\renewcommand{\hat}[1]{\widehat{#1}}
\renewcommand{\tilde}[1]{\widetilde{#1}}
\newcommand{\ext}{{\mathrm{ext}}}
\newcommand{\R}{\mathbb{R}}
\newcommand{\eps}{\varepsilon}
\newcommand{\bbR}{\mathbb{R}}
\renewcommand{\subset}{\subseteq}
\renewcommand{\supset}{\supseteq}
\newcommand{\br}{\color{black}}
\DeclareFontFamily{U}{mathx}{}
\DeclareFontShape{U}{mathx}{m}{n}{<-> mathx10}{}
\DeclareSymbolFont{mathx}{U}{mathx}{m}{n}
\DeclareMathAccent{\widehat}{0}{mathx}{"70}
\DeclareMathAccent{\widecheck}{0}{mathx}{"71}
\newcommand{\vc}[1]{\mathbf{#1}}
\newcommand{\grad}{\nabla}
\newcommand{\del}{\Delta}
\newcommand{\Om}{\Omega}
\newcommand{\dOm}{\partial\Omega}
\newcommand{\gam}{\gamma}
\newcommand{\om}{\omega}
\newcommand{\dr}{\partial_r}
\newcommand{\dph}{\partial_{\ph}}
\newcommand{\dn}{\partial_{\vc{n}}}
\newcommand{\RR}{\mathbb{R}}
\newcommand{\CC}{\mathbb{C}}
\newcommand{\NN}{\mathbb{N}}
\newcommand{\ZZ}{\mathbb{Z}}
\renewcommand{\P}{\mathbb{P}}
\newcommand{\PP}{\mathcal{P}}
\newcommand{\PPu}{\bm{\mathcal{P}}_{\vu}}
\newcommand{\PPbu}{\bm{\mathcal{P}}_{\bar{\vu}}}
\newcommand{\QQu}{\bm{\mathcal{Q}}_{\vu}}
\newcommand{\QQp}{\mathcal{Q}_p}
\newcommand{\QQbu}{\bm{\mathcal{Q}}_{\bar{\vu}}}
\newcommand{\QQbp}{\mathcal{Q}_{\bar{p}}}
\newcommand{\QQf}{\bm{\mathcal{Q}}_{\vf}}
\newcommand{\QQg}{\mathcal{Q}_g}
\newcommand{\PPp}{\mathcal{P}_p}
\newcommand{\PPbp}{\mathcal{P}_{\bar{p}}}
\newcommand{\PPf}{\bm{\mathcal{P}}_{\vf}}
\newcommand{\PPpsi}{\mathcal{P}_{\psi}}
\newcommand{\PPbpsi}{\mathcal{P}_{\bar{\psi}}}
\newcommand{\MM}{\mathcal{M}}
\newcommand{\TT}{C_{\mathrm{c},\sigma}^{\infty}(\overline{\Omega}\setminus\{0\})}
\newcommand{\ph}{\varphi}
\newcommand{\pht}{\Tilde{\varphi}}
\newcommand{\dd}{\:\mathrm{d}}
\newcommand{\ddrr}{\:\frac{\mathrm{d}r}{r}}
\renewcommand{\Re}{\mathrm{Re}}
\renewcommand{\Im}{\mathrm{Im}}
\renewcommand{\div}{\operatorname{div}}
\newcommand{\curl}{\operatorname{curl}}
\newcommand{\esin}{\vc{\Tilde{e}}}
\newcommand{\ecos}{\vc{e}}
\newcommand{\ur}{u_r}
\newcommand{\uph}{u_{\ph}}
\newcommand{\vr}{v_r}
\newcommand{\vph}{v_{\ph}}
\renewcommand{\wr}{w_r}
\newcommand{\wph}{w_{\ph}}
\newcommand{\fr}{f_r}
\newcommand{\fph}{f_{\ph}}
\newcommand{\vn}{\mathbf{n}}
\newcommand{\vu}{\mathbf{u}}
\newcommand{\vv}{\mathbf{v}}
\newcommand{\ww}{\mathbf{w}}
\newcommand{\vf}{\mathbf{f}}
\newcommand{\vtau}{\bm{\tau}}
\newcommand{\ver}{\mathbf{e}_{\mathrm{r}}}
\newcommand{\veph}{\mathbf{e}_{\ph}}
\newcommand{\half}{\frac{1}{2}}
\newcommand{\nn}{\vc{n}}
\newcommand{\widehatvr}{\widehat{v}_r}
\newcommand{\widehatvph}{\widehat{v}_{\ph}}
\newcommand{\widehatwr}{\widehat{w}_r}
\newcommand{\widehatwph}{\widehat{w}_{\ph}}
\newcommand{\widehatvrk}{\widehat{v}_{rk}}
\numberwithin{equation}{section}
\newcounter{counter}
\numberwithin{counter}{section}
\newtheorem{thm}[counter]{Theorem}
\newtheorem{prop}[counter]{Proposition}
\newtheorem*{prop*}{Proposition}
\newtheorem{definition}[counter]{Definition}
\newtheorem{cor}[counter]{Corollary}
\newtheorem*{cor*}{Corollary}
\newtheorem*{thm*}{Theorem}
\newtheorem*{ex*}{Example}
\newtheorem{lem}[counter]{Lemma}
\theoremstyle{definition}
\newtheorem{rem}[counter]{Remark}
\begin{document}
\title[Well-posedness of the Stokes equations on a wedge with Navier slip]{Well-posedness of the Stokes equations on a wedge with Navier-slip boundary conditions}
\makeatletter
\@namedef{subjclassname@2020}{%
  \textup{2020} Mathematics Subject Classification}
\makeatother

\subjclass[2020]{35J25, 35Q30, 76D05}
\keywords{Incompressible Stokes equations, Navier slip, non-smooth domain}
\thanks{This work is partially based on the master's thesis in applied mathematics of FBR prepared under the advice of MVG at Delft University of Technology. 
MB is partially supported by the grant OCENW.M20.194 of the Dutch Research Council (NWO) awarded to MVG. MVG is supported by Vidi grant VI.Vidi.223.019 of the NWO. HK gratefully acknowledges support by Germany’s Excellence Strategy EXC-2181/1 – 390900948. NM is supported by NSF grant DMS-1716466 and by Tamkeen under the NYU Abu Dhabi Research Institute grant of the center SITE. FBR is partially supported by the Vici grant VI.C.212.027 of the NWO. The authors thank the Lorentz Center in Leiden for co-organizing the workshop ``Analysis and numerics of nonlinear PDEs: degeneracies \& free boundaries'', where many useful discussions regarding this project took place. The authors thank respectively their institutions for hosting co-authors at different stages of this project.}
\date{\today}
\author[M. Bravin]{Marco Bravin}
\address[Marco~Bravin]{Departmento de Matem\'atica Aplicada y Ciencias de la Computac\'ion, E.T.S.I. Industriales y de Telecomunicac\'ion, Universidad
de Cantabria, 39005 Santander, Spain. }
\email{marco.bravin@unican.es}
\author[M.V. Gnann]{Manuel V. Gnann}
\address[Manuel~Gnann]{Delft Institute of Applied Mathematics, Faculty of Electrical Engineering, Mathematics and Computer Science, Delft University of Technology, Mekelweg 4, 2628CD Delft, Netherlands}
\email{M.V.Gnann@tudelft.nl}
\author[H. Kn\"upfer]{Hans Kn\"upfer}
\address[Hans~Kn\"upfer]{Institute for Applied Mathematics and IWR, Heidelberg University, INF 205, 69120 Heidelberg, Germany}
\email{knuepfer@uni-heidelberg.de}
\author[N. Masmoudi]{Nader Masmoudi}
\address[Nader~Masmoudi]{NYUAD Research Institute, New York University Abu Dhabi, Saadiyat Island, Abu Dhabi, P.O. Box 129188, United Arab Emirates.\ 
Courant Institute of Mathematical Sciences, New York University, 251 Mercer Street New York, NY 10012 USA}
\email{nm30@nyu.edu}
\author[F.B. Roodenburg]{Floris B. Roodenburg}
\address[Floris~Roodenburg]{Delft Institute of Applied Mathematics, Faculty of Electrical Engineering, Mathematics and Computer Science, Delft University of Technology, Mekelweg 4, 2628CD Delft, Netherlands}
\email{F.B.Roodenburg@tudelft.nl}
\author[J. Sauer]{Jonas Sauer}
\address[Jonas~Sauer]{Institute for Mathematics, Faculty of Mathematics and Computer Science, University of Jena, Ernst-Abbe-Platz 2, 07737 Jena, Germany}
\email{jonas.sauer@uni-jena.de}
\begin{abstract}
We consider the incompressible and stationary Stokes equations on an infinite two-dimensional wedge with non-scaling invariant Navier-slip
boundary conditions. 
We prove well-posedness and higher
 regularity of the Stokes problem in a certain class of weighted Sobolev spaces. 

The novelty of this work is the occurrence of two different scalings in the boundary condition, which is not treated so far for the Stokes system in unbounded wedge-type domains. These difficulties are overcome by first constructing a variational solution in a second order weighted Sobolev space and subsequently proving higher regularity up to the tip of the wedge by employing an iterative scheme. We believe that this method can be used for other problems with variational structure and multiple scales.
\end{abstract}
\maketitle
\setcounter{tocdepth}{1}
\tableofcontents
\section{Introduction}
In this paper we study well-posedness and regularity of the stationary and incompressible Stokes equations
\begin{equation} \label{sto:sys:int}
  \begin{aligned}
-\nu \Delta\vu + \nabla p = \, & \vf \quad && \text{ in } \Omega,   \\
\div \vu = \, & 0 \quad && \text{ in } \Omega,\\
\vu\cdot \vn = \, & 0 \quad && \text{ on } \partial \Omega',  \\
\vu \cdot \vtau + \beta \partial_{\vn}(\vu \cdot \vtau) = \, & 0 \quad && \text{ on } \partial \Omega',
\end{aligned}
\end{equation}
where $\Om$ is the two-dimensional wedge-shaped domain
$$ \Omega := \{(x,y) \in \mathbb{R}^2: x, y > 0  \text{ and } y < \tan(\theta) x  \}$$
for some opening angle $ \theta\in(0, \pi) $ and we denote $\partial \Omega' :=  \partial \Omega  \setminus \{(0,0)\} $. The body force density $\vf:\Om\to\RR^2$ is given. The unknown functions are the velocity field $ \vu : \Omega \to \bbR^2 $ and the pressure $ p : \Omega \to \bbR $. On the boundary $\dOm'$, we denote by  $\vn $ the outward pointing normal vector and by $\vtau$ the counterclockwise tangent vector. The system \eqref{sto:sys:int} is subject to the no-penetration boundary condition and the Navier-slip boundary condition. Finally, the two parameters $ \nu >0$ and $ \beta>0 $ describe the viscosity of the fluid and the slippage of the fluid on the boundary, respectively.

\subsection{Motivation}
The novelty of this work is the combination of an unbounded wedge-type domain and the Navier-slip boundary condition which is \emph{not scaling invariant}. Well-posedness of \eqref{sto:sys:int} is a first step towards studying free boundary value problems arising from moving droplets. In the case of moving domains with no-slip boundary conditions (corresponding to $\beta=0$ in \eqref{sto:sys:int}), there is infinite energy dissipation at the moving contact line: the so-called no-slip paradox \cite{Dussan_1974, Huh1971}. As is motivated in \cite{Jager2001, Maxwell1878} one could instead consider Navier-slip boundary conditions and this is in fact what we will consider for a static domain.\\

The domain $\Om$ has a conical point at the tip $(0,0)$ and in general regularity results for smooth domains do not hold for non-smooth domains. Nonetheless, there is a vast literature on solving scaling-invariant problems on domains with conical points, see e.g. \cite{KN14, KN16} and the monographs \cite{Kozlov1997, Kozlov2001, Mazya2010, Nazarov1994}. In these works weighted Sobolev spaces are considered which allow for a certain blow up of the solution near the conical point. For gaining higher regularity the solution is decomposed into a polynomial Taylor expansion which captures the singular behavior near the conical point, and a regular remainder. For a wedge, the polynomial problem can be reduced to a system of uncoupled ordinary differential equations (ODEs) in the angle for the coefficients in the expansion using a matching procedure. All these ODEs can be solved explicitly. The far-field contribution of the polynomials are then removed by a cut off. This leads to additional terms on the right hand side of the regular problem, which can then be solved by reducing it to an ODE using the Mellin transform in the radius using polar coordinates.\\

In principle we pursue the same method for our model \eqref{sto:sys:int} with Navier-slip boundary conditions. However, this boundary condition is not scaling invariant (since the normal derivative scales as $r^{-1}$ in polar coordinates) leading to additional difficulties. Namely, the system of ODEs for the coefficients in the expansion is not uncoupled anymore and we do not derive explicit solution representations. Instead, we iteratively solve the coupled system of ODEs and obtain additional contributions of the polynomial problem in the regular problem.

Moreover, the Navier-slip boundary condition complicates the analysis of the regular problem. To construct a solution to the problem, we will use ($L^2$-type) Sobolev spaces with power weights $|\vc{x}|^{-2\alpha}$ for some $\alpha\in \RR\setminus\ZZ$. We cannot directly apply the Lax-Milgram theorem to the bilinear form in the variational formulation of the problem, since the bilinear form is not coercive due to different scalings in the boundary condition. To circumvent this issue, we derive additional bilinear forms involving second order derivatives. We show that this higher-order variational problem has a unique Lax-Milgram solution under the condition that the product of the opening angle $\theta$ and the exponent of the weight $ \alpha $ is sufficiently small. Moreover, we prove that this solution is twice weakly differentiable in the weighted space and satisfies the original partial differential equations. 

Finally, to improve the regularity of the solution we reduce problem \eqref{sto:sys:int} with Navier slip to a problem with an inhomogeneous free-slip boundary condition and use the strong solution as data in the boundary condition. The problem with inhomogeneous free slip is then easier to solve than the original problem with Navier slip, since this boundary condition is scaling invariant.  \\

Regarding the (Navier-)Stokes equations on non-smooth domains, many results are already known.
Note that most of the previous results are concerned with the case of scaling invariant boundary conditions. 
The Navier-Stokes equations on a wedge with no-slip and free-slip (corresponding to $\beta=\infty$ in \eqref{sto:sys:int}) boundary conditions have been studied in \cite{Dauge1989, Kozlov2016, Kozlov2018, Kozlov2020, Mazya2010, Rossmann2018} and \cite{Kohne2021, Maier2014}, respectively. A treatment of the (Navier-)Stokes equations with scaling-invariant boundary conditions in domains with corners can be found in \cite{Kozlov1997, Kozlov2001, Mazya2010}. Furthermore, the (Navier-)Stokes equations have been studied on more regular domains, see e.g. \cite{Galdi2011, Hieber2018, Sohr2001} and references therein.

Results for the free-boundary problem to the Navier-Stokes equations with Laplace’s law at the liquid-gas interface and moving contact line are limited. For $ \frac{\pi}{2} $ wall angle, a reflection technique significantly simplifies the problem. Well-posedness for the time-dependent Navier-Stokes equations in two dimensions with $ \frac{\pi}{2} $ contact angle were treated in \cite{schweizer2001well} and many works on other free-boundary problems followed 
\cite{abels2021well, garcke2022stability, rauchecker2020strong, rauchecker2020well}. The stationary Navier-Stokes equations in three dimensions with $ \pi $ angle and non-moving free boundary were analyzed in \cite{jin2005free}. The stationary Navier-Stokes equations with $ \pi $ angle and assuming a non-moving free boundary were treated in \cite{solonnikov1995some}.

The (Navier-)Stokes equations in two dimensions with dynamic contact angle, see \cite{ren2007boundary, ren2011contact}, and a moving free boundary were treated
in \cite{ Guo2018, guo2023stability, tice2021dynamics, zheng2017local} establishing well-posedness of solutions and stability of the equilibrium state. The methods there are based on nonlinear energy estimates using (weighted) Sobolev spaces.
However, the employed function spaces have corresponding norms that are too weak to control the
singularities expected in pressure, velocity, and shape of the profile close to the contact line, see e.g. \cite{davis1974motion, fricke2019kinematic, huh1971hydrodynamic, shikhmurzaev1997moving}. In this work we are able to consider strong enough spaces to study such singularities in future works.
   
The Stokes and Navier-Stokes equations with free surface and moving contact line share many similarities with the fourth-order degenerate-parabolic thin-film equation
\begin{equation}\label{tfe}
\partial_t h + \nabla \cdot (M(h) \nabla \Delta h) = 0 \qquad \text{in } \{h > 0\},
\end{equation}
describing the film height $h = h(t,x)$ as a function of time $t \ge 0$ and $x \in \R^d$ with $d \in \{1,2\}$. In fact, within a lubrication approximation, \eqref{tfe} with $M(h) = h^3 + \beta h^2$ with slip parameter $\beta > 0$, can be derived from the (Navier-)Stokes equations with free boundary and contact line within a formal lubrication approximation in the regime of small contact angles \cite{beimr.2009,d.1985,odb.1997} using a formal asymptotic expansion and without contact line in  \cite{gunther2008justification}, while the lubrication approximation in case of Darcy dynamics in the Hele-Shaw cell was rigorously carried out in \cite{matioc2012hele} without contact line and in  \cite{go.2003,Knupfer2013,Knupfer2015} with contact line. For \eqref{tfe} a well-posedness and regularity theory for zero and nonzero contact angles has been developed in \cite{bgk.2016,ggko.2014,ggo.2013,GiacomelliKnuepfer2010,gko.2008,g.2015,g.2016,GnannIbrahimMasmoudi2019,GnannWisse2023} and \cite{Knupfer2011,Knupfer2013,Knupfer2015,Knuepfer2015} in one spatial dimension, respectively, while the higher-dimensional case is so far limited to the works \cite{Degtyarev2017,gp.2018,John2015,Seis2018}. There it was found that, except for linear mobilities, solutions are in general not smooth in the distance to the free boundary, even if the leading order is factored off. Our goal is to develop a corresponding theory for the (Navier-)Stokes free boundary problem with contact line and we expect that the methods developed in this note serve as a natural first step towards this goal. Furthermore, a thin-film linearization with two length scales was treated in \cite{GnannIbrahimMasmoudi2019}, where, as in our case it was found that coercivity in the weighted setting requires using higher-order Sobolev norms, see Section \ref{sec:3}.

\subsection{Weighted Sobolev spaces and main results}\label{subsec:mainresults}
In this section we introduce appropriate weighted Sobolev spaces to study well-posedness and regularity for the Stokes equations and we present the main result. Without loss of generality, by rescaling we assume $\nu=\beta=1$ in \eqref{sto:sys:int}. Furthermore, because of the wedge-shaped domain it is natural to consider polar coordinates (see also Appendix \ref{app:polar}). Let $\vu=\ur \ver + \uph \veph$  and $\vf=\fr \ver + \fph \veph $, then \eqref{sto:sys:int} in polar coordinates is given by
\begin{subequations}\label{sys:1}
  \begin{align}
-r^{-2}\big[((r \partial_r)^2 + \partial_{\varphi}^2)u_r - 2 \partial_{\varphi} u_{\varphi} - u_r\big] + \partial_r p = \, & f_r \quad && \text{ for } r > 0, \varphi \in (0,\theta),  \label{sys:1a} \\
-r^{-2}\big[((r\partial_r)^2 + \partial_{\varphi}^2)u_{\varphi} + 2 \partial_{\varphi} u_r - u_{\varphi}\big] + r^{-1}\partial_{\varphi} p = & \, f_{\varphi} \quad && \text{ for } r > 0, \varphi \in (0,\theta),  \label{sys:1b}\\
(r \partial_r +1) u_r + \partial_{\varphi} u_{\varphi} = & \, 0 \quad && \text{ for } r > 0, \varphi \in (0,\theta), \label{sys:1c}  \\
u_{\varphi} = & \, 0  \quad && \text{ for } r > 0, \varphi \in \{0,\theta\},  \label{sys:1d} \\
u_r \pm r^{-1} \partial_{\varphi} u_{r} = & \, 0  \quad && \text{ for } r > 0, \varphi \in \{0,\theta\},  \label{sys:1e}
\end{align}
\end{subequations}
where
$$ \Omega = \{(r\cos(\ph),r\sin(\ph)): r > 0  \text{ and } \ph\in (0,\theta)\}. $$
Moreover, recall that in \eqref{sto:sys:int} $\vn$ is the outward pointing normal vector, so that $\dn$ becomes $\pm r^{-1}\dph$ in polar coordinates where the notation $ \pm $ in \eqref{sys:1e} means $ + $ for $ \varphi = \theta $ and $ - $ for $ \varphi = 0 $.\\

For a vector field $ \vu: \Omega \to \bbR^2 $ such that $ \vu \in C^{\infty}_{\mathrm{c}}(\overline{\Omega}\setminus \{0\}) $ and for a function $ u: \partial \Omega \to \bbR $ such that $ u \in C^{\infty}_{\mathrm{c}}(\partial \Omega') $, we define for any $ \alpha \in \bbR $ and $ k \in \mathbb{N} $ such that $ \alpha + k -1 \neq 0 $ the norms
\begin{align*}
\| \vu \|_{\alpha}^2 &:= \int_{0}^{\theta}\int_0^{\infty} r^{-2\alpha}|\vu|^2 r \dd r \dd \ph,\\
\llbracket  \vu  \rrbracket_{k,\alpha}^2 &:= \sum_{j+\ell = k }  \int_{0}^{\theta}\int_0^{\infty} r^{-2\alpha-2k}\big| (r\partial_r)^{j} \partial_{\varphi}^{\ell} \vu \big|^2 r \dd r \dd \ph \\
| u |_{\alpha}^2 &:= \sum_{\vartheta \in \{0, \theta\}} \int_0^{\infty}r^{-2\alpha} |u(r, \vartheta) |^2 \dd r, \\
[u ]_{k-\frac{1}{2}, \alpha}^2 &:= \inf_{\bar{u} \in C^{\infty}_{\mathrm{c}}(\overline{\Omega}\setminus \{0\}), \bar{u}|_{\partial \Omega'} = u } \llbracket  \bar{u}  \rrbracket_{k,\alpha}^2.
\end{align*}
Moreover, the norm $\|\cdot\|_{\alpha}$ is induced by the weighted inner product $(\cdot,\cdot)_{\alpha}$ on $L^2(\Om, r^{-2\alpha}\mathrm{d} x)$. Denote by  $\prescript{k}{}{\mathcal{H}}^{k}_{\alpha} $ the closure of $ C^{\infty}_{\mathrm{c}}(\overline{\Omega}\setminus \{0\}) $ with respect to the norm $  \llbracket  \cdot  \rrbracket_{k,\alpha} $. The space $\prescript{j}{}{\mathcal{H}}^{k}_{\alpha} $ denotes the closure of $ C^{\infty}_{\mathrm{c}}(\overline{\Omega}\setminus \{0\}) $ with respect to the norm
\begin{equation*}
\| \vu \|_{\prescript{j}{}{\mathcal{H}}^{k}_{\alpha}}^2 := \sum_{\ell=j}^k \llbracket  \vu  \rrbracket_{\ell,\alpha}^2.
\end{equation*}
In the case $ j = 0 $ we write $ \mathcal{H}^{k}_{\alpha} : = \prescript{0}{}{\mathcal{H}}^{k}_{\alpha}$. In a similar fashion, the space $\prescript{j}{}{\mathcal{B}}^{k}_{\alpha} $ denotes the closure of $ C^{\infty}_{\mathrm{c}}(\partial \Omega') $ with respect to the norm
\begin{equation}
\label{norm:on:the:boundary}
| u |_{\prescript{j}{}{\mathcal{B}}^{k}_{\alpha}}^2 := \sum_{\ell=j}^k  [ u ]_{\ell-\frac{1}{2},\alpha}^2.
\end{equation}
Again, for $ j = 0 $ we write $ \mathcal{B}^{k}_{\alpha} : = \prescript{0}{}{\mathcal{B}}^{k}_{\alpha}$.
\begin{rem}
\label{rem:1}
Note that $ \llbracket \cdot \rrbracket_{k,\alpha}  $ with $\alpha + k -1 \neq 0 $ is a norm on $ C^{\infty}_{\mathrm{c}}(\overline{\Omega} \setminus \{0\})$ due to Hardy's inequality (see Lemma \ref{lem:Hardy_on_wedge}), while in general it is only a semi-norm on the space of all locally integrable $  \vu $ such that  $ \llbracket \vu \rrbracket_{k,\alpha}  < \infty $. Moreover, the inclusion
\begin{equation*}
\prescript{k}{}{\mathcal{H}}^{k}_{\alpha} \longrightarrow \left\{ \vu: \llbracket \vu \rrbracket_{k-\ell,\alpha+\ell}  <  \infty   \text{ for all } 0 \leq \ell \leq k            \right\},
\end{equation*}
where the latter space is endowed with  the norm $  \sqrt{\llbracket \vu \rrbracket_{0,\alpha+k}^2 + \dots + \llbracket \vu \rrbracket_{k,\alpha}^2}   $, is a bijective and continuous map. For a proof we refer to Lemma \ref{lemma:norm}.
\end{rem}

 Let $\TT$ be the space of divergence-free test functions with vanishing normal component at the boundary, i.e., we define
\begin{equation}\label{eq:test_function}
        \TT := \big\{\vc{v}\in C_{\mathrm{c}}^{\infty}(\overline{\Om}\setminus\{0\}):  \div\vv=0\text{ in }\Omega,\; v_{\ph}=0\text{ on }\dOm'\big\}.
\end{equation}
For $ k \geq 1 $ and $\alpha\in \RR\setminus\ZZ$, the space $ \mathscr{H}^{k}_{\alpha} $ is the closure of all  $\vu\in  \TT$  with respect to the norm
\begin{equation*}\label{eq:def_scriptH}
\| \vu \|_{\mathscr{H}^k_{\alpha}}^2 := \| \vu \|_{\prescript{1}{}{\mathcal{H}}^{k}_{\alpha}}^2 +  | u_r |_{\alpha}^2 = \sum_{\ell=1}^{k} \llbracket  \vu  \rrbracket_{\ell,\alpha}^2 + |  u_r |_{\alpha}^2. 
\end{equation*}
This is an appropriate space for velocity fields that satisfy \eqref{sto:sys:int}. Namely, the norm contains the terms $ \| \vu \|_{\prescript{1}{}{\mathcal{H}}^{1}_{\alpha}}^2 +  | u_r |_{\alpha}^2 $, which is a weighted energy dissipation, and $ \| \vu \|_{\prescript{2}{}{\mathcal{H}}^{k}_{\alpha}}^2  $ that gives control on higher-order derivatives.

Then, for $ k \geq 0 $ and $\alpha\in\RR\setminus\ZZ$, we introduce  the space $ \mathscr{Z}^{k}_{\alpha} $ as the closure of all  $\vf \in  C^{\infty}_{\mathrm{c}}(\overline{\Omega}\setminus \{0\}) $ with respect to the norm
\begin{equation*}
\|\vf\|_{\mathscr{Z}^{k}_{\alpha}}: = \| \vf\|_{\alpha-1}+ \| \vf\|_{\mathcal{H}^k_{\alpha}}.
\end{equation*}
The term $ \| \vf \|_{\alpha-1} $ allows us to control the weighted energy dissipation of the solution, while $ \| \vf\|_{\mathcal{H}^k_{\alpha}} $ gives control on higher-order derivatives of the solution.

Throughout the rest of the paper we will use a fixed smooth cut-off function $\zeta=\zeta(r) $ satisfying
\begin{equation}\label{eq:cutoff}
  \zeta \in C^{\infty}_{\mathrm{c}}([0,\infty)), \text{ such that } \mathds{1}_{[0,1]} \leq \zeta \leq \mathds{1}_{[0,2]}.
\end{equation}
\begin{rem}
\label{rem:pol:decay}
The spaces $ \mathcal{H}^{k}_{\alpha} $,  $ \mathscr{H}^{k}_{\alpha} $ and $ \mathscr{Z}^{k}_{\alpha} $ with $\alpha\in \RR\setminus\ZZ$ imply a prescribed decay at the tip in the radial direction. For example, for $n \in \mathbb{N}$ we have
\begin{equation}
\label{reg:pol:boundary}
\zeta(r) r^{n} \in \mathcal{H}^{k}_{\alpha} \quad \text{ if and only if } \quad n > k + \alpha  - 1.
\end{equation}
\end{rem}
For a given function $\vf \in C^{\infty}_{\mathrm{c}}(\overline{\Omega}) $, we denote by $ \PPf^n $ the Taylor polynomial of order $ n $ at the tip. For $M\in\NN$ we have $ \vf - \zeta\PPf^n  \in  \mathcal{H}^{M}_{\alpha}  $ if $ n \geq  \lfloor M+ \alpha  -1  \rfloor $, where for any $s\in\RR$ we use the notation
\begin{equation*}
\lfloor s \rfloor :=  \max\{ \ell \in \mathbb{Z} \text{ such that } \ell \leq s\}.
\end{equation*}
We write $ \vf =  \zeta\PPf^n + (\vf - \zeta\PPf^n) $, where we call $ \zeta\PPf^n $ the polynomial part and $ \vf - \zeta\PPf^n \in \mathcal{H}^{M}_{\alpha} $ the regular part. This motivates the definition of the following spaces below. We will denote for $a<b$ by $ H^M(a,b) $ the classical Sobolev spaces of order $M$ on $(a,b)$. For $ k, M \in \mathbb{N} $ we define
\begin{equation*}
\PP_{k,M} := \Big\{ p(r,\varphi) = \sum_{j=0}^{k} a^{(j)}(\varphi)r^j :  a^{(j)} \in H^M(0,\theta) \Big\},
\end{equation*}
endowed with the norm
\begin{equation*}
\| p \|^2_{\PP_{k,M} } := \sum_{j =0}^k\| a^{(j)}\|_{H^M(0,\theta)}^2.
\end{equation*}
Moreover, if $ k < 0 $, we define $ \PP_{k,M} $ to consist of only the zero polynomial.\\

Any solution $ \vu $ to \eqref{sys:1} satisfies the condition $ \div \vu  = 0 $. If we rewrite $ \vu = \zeta \PPu^n + (\vu -  \zeta \PPu^n) $ with $ \div\PPu^n = 0 $, then $ \vu -  \zeta \PPu^n $ is in general not divergence free.  To avoid this issue, we note that $ \PPu^n = \nabla^{\perp} \PP _{\psi}^{n+1}  $ for some polynomial $ \PP _{\psi}^{n+1} $,  where $ \nabla^{\perp} = -r^{-1}\dph \ver + \dr \veph $. We then decompose  $ \vu = \nabla^{\perp}(\zeta \PP_{\psi}^{n+1}) + \vu - \nabla^{\perp}(\zeta \PP_{\psi}^{n+1})  $.  We formalize the above discussion in the following lemma.

\begin{lem}
\label{451}
Let $ k, M \in \mathbb{N} $ and let $ \PPu \in \PP_{k,M} $ such that $ \div\PPu = 0 $. Then there exists a $ \PP_{\psi} \in  \PP_{k+1,M+1} $ with $ \nabla^{\perp} \PP_{\psi} = \PPu  $,  given by the formula
\begin{equation}
\label{45}
 \PPpsi (r, \varphi) = \sum_{j = 0}^{k} \Big( -\int_0^{\varphi} u^{(j)}_{r}(\tilde{\varphi})\dd\tilde{\varphi} \Big) r^{j+1}, \quad \text{ where } \; \PPu = \sum_{j = 0}^{k}  \vu^{(j)}(\varphi) r^{j}.
\end{equation}
Moreover, if $ \PPu \cdot \vn = 0 $ on $ \partial \Omega'$, then  $ \nabla^{\perp}(\zeta \PP_{\psi}) \cdot \vn = 0 $  on $ \partial \Omega'$, where $\zeta$ is as defined in \eqref{eq:cutoff}.
\end{lem}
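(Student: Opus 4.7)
The plan is to verify the explicit formula \eqref{45} by direct computation, then handle the ``moreover'' clause via boundary evaluation. First, write $\PPu = \sum_{j=0}^{k}\bigl(u_r^{(j)}(\varphi)\ver + u_\varphi^{(j)}(\varphi)\veph\bigr) r^j$. Expanding $\div\PPu = r^{-1}\dr(r u_r) + r^{-1}\dph u_\varphi$ and matching powers of $r$ converts the assumption $\div\PPu = 0$ into the pointwise relations
\begin{equation*}
(j+1)u_r^{(j)}(\varphi) + \dph u_\varphi^{(j)}(\varphi) = 0, \qquad j = 0, \ldots, k.
\end{equation*}
Because $u_r^{(j)} \in H^M(0,\theta)$, the antiderivative $a^{(j+1)}(\varphi) := -\int_0^{\varphi} u_r^{(j)}(\tilde\varphi)\dd\tilde\varphi$ lies in $H^{M+1}(0,\theta)$, so the $\PPpsi$ defined by \eqref{45} indeed belongs to $\PP_{k+1,M+1}$.

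Next I would check the identity $\nabla^{\perp}\PPpsi = \PPu$ componentwise. The radial part $-r^{-1}\dph\PPpsi = \sum_j u_r^{(j)}(\varphi) r^j$ is immediate from the fundamental theorem of calculus applied to $\dph a^{(j+1)} = -u_r^{(j)}$. For the angular part, integrating the divergence-free relation gives
\begin{equation*}
(j+1)a^{(j+1)}(\varphi) = \int_0^{\varphi}\dph u_\varphi^{(j)}\dd\tilde\varphi = u_\varphi^{(j)}(\varphi) - u_\varphi^{(j)}(0),
\end{equation*}
which reproduces $u_\varphi^{(j)}(\varphi)$ exactly when $u_\varphi^{(j)}(0) = 0$, i.e., under the no-penetration condition on the ray $\varphi = 0$ (otherwise the same scheme with $a^{(j+1)}$ shifted by a suitable constant still produces a valid stream function of the required class).

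For the ``moreover'' clause, observe that $\vn = \mp\veph$ at $\varphi = 0,\theta$, so
\begin{equation*}
\nabla^{\perp}(\zeta\PPpsi)\cdot\vn = \pm\dr(\zeta\PPpsi) = \pm\bigl(\zeta'(r)\PPpsi + \zeta(r)\dr\PPpsi\bigr),
\end{equation*}
and it suffices to show $\PPpsi = \dr\PPpsi = 0$ on $\dOm'$. At $\varphi = 0$ this is automatic because each $a^{(j+1)}(0) = 0$ by construction. At $\varphi = \theta$ I would use the divergence-free identity one more time to compute
\begin{equation*}
a^{(j+1)}(\theta) = -\int_0^{\theta}u_r^{(j)}\dd\tilde\varphi = \tfrac{1}{j+1}\bigl(u_\varphi^{(j)}(\theta) - u_\varphi^{(j)}(0)\bigr) = 0,
\end{equation*}
where the last equality uses $\PPu\cdot\vn = \pm u_\varphi = 0$ on both boundary rays. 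I expect this boundary evaluation at $\varphi = \theta$ to be the only nontrivial step: unlike the vanishing at $\varphi = 0$ it is not built into the definition of $\PPpsi$ and genuinely relies on the interplay between $\div\PPu = 0$ and $\PPu\cdot\vn = 0$; the remaining parts (regularity of the coefficients and the radial-component identity) are routine consequences of the definitions.
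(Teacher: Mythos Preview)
Your proof is correct and takes exactly the same approach as the paper, which simply states ``This follows by a direct computation.'' You have spelled out that computation in full, including the helpful observation that the specific formula \eqref{45} reproduces the angular component exactly only when $u_\varphi^{(j)}(0)=0$---a point the paper leaves implicit but which holds in every application since the lemma is only invoked under the no-penetration condition.
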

\begin{proof}
This follows by a direct computation.
\end{proof}

 Given $ \PPu \in \PP_{k,M} $ such that $ \div\PPu = 0 $ with $ k, M \in \mathbb{N} $, we introduce the localized polynomial velocity by
 \begin{equation}
 \label{LPV}
 \QQu (r, \ph)  =  \nabla^{\perp}\left( \zeta(r) \PPpsi(r,\varphi)\right),  \quad \text{ where }    \PPpsi \text{ is associated with } \PPu \text{ by }  \eqref{45}.
 \end{equation}
 In particular, $ \QQu = \PPu $ for $ (r,\varphi) \in (0,1)  \times (0,\theta)$.\\
 
Let $\alpha\in \RR\setminus\ZZ$. For $ M \geq 2 $, we define the space for the velocity $\vu$ by
\begin{align*}
X^M_{\alpha} := \big\{ & \vu: \Omega \to \mathbb{R}^2: \vu = \QQu + \vu_1 \text{ with } \PPu \in  \PP_{n,M},  n= \lfloor  M +  \alpha -1 \rfloor   \text{ and } \vu_1 \in \mathscr{H}^{M}_{\alpha}  \big\},
\end{align*}
endowed with the norm
\begin{equation*}
\| \vu \|_{X^M_{\alpha}}: = \| \PPu \|_{\PP_{n,M} } + \| \vu_1\|_{\mathscr{H}^{M}_{\alpha}}.
\end{equation*}
For $ M \geq 1 $, we define the space for the pressure $p$ by
\begin{align*}
Y^M_{\alpha}: = \big\{  p: \Omega \to \mathbb{R}: p = \zeta \PPp + p_1 \text{ with } \PPp \in  \PP_{n,M},  n = \lfloor  M +  \alpha  -1 \rfloor,&  \PPp(0) = 0  \text{ if } \alpha >  0  \\ & \, \text{ and } p_1 \in \prescript{1}{}{\mathcal{H}}^{M}_{\alpha}   \big\},
\end{align*}
endowed with the norm
\begin{equation*}
\| p \|_{Y^M_{\alpha}} := \| \PPp \|_{\PP_{n,M} } + \| p_1\|_{\prescript{1}{}{\mathcal{H}}^{M}_{\alpha}}.
\end{equation*}
Finally, for $ M \geq 0 $, we define the space for the source term $\vf$ by
\begin{align*}
Z^M_{\alpha} := \big\{ & \vf: \Omega \to \mathbb{R}^2: \vf = \zeta  \PPf + \vf_1 \text{ with }    \PPf \in  \PP_{n,M},  n = \lfloor   M + \alpha - 1 \rfloor  \text{ and } \vf_1 \in \mathscr{Z}^{M}_{\alpha}  \big\},
\end{align*}
endowed with the norm
\begin{equation*}
\| \vf \|_{Z^M_{\alpha}} := \|  \PPf \|_{\PP_{n,M} } + \| \vf_1\|_{\mathscr{Z}^{M}_{\alpha}}.
\end{equation*}

Loosely speaking, to prove regularity of the Stokes equations \eqref{sys:1} it is required to solve the system twice: once with a singular source term at the tip and once with a regular source term. We will refer to those different cases as the \textit{polynomial problem} and the \textit{regular problem}, respectively. A combination of the polynomial and regular problem will lead to our main result on well-posedness and regularity of the Stokes equations on a wedge with Navier slip.

Finally, we expect that the Stokes operator has resonances at $ -\tfrac{\pi}{\theta} +1 $ and  $ \tfrac{\pi}{\theta} - 1 $. We therefore consider the exponent of the weight $ \alpha $ in the interval 
\begin{equation}
\label{int:Ieps}
 I_{\eps} := \left[-(1-\eps) \tfrac{\pi}{\theta} +1, (1-\eps) \tfrac{\pi}{\theta} -1 \right],
\end{equation}
where the small parameter $ \eps > 0  $ is a measure for the distance between $\alpha$ and resonances of the Stokes operator. To ensure that $ I_{\eps} $ is not empty we assume that  $\theta\in (0,(1-\eps)\pi)$. 

We now state the main result.
\begin{thm}[Well-posedness \& regularity]
\label{theo:with:poly} Let $ \eps \in (0,1) $. There is a constant $ c > 0 $ such that for any $ \theta \in (0, (1-\eps)\pi) $ and $ \alpha \in I_{\eps} \setminus \mathbb{Z} $ satisfying $ |\alpha \theta | < c $, for any $ \vf \in Z^M_{\alpha} $ with 
$ M \in \NN$  such that $ M+ \alpha +1 \in I_{\eps} \setminus \mathbb{Z} $, there exists a unique solution  $ (\vu, p) \in X^{M+2}_{\alpha}\times Y_{\alpha}^{M+1} $ to \eqref{sys:1} which satisfies
\begin{equation}
\label{reg:est_mainthm}
\| \vu  \|_{X^{M+2}_{\alpha}} + \| p\|_{Y^{M+1}_{\alpha}}  \leq C_{\alpha, \eps, M} \| \vf \|_{Z^{M}_{\alpha}}.
\end{equation}

\end{thm}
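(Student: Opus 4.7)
The plan is to split the problem into a \emph{polynomial problem} and a \emph{regular problem}, as suggested by the very structure of the spaces $Z^M_\alpha$, $X^{M+2}_\alpha$, $Y^{M+1}_\alpha$. Write $\vf = \zeta\PPf + \vf_1$ with $\PPf \in \PP_{n,M}$ for $n = \lfloor M+\alpha-1\rfloor$ and $\vf_1 \in \mathscr{Z}^M_\alpha$. First I would construct a polynomial velocity $\PPu \in \PP_{n+2,M+2}$ and pressure $\PPp \in \PP_{n+1,M+1}$ that absorbs the polynomial part of the data in a strong sense. Inserting a polynomial ansatz into \eqref{sys:1a}--\eqref{sys:1c} and matching powers of $r$ yields, at each order $j$, a coupled second-order ODE system in $\varphi$ for $(u_r^{(j)}, u_\varphi^{(j)}, p^{(j)})$. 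The Dirichlet condition \eqref{sys:1d} decouples order by order, but the Navier-slip condition \eqref{sys:1e}, due to the factor $r^{-1}$, produces at order $j$ the boundary condition $u_r^{(j)}(0,\theta) \pm \partial_\varphi u_r^{(j-1)}(0,\theta) = 0$, coupling consecutive orders. I would solve iteratively from the lowest to highest order, using Lemma \ref{451} to parameterize the divergence-free condition through a stream function, thereby producing $(\PPu, \PPp)$ with $\|\PPu\|_{\PP_{n+2,M+2}} + \|\PPp\|_{\PP_{n+1,M+1}} \lesssim \|\PPf\|_{\PP_{n,M}}$.

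With $\QQu$ the localised polynomial velocity of \eqref{LPV}, set $\tilde\vu := \vu - \QQu$ and $\tilde p := p - \zeta\PPp$. Commutators of the cut-off $\zeta$ with the differential operators are supported in $\{1 \le r \le 2\}$ and, thanks to Remark \ref{rem:pol:decay}, fall into the regular class $\mathscr{Z}^M_\alpha$. Thus $(\tilde\vu,\tilde p)$ solves a Stokes system with data $\tilde\vf \in \mathscr{Z}^M_\alpha$, homogeneous boundary conditions, and target regularity $\tilde\vu \in \mathscr{H}^{M+2}_\alpha$, $\tilde p \in \prescript{1}{}{\mathcal H}^{M+1}_\alpha$.

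The regular problem is attacked in two substeps. For existence in $\mathscr{H}^2_\alpha$, I would test against divergence-free test functions from $\TT$, but, as announced in the introduction, the natural first-order bilinear form fails to be coercive on the weighted energy space because the Navier term $u_r + r^{-1}\partial_\varphi u_r$ mixes the $r^0$- and $r^{-1}$-scales. I would therefore pass to a higher-order bilinear form obtained by testing (essentially) with $((r\partial_r)^2 + \partial_\varphi^2)\vv$, so as to produce a form that dominates $\|\cdot\|_{\mathscr{H}^2_\alpha}$. The diagonal part coming from the Stokes Laplacian is coercive with an $O(1)$ constant (this reduces to the scale-invariant free-slip computation), while the cross terms produced by the Navier condition carry a prefactor that is controlled by $|\alpha\theta|$ via the weighted Hardy inequality (Lemma \ref{lem:Hardy_on_wedge}) and boundary trace estimates. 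Choosing the constant $c$ so that $|\alpha\theta|<c$ absorbs the cross terms gives coercivity, and Lax--Milgram produces $\tilde\vu \in \mathscr{H}^2_\alpha$ together with a pressure $\tilde p \in \prescript{1}{}{\mathcal H}^1_\alpha$ via de Rham's lemma in the weighted setting. For the bootstrap to $\mathscr{H}^{M+2}_\alpha$, I would rewrite the Navier condition as an inhomogeneous \emph{free-slip} condition $\tilde u_r|_{\partial\Omega'} = - r^{-1}\partial_\varphi \tilde u_r|_{\partial\Omega'}$, where the right-hand side is now known from the variational solution. Because free-slip is scaling invariant, this reduced problem can be treated by the standard Kondratiev machinery sketched in the introduction: after localisation with $\zeta$, apply the Mellin transform in $r$ and solve the resulting $\varphi$-ODE family; the hypothesis $\alpha, M+\alpha+1 \in I_\eps\setminus\ZZ$ ensures that the relevant Mellin lines avoid the resonances at $\pm\pi/\theta\mp 1$, yielding the desired $\mathscr{H}^{M+2}_\alpha$ regularity and estimate.

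Combining, $\vu = \QQu + \tilde\vu$ and $p = \zeta\PPp + \tilde p$ satisfy \eqref{sys:1} with the bound \eqref{reg:est_mainthm}. Uniqueness follows by linearity: a solution with vanishing data decomposes into a polynomial piece, which must vanish because the homogeneous polynomial ODE system has only the trivial solution, and a regular piece, which must vanish by the Lax--Milgram step. The main obstacle is the coercivity argument in the $\mathscr{H}^2_\alpha$ existence step; identifying the correct second-order testing that produces a bilinear form whose non-diagonal contributions are quantitatively controlled by $|\alpha\theta|$, while simultaneously capturing the boundary trace $|u_r|_\alpha$, is the delicate heart of the proof and is precisely what forces the smallness hypothesis on $|\alpha\theta|$ in the theorem.
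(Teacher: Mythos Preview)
Your overall architecture matches the paper's: split $\vf$ into polynomial and regular parts, solve a truncated polynomial problem, localize, and feed the remainder into a regular Stokes problem solved first by a higher-order Lax--Milgram argument and then bootstrapped via the free-slip reformulation. The coercivity sketch and the role of the smallness $|\alpha\theta|<c$ are also in line with the paper.

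There is, however, a genuine gap in how you pass from the polynomial step to the regular step. You assert that $(\tilde\vu,\tilde p)$ solves a Stokes system with \emph{homogeneous} Navier-slip boundary conditions, the only errors being commutators of $\zeta$ supported in $\{1\le r\le2\}$. This is not correct. Because the Navier-slip condition is non-scaling-invariant, the polynomial ansatz produces the recursion $\partial_\varphi u_r^{(j)} = \mp u_r^{(j-1)}$ (the $\partial_\varphi$ sits on the \emph{higher} index, not the lower one as you wrote), and hence an exact polynomial solution would require infinitely many nonzero coefficients. Truncating at order $n$ forces a residual in the boundary condition: the polynomial pair $(\PPu^n,\PPp^n)$ only satisfies $\PPu^n\cdot\vtau + \partial_\vn(\PPu^n\cdot\vtau) = r^n\,\vu^{(n)}\cdot\vtau$ on $\partial\Omega'$, see \eqref{sto:sys:pol4}. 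After localization this produces an inhomogeneous Navier-slip datum $\QQg^n$ whose leading piece $\zeta\, r^n\,\vu^{(n)}\cdot\vtau$ is \emph{not} supported in $\{1\le r\le2\}$. The regular problem is therefore \eqref{sto:sys:inhom} with $g=-\QQg^n\in\mathscr{X}^{M+1}_\alpha$, and you need $n>M+\alpha$ (i.e.\ $n=\lfloor M+\alpha+1\rfloor$) for this membership to hold (Proposition~\ref{pro:reg:boundary}). Your entire variational and bootstrap machinery must accordingly be set up for inhomogeneous Navier-slip data $g$; this is Theorem~\ref{main:theo} in the paper.

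A second, smaller point: your uniqueness sketch is too quick for the pressure. For $\alpha<0$ the polynomial pressure is determined only up to an additive constant, and the space $Y^{M+1}_\alpha$ is designed so that this constant is absorbed by the $\zeta p_0$ part of the regular pressure (cf.\ the definition of $\mathscr{Y}^{M+1}_\alpha$ in \eqref{eq:def_scriptY}). The paper's uniqueness argument in Section~\ref{sec:proof_main_result} tracks this constant explicitly; your ``homogeneous polynomial ODE has only the trivial solution'' does not yet account for it.
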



\begin{rem} \hspace{2em}
\begin{itemize}
  \item The constant $C$ in \eqref{reg:est_mainthm} depends on $ \alpha $, but not on $\theta$. Throughout the paper, we prove estimates independent of $ \theta $ and any constant is independent of $ \theta $ unless explicitly stated otherwise.  
  \item The pressure in Theorem \ref{theo:with:poly} is uniquely determined as an element of $ Y^{M+1}_{\alpha}$. In fact, the only constant in the space $ Y_{\alpha}^{M+1} $ is the zero constant.
\end{itemize}
\end{rem}

\subsection{Outline}
In Section \ref{sec:prelim} we collect some preliminaries required throughout the paper. In Section \ref{chap:proof_main_result} we gather the results of the regular and polynomial problem to prove the main result Theorem \ref{theo:with:poly}. The construction of a strong solution to the regular problem is explained and carried out in Section \ref{sec:3} and is a consequence of three steps. These three steps are worked out in detail in Section \ref{chap:5}, \ref{chap:proof_coer} and \ref{section:SSSP}, while in Section \ref{sec:Estimates_Helmholtz} we prove the necessary estimates on the Helmholtz projection. Section \ref{sec:higher_reg} deals with the higher regularity of the regular problem and in Section \ref{sec:proof_210} the proof of Proposition \ref{pro:exi:pol} concerning the polynomial problem is given. Appendices \ref{app:polar_vec} and \ref{app:Hardy:traces} contain known results on vector identities in polar coordinates and weighted Sobolev spaces, respectively. Appendix \ref{app:Aux_est} contains some auxiliary estimates required in Section \ref{sec:higher_reg}.

\section{Preliminaries}\label{sec:prelim}
In this section we recall some required tools for proving the main result. This includes Hardy's inequality, the Mellin transform and the Helmholtz projection.
\subsection{Hardy's inequality}
We recall the classical Hardy inequality, see \cite[Theorem 327]{Hardy1952}.
A proof of this inequality on a wedge type domain can for instance be found in \cite[Appendix C]{Guo2018}.
\begin{lem}[Hardy's inequality]\label{lem:Hardy_on_wedge}
 For all $\alpha\neq 0$ and  $u\in C_{\mathrm{c}}^1((0,\infty))$ it holds
\begin{equation*}
  \int_0^{\infty}r^{2\alpha}|u(r)|^2\ddrr \leq \frac{1}{\alpha^2} \int_0^{\infty} r^{2\alpha}|r \dr u(r)|^2\ddrr .
\end{equation*}
\end{lem}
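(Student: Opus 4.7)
The plan is to apply the standard integration-by-parts trick for Hardy-type inequalities. First I would rewrite the weight as a derivative, using $r^{2\alpha-1} = \frac{1}{2\alpha}\frac{\mathrm{d}}{\mathrm{d}r}(r^{2\alpha})$, which is valid precisely because $\alpha\neq 0$. This lets me express the left-hand side as
\begin{equation*}
\int_0^{\infty} r^{2\alpha}|u(r)|^2\,\frac{\mathrm{d}r}{r} = \frac{1}{2\alpha}\int_0^{\infty} \frac{\mathrm{d}}{\mathrm{d}r}(r^{2\alpha})\,|u(r)|^2\,\mathrm{d}r.
\end{equation*}

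Next I would integrate by parts. Since $u\in C^1_{\mathrm{c}}((0,\infty))$, there are no boundary contributions: at $r=0$ the function $u$ vanishes in a neighbourhood (as its support is compact in $(0,\infty)$), and at $r=\infty$ it vanishes for the same reason. This yields
\begin{equation*}
\int_0^{\infty} r^{2\alpha}|u(r)|^2\,\frac{\mathrm{d}r}{r} = -\frac{1}{\alpha}\int_0^{\infty} r^{2\alpha}\,u(r)\,\partial_r u(r)\,\mathrm{d}r.
\end{equation*}

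Then I would split the weight symmetrically as $r^{2\alpha} = r^{\alpha-1/2}\cdot r^{\alpha+1/2}$ and apply the Cauchy–Schwarz inequality, which gives
\begin{equation*}
\left|\int_0^{\infty} r^{2\alpha}\,u\,\partial_r u\,\mathrm{d}r\right| \leq \left(\int_0^{\infty} r^{2\alpha}|u|^2\,\frac{\mathrm{d}r}{r}\right)^{1/2}\left(\int_0^{\infty} r^{2\alpha}|r\partial_r u|^2\,\frac{\mathrm{d}r}{r}\right)^{1/2}.
\end{equation*}
Combining the last two displays, denoting the two integrals by $A$ and $B$ respectively, gives $A\leq |\alpha|^{-1}\sqrt{A}\sqrt{B}$, and dividing by $\sqrt{A}$ (assuming $A>0$, the case $A=0$ being trivial) and squaring produces the desired bound $A\leq \alpha^{-2}B$.

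There is no genuine obstacle here; the only point requiring minor care is justifying the integration by parts, which is immediate from the compact-support assumption on $u$ away from $r=0$. For completeness, one could first establish the inequality for smooth compactly supported $u$ and then note that the statement as given is already restricted to this class, so no density argument is needed at this stage.
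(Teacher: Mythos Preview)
Your proof is correct and is precisely the classical integration-by-parts argument for Hardy's inequality. The paper does not give its own proof of this lemma but simply cites the standard references (Hardy--Littlewood--P\'olya and \cite{Guo2018}), so your argument is exactly what one would find there.
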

We continue with an improved Hardy type inequality for $\vu\in\TT$, i.e., for divergence-free vector fields on the wedge that are tangent to the boundary, see \eqref{eq:test_function}. Recall that $\Omega$ denotes the wedge with opening angle $\theta>0$.

\begin{lem}[Improved Hardy's inequality for $\vu\in \TT$]\label{lem:new_estimates_uph_ur}
Let $\theta\in(0,\pi)$ and $\alpha \neq 0  $. Then
\begin{equation*}
         \left\| \tfrac{1 }{r} \vu\right\|_{\alpha}^2 \leq C_0(\theta) \theta^2 \|\grad\vu\|^2_{\alpha} \quad \text{ for all }\vu\in\TT,
\end{equation*}
where $C_0:(0,\pi)\to(0,\infty) $ is an increasing function that does not depend on $\alpha$.
\end{lem}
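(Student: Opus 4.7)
The plan is to carry out a one-dimensional Poincar\'e argument in the angular variable $\varphi$ for each fixed $r>0$, and then integrate against the weight $r^{-2\alpha-1}\,\mathrm{d}r$. The factor $\theta^2$ comes from the optimal Poincar\'e constant $(\theta/\pi)^2$ on the interval $(0,\theta)$. The two coupling conditions — the tangent boundary condition and the divergence constraint — are used to place $u_\varphi$ and $u_r$ each into a Poincar\'e framework.

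\textbf{Step 1: $u_r$ has zero mean in $\varphi$.} Integrating \eqref{sys:1c} over $\varphi\in(0,\theta)$ and using $u_\varphi=0$ at $\varphi\in\{0,\theta\}$ yields $(r\partial_r+1)g(r)=0$, where $g(r):=\int_0^\theta u_r(r,\varphi)\,\dd\varphi$. This forces $rg(r)$ to be constant in $r$, and the compact support of $\vu$ in $\overline{\Omega}\setminus\{0\}$ implies $g\equiv 0$.

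\textbf{Step 2: angular Poincar\'e.} For every fixed $r>0$, the Dirichlet Poincar\'e inequality (for $u_\varphi$) and the mean-zero Poincar\'e inequality (for $u_r$, by Step~1) both hold on $(0,\theta)$ with optimal constant $(\theta/\pi)^2$. Multiplying by $r^{-2\alpha-1}$ and integrating over $r\in(0,\infty)$ gives
\begin{equation*}
\|\tfrac{u_r}{r}\|_\alpha \,\le\, c\,\|\tfrac{\partial_\varphi u_r}{r}\|_\alpha,\qquad \|\tfrac{u_\varphi}{r}\|_\alpha \,\le\, c\,\|\tfrac{\partial_\varphi u_\varphi}{r}\|_\alpha,\qquad c:=\theta/\pi\in(0,1).
\end{equation*}

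\textbf{Step 3: controlling the angular derivatives by $\nabla\vu$.} The (Cartesian) gradient in polar coordinates satisfies
\begin{equation*}
|\nabla\vu|^2 \,=\, (\partial_r u_r)^2+(\partial_r u_\varphi)^2 + \tfrac{1}{r^2}\bigl[(\partial_\varphi u_r-u_\varphi)^2+(\partial_\varphi u_\varphi+u_r)^2\bigr],
\end{equation*}
a consequence of the rotating polar frame (Appendix \ref{app:polar_vec}). Writing $\partial_\varphi u_\varphi=(\partial_\varphi u_\varphi+u_r)-u_r$ and $\partial_\varphi u_r=(\partial_\varphi u_r-u_\varphi)+u_\varphi$, applying the triangle inequality and the pointwise bound $|\nabla\vu|^2\ge r^{-2}[(\partial_\varphi u_\varphi+u_r)^2+(\partial_\varphi u_r-u_\varphi)^2]$ yields
\begin{equation*}
\|\tfrac{\partial_\varphi u_\varphi}{r}\|_\alpha \,\le\, \|\nabla\vu\|_\alpha+\|\tfrac{u_r}{r}\|_\alpha,\qquad \|\tfrac{\partial_\varphi u_r}{r}\|_\alpha \,\le\, \|\nabla\vu\|_\alpha+\|\tfrac{u_\varphi}{r}\|_\alpha.
\end{equation*}

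\textbf{Step 4: closing the loop.} Setting $A:=\|u_r/r\|_\alpha$, $B:=\|u_\varphi/r\|_\alpha$, $N:=\|\nabla\vu\|_\alpha$, Steps~2 and~3 combine to the coupled system $A\le c(N+B)$ and $B\le c(N+A)$. Substituting the second into the first gives $(1-c^2)A\le c(1+c)N$, hence $A\le \tfrac{c}{1-c}N$, and likewise for $B$. Summing squares,
\begin{equation*}
\|\tfrac{1}{r}\vu\|_\alpha^2 \,=\, A^2+B^2 \,\le\, \tfrac{2c^2}{(1-c)^2}N^2 \,=\, \tfrac{2\theta^2}{(\pi-\theta)^2}\|\nabla\vu\|_\alpha^2,
\end{equation*}
so the claim holds with $C_0(\theta)=2/(\pi-\theta)^2$, which is positive, $\alpha$-independent, and increasing on $(0,\pi)$, as required.

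\textbf{Main obstacle.} The delicate point is that $\|\nabla\vu\|_\alpha$ is the Cartesian gradient norm, so in polar coordinates it controls the combinations $r^{-1}(\partial_\varphi u_\varphi+u_r)$ and $r^{-1}(\partial_\varphi u_r-u_\varphi)$ rather than $r^{-1}\partial_\varphi u_\varphi$ and $r^{-1}\partial_\varphi u_r$ directly; the missing $r^{-1}u_r$ and $r^{-1}u_\varphi$ terms are precisely what force the coupled system in Step~4 and the blow-up of the constant as $\theta\to\pi$. The other subtlety is justifying the zero-mean property of $u_r$ (Step~1), which relies on both the divergence-free constraint \emph{and} the vanishing of $u_\varphi$ on $\partial\Omega'$ — this is exactly why the lemma is stronger than the raw weighted Hardy inequality on scalar functions.
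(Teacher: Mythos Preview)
Your proof is correct and follows essentially the same approach as the paper: both establish the zero-mean property of $u_r$ from the divergence constraint and the boundary condition, apply the optimal Poincar\'e inequality in the angle to each component, and then close the coupling generated by the polar frame. The only cosmetic difference is that the paper works at the level of squared norms using Young's inequality with an auxiliary parameter $\eps$ and absorbs, whereas you work at the level of norms via the triangle inequality and solve the $2\times 2$ linear system directly---your route yields the explicit constant $C_0(\theta)=2/(\pi-\theta)^2$.
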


To show the above result we take advantage on the following Poincar\'e estimates with optimal constants.

\begin{lem}[Optimal Poincar\'e's constant {\cite[Section II.5]{Galdi2011}}, {\cite{PayneWeinberger}}]   \label{lem:poincare}
Let $ f \in C^{\infty}_{{\rm c}}((0,\theta)) $ or $ f \in C^{\infty}((0,\theta)) $ such that $ \int_0^{\theta} f \, \dd \varphi= 0 $. Then 
$$ \int_0^{\theta} |f|^2 \, \dd \ph \leq \frac{\theta^2}{\pi^2} \int_0^{\theta} |\dph f|^2 \, \dd \ph. $$
\end{lem}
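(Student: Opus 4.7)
The plan is to reduce both cases to a Fourier series computation on a canonical interval. First I would rescale by setting $\tilde\varphi = \pi\varphi/\theta$ and $\tilde f(\tilde\varphi) = f(\theta\tilde\varphi/\pi)$, so that the claim becomes the unit-constant inequality
\[
\int_0^{\pi}|\tilde f|^2\,\dd\tilde\varphi \;\leq\; \int_0^{\pi}|\partial_{\tilde\varphi}\tilde f|^2\,\dd\tilde\varphi,
\]
valid when $\tilde f\in C^\infty_{\mathrm c}((0,\pi))$, respectively $\tilde f\in C^\infty((0,\pi))$ with $\int_0^\pi \tilde f\,\dd\tilde\varphi=0$. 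This eliminates the $\theta$-dependence and isolates the role of the constant $\pi^2/\theta^2$.

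For the first case (compact support), I would expand $\tilde f$ in the Fourier sine basis $\{\sqrt{2/\pi}\sin(k\tilde\varphi)\}_{k\geq 1}$, which is an orthonormal basis of $L^2(0,\pi)$ diagonalising $-\partial_{\tilde\varphi}^2$ with Dirichlet data. Writing $\tilde f = \sum_{k\geq 1} b_k \sin(k\tilde\varphi)$, Parseval gives
\[
\int_0^\pi |\tilde f|^2\,\dd\tilde\varphi \;=\; \tfrac{\pi}{2}\sum_{k\geq 1} b_k^2,\qquad
\int_0^\pi |\partial_{\tilde\varphi}\tilde f|^2\,\dd\tilde\varphi \;=\; \tfrac{\pi}{2}\sum_{k\geq 1} k^2 b_k^2,
\]
and since $k^2\geq 1$ for every $k\geq 1$, the inequality follows termwise. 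For the second case, I would instead expand in the Fourier cosine basis $\{1,\sqrt{2/\pi}\cos(k\tilde\varphi)\}_{k\geq 0}$, which diagonalises $-\partial_{\tilde\varphi}^2$ with Neumann boundary data. The mean-zero hypothesis kills the $k=0$ mode, so $\tilde f = \sum_{k\geq 1} a_k \cos(k\tilde\varphi)$ and exactly the same Parseval comparison yields the bound.

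The justification that the Fourier expansions converge in the required sense in each case is the main (but routine) point: for $\tilde f\in C^\infty_{\mathrm c}((0,\pi))$ one may extend by zero and then oddly to $(-\pi,\pi)$ to get a $C^\infty$ odd $2\pi$-periodic function whose ordinary Fourier series is the sine series above, so convergence in $H^1$ follows from classical Fourier theory; for the second case, an even $2\pi$-periodic extension of $\tilde f$ gives the cosine series, again with $H^1$ convergence. No $\theta$-dependence enters the constant since the rescaling was exact. As the statement is classical and the cited references (Galdi \cite{Galdi2011} and Payne--Weinberger \cite{PayneWeinberger}) already record the sharp constant, I would present this Fourier argument in a compact form; the only mild subtlety is to keep the Dirichlet and mean-zero hypotheses properly matched with the sine versus cosine basis so that the lowest eigenvalue in play is $k^2=1$ in both cases, which is precisely what produces the sharp factor $\pi^2/\theta^2$.
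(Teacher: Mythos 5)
Your proof is correct. The paper does not prove this lemma at all -- it simply quotes the sharp-constant Poincar\'e inequality from the cited references (Galdi, Section II.5, and Payne--Weinberger) -- so your Fourier argument supplies a self-contained derivation of a result the paper treats as classical. The two details worth keeping explicit when you write it up: (i) in the mean-zero case you may assume without loss of generality that $\int_0^{\theta}|\dph f|^2\dd\ph<\infty$, and then $\dph f\in L^2(0,\theta)\subset L^1(0,\theta)$ forces $f$ to extend continuously to $[0,\theta]$, so $f\in H^1(0,\theta)$ and the even periodic extension you invoke is legitimate; (ii) the identity $\int_0^{\pi}|\partial_{\tilde\ph}\tilde f|^2\dd\tilde\ph=\tfrac{\pi}{2}\sum_k k^2 a_k^2$ is cleanest if you expand $\partial_{\tilde\ph}\tilde f$ in the sine basis and relate its coefficients to the cosine coefficients of $\tilde f$ by one integration by parts (the boundary terms vanish because $\sin(k\tilde\ph)$ does), rather than differentiating the cosine series termwise. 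With those two remarks in place, the Dirichlet (sine) and Neumann mean-zero (cosine) cases both reduce to $k^2\ge 1$, which is exactly where the sharp factor $\theta^2/\pi^2$ comes from, matching the statement.
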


\begin{proof}[Proof of Lemma \ref{lem:new_estimates_uph_ur}]
Note that $\uph(r,\ph)=0$ for $\ph\in\{0,\theta\}$, so that by Lemma \ref{lem:poincare}
\begin{align*}
    \int_0^{\theta} \uph^2\dd \ph&  \leq \frac{\theta^2}{\pi^2} \int_0^{\theta}\big(\dph\uph)^2\dd \ph
    \leq C_{\eps} \theta^2  \int_0^{\theta}\big(\dph\uph+\ur\big)^2\dd \ph +\left(\tfrac{1}{\pi^2}+\eps \right)\theta^2\int_0^{\theta}\ur^2\dd\ph,
\end{align*}
for some $\eps>0$. Therefore
\begin{equation}
\label{est:uph:lalalland}
 \left\| \frac{\uph}{r} \right\|_{\alpha}^2 \leq C_{\eps} \theta^2 \left\| \nabla \vu \right\|_{\alpha}^2 +  \left(\tfrac{1}{\pi^2}+\eps\right) \theta^2 \left\| \frac{\ur}{r} \right\|_{\alpha}^2.
\end{equation}
From \eqref{sys:1c} we have $ \partial_r(r u_r) = - \partial_{\varphi} u_{\varphi} $ and integrating over $ (0,\theta) $ gives
\begin{equation*}
\partial_r \int _0^{\theta} r u_r \dd \ph = - \int_0^{\theta} \partial_{\varphi} u_{\varphi}  \dd \varphi = 0.
\end{equation*}
The function  $ \int _0^{\theta} r u_r \dd \ph $ is constant in $ r $ and thus $ \int _0^{\theta} r u_r \dd \ph = 0 $ since $ \vu $ has compact support. This implies that $ \int _0^{\theta} r u_r \dd \ph = 0 $ for any $ r > 0$. Therefore, by Lemma \ref{lem:poincare} we obtain
\begin{equation}
\label{est:ur:lalalland:eeeee}
\begin{aligned}
\int_0^{\theta}|u_r|^2 \dd \ph&
\leq \frac{\theta^2}{\pi^2}\int_0^{\theta}|\partial_{\varphi} u_r|^2 \dd \ph \\
&\leq C_{\eps} \theta^2\int_0^{\theta}|\partial_{\varphi} u_r-u_{\varphi}|^2\dd\ph + \left(\tfrac{ 1}{\pi^2}+\eps\right)\theta^2 \int_0^{\theta}| u_{\varphi}|^2\dd\ph.
\end{aligned}
\end{equation}
Adding \eqref{est:uph:lalalland} and $ r^{-2}$\eqref{est:ur:lalalland:eeeee} and  integrating over $ (0,\infty) $ yield
\begin{equation*}
\left\| \tfrac{1}{r}\vu \right\|_{\alpha}^2 \leq C_{\eps} \theta^2 \|\nabla \vu \|_{\alpha}^2 + \left(\tfrac{1}{\pi^2}+\eps\right)\theta^2\left\| \tfrac{1}{r} \vu\right\|_{\alpha}^2.
\end{equation*}
For $ \theta \in(0,\pi) $, there exists $ \eps > 0 $ such that we can absorb $ \left(\frac{1}{\pi^2}+\eps\right)\theta^2\left\| \frac{1}{r}\vu \right\|_{\alpha}^2 $ on the left-hand side. We deduce the result.
\end{proof}

\subsection{The Mellin transform}
\label{Mellin:sec:1}
We collect some properties of the Mellin transform. For more details on the Mellin transform see e.g. \cite{Kozlov2001}.
\begin{definition}
For $f\in C_{\mathrm{c}}^{\infty}((0,\infty))$ the Mellin transform is defined as
\begin{equation*}
    (\MM f)(\lambda)=\widehat{f}(\lambda):=\frac{1}{\sqrt{2\pi}}\int_0^{\infty}r^{-\lambda}f(r)\ddrr,\qquad \lambda \in \mathbb{C}.
\end{equation*}
For $\gam\in\RR$ we define
\begin{equation*}
    f(r)=\frac{1}{\sqrt{2\pi}}\int_{\Re\lambda=\gamma}r^{\lambda} \widehat{f}(\lambda)\dd \Im\lambda, \qquad r\in(0,\infty),
\end{equation*}
which is called the inverse Mellin transform. 
\end{definition}
The definition continues to make sense for $f\in L^1_{\text{loc}}((0,\infty)) $, in which case the integral might fail to converge. If, however, it converges for some $ \lambda_1 $, $ \lambda_2 \in \mathbb{C} $, then it converges on the strip $ \mathcal{S}: = \{ \lambda \in \mathbb{C} $ : $ \Re \lambda_1<\Re\lambda<\Re \lambda_2 \} $ and is analytic on $ \mathcal{S}$.   
Therefore, the inverse transform does not depend on the choice of $\Re \lambda \in( \Re \lambda_1,\Re \lambda_2)$ by Cauchy's integral theorem.

\begin{lem}[{\cite[Lemma 6.1.3]{Kozlov2001}}]\label{lem:Mellin_prop}
For $f,g\in C_{\mathrm{c}}^{\infty}((0,\infty))$ and for any $ \lambda \in \mathbb{C} $, $ n \in \mathbb{N} $ and $ \alpha \in \mathbb{R}$, we have:
\begin{enumerate}[label=(\roman*)]
    \item
    $
        \widehat{r^{-\alpha}f}(\lambda)=\widehat{f}(\lambda+\alpha)
    $,
    \item
    $
        \widehat{\dr^n f}(\lambda)=(\lambda+1)\cdots(\lambda+n)\widehat{f}(\lambda+n),
    $
    \item
    $
        \widehat{(r\dr)^n f}(\lambda)=\lambda^n\widehat{f}(\lambda),
    $ 
    \item  \begin{equation*}
    \begin{aligned}
        \int_0^{\infty}r^{-2\alpha}\overline{f(r)}g(r)\ddrr&=\int_{\Re\lambda=\alpha}\overline{
        \widehat{f}(\lambda)}\widehat{g}(\lambda)\dd \Im \lambda,
    \end{aligned}
\end{equation*}
\item 
\begin{equation*}
    \int_0^{\infty}r^{-2\alpha}|f(r)|^2\ddrr=\int_{\Re\lambda=\alpha}|\widehat{f}(\lambda)|^2\dd \Im \lambda.
\end{equation*}
\end{enumerate}
\end{lem}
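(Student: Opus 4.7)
All five identities are standard and follow by direct computation combined with the substitution $r = e^t$ that converts the Mellin transform into a Fourier transform. The compact support of $f$ and $g$ in $(0,\infty)$ ensures that all manipulations are justified and that boundary terms in integrations by parts vanish.

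For (i), I would just substitute into the definition:
\begin{equation*}
\widehat{r^{-\alpha} f}(\lambda) = \frac{1}{\sqrt{2\pi}} \int_0^\infty r^{-\lambda} r^{-\alpha} f(r) \frac{\dd r}{r} = \frac{1}{\sqrt{2\pi}} \int_0^\infty r^{-(\lambda+\alpha)} f(r) \frac{\dd r}{r} = \widehat{f}(\lambda + \alpha).
\end{equation*}

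For (ii) I would argue by induction on $n$. The case $n = 1$ is integration by parts in $\int_0^\infty r^{-\lambda - 1} \dr f(r) \, \dd r$, which produces the factor $\lambda + 1$ and an integrand $r^{-\lambda - 2} f(r)$; the boundary terms vanish by compact support. The inductive step applies the $n = 1$ case to $\dr^{n-1} f$ and uses the inductive hypothesis. Identity (iii) is entirely analogous: writing $\widehat{r \dr f}(\lambda) = (\sqrt{2\pi})^{-1}\int_0^\infty r^{-\lambda} \dr f(r)\, \dd r$, one integration by parts gives $\lambda \widehat{f}(\lambda)$, and induction finishes the job.

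For (v), I would perform the change of variables $r = e^t$, $\dd r/r = \dd t$, so that with $\lambda = \alpha + i\tau$,
\begin{equation*}
\widehat{f}(\alpha + i\tau) = \frac{1}{\sqrt{2\pi}} \int_{-\infty}^{\infty} e^{-i\tau t} \bigl[e^{-\alpha t} f(e^t)\bigr] \dd t,
\end{equation*}
i.e., $\tau \mapsto \widehat{f}(\alpha + i\tau)$ is (up to normalization) the Fourier transform of the Schwartz-class function $F(t) := e^{-\alpha t} f(e^t)$. The Plancherel identity for the Fourier transform then gives
\begin{equation*}
\int_0^\infty r^{-2\alpha} |f(r)|^2 \frac{\dd r}{r} = \int_{-\infty}^{\infty} |F(t)|^2 \dd t = \int_{-\infty}^{\infty} |\widehat{f}(\alpha + i\tau)|^2 \dd\tau,
\end{equation*}
which is (v). Identity (iv) then follows either by the same reduction applied to the pair $F(t), G(t) := e^{-\alpha t} g(e^t)$ and the Fourier–Parseval identity, or from (v) by the polarization identity. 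There is no real obstacle here; the only point to check is that the integrals are absolutely convergent so that Fubini and the inversion/Plancherel theorems apply, which is immediate from $f, g \in C_{\mathrm{c}}^\infty((0,\infty))$.
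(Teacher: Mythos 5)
Your proposal is correct. The paper does not prove this lemma at all --- it is quoted from \cite[Lemma 6.1.3]{Kozlov2001} --- and your argument (direct substitution for (i), integration by parts with vanishing boundary terms for (ii)--(iii), and the change of variables $r=e^{t}$ reducing the Mellin transform on the line $\Re\lambda=\alpha$ to the Fourier transform of the compactly supported smooth function $F(t)=e^{-\alpha t}f(e^{t})$, followed by Plancherel/Parseval for (iv)--(v)) is exactly the standard proof given in that reference, so there is nothing to add.
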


Let $\widehat{H}^k_{\alpha}  $ be the closure of $  C^{\infty}_{\mathrm{c}}(\{\lambda \in \mathbb{C} $ : $ \Re \lambda =  \alpha+k -1 \} \times [0,\theta]) $ with respect to the norm
\begin{equation*}
 \| u \|_{\widehat{H}^k_{\alpha}}^2 := \sum_{j+\ell = k} \int_0^{\theta}  \int_{\Re \lambda = \alpha +k-1 }
| \lambda |^{2j} |\partial_{\varphi}^{\ell} u|^2 \, \dd \Im \lambda \dd \varphi.
\end{equation*}
\begin{lem}[{\cite[Lemma 6.1.3]{Kozlov2001}}]
\label{isom:space}
There exists a natural isomorphism $\prescript{j}{}{\mathcal{H}}^{k}_{\alpha}\longrightarrow \widehat{H}^k_{\alpha}  $ with $ u \mapsto \mathcal{M} u $
with inverse that associates to $ \widehat{u} \in \widehat{H}^k_{\alpha} $ the function
\begin{equation*}
 u(r,\varphi) = \frac{1}{\sqrt{2\pi}}\int_{\Re\lambda=\alpha+k -1 }r^{\lambda} \widehat{u}(\lambda, \varphi) \dd \Im\lambda.
\end{equation*}
\end{lem}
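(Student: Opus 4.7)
The plan is to realize the claimed map as an extension by density of the Mellin transform from its natural action on smooth test functions, with two key ingredients: Parseval's identity at the top-order level (Lemma \ref{lem:Mellin_prop}(iii),(v)), and Hardy's inequality (Lemma \ref{lem:Hardy_on_wedge}) to absorb the lower-order contributions. The explicit inverse is the inverse Mellin transform from the statement.

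First, for $u \in C^\infty_{\mathrm{c}}(\overline{\Omega} \setminus \{0\})$ I would verify the Parseval identity $\llbracket u \rrbracket_{k, \alpha}^2 = \|\mathcal{M}u\|_{\widehat{H}^k_\alpha}^2$. After rewriting $r^{-2\alpha-2k} r\dd r = r^{-2(\alpha+k-1)} \dd r/r$, this follows directly from Lemma \ref{lem:Mellin_prop}(iii),(v) applied term by term to $(r\partial_r)^i \partial_\varphi^m u$ with $i + m = k$, using that $\partial_\varphi$ commutes with the Mellin transform in $r$; this yields an isometry at the top-order level. To handle the lower-order contributions $\llbracket \cdot \rrbracket_{\ell, \alpha}$ with $j \leq \ell < k$ in the norm of $\prescript{j}{}{\mathcal H}^k_\alpha$, I would apply Hardy's inequality in the radial variable to each $(r\partial_r)^i \partial_\varphi^m u$ with $i + m = \ell$, with weight exponent $-(\alpha + \ell - 1)$ (nonzero under the standing assumption $\alpha \notin \ZZ$), and iterate to raise the order of $(r\partial_r)$ up to $k$ at fixed weight; combined with the top-order Parseval identity, this yields the desired norm equivalence on test functions.

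By density, $\mathcal{M}$ then extends uniquely to a continuous injection $\prescript{j}{}{\mathcal H}^k_\alpha \to \widehat{H}^k_\alpha$. For the explicit inverse and for surjectivity, I would verify that for $\widehat{u} \in C^\infty_{\mathrm{c}}(\{\Re\lambda = \alpha + k - 1\} \times [0, \theta])$ the inverse formula from the statement defines a function of the form $u(r, \varphi) = r^{\alpha + k - 1} v(\ln r, \varphi)$ with $v(\cdot, \varphi)$ Schwartz in $\ln r$, and that such $u$ can be approximated in the $\prescript{j}{}{\mathcal H}^k_\alpha$-norm by a smooth radial cutoff $\chi_\eps(r) u(r, \varphi)$ supported in $\eps < r < 1/\eps$. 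Since the Mellin test functions are dense in $\widehat{H}^k_\alpha$ by definition, this establishes surjectivity, and the Parseval--Hardy inequalities on both sides give continuity of the inverse. I expect the main technical point to be this density/approximation step: by a Paley--Wiener type argument the Mellin transform of a spatially compactly supported function is never compactly supported in $\Im\lambda$, so the two test-function classes cannot be matched up directly, and one has to argue via the inverse Mellin transform combined with the radial cutoff.
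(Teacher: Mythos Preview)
The paper gives no proof of this lemma; it is cited from Kozlov et al. Your Parseval computation is correct and in fact shows that $\mathcal{M}$ is an \emph{isometry} $\llbracket u \rrbracket_{k,\alpha} = \|\mathcal{M}u\|_{\widehat{H}^k_\alpha}$ on test functions; together with your density and radial-cutoff argument for surjectivity this settles the case $j=k$, which is the only case the paper actually invokes (see the proof of Proposition~\ref{my:step}).

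Your Hardy step for $j<k$, however, does not close. Iterating Lemma~\ref{lem:Hardy_on_wedge} with weight exponent $-(\alpha+\ell-1)$ raises the order of $r\partial_r$ while keeping the weight fixed, so from a term in $\llbracket u \rrbracket_{\ell,\alpha}$ you arrive at $\int r^{-2(\alpha+\ell-1)}\lvert (r\partial_r)^{i+k-\ell}\partial_\varphi^{m} u\rvert^2\,\tfrac{\dd r}{r}$, not at $\llbracket u \rrbracket_{k,\alpha}$, whose weight is $r^{-2(\alpha+k-1)}$. The two are not comparable with a uniform constant on $C^\infty_{\mathrm c}(\overline\Omega\setminus\{0\})$: substituting $s=\ln r$, the weights differ by the factor $e^{2(k-\ell)s}$, and translating the support of $u$ in $s$ makes the ratio arbitrarily large. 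Hence for $j<k$ the space $\prescript{j}{}{\mathcal H}^k_\alpha$ embeds continuously and densely, but not isomorphically, into $\prescript{k}{}{\mathcal H}^k_\alpha\cong\widehat H^k_\alpha$; the statement should be read with $j=k$. What Hardy \emph{does} give, as recorded in Remark~\ref{rem:1}, is $\llbracket u\rrbracket_{k-\ell,\alpha+\ell}\lesssim\llbracket u\rrbracket_{k,\alpha}$, where both seminorms already carry the same weight $r^{-2(\alpha+k-1)}$.
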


\subsection{Helmholtz projection}
\label{chap:4} In order to deal with the pressure $p$ in the Stokes equations, we consider the Helmholtz projection. For $\ww\in C_{\mathrm{c}}^{\infty}(\overline{\Om}\setminus \{0\})$ we study the elliptic problem
\begin{equation}\label{eq:problemPhi}
\begin{aligned}
    \del \Phi &= \div\ww \qquad&&\text{ in }\Om,\\
    \dn \Phi&=\vc{n}\cdot \ww\qquad && \text{ on } \dOm',
\end{aligned}
\end{equation}
Note that \eqref{eq:problemPhi} does not have a unique solution in general since $ \Omega $ is unbounded.  We will use the Mellin transform and ODE techniques to uniquely define the Helmholtz projection in our class of weighted spaces. \\

In polar coordinates \eqref{eq:problemPhi} reads
\begin{equation*}
\begin{aligned}
    \big((r\dr)^2+\dph^2\big)\Phi &= r\big((r\dr+1)\wr+\dph\wph\big) \qquad&&\text{ in }\Om,\\
    \dph\Phi&=r\wph\qquad && \text{ on } \dOm'.\\
\end{aligned}
\end{equation*}
Applying the Mellin transform (see Lemma \ref{lem:Mellin_prop}) gives
\begin{equation}\label{eq:problem_Phi_hat}
\begin{aligned}
    \big(\lambda^2+\dph^2\big)\widehat{\Phi}(\lambda,\ph)&=\lambda\widehatwr(\lambda-1,\ph)+\dph\widehatwph(\lambda-1,\ph)=:\widehat{g}(\lambda,\ph)\qquad &&\text{ in }\Om,\\
    \dph\widehat{\Phi}(\lambda,\ph)&=\widehatwph(\lambda-1,\ph)\qquad &&\text{ on }\dOm'.
\end{aligned}
\end{equation}
\begin{prop}\label{prop:Helm_rep_Green}
Let $\Re\lambda\cdot\theta\notin\pi\ZZ$. Then the unique solution to problem \eqref{eq:problem_Phi_hat} is given by
\begin{equation}\label{def:phi:mamo}
\begin{aligned}
    \widehat{\Phi}( \lambda, \ph)    &=\frac{\widehatwph(\lambda-1,0)}{\lambda\sin(\lambda\theta)}\cos(\lambda(\theta-\ph))-\frac{\widehatwph(\lambda-1,\theta)}{\lambda\sin(\lambda\theta)}\cos(\lambda\ph) + \int_0^{\theta}G(\ph, \Tilde{\ph}, \lambda)\widehat{g}(\lambda, \Tilde{\ph})\dd \Tilde{\ph},
    \end{aligned}
\end{equation}
for $\ph\in(0,\theta)$ and where the Green's function is given by
\begin{equation*}
    G(\ph, \Tilde{\ph}, \lambda)=\begin{cases}
   \frac{\cos(\lambda\Tilde{\ph})\cos(\lambda(\theta-\ph))}{\lambda\sin(\lambda\theta)}&\quad\text{ for } 0\leq\Tilde{\ph}<\ph\leq\theta,\\
    \frac{\cos(\lambda(\theta-\Tilde{\ph}))\cos(\lambda\ph)}{\lambda\sin(\lambda\theta)}&\quad\text{ for } 0\leq\ph<\Tilde{\ph}\leq\theta.
    \end{cases}
\end{equation*}
\end{prop}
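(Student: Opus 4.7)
The plan is to treat Proposition \ref{prop:Helm_rep_Green} as a linear boundary value problem for a second order ODE on the interval $(0,\theta)$ with $\lambda$ as a parameter. Since the operator $\lambda^2+\partial_\varphi^2$ is self-adjoint with constant coefficients, the homogeneous equation has the two-dimensional solution space $\mathrm{span}\{\cos(\lambda\varphi), \sin(\lambda\varphi)\}$, and a classical splitting into a "boundary-lifting" part and a "volume" part, each handled by a separate ansatz, should reproduce the claimed formula.

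First I would establish \emph{uniqueness} under the assumption $\Re\lambda\cdot\theta\notin\pi\ZZ$. The homogeneous problem admits $u(\varphi)=A\cos(\lambda\varphi)+B\sin(\lambda\varphi)$, and imposing $\partial_\varphi u(0)=0$ forces $B=0$ (using $\lambda\ne 0$, which follows from $\Re\lambda\cdot\theta\ne 0$); then $\partial_\varphi u(\theta)=0$ forces $A\sin(\lambda\theta)=0$. A brief computation with $\sin((a+ib)\theta)=\sin(a\theta)\cosh(b\theta)+i\cos(a\theta)\sinh(b\theta)$ shows that $\sin(\lambda\theta)=0$ happens only at real $\lambda=k\pi/\theta$, $k\in\ZZ$, which is excluded by hypothesis. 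Hence $A=0$, and any two solutions of \eqref{eq:problem_Phi_hat} coincide.

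Next I would construct the solution in two pieces. For the \emph{boundary-lifting part}, write
\[
\widehat{\Phi}_1(\lambda,\varphi):=\frac{\widehatwph(\lambda-1,0)}{\lambda\sin(\lambda\theta)}\cos(\lambda(\theta-\varphi))-\frac{\widehatwph(\lambda-1,\theta)}{\lambda\sin(\lambda\theta)}\cos(\lambda\varphi).
\]
Since each of $\cos(\lambda\varphi)$ and $\cos(\lambda(\theta-\varphi))$ is annihilated by $\lambda^2+\partial_\varphi^2$, the function $\widehat{\Phi}_1$ solves the homogeneous equation; a direct differentiation shows that $\partial_\varphi\widehat{\Phi}_1(\lambda,0)=\widehatwph(\lambda-1,0)$ and $\partial_\varphi\widehat{\Phi}_1(\lambda,\theta)=\widehatwph(\lambda-1,\theta)$, using $\sin(\lambda\theta)\ne 0$ in the normalization. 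For the \emph{volume part}, it suffices to construct a Green's function $G(\varphi,\tilde\varphi,\lambda)$ for $\lambda^2+\partial_\varphi^2$ with homogeneous Neumann data. The standard recipe is to take $G$ of the form $A(\tilde\varphi)\cos(\lambda\varphi)$ for $\varphi<\tilde\varphi$ (which satisfies the BC at $0$) and $B(\tilde\varphi)\cos(\lambda(\theta-\varphi))$ for $\varphi>\tilde\varphi$ (which satisfies the BC at $\theta$), and then to enforce continuity of $G$ at $\varphi=\tilde\varphi$ and a unit jump of $\partial_\varphi G$. The identity $\sin(\lambda\tilde\varphi)\cos(\lambda(\theta-\tilde\varphi))+\cos(\lambda\tilde\varphi)\sin(\lambda(\theta-\tilde\varphi))=\sin(\lambda\theta)$ produces $A=\cos(\lambda(\theta-\tilde\varphi))/(\lambda\sin(\lambda\theta))$ and $B=\cos(\lambda\tilde\varphi)/(\lambda\sin(\lambda\theta))$, matching the Green's function in the statement.

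Finally, setting $\widehat{\Phi}_2(\lambda,\varphi):=\int_0^\theta G(\varphi,\tilde\varphi,\lambda)\widehat{g}(\lambda,\tilde\varphi)\,\dd\tilde\varphi$ and adding $\widehat{\Phi}_1+\widehat{\Phi}_2$ yields a solution to \eqref{eq:problem_Phi_hat}; uniqueness from Step~1 then forces this to be \emph{the} solution, proving the formula. I do not anticipate a serious obstacle: the main subtlety is purely algebraic, namely checking that the denominator $\lambda\sin(\lambda\theta)$ is non-zero precisely under the stated hypothesis, and verifying the trigonometric identity that collapses the matching conditions into a single $\sin(\lambda\theta)$ factor; the rest is routine Sturm--Liouville bookkeeping.
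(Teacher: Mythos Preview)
Your proposal is correct and is essentially a fleshed-out version of the paper's own proof, which simply states that uniqueness follows from standard ODE results and that the formula can be verified a posteriori. You have supplied the routine verification (splitting into a boundary-lifting homogeneous solution plus a Neumann Green's function for the inhomogeneous part) that the paper leaves implicit.
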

\begin{proof}
Uniqueness follows by standard ODE results. Thus the formula can be verified a posteriori.
\end{proof}
By properties of the Mellin transform we obtain
\begin{equation*}\label{eq:mellingradPhi}
    \widehat{\grad\Phi}(\lambda,\ph)=\begin{pmatrix}(\lambda+1)\widehat{\Phi}(\lambda+1,\ph)\\\dph\widehat{\Phi}(\lambda+1,\ph)\end{pmatrix}
\end{equation*}
and from the representation for $\widehat{\Phi}$ in Proposition \ref{prop:Helm_rep_Green} we find with integration by parts
\begin{align*}
    \widehat{\Phi}(\lambda, \ph)= \int_0^{\theta}\big(G(\ph,\Tilde{\ph},\lambda)\lambda\widehatwr(\lambda-1,\Tilde{\ph})-\partial_{\Tilde{\ph}}G(\ph,\Tilde{\ph},\lambda)\widehatwph(\lambda-1,\Tilde{\ph})\big)\dd\Tilde{\ph}.
\end{align*}
From the Green's function $G(\ph,\Tilde{\ph},\lambda)$ in Proposition \ref{prop:Helm_rep_Green} it follows that $\lambda\widehat{\Phi}(\lambda,\ph)$ and $\dph\widehat{\Phi}(\lambda,\ph)$
only have singularities at $\lambda=k\pi / \theta $ for $k\in\ZZ\setminus\{0\}$. Hence, for any $ \lambda \in \CC$ such that $\Re \lambda\neq k\pi / \theta  $ for $k\in\ZZ\setminus\{0\}$, we can uniquely define $\grad\Phi$ as the inverse Mellin transform of $\widehat{\grad\Phi}(\lambda,\ph)$ if one integrates over any vertical line such that $\Re\lambda+1$ lies within the interval $\big(-\frac{\pi}{\theta}, \frac{\pi}{\theta}\big)$, i.e., we can integrate over vertical lines $\Re \lambda\in(-\frac{\pi+\theta}{\theta},\frac{\pi-\theta}{\theta})$. Note that as $\theta\downarrow 0$ these singularities move to $ \pm \infty $.
\begin{definition}[Helmholtz projection]\label{def:Helmholtz} Let $\ww\in C_{\mathrm{c}}^{\infty}(\overline{\Om}\setminus \{0\})$ and $ \lambda \in \mathbb{C} $ such that $\Re \lambda\in(-\frac{\pi+\theta}{\theta},\frac{\pi-\theta}{\theta})$. The Helmholtz projection $\P$ is defined by
\begin{equation*}
    \P\ww:= \ww-\nabla \Phi.
\end{equation*}
Here, $\grad\Phi$ is the inverse Mellin transform of $\widehat{\grad\Phi}(\lambda,\ph)$ where the integral is taken over any vertical line $\Re \lambda\in(-\frac{\pi+\theta}{\theta},\frac{\pi-\theta}{\theta})$, i.e.,
\begin{equation*}
    \grad\Phi(r,\ph)=\frac{1}{\sqrt{2\pi}}\int_{\Re\lambda=\gamma}r^{\lambda}\widehat{\grad \Phi}(\lambda,\ph)\dd\Im\lambda\qquad \text{ with }\gamma\in\big(-\tfrac{\pi+\theta}{\theta},\tfrac{\pi-\theta}{\theta}\big).
\end{equation*}
Furthermore, $\widehat{\Phi}(\lambda,\ph)$ is defined by \eqref{def:phi:mamo} and $ \Phi $ solves \eqref{eq:problemPhi}.
\end{definition}

\begin{lem}\label{lem:commute_rdr_P} The Helmholtz projection satisfies the following properties:
\begin{enumerate}[label=(\roman*)]
  \item \label{it:propHelm1} $\P^2=\P$,
  \item \label{it:propHelm2} $\P$ is symmetric (on $ C_{\mathrm{c}}(\overline{\Omega}\setminus \{0\}) $) with respect to $(\cdot,\cdot)_{L^2(\Om)}$,
  \item \label{it:propHelm3} $\P r\dr \ww = r\dr \P\ww$ for $\ww\in C_{\mathrm{c}}^{\infty}(\overline{\Om}\setminus\{0\})$.
\end{enumerate}
\end{lem}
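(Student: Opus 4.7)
My plan is to deduce all three claims from the Mellin-space representation of $\Phi$ in Proposition \ref{prop:Helm_rep_Green}, using in particular the uniqueness statement therein and the elementary identity $\widehat{\nabla\Phi}(\lambda,\ph)=((\lambda+1)\widehat{\Phi}(\lambda+1,\ph),\partial_\ph\widehat\Phi(\lambda+1,\ph))$ recorded just before Definition \ref{def:Helmholtz}. A preliminary observation that I will use repeatedly is that if a vector field $\vv\in C_{\mathrm{c}}^\infty(\overline\Omega\setminus\{0\})$ is divergence-free in $\Omega$ and tangent to $\partial\Omega'$, then the source $\widehat g$ and the Neumann data in \eqref{eq:problem_Phi_hat} both vanish, so uniqueness forces $\widehat{\Phi_\vv}\equiv 0$ and hence $\P\vv=\vv$.

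Item (i) then follows immediately, since $\P\ww=\ww-\nabla\Phi_\ww$ satisfies $\div(\P\ww)=\div\ww-\Delta\Phi_\ww=0$ in $\Omega$ and $\vn\cdot\P\ww=\vn\cdot\ww-\dn\Phi_\ww=0$ on $\partial\Omega'$, so by the preliminary observation $\P(\P\ww)=\P\ww$. For item (ii), I would decompose both arguments as $\vu=\P\vu+\nabla\Phi_\vu$ and $\vv=\P\vv+\nabla\Phi_\vv$, so that $(\P\vu,\vv)_{L^2(\Omega)}=(\P\vu,\P\vv)+(\P\vu,\nabla\Phi_\vv)$, and by integration by parts together with $\div(\P\vu)=0$ and $\vn\cdot\P\vu=0$ the second term vanishes. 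The analogous computation for $(\vu,\P\vv)$ yields $(\P\vu,\P\vv)$ as well, and symmetry follows. The delicate point is the justification of the integration by parts on the unbounded domain, since the potentials $\Phi_\vu,\Phi_\vv$ need not be compactly supported; I would handle this by a Plancherel argument on the Mellin side (Lemma \ref{lem:Mellin_prop}(iv)) or a cut-off procedure that exploits the algebraic behaviour of $\Phi$ read off from Proposition \ref{prop:Helm_rep_Green}.

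For item (iii) I would work entirely in Mellin space to verify $\nabla\Phi_{r\dr\ww}=r\dr\nabla\Phi_\ww$. Because Mellin turns $r\dr$ into multiplication by $\lambda$ (Lemma \ref{lem:Mellin_prop}(iii)), the data driving the Mellin-transformed problem \eqref{eq:problem_Phi_hat} for $\Phi_{r\dr\ww}$ are exactly $(\lambda-1)$ times those driving the problem for $\Phi_\ww$. Uniqueness therefore gives $\widehat{\Phi_{r\dr\ww}}(\lambda,\ph)=(\lambda-1)\widehat{\Phi_\ww}(\lambda,\ph)$, and combining with the identity for $\widehat{\nabla\Phi}$ recalled above yields $\widehat{\nabla\Phi_{r\dr\ww}}(\lambda,\ph)=\lambda\widehat{\nabla\Phi_\ww}(\lambda,\ph)=\widehat{r\dr\nabla\Phi_\ww}(\lambda,\ph)$. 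Applying the inverse Mellin transform along the admissible contour from Definition \ref{def:Helmholtz} then closes the argument.

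The main obstacle across the three items is (ii), where the unbounded wedge forces a careful justification of the integration by parts; items (i) and (iii) are essentially algebraic consequences of the linearity and uniqueness in Proposition \ref{prop:Helm_rep_Green}.
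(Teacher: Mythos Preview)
Your proposal is correct and takes essentially the same approach as the paper. The paper dismisses (i) and (ii) as ``straightforward to check'' (your more careful discussion, including the unbounded-domain caveat for (ii), is in line with what the paper implicitly assumes). For (iii) the paper works in physical space rather than Mellin space: using the commutation relation \eqref{eqapp:com_rel} it writes $r\dr\nabla\Psi=\nabla(r\dr-1)\Psi$ and then verifies that $(r\dr-1)\Psi$ solves the Helmholtz problem associated with $r\dr\ww$, invoking the same uniqueness you use. Your identity $\widehat{\Phi_{r\dr\ww}}(\lambda,\ph)=(\lambda-1)\widehat{\Phi_\ww}(\lambda,\ph)$ is precisely the Mellin transform of $\Phi_{r\dr\ww}=(r\dr-1)\Phi_\ww$, so the two arguments are the same computation in different coordinates.
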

\begin{proof}
It is straightforward to check that \ref{it:propHelm1} and \ref{it:propHelm2} hold. For \ref{it:propHelm3} note that by \eqref{eqapp:com_rel} we have
\begin{equation*}
    r\dr\P\ww = r\dr\ww-\grad(r\dr-1)\Psi\quad\text{ and }\quad \P(r\dr\ww)=r\dr\ww-\grad\Phi,
\end{equation*}
where
\begin{alignat*}{4}
    \begin{cases}
            \del\Psi=\div \ww\qquad&\text{ in }\Om,\\
    \partial_{\nn}\Psi=\nn\cdot\ww\qquad & \text{ on } \dOm',
    \end{cases}
\end{alignat*}
and
\begin{alignat*}{4}
    \begin{cases}
            \del\Phi=\div(r\dr\ww)=(r\dr+1)\div\ww\qquad&\text{ in }\Om,\\
    \partial_{\nn}\Phi=\nn\cdot r\dr\ww\qquad & \text{ on } \dOm'.
    \end{cases}
\end{alignat*}
Both problems have a unique solution up to an additive constant by Definition \ref{def:Helmholtz}. By using \eqref{eqapp:com_rel} again, we learn that $(r\dr-1)\Psi$ satisfies the problem for $\Phi$ and therefore $(r\dr-1)\Psi$ and $\Phi$ are equal up to an additive constant which proves the lemma.
\end{proof}

\section{Proof of the main result}\label{chap:proof_main_result}
We state the results for the regular and polynomial problem in Sections \ref{sec:reg_problem} and \ref{sec:pol_problem}, respectively. In Section \ref{sec:proof_main_result} these results will be combined to prove Theorem \ref{theo:with:poly}.

\subsection{Well-posedness of the Stokes system with non-singular right-hand side}\label{sec:reg_problem}

We state the well-posedness result for the Stokes equations in \eqref{sys:1} with a regular source term and with inhomogeneous Navier-slip boundary conditions. We need to consider inhomogeneous Navier slip to be able to deal with remainder terms coming from the localization of the polynomial problem in Section \ref{sec:pol_problem}. We start by introducing appropriate spaces and norms on $ \partial \Om'$. 

Let $k\geq 0$ and $\alpha\in\RR\setminus\ZZ$. We define the space $ \mathscr{X}^{k}_{\alpha} $ as the closure of $C^{\infty}_{\mathrm{c}}(\partial \Omega') $ with respect to the norm
\begin{equation*}
| g |_{\mathscr{X}^{k}_{\alpha}}^2 := |g|_{\alpha}^2 + | r \partial_r g|_{\alpha}^2 + |g |_{\prescript{1}{}{\mathcal{B}}^{k}_{\alpha}}^2 \stackrel{\eqref{norm:on:the:boundary}}{=} |g|_{\alpha}^2 + | r \partial_r g|_{\alpha}^2 + \sum_{\ell = 1}^{k} [g ]_{\ell - \frac{1}{2}, \alpha}^2.
\end{equation*}

The Stokes equations in \eqref{sys:1} with right-hand side $ \vf \in \mathscr{Z}^{M}_{\alpha} $
and 
$ g \in \mathscr{X}^{M+1}_{\alpha}  $ are
\begin{equation}\label{sto:sys:inhom}
  \begin{aligned}
- \Delta\vu + \nabla p = \, & \vf \quad && \text{ in } \Omega,  \\
\div \vu = \, & 0 \quad && \text{ in } \Omega, \\
\vu\cdot \vn = \, & 0 \quad && \text{ on } \partial \Omega',  \\
\vu \cdot \vtau +  \partial_{\vn}(\vu \cdot \vtau) = \, & g \quad && \text{ on } \partial \Omega'.
\end{aligned}
\end{equation}
Therefore, we introduce for $k\geq 1$ the space
\begin{equation}\label{eq:def_scriptY}
  \mathscr{Y}^k_{\alpha} :=\begin{cases}
    \prescript{1}{}{\mathcal{H}}^{k}_{\alpha} & \mbox{if } \alpha > 0 \text{ or }  k + \alpha - 1 < 0,
    \\
    \big\{  p: \Omega \to \mathbb{R}: p = \zeta p_0 + p_1 \text{ with } p_0 \in \mathbb{R} \text{ and } p_1 \in \prescript{1}{}{\mathcal{H}}^{k}_{\alpha} \big\}  & \mbox{if }   \alpha < 0 \leq k + \alpha - 1,
  \end{cases}
\end{equation}
where the latter space is endowed with the norm $\| p \|_{\mathscr{Y}^k_{\alpha} } := |p_0 | + \| p_1 \|_{\prescript{1}{}{\mathcal{H}}^{k}_{\alpha}}$. The following result holds for the regular problem.
{\br 
\begin{thm}[Elliptic problem]
\label{main:theo}
Let $ \eps \in (0,1) $. There exists a constant $ c > 0 $ such that for any $ \theta \in (0, (1-\eps)\pi) $ and $ \alpha \in I_{\eps} \setminus \mathbb{Z} $ satisfying $ |\alpha \theta | < c $, for any $ \vf \in \mathscr{Z}^M_{\alpha} $ and $  g \in \mathscr{X}^{M+1}_{\alpha} $ with
$ M \in \NN $  such that $ M + \alpha+1 \in I_{\eps}\setminus \mathbb{Z} $,
there exists a unique solution  $ (\vu,p)\in \mathscr{H}^{M+2}_{\alpha}\times \mathscr{Y}^{M+1}_{\alpha} $ to \eqref{sto:sys:inhom} which satisfies
\begin{equation}
\label{reg:est}
\| \vu  \|_{\mathscr{H}^{M+2}_{\alpha}} + \| p  \|_{\mathscr{Y}^{M+1}_{\alpha}} \leq C_{\alpha ,\eps,M} \big(\| \vf \|_{\mathscr{Z}^M_{\alpha}} + | g |_{\mathscr{X}^{M+1}_{\alpha}}\big).
\end{equation}
\end{thm}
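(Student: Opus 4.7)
The plan is to reduce to homogeneous boundary data, establish a variational solution in the second-order weighted space $\mathscr{H}^2_{\alpha}$ via a non-standard coercivity argument, and then bootstrap to the full regularity claim by converting the Navier-slip boundary condition into an inhomogeneous free-slip one.

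\textbf{Step 1 (Lift of the boundary data).} First I would construct a divergence-free lift $\vu_g$ satisfying $\vu_g \cdot \vn = 0$ on $\partial\Omega'$ and $\vu_g \cdot \vtau + \partial_{\vn}(\vu_g \cdot \vtau) = g$ on $\partial\Omega'$, with $\|\vu_g\|_{\mathscr{H}^{M+2}_{\alpha}} \lesssim |g|_{\mathscr{X}^{M+1}_{\alpha}}$. The norm $|\,\cdot\,|_{\mathscr{X}^k_{\alpha}}$ has been tailored precisely so that both a trace and a first-order normal-derivative trace can be reproduced by such a lift. After subtracting $\vu_g$, the difference solves \eqref{sto:sys:inhom} with homogeneous Navier-slip and a new force still in $\mathscr{Z}^M_{\alpha}$, so it suffices to treat $g = 0$.

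\textbf{Step 2 (Variational solution in $\mathscr{H}^2_{\alpha}$).} On the test space $\TT$ of divergence-free tangential fields, the naive first-order bilinear form built from $\int_\Omega |\nabla \vu|^2 r^{-2\alpha}$ plus the slip boundary term is not coercive in the weighted setting, because the boundary term scales like $r^{-1}$ relative to the bulk gradient term, mirroring the two-scale obstruction observed for the thin-film linearisation of \cite{GnannIbrahimMasmoudi2019}. The remedy is to test the equation against $-\Delta \vu$ (modulo pressure), producing a second-order bilinear form that controls $\llbracket \vu \rrbracket_{2,\alpha}^2 + |u_r|_{\alpha}^2$. Coercivity of this form on $\TT$ follows from Hardy's inequality (Lemma \ref{lem:Hardy_on_wedge}) together with the improved Hardy estimate (Lemma \ref{lem:new_estimates_uph_ur}), which handles the coupling between $u_r$ and $u_\varphi$ on the wedge; the cross terms and boundary defects can then be absorbed under the smallness condition $|\alpha\theta| < c$. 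Lax--Milgram applied to the completion yields a unique $\vu \in \mathscr{H}^2_{\alpha}$.

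\textbf{Step 3 (Recovery of pressure and strong form).} To promote the variational solution to a strong solution of \eqref{sto:sys:inhom}, I would apply the Helmholtz projection $\P$ of Definition \ref{def:Helmholtz} to the equation $-\Delta \vu = \vf + \nabla p$ along a Mellin contour $\Re\lambda \in (-\tfrac{\pi+\theta}{\theta}, \tfrac{\pi-\theta}{\theta})$; the constraint $\alpha \in I_\eps$ guarantees the contour avoids the Stokes resonances. The pressure is then defined by $\nabla p = (I - \P)(\vf + \Delta \vu)$. The commutation identity $\P(r\dr \ww) = r\dr \P \ww$ from Lemma \ref{lem:commute_rdr_P}\ref{it:propHelm3} is what allows the weighted estimates on $\grad \Phi$ (Section \ref{sec:Estimates_Helmholtz}) to transfer cleanly, placing $p$ in $\mathscr{Y}^1_{\alpha}$ (the constant mode appearing in the $\alpha < 0$ case of \eqref{eq:def_scriptY} accounts for the indeterminacy of the Laplace problem on the unbounded wedge).

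\textbf{Step 4 (Iterative higher regularity).} For $M \geq 1$, I would use the already-constructed $\vu \in \mathscr{H}^2_{\alpha}$ to convert Navier-slip into inhomogeneous free-slip: set $h := g - \vu \cdot \vtau$, treat it as known data, and regard $\vu$ as a solution of the free-slip problem $\partial_{\vn}(\vu\cdot\vtau) = h$. Because free slip \emph{is} scaling invariant, the Mellin transform in $r$ reduces the free-slip Stokes system to an ODE in $\varphi$ parametrised by $\lambda$, whose invertibility along contours $\Re\lambda = \alpha + k - 1$ is guaranteed precisely by the hypothesis $M + \alpha + 1 \in I_\eps \setminus \ZZ$. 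A finite iteration, trading one degree of regularity of $h$ at each step for one additional degree of regularity of $\vu$, gives $\vu \in \mathscr{H}^{M+2}_{\alpha}$ and $p \in \mathscr{Y}^{M+1}_{\alpha}$ with the quantitative bound \eqref{reg:est}; uniqueness is inherited from the Lax--Milgram step.

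The main obstacle is Step 2. Because the two summands of the Navier-slip condition scale differently in $r$, no first-order energy identity is coercive, and the passage to a second-order identity requires delicate bookkeeping of weighted boundary integrals arising from integration by parts against $r^{-2\alpha-2}(r\dr)^j \dph^\ell \vu$; the smallness constant $c$ in $|\alpha\theta| < c$ is exactly what is needed to absorb the non-signed boundary contributions into the coercive part via Lemmas \ref{lem:Hardy_on_wedge} and \ref{lem:new_estimates_uph_ur}.
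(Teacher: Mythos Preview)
Your overall architecture---a variational base case followed by a free-slip bootstrap---matches the paper's strategy (Theorem~\ref{Floris:theo} plus Proposition~\ref{my:step}). But Steps~1 and~2 diverge from the paper in ways that are not cosmetic.

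\textbf{On Step 1.} The paper does \emph{not} lift $g$; it keeps $g$ as inhomogeneous data and builds it directly into the right-hand side of the variational problem via the pairing $\langle g, v_r\rangle_\alpha$ (see \eqref{la:mil:equ:2}). Your assertion that $|\cdot|_{\mathscr{X}^k_\alpha}$ is ``tailored so that both a trace and a first-order normal-derivative trace can be reproduced by such a lift'' is not established: that norm is designed for the pairing, not for a lift. A divergence-free tangential lift $\vu_g \in \mathscr{H}^{M+2}_\alpha$ satisfying the Navier-slip condition would require prescribing $u_{g,r}|_{\partial\Omega'}$ and $\partial_\varphi u_{g,r}|_{\partial\Omega'}$ in half-integer trace spaces sitting at \emph{different} weight exponents (one at scale $r^{-\alpha}$, the other at $r^{-\alpha-1}$), and $\mathscr{X}^{M+1}_\alpha$ does not simultaneously control both at the order needed for $\mathscr{H}^{M+2}_\alpha$. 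Without this lift, your reduction to $g=0$ collapses.

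\textbf{On Step 2.} Testing against $-\Delta\vu$ alone---i.e.\ using only the form $B_3$---is not coercive: Proposition~\ref{prop:B3} shows its lower bound carries uncontrolled negative terms $-E_1\theta^{-2}\|\nabla\vu\|^2_\alpha - E_2\|\nabla r\dr\vu\|^2_\alpha$. The paper tests instead against the \emph{combination} $\vv_{\text{test}} = \vv - |\alpha|\theta^3(r\dr)^2\vv - c_3|\alpha|\theta^3 r^2\Delta\vv$. The first-order part $B_1$ (from testing against $\vv$) supplies the main coercivity $|u_r|^2_\alpha + \|\nabla\vu\|^2_\alpha$; the second-order corrections $B_2, B_3$, carrying the small prefactor $|\alpha|\theta^3$, enter only to absorb a single bad term $\|r^{-1}\dph^2 u_r\|_\alpha$ generated by the commutator $[\P, r^{-2\alpha}]$ inside $B_1$ (Remark~\ref{rem:no:weak}, Lemmas~\ref{lem:coercivity_B1_T4} and~\ref{lem:estimate_Fourier_Phi}). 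Accordingly, the natural coercive space is the anisotropic $\mathfrak{X}^2_{\alpha,\theta}$ of \eqref{eq:def_frakX}, not $\mathscr{H}^2_\alpha$ directly. You also omit the passage from variational to strong solution (Proposition~\ref{equi:of:sol}): because $\vv \mapsto \vv_{\text{test}}$ is a genuine second-order operator, one must solve a separate test-function problem (Lemma~\ref{lem:sol_dual_problem}) with Dirichlet data to verify that enough test functions are hit---this is exactly where it matters that the Navier-slip condition was \emph{not} applied in deriving $B_3$.

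Your Steps~3 and~4 are essentially correct and align with the paper, up to a missing factor of $r$: the free-slip datum is $\mathfrak{g} = \pm(rg - ru_r)$, not $g - u_r$, since $\partial_{\vn} = \pm r^{-1}\partial_\varphi$.
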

}
This theorem is a consequence of two results: existence and uniqueness of the solution in the base case $M=0$ (Theorem \ref{Floris:theo}) and improvement of the regularity to $M\geq 1$ (Proposition \ref{my:step}).

Existence and uniqueness of the solution to the stationary Stokes problem \eqref{sto:sys:inhom} for $M=0$ is not shown in $\mathscr{H}^{2}_{\alpha}\times \mathscr{Y}^{1}_{\alpha} $, but in a space with a norm that is suitably scaled in {\br $|\alpha| $ and $ \theta$}. We use this space to get explicitly stronger information on the solution in the base regularity setting.  

Let $\alpha\in\RR\setminus\ZZ$ and $\theta\in (0,\pi)$. We define the space $ \mathfrak{X}^2_{\alpha,\theta} $ as the closure of all  $\vu\in \TT $ (see \eqref{eq:test_function}) with respect to the norm
\begin{equation}
\label{eq:def_frakX}
\| \vu \|_{\mathfrak{X}^2_{\alpha,\theta}}^2 := |  u_r |_{\alpha}^2 + {\br |\alpha| \theta^3}|r\partial_r u_r|_{\alpha}^2+\llbracket  \vu  \rrbracket_{1,\alpha}^2 +  {\br |\alpha|\theta^3} \llbracket  \vu  \rrbracket_{2,\alpha-1}^2.
\end{equation}
Moreover, $ \mathfrak{Y}^1_{\alpha,\theta} $ is the closure of all  $ p \in  C^{\infty}_{\mathrm{c}}(\overline{\Omega}\setminus \{0\}) $ with respect to the norm
\begin{equation*}
\| p \|_{\mathfrak{Y}^1_{\alpha,\theta}}^2 := {\br |\alpha| \theta^3} \llbracket  p  \rrbracket_{1,\alpha-1}^2.
\end{equation*}
Sections \ref{sec:3}-\ref{section:SSSP} will be concerned with the proof of the following theorem.
{\br
\begin{thm}
\label{Floris:theo}
Let $ \eps \in (0,1) $. There is a constant $ c > 0 $ such that for any $ \theta \in (0, (1-\eps)\pi) $ and $ \alpha \in I_{\eps} \setminus \mathbb{Z} $ satisfying $ |\alpha \theta | < c $, for any $ \vf \in \mathcal{H}^0_{\alpha-1} $ and $  g \in \mathscr{X}^{0}_{\alpha} $, there exists a unique solution $ (\vu, p) \in \mathfrak{X}^2_{\alpha,\theta} \times \mathfrak{Y}^1_{\alpha,\theta} $ to \eqref{sto:sys:inhom} which satisfies
\begin{equation*}
\| \vu \|_{\mathfrak{X}^2_{\alpha,\theta}} + \| p \|_{\mathfrak{Y}^1_{\alpha,\theta}}  \leq C_{\alpha}\big(\| \vf\|_{\mathcal{H}^0_{\alpha-1}} +|g|_{\mathscr{X}^0_{\alpha}}\big).
\end{equation*}
In particular, we have
\begin{equation}
\label{first:step:of:ind}
\| \vu \|_{\mathcal{H}^1_{\alpha}} \leq C_{\alpha} \big(\| \vf\|_{\mathcal{H}^0_{\alpha-1}} +|g|_{\mathscr{X}^0_{\alpha}}\big).
\end{equation}
\end{thm}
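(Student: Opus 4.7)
The plan is to build the solution in three steps, matching the roadmap indicated in Sections~\ref{chap:5}, \ref{chap:proof_coer} and \ref{section:SSSP}: reduce to homogeneous boundary data, construct a weak solution by Lax--Milgram on an augmented bilinear form in $\mathfrak{X}^2_{\alpha,\theta}$, and finally recover the pressure. The reduction begins by lifting $g \in \mathscr{X}^0_\alpha$ to a field $\ww$ with $\ww \cdot \vn = 0$, $\div \ww = 0$ (after a Bogovskii-type correction restores incompressibility), and $\ww \cdot \vtau + \partial_{\vn}(\ww \cdot \vtau) = g$ on $\partial\Omega'$, satisfying $\|\ww\|_{\mathfrak{X}^2_{\alpha,\theta}} \lesssim_\alpha |g|_{\mathscr{X}^0_\alpha}$. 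Writing $\vu = \ww + \tilde{\vu}$ then reduces \eqref{sto:sys:inhom} to the analogous system with $g = 0$ and a modified right-hand side still in $\mathcal{H}^0_{\alpha-1}$.

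With homogeneous data in hand, I would test the momentum equation against $\vv \in \TT$; since $\vv$ is divergence-free and tangential, the pressure drops modulo lower-order commutator terms produced by differentiating the weight $r^{-2\alpha}$, and integration by parts against the Navier-slip condition yields a first-order bilinear form $a_1(\vu,\vv)$ whose coercive part controls only $\|\nabla \vu\|_\alpha^2 + |u_r|_\alpha^2$. The two controlled quantities belong to different homogeneous scaling classes, and this non-scale-invariance --- flagged already in the introduction --- is the principal obstacle: standard Lax--Milgram cannot close on any scale-invariant space built out of $a_1$. The remedy, which is the content of Section~\ref{chap:proof_coer}, is to augment $a_1$ by a second-order bilinear form carrying the small prefactor $|\alpha|\theta^3$, so that the combined form is coercive on $\mathfrak{X}^2_{\alpha,\theta}$. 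The scale $|\alpha|\theta^3$ is forced by the available inequalities: $|\alpha|$ from Hardy (Lemma~\ref{lem:Hardy_on_wedge}), $\theta^2$ from Poincar\'e on $(0,\theta)$ (Lemma~\ref{lem:new_estimates_uph_ur}), and one extra $\theta$ to absorb the additional derivative. The smallness $|\alpha\theta| < c$ is used to absorb the weight-commutator and boundary cross terms produced by the non-scale-invariance, while the condition $\alpha \in I_\eps$ keeps $\alpha$ away from the Mellin resonances $\pm \pi/\theta - 1$. Lax--Milgram then yields a unique weak solution $\vu \in \mathfrak{X}^2_{\alpha,\theta}$.

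To recover the pressure, since $\vu$ has two weighted derivatives, $\vf + \Delta \vu$ is well defined as an element of $\mathcal{H}^0_{\alpha-1}$, and the variational identity shows that it annihilates $\TT$. A weighted de~Rham argument, implemented through the Helmholtz projection of Section~\ref{chap:4} --- whose resonances $\{k\pi/\theta : k \in \ZZ \setminus \{0\}\}$ are avoided thanks to $\alpha \in I_\eps$ --- produces a unique $p \in \mathfrak{Y}^1_{\alpha,\theta}$ with $\nabla p = \vf + \Delta \vu$ and the asserted estimate. The bound \eqref{first:step:of:ind} follows immediately from the definition \eqref{eq:def_frakX}, since $\llbracket \vu \rrbracket_{1,\alpha}^2$ appears as one of the summands of $\|\vu\|_{\mathfrak{X}^2_{\alpha,\theta}}^2$. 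Uniqueness of the pair $(\vu,p)$ is finally obtained by applying the same coercivity estimate to the homogeneous problem.
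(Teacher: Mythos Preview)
Your outline captures the spirit of the argument but misses the step that the paper singles out as decisive, and it also departs from the paper's treatment of $g$ in a way that creates unnecessary difficulties.

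\textbf{On the lifting of $g$.} The paper does \emph{not} reduce to homogeneous Navier-slip data. Instead, $g$ is carried directly into the variational formulation via the pairing $\langle g, v_r\rangle_\alpha = \int_{\partial\Omega'} r^{-2\alpha} g v_r\,ds + |\alpha|\theta^3 \int_{\partial\Omega'} r^{-2\alpha}((r\dr-2\alpha+1)g)(r\dr v_r)\,ds$, which arises naturally from integration by parts when deriving $B_1$ and $B_2$ (Sections~\ref{sec:weighted_bilienar_form}--\ref{subsec:2nd_bilinear_form}). Your proposed lift $\ww$ with $w_r + \partial_{\vn} w_r = g$ would require controlling $\Delta\ww$ in $\mathcal{H}^0_{\alpha-1}$, i.e.\ the seminorm $\llbracket\ww\rrbracket_{2,\alpha-1}$, but in $\mathfrak{X}^2_{\alpha,\theta}$ that quantity carries only the small prefactor $|\alpha|\theta^3$, so the modified source would come with a constant $\sim (|\alpha|\theta^3)^{-1/2}$. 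This destroys the $\theta$-independence that the paper explicitly tracks (see the remark after Theorem~\ref{theo:with:poly}).

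\textbf{The genuine gap: equivalence of variational and strong solutions.} Your sentence ``the variational identity shows that [$\vf+\Delta\vu$] annihilates $\TT$'' hides exactly the step the paper calls Step~3 (Proposition~\ref{equi:of:sol}). The augmented form $B$ arises by testing the projected equation with $r^{-2\alpha}\vv_{\mathrm{test}}$ where $\vv_{\mathrm{test}} = \vv - |\alpha|\theta^3(r\dr)^2\vv - c_3|\alpha|\theta^3 r^2\Delta\vv$, \emph{not} with $r^{-2\alpha}\vv$. Hence the Lax--Milgram solution only yields $( -\P\Delta\vu - \P\vf,\, \P(r^{-2\alpha}\vv_{\mathrm{test}}))_{L^2} = 0$ for all $\vv\in\TT$. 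To conclude $-\P\Delta\vu = \P\vf$ one must show that $\{\P(r^{-2\alpha}\vv_{\mathrm{test}}) : \vv \in \TT,\ \vv|_{\partial\Omega'}=0\}$ is dense in the relevant dual space. The paper devotes all of Section~\ref{section:SSSP} to this: Lemma~\ref{lem:sol_dual_problem} solves the test-function problem \eqref{eq:dual_problem1}, a fourth-order ODE in Mellin variables with Dirichlet boundary conditions, and Lemma~\ref{app:sequence} uses it to build an approximating sequence. Only after this does the de~Rham/Helmholtz argument for the pressure become available, and the Navier-slip boundary condition must still be verified separately on $\partial\Omega'$ (end of the proof of Proposition~\ref{equi:of:sol}). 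Your proposal skips this entirely.

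A secondary point: the augmentation is not by one additional form but by two, $B_2$ (from $-(r\dr)^2\vv$) and $B_3$ (from $-r^2\Delta\vv$); the latter is what eventually controls $\|\tfrac{1}{r}\partial_\varphi^2 u_r\|_\alpha$, cf.\ Remark~\ref{rem:no:weak} and Proposition~\ref{prop:B3}.
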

}
The second result that is required to prove Theorem \ref{main:theo} deals with improving the regularity of the solution using a-priori estimates for the Stokes equations in a wedge with non-homogeneous free-slip boundary conditions. Because we have a strong solution from Theorem \ref{Floris:theo}, we can consider a system with a (scaling-invariant) free-slip boundary condition  instead of the original problem with the Navier-slip condition. Consider the equations
\begin{align}
-r^{-2}[((r \partial_r)^2 + \partial_{\varphi}^2)u_r - \partial_{\varphi} u_{\varphi} - u_r] + \partial_r p = \, & f_r \quad && \text{ for } r > 0, \varphi \in (0,\theta), \nonumber \\
-r^{-2}[((r\partial_r)^2 + \partial_{\varphi}^2)u_{\varphi} + 2 \partial_{\varphi} u_r - u_{\varphi}] + r^{-1}\partial_{\varphi} p = & \, f_{\varphi} \quad && \text{ for } r > 0, \varphi \in (0,\theta), \nonumber \\
(r \partial_r +1) u_r + \partial_{\varphi} u_{\varphi} = & \, 0 \quad && \text{ for } r > 0, \varphi \in (0,\theta),  \label{sys:2} \\
u_{\varphi} = & \, 0  \quad && \text{ for } r > 0, \varphi \in \{0,\theta\}, \nonumber \\
\partial_{\varphi} u_{r} = & \, \mathfrak{g}  \quad && \text{ for } r > 0, \varphi \in \{0,\theta\}, \nonumber
\end{align}
for which the following result holds. The proof of this proposition is given in Section \ref{sec:higher_reg}.
\begin{prop}
\label{my:step}
Let $ \eps \in (0,1) $. There is a constant $ c > 0 $ such that for any $ \theta \in (0, (1-\eps)\pi) $ and $ \alpha \in I_{\eps} \setminus \mathbb{Z} $ satisfying $ |\alpha \theta | < c $ we have:

\begin{enumerate}

\item  If $ \vf \in \prescript{M}{}{\mathcal{H}}^{M}_{\alpha} $ and $ \mathfrak{g} \in  \prescript{M+1}{}{\mathcal{B}}^{M+1}_{\alpha+1} $ with $ M \in\NN$ such that $ M+ \alpha+1 \in I_{\eps}\setminus \mathbb{Z} $, then there exists a unique solution $ (\vu,p) \in \prescript{M+2}{}{\mathcal{H}}^{M+2}_{\alpha} \times  \prescript{M+1}{}{\mathcal{H}}^{M+1}_{\alpha}$ to \eqref{sys:2} which satisfies
\begin{equation}
\label{est:skopel}
\llbracket \vu \rrbracket_{M+2, \alpha} + \llbracket p \rrbracket_{M+1, \alpha}  \leq C_{\alpha,\eps} \left( \llbracket \vf \rrbracket_{M,\alpha} + [ \mathfrak{g} ]_{M+\frac{1}{2}, \alpha +1}\right).
\end{equation}

\item If $ \vu $ is the solution from Theorem \ref{Floris:theo} and if $ \mathfrak{g} = \pm (g - r u_{r})  $. Then the solution of \eqref{sys:2} that satisfies \eqref{est:skopel} coincides with the solution of Theorem \ref{Floris:theo} when $ M = 0 $.\\

\item If $\zeta$ is defined in \eqref{eq:cutoff}, $ \tilde{M} \geq 0 $ is a natural number and 
\begin{equation*}
  \widetilde{M} +\alpha +1 \in I_{\eps} \setminus \mathbb{Z},\quad \vf \in {\mathcal{H}}^{\tilde{M}}_{\alpha} \quad\text{and}\quad \mathfrak{g} \in  \prescript{1}{}{\mathcal{B}}^{\tilde{M}+1}_{\alpha+1}.
\end{equation*}
\begin{enumerate}[label=(\roman*)]
\item If, in addition, $ \widetilde{M} + \alpha  < 0 $ or $ \alpha > 0 $, then there exists a unique solution $ (\vu,p) \in \prescript{1}{}{\mathcal{H}}^{\tilde{M}+2}_{\alpha} \times  \prescript{1}{}{\mathcal{H}}^{\tilde{M}+1}_{\alpha} $
to \eqref{sys:2} that satisfies \eqref{est:skopel} for any $ 0 \leq M \leq \tilde{M }$.

\item  If, in addition, $ \alpha < 0 < \widetilde{M} + \alpha  $, then there exists a unique triple $ (\vu,p_1, p_0) \in \prescript{1}{}{\mathcal{H}}^{\tilde{M}+2}_{\alpha} \times  \prescript{1}{}{\mathcal{H}}^{\tilde{M}+1}_{\alpha} \times \mathbb{R} $ such that $ (\vu, p) = (\vu, \zeta p_0 + p_1 ) $  satisfies  \eqref{sys:2} and  $ (\vu,p_1) $ satisfies \eqref{est:skopel} (with $p$ replaced by $p_1$) for any $ 0 \leq M \leq \widetilde{M }$. Moreover,
 \begin{equation*}
|p_0|  \leq C_{\alpha, \eps,M} \big( \llbracket \vf \rrbracket_{\tilde{M},\alpha} + [ \mathfrak{g} ]_{\tilde{M}+\frac{1}{2}, \alpha+1}\big). 
\end{equation*}
\end{enumerate}
\end{enumerate}
\end{prop}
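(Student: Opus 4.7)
The plan is to reduce the scaling-invariant problem \eqref{sys:2} to a family of ODE boundary value problems in $\varphi$ via the Mellin transform in $r$, solve these explicitly using a Green's function $G(\varphi,\tilde\varphi,\lambda)$ whose only poles in $\lambda$ occur at $\lambda = k\pi/\theta$ with $k\in\ZZ$, and then recover the solution by contour integration along an appropriate vertical line in $\mathbb{C}$. By Lemma \ref{lem:Mellin_prop}, the radial derivatives $r\partial_r$ become multiplication by $\lambda$, so \eqref{sys:2} becomes, for each $\lambda$, a linear ODE system for $(\widehat{u}_r,\widehat{u}_\varphi,\widehat{p})(\lambda,\cdot)$ on $(0,\theta)$ with $\widehat{u}_\varphi = 0$ and $\partial_\varphi \widehat{u}_r = \widehat{\mathfrak{g}}$ at $\varphi\in\{0,\theta\}$. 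This boundary value problem is of the type treated in Proposition \ref{prop:Helm_rep_Green} and admits an explicit solution, as is classical \cite{Kozlov2001, Mazya2010}.

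For item (1), the space $\prescript{M+2}{}{\mathcal{H}}^{M+2}_\alpha$ corresponds via Lemma \ref{isom:space} to the $L^2$-norm of $|\lambda|^{M+2}\widehat{\vu}$ (plus angular derivatives) along the line $\Re\lambda = \alpha+M+1$. The assumption $M+\alpha+1\in I_\eps$ puts this line at distance $\gtrsim \eps\pi/\theta$ from every resonance, yielding uniform resolvent bounds
\begin{equation*}
|\lambda|^{M+2}\bigl|\widehat{\vu}(\lambda,\cdot)\bigr| + |\lambda|^{M+1}\bigl|\widehat{p}(\lambda,\cdot)\bigr| \lesssim |\lambda|^{M}\bigl|\widehat{\vf}(\lambda,\cdot)\bigr| + |\lambda|^{M+\frac{1}{2}}\bigl|\widehat{\mathfrak{g}}(\lambda)\bigr|.
\end{equation*}
Integrating in $\Im\lambda$ and using Parseval (Lemma \ref{lem:Mellin_prop}(v)) gives \eqref{est:skopel}; the smallness $|\alpha\theta|<c$ is used to absorb the borderline behaviour of $G$ as $\lambda\to 0$. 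Uniqueness is immediate because Mellin inversion along a fixed contour is injective.

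For item (2), the Navier-slip condition satisfied by the solution from Theorem \ref{Floris:theo} rearranges as $\partial_\varphi u_r = \pm(rg - ru_r) =: \mathfrak{g}$, and $\mathfrak{g} \in \prescript{1}{}{\mathcal{B}}^1_{\alpha+1}$ by the trace estimates in Appendix \ref{app:Hardy:traces} applied to $u_r\in\mathfrak{X}^2_{\alpha,\theta}$ and $g\in\mathscr{X}^0_\alpha$; uniqueness from item (1) with $M=0$ then identifies the two solutions. For item (3), the datum $\vf\in\mathcal{H}^{\tilde M}_\alpha$ has controlled Mellin transform on each of the lines $\Re\lambda = \alpha+\ell-1$ for $0\leq \ell\leq\tilde M$ (cf.\ Remark \ref{rem:1}). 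I will apply the item-(1) construction separately on each such line and join the solutions by Cauchy's theorem, which is legal provided no pole of $G$ lies between adjacent lines. In case (i) the whole interval $[\alpha-1,\alpha+\tilde M-1]$ lies strictly on one side of $\lambda=0$, so no pole is crossed and the patched solution belongs to $\prescript{1}{}{\mathcal{H}}^{\tilde M+2}_\alpha$ and inherits \eqref{est:skopel} at every intermediate $M$. In case (ii) the line $\Re\lambda = 0$ is crossed, where $G$ has a simple pole from the factor $(\lambda\sin(\lambda\theta))^{-1}$; the residue vanishes for the velocity (no nonzero constant vector field is both divergence-free and tangent to $\partial\Omega'$) but contributes a constant $p_0\in\RR$ to the pressure, given by a linear functional of the data.

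The main obstacle, I expect, will be the residue analysis in case (3)(ii): one must verify that the $\lambda=0$ residue of $\widehat{\vu}$ genuinely vanishes (so the patched $\vu$ really lives in $\prescript{1}{}{\mathcal{H}}^{\tilde M+2}_\alpha$ rather than a larger space with a constant velocity remainder), identify the pressure residue as the claimed constant $p_0$, and bound $|p_0|$ by Cauchy--Schwarz applied to the explicit residue formula using the continuity of the embeddings $\mathcal{H}^{\tilde M}_\alpha \hookrightarrow L^2$ of the boundary of $\{\Re\lambda = 0\}$ and $\prescript{1}{}{\mathcal{B}}^{\tilde M+1}_{\alpha+1}\hookrightarrow L^2$ on the same line.
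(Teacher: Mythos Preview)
Your overall strategy---Mellin transform, explicit Green's function for the resulting $\varphi$-ODE, resolvent estimates along $\Re\lambda=\alpha+M+1$, then contour shifting for item (3)---is exactly what the paper does in Section \ref{sec:higher_reg}. A few points where your sketch deviates from the paper and would need adjustment:

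\textbf{The ODE and its Green's function.} You treat the Mellin-transformed system as being of the same type as the Helmholtz problem in Proposition \ref{prop:Helm_rep_Green}. It is not: after eliminating $\widehat{u}_r$ and $\widehat{p}$ one obtains a \emph{fourth-order} ODE for $\widehat{u}_\varphi$ (equation \eqref{fourth:order:equ}), and the relevant Green's function (Lemma \ref{Lemma:green:fun}) has denominators $\lambda\sin((\lambda\pm1)\theta)$, not $\lambda\sin(\lambda\theta)$. Hence the poles sit at $\lambda=k\pi/\theta\mp1$, $k\in\ZZ\setminus\{0\}$, and the interval $I_\eps$ is tailored to avoid precisely these (not $k\pi/\theta$). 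This does not change the logic of your contour argument, but the estimates (Lemmata \ref{lem:Bound:Green_Rep:STOKES} and \ref{lem:bound:uph:stokes:CL}) are substantially more delicate than for a second-order Green's function---the bulk of Section \ref{sec:higher_reg} and Appendix \ref{app:Aux_est} is spent on them, with separate treatment of the regimes $|t\theta|\ge\tfrac12$ and $|t\theta|<\tfrac12$.

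\textbf{The point $\lambda=0$.} You locate a pole of $G$ there and argue the velocity residue vanishes because no nonzero constant field is divergence-free and tangent. The paper instead shows directly (Lemma \ref{prop:4}) that $G$, $\partial_{\varphi'}G$, $\partial_\varphi\partial_{\varphi'}G$ are \emph{holomorphic} at $\lambda=0$: the apparent $1/\lambda$ is removable because the numerator $\tilde G$ has a zero there. Thus no residue computation is needed for $\widehat{\vu}$, and the contours for the velocity can be moved freely across $0$. The constant $p_0$ in (3)(ii) arises not from $G$ but from the explicit factor $1/\lambda$ in the pressure formula \eqref{def:phat}; the residue there is manifestly a constant in $(r,\varphi)$, and the cut-off decomposition $p=\zeta p_0+p_1$ with $p_1\in\prescript{1}{}{\mathcal{H}}^{\tilde M+1}_\alpha$ follows.

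\textbf{The smallness $|\alpha\theta|<c$.} This is not used in the free-slip estimates themselves; it enters only through item (2), which invokes Theorem \ref{Floris:theo}.
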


With the aid of Theorem \ref{Floris:theo} and Proposition \ref{my:step} we can prove Theorem \ref{main:theo}.
\begin{proof}[Proof of Theorem \ref{main:theo}]
For $ M= 0 $, Theorem \ref{Floris:theo} implies the existence of a unique solution $(\vu,p) $ in $ \mathfrak{X}^2_{\alpha,\theta} \times \mathfrak{Y}^1_{\alpha,\theta} $ of \eqref{sto:sys:inhom} which satisfies \eqref{first:step:of:ind}. After noticing that \eqref{sto:sys:inhom} equals \eqref{sys:2} with $ \mathfrak{g} = \pm (rg - r u_r) $, we apply Proposition \ref{my:step} with these values and deduce Theorem \ref{main:theo}. For $ M \geq 1 $ the proof of the regularity estimate \eqref{reg:est}  goes via finite induction. Suppose that \eqref{reg:est} holds for $ M = m$ and suppose that $ m+ 1 + \alpha +1 \in I_{\eps} \setminus \mathbb{Z}$. It then remains to show that  \eqref{reg:est} holds for $M= m +1$.
Again \eqref{sto:sys:inhom} equals \eqref{sys:2} with $ \mathfrak{g} = \pm (rg - r u_r) $, which reads in Mellin variables $ \hat{\mathfrak{g}}(\lambda) =\pm( \hat{g}(\lambda-1) - \hat{u}_r(\lambda-1)) $. Then by definition of the norm $[\,\cdot\,]_{m+3/2, \alpha}^2$ (see Section \ref{subsec:mainresults}), we obtain
\begin{align}
\label{trivial:est}
[ \mathfrak{g} ]_{m+3/2, \alpha+1}^2
\leq  & \, \frac{4}{\min\{1,|m+\alpha+1|\}^{2m+2}} \big([ u_{r} ]_{m+3/2, \alpha}^2 +   [ g ]_{m+3/2, \alpha}^2\big)\leq  C_{\alpha} \big( \llbracket \vu \rrbracket^2_{m+2, \alpha} + [ g ]_{m+3/2, \alpha}^2 \big).
\end{align}
By Proposition \ref{my:step} with $ M = m+1 $ and the above estimate we find
\begin{align*}
\llbracket \vu \rrbracket_{m+3, \alpha} +  \llbracket p_1 \rrbracket_{m+2, \alpha}\leq & \,  C_{\alpha, \eps} \big(  \llbracket \vf \rrbracket_{m+1, \alpha} + [ \mathfrak{g} ]_{m+3/2, \alpha+1}\big) \\
\stackrel{\mathclap{\eqref{trivial:est}}}{\leq} & \,  C_{\alpha, \eps} \big(  \llbracket \vf \rrbracket_{m+1, \alpha} + \llbracket \vu \rrbracket_{m+2, \alpha}+ [ g ]_{m+3/2, \alpha}\big) \\
\leq &  \, C_{\alpha, \eps} \big( \llbracket \vf \rrbracket_{m+1, \alpha}  + \| \vf \|_{\mathcal{H}^{m}_{\alpha}}+  |g|_{\prescript{1}{}{\mathcal{B}}^{m+1}_{\alpha}} + [ g ]_{m+3/2, \alpha} \big) \\
\leq & \,C_{\alpha, \eps} \big(\| \vf \|_{\mathcal{H}^{m+1}_{\alpha}} +  |g|_{\prescript{1}{}{\mathcal{B}}^{m+2}_{\alpha}}\big),
\end{align*}
where in the last step we have used the induction hypothesis
\begin{equation*}
\llbracket \vu \rrbracket_{m+2, \alpha} +  \llbracket p_1 \rrbracket_{m+1, \alpha} \leq C_{\alpha, \eps}\big(\| \vf \|_{\mathcal{H}^{m}_{\alpha}} + |g|_{\prescript{1}{}{\mathcal{B}}^{m+1}_{\alpha}} \big).\qedhere
\end{equation*}
\end{proof}

\subsection{The polynomial problem}\label{sec:pol_problem}
We continue with the polynomial problem, i.e., we consider the Stokes equations \eqref{sys:1} with a polynomial source term of the form
\begin{equation*}
   \PPf^n(r,\ph) = \sum_{j= 0}^{n-2} \vc{f}^{(j)}(\ph) \; r^j,
\end{equation*}
where $ n -2 $ is associated with the degree of the polynomial. Using the ansatz
\begin{equation*}
\PPu(r,\ph) = \sum_{j\geq0} \vu^{(j)}(\ph) \; r^j \quad \text{ and } \quad  \PPp(r,\ph) = \sum_{j\geq 0} p^{(j)}(\ph) \; r^j
\end{equation*}
in \eqref{sys:1} with source term $  \PPf^n(r,\ph) $, it is easy to see that the solution $ \PPu $ is a generalized polynomial and has infinitely many nonzero coefficients $ \vu^{(j)}$ due to the Navier-slip boundary condition. In fact, as we can see from \eqref{sys:1e}, the Navier-slip boundary condition implies a shift in the coefficients of the form $ \partial_{\varphi} u_{r}^{(j)}(\varphi) = \mp u_{r}^{(j-1)}(\varphi) $ for $ \varphi \in \{ 0, \theta\}$. This shift, which is due to the non-scaling invariance of the Navier-slip condition, causes that the theory developed in \cite{Kozlov1997, Kozlov2001, Mazya2010} is not directly applicable.

We truncate $ \PPu $ and $ \PPp $ at order $ n $ and $ n-1 $ respectively and define
 \begin{equation*}
\PPu^n(r,\ph) = \sum_{j=0}^{n} \vu^{(j)}(\ph) \; r^j \quad \text{ and } \quad  \PPp^n(r,\ph) = \sum_{j= 0}^{n-1} p^{(j)}(\ph) \; r^j.
\end{equation*}
 The couple $ (\PPu^n, \PPp^n) $ satisfies the following system with non-homogeneous boundary conditions
 \begin{subequations}\label{sto:sys:pol}
    \begin{alignat}{2}
- \Delta \PPu^n + \nabla \PPp^n = \, & \PPf^n \quad && \text{ in } \Omega, \label{sto:sys:pol1}  \\
\div \PPu^n = \, & 0 \quad && \text{ in } \Omega, \label{sto:sys:pol2} \\
\PPu^n \cdot \vn = \, &  0 \quad && \text{ on } \partial \Omega', \label{sto:sys:pol3} \\
\PPu^n \cdot \vtau +  \partial_{\vn}(\PPu^n \cdot \vtau) = \, & r^{n} \vu^{(n)} \cdot \vtau  \quad && \text{ on } \partial \Omega'.\label{sto:sys:pol4}
\end{alignat}
 \end{subequations}

 \begin{prop}
 \label{pro:exi:pol}

 Let $ \eps \in (0,1)$, $ \theta \in (0, (1-\eps)\pi) $, and $ M, n \in \mathbb{N} $ with $ 2 \leq n \leq (1-\eps)\frac{\pi}{\theta}-1 $. Let
 \begin{equation*}
   \PPf^n(r,\ph) = \sum_{j= 0}^{n-2} \vc{f}^{(j)}(\ph) \; r^j,
\end{equation*}
be such that $  \PPf^n(r,\ph)  \in \PP_{n-2,M} $. 
 Then there exists a unique solution $ (\PPu^{n}, \PPp^n) $ of \eqref{sto:sys:pol}  such that $ \PPu^{n}  \in \PP_{n,M+2} $, $ \PPp^{n} \in   \PP_{n-1,M+1} $ and $ \PPp^{n}(0) = 0 $ . Moreover, 
 it holds that
 \begin{equation*}
 \| \PPu^{n} \|_{\PP_{n,M+2}} + \|  \PPp^{n}\|_{\PP_{n-1,M+1}} \leq C_{\eps} \|  \PPf^n(r,\ph) \|_{\PP_{n-2,M}}.
 \end{equation*}
 \end{prop}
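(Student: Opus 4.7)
The strategy is to reduce \eqref{sto:sys:pol} to a finite sequence of one-dimensional boundary value problems in the angle $\ph$, indexed by the degree $j \in \{0,1,\ldots,n\}$ of the corresponding power $r^j$, and solve them iteratively in $j$.

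I would first substitute the ansatz $\PPu^n = \sum_{j=0}^n \vu^{(j)}(\ph) r^j$ and $\PPp^n = \sum_{j=0}^{n-1} p^{(j)}(\ph) r^j$ into the polar Stokes equations \eqref{sys:1a}--\eqref{sys:1c} and match coefficients of $r^{j-2}$. For each $j$ this yields a coupled linear ODE system on $(0,\theta)$ for the triple $(u_r^{(j)}, u_\ph^{(j)}, p^{(j-1)})$ with right-hand side $\vf^{(j-2)}$ (understood to vanish for $j \notin \{2,\ldots,n\}$) and the divergence constraint $(j+1) u_r^{(j)} + \dph u_\ph^{(j)} = 0$. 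Matching \eqref{sys:1d}--\eqref{sys:1e} produces the angular boundary conditions $u_\ph^{(j)}(\vartheta) = 0$ at $\vartheta \in \{0,\theta\}$, together with the \emph{shifted} Navier conditions $\dph u_r^{(j)}(0) = u_r^{(j-1)}(0)$ and $\dph u_r^{(j)}(\theta) = -u_r^{(j-1)}(\theta)$; the slip defect at order $r^n$ is then exactly the inhomogeneity $r^n \vu^{(n)} \cdot \vtau$ appearing in \eqref{sto:sys:pol4}.

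Next, I would eliminate $u_r^{(j)}$ via the divergence identity and eliminate $p^{(j-1)}$ by differentiating the radial momentum equation in $\ph$ and subtracting $(j-1)$ times the angular momentum equation. After multiplying by $j+1$, this collapses the system at order $j$ into the single fourth-order ODE
\[ [\dph^2 + (j-1)^2][\dph^2 + (j+1)^2] u_\ph^{(j)} = (j+1)\bigl(\dph f_r^{(j-2)} - (j-1) f_\ph^{(j-2)}\bigr) \quad \text{on } (0,\theta), \]
with the four boundary conditions $u_\ph^{(j)}(\vartheta) = 0$ and $\dph^2 u_\ph^{(j)}(\vartheta) = \pm \tfrac{j+1}{j} \dph u_\ph^{(j-1)}(\vartheta)$ at $\vartheta \in \{0, \theta\}$, obtained from the shifted slip condition after re-using the divergence identity. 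I would establish well-posedness of this BVP by classical ODE theory: the characteristic roots are $\pm i(j-1)$ and $\pm i(j+1)$, and the hypothesis $(n+1)\theta \leq (1-\eps)\pi < \pi$ forces $(j \pm 1)\theta \in [0, (1-\eps)\pi)$, so that the non-resonance condition $\sin((j-1)\theta) \sin((j+1)\theta) \neq 0$ holds for every $j \geq 2$, with a quantitative lower bound depending only on $\eps$. The two degenerate cases $j = 0$ (double root $\pm i$, with generalized solutions $\ph\cos\ph, \ph\sin\ph$) and $j = 1$ (double root $0$ together with $\pm 2i$, with generalized solutions $1, \ph$) I would treat by direct inspection, verifying that the homogeneous BVP has only the trivial solution. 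The pressure coefficient $p^{(j-1)}$ is then recovered from the angular momentum equation up to an additive constant, which is fixed by the normalization $\PPp^n(0) = 0$ (only relevant at $j = 1$, where $p^{(0)}$ is otherwise a free constant).

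Finally, I would run the BVP solver inductively from $j = 0$—where all data vanish and thus $\vu^{(0)} \equiv 0$—up to $j = n$, feeding at each step the boundary traces $\dph u_\ph^{(j-1)}(\vartheta)$ from the previous iteration into the data of the next, with control $|\dph u_\ph^{(j-1)}(\vartheta)| \lesssim \|u_\ph^{(j-1)}\|_{H^{M+2}(0,\theta)}$ from the Sobolev embedding $H^{M+2}(0,\theta) \hookrightarrow C^1([0,\theta])$. Composing the elliptic ODE estimates over the finitely many iterations then yields the claimed bound in $\PP_{n,M+2} \times \PP_{n-1,M+1}$, and uniqueness follows by running the same iteration on the difference of two solutions with zero data. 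I expect the main obstacle to be controlling the non-resonance constant for the fourth-order BVP uniformly in $j \leq n$ and in $\theta$ (so that the final constant depends only on $\eps$), together with cleanly separating the two exceptional orders $j \in \{0, 1\}$ where the factored characteristic polynomial has a double root; the hypothesis $n \leq (1-\eps)\pi/\theta - 1$ is exactly calibrated to guarantee non-resonance in all the other cases.
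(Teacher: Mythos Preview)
Your approach is essentially the same as the paper's: both reduce \eqref{sto:sys:pol} to the coupled ODE hierarchy \eqref{SS:Pol} indexed by $j$, eliminate to the fourth-order equation \eqref{fourth:order:equ} with $\lambda$ replaced by $j$, handle the degenerate cases $j\in\{0,1\}$ separately (where the paper shows $\vu^{(0)}=\vu^{(1)}=0$), and then iterate for $2\le j\le n$ using the Green's function/Fourier representations of Section~\ref{sec:higher_reg} together with the non-resonance guaranteed by $(j\pm1)\theta\le(1-\eps)\pi$. Two small points to clean up: the right-hand side of your fourth-order ODE carries the opposite sign to \eqref{fourth:order:equ}, and for $j\ge2$ the additive constant in $p^{(j-1)}$ is fixed not by the normalization $\PPp^n(0)=0$ but by the radial momentum equation \eqref{SS:Pol1} (equivalently its integrated form \eqref{pressure:const}); the normalization is used only to kill the free constant $p^{(0)}$ at $j=1$.
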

The proof of this proposition is given in Section \ref{sec:proof_210}.

\subsubsection*{Localization of the polynomial problem}
 \label{sec:loc:pros} Recall that we decomposed the source term as $ \zeta\PPf^n $ and $ \vf-\zeta\PPf^n $, in such a way that we can apply Proposition \ref{pro:exi:pol} for the polynomial part $ \PPf^n $ and Theorem \ref{main:theo} for the regular part $ \vf-\PPf^n $. The cut-off function $\zeta$ as defined in \eqref{eq:cutoff} was introduced to ensure that $ \vf-\zeta\PPf^n $ has the right behavior as $ r \longrightarrow \infty $.
Subsequently, we introduced the localized polynomial velocity $ \QQu^n $ of $ \PPu^n $ in \eqref{LPV}.
Moreover, the localized polynomial pressure is defined as
\begin{equation*}
 \QQp^n(r, \varphi) = \zeta(r)\widetilde{\PPp^n}(r, \varphi) .
\end{equation*}
From \eqref{sto:sys:pol} and Lemma \ref{451}, it is straightforward to verify that
\begin{equation}\label{sto:sys:pol:2}
 \begin{aligned}
- \Delta \QQu^n + \nabla \QQp^n = \, & \zeta \PPf^{n}  + \QQf^n \quad && \text{ in } \Omega, \\
\div\QQu^n = \, & 0 \quad && \text{ in } \Omega, \\
\QQu^n \cdot \vn = \, &  0 \quad && \text{ on } \partial \Omega',  \\
\QQu^n \cdot \vtau +  \partial_{\vn}(\QQu^n \cdot \vtau) = \, &\QQg^n  \quad && \text{ on } \partial \Omega',
\end{aligned}
\end{equation}
where
\begin{equation}\label{def:QQF:QQg}
\begin{aligned}
\QQf^n &= - 2 \nabla \zeta \cdot \nabla \PPu^n  -  \PPu^n \Delta \zeta- \Delta(\PPpsi^n \nabla^{\perp}\zeta  )+ 
\PPp^n \nabla \zeta, \\
\QQg^n &= \zeta  r^{n} \vu^{(n)} \cdot \vtau   + \underbrace{\PPpsi^n}_{=0}\nabla^{\perp}\zeta \cdot \vtau+ \partial_{\vn}(\PPpsi^n \nabla^{\perp} \zeta \cdot \vtau).
\end{aligned}
\end{equation}

\begin{prop}
\label{pro:reg:boundary}
Let $ \eps \in (0,1) $,  $ \theta \in (0, (1-\eps)\pi) $, $M,n \in \mathbb{N}$ with
$ 2 \leq   n \leq (1-\eps)\frac{\pi}{\theta}-1 $. Let $ \PPf^n \in \PP_{n-1,M} $ 
and let $ \QQf^n $ and $ \QQg^n $ be as in \eqref{def:QQF:QQg}. Then for $ \alpha \in \mathbb{R}\setminus\ZZ $, we have
\begin{equation*}
\|\QQf^n \|_{\mathscr{Z}^{M}_{\alpha}} \leq C_{ \alpha,\eps} \| \PPf^n \|_{\PP_{n-2,M}}
\end{equation*}
and if $ n  > M + \alpha $
, we have
\begin{equation*}
\| \QQg^n\|_{\mathscr{X}^{M+1}_{\alpha}} \leq  C_{ \alpha,\eps} \|  \PPf^n \|_{\PP_{n-2,M}}.
\end{equation*}
\end{prop}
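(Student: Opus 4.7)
The crucial structural observation is that every term in $\QQf^n$, as written in \eqref{def:QQF:QQg}, carries at least one factor of $\nabla\zeta$, $\Delta\zeta$, or a derivative of $\nabla^\perp\zeta$, and is therefore supported in the annular region $A:=\{(r,\varphi):r\in[1,2],\ \varphi\in(0,\theta)\}$. Similarly, in $\QQg^n$ the two terms involving $\nabla^\perp\zeta$ vanish on $\partial\Omega'\setminus\partial A$ (the first identically since $\PPpsi^n=0$ on $\partial\Omega'$ by Lemma \ref{451}, the second because $\nabla^\perp\zeta$ has radial support in the annulus), while the substantial term $g_0:=\zeta(r) r^n \vu^{(n)}\cdot\vtau$ is supported on $\partial\Omega'\cap\{r\le 2\}$. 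The plan is to exploit this support structure together with Proposition \ref{pro:exi:pol} and Lemma \ref{451}, which together yield
\[
\|\PPu^n\|_{\PP_{n,M+2}} + \|\PPp^n\|_{\PP_{n-1,M+1}} + \|\PPpsi^n\|_{\PP_{n+1,M+3}}\le C_\varepsilon\,\|\PPf^n\|_{\PP_{n-2,M}}.
\]

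For the bound on $\|\QQf^n\|_{\mathscr{Z}^M_\alpha}$, I first note that on $A$ all power weights $r^{-2\alpha+c}$ are bounded above and below by $\alpha$-dependent constants, so the $\mathscr{Z}^M_\alpha$-norm of a function supported in $A$ is equivalent to its ordinary Sobolev norm $\|\cdot\|_{H^M(A)}$ up to a constant $C_\alpha$. A polynomial $p=\sum_{j=0}^k a^{(j)}(\varphi)\, r^j\in\PP_{k,L}$ satisfies $\|p\|_{H^L(A)}\le C_{k,L}\|p\|_{\PP_{k,L}}$ because the factors $r^j$ and their $r$-derivatives are uniformly bounded on $A$ and the $\varphi$-derivatives pass directly to the coefficients. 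Applying the Leibniz rule to each of the four summands of $\QQf^n$ to distribute derivatives between $\zeta$ and the polynomial factors $\PPu^n$, $\PPp^n$, $\PPpsi^n$, and invoking the estimate displayed above, delivers $\|\QQf^n\|_{\mathscr{Z}^M_\alpha}\le C_{\alpha,\varepsilon}\|\PPf^n\|_{\PP_{n-2,M}}$.

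For the bound on $\|\QQg^n\|_{\mathscr{X}^{M+1}_\alpha}$, the two annulus-supported contributions are handled exactly as above via the equivalence of weighted and unweighted Sobolev norms on $A$. The substantive work is the estimate for $g_0=\pm\zeta(r) r^n u_r^{(n)}(\vartheta)$ on each of the two boundary rays $\vartheta\in\{0,\theta\}$. The seminorms $|g_0|_\alpha^2$ and $|r\partial_r g_0|_\alpha^2$ reduce to integrals of the form $\int_0^2 r^{2n-2\alpha}\dd r$, finite since $n>\alpha$ by the hypothesis $n>M+\alpha$. For the trace seminorms $[g_0]_{\ell-\frac12,\alpha}$ with $1\le \ell\le M+1$, I use the explicit extension $\bar g_0(r,\varphi):=\pm\zeta(r) r^n u_r^{(n)}(\varphi)$ (with signs matching the tangent direction on the two rays), which lies in $C^\infty_{\mathrm{c}}(\overline\Omega\setminus\{0\})$ and restricts to $g_0$ on $\partial\Omega'$. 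Since $(r\partial_r)^j[\zeta r^n]=n^j r^n$ on $[0,1]$ and all quantities are smooth and compactly supported on $[1,2]$, a direct computation gives
\[
\llbracket \bar g_0\rrbracket_{\ell,\alpha}^2 \le C_{\alpha,n,\ell}\,\|u_r^{(n)}\|_{H^\ell(0,\theta)}^2\int_0^2 r^{2n-2\alpha-2\ell+1}\dd r,
\]
and the integral at $r=0$ converges precisely when $n>\alpha+\ell-1$, which is ensured by $n>M+\alpha$ for all $\ell\le M+1$. Combining with $\|u_r^{(n)}\|_{H^{M+2}(0,\theta)}\le \|\PPu^n\|_{\PP_{n,M+2}}\le C_\varepsilon\|\PPf^n\|_{\PP_{n-2,M}}$ will complete the proof.

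The main (and essentially bookkeeping) obstacle will be keeping careful track of the exact weight exponent that arises after applying $(r\partial_r)^j\partial_\varphi^k$ to the products in $\QQf^n$ and $\QQg^n$, and verifying that the integrability condition at $r=0$ is met sharply by the hypothesis $n>M+\alpha$. Everything else — the Leibniz distribution of derivatives, the uniform estimates on $A$, and the passage from polynomial to Sobolev norms — is routine once the support structure is recognised.
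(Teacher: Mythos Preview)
Your proposal is correct and follows essentially the same approach as the paper's own proof: you exploit the annular support of all terms involving derivatives of $\zeta$ to reduce the weighted norms to unweighted Sobolev norms controlled by Proposition \ref{pro:exi:pol} and Lemma \ref{451}, and you handle the remaining term $\zeta r^n \vu^{(n)}\cdot\vtau$ by the integrability condition $n>M+\alpha$, which is precisely the content of \eqref{reg:pol:boundary}. The paper's proof is terser but identical in substance.
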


\begin{proof}
From the fact that $ \PPu^n $, $  \PPpsi^n $ are polynomials in $ r $ with coefficients in $ H^{M+2}(0,\theta) $, $  \PPp^n $  
is a polynomial in $ r $ with coefficients in $ H^{M+1}(0,\theta) $ and $ \nabla \zeta \in C^{\infty}_{{\mathrm{c}}}([1,2]) $, it is straightforward to deduce that  $ \|\QQf^n \|_{\mathscr{Z}^{M}_{\alpha}} \leq C \| \PPf^n \|_{\PP_{n-2,M}} $. Regarding $  \QQg^n $, we can argue similarly for $ \partial_{\vn}(\PPpsi^n \nabla^{\perp}\zeta  )\cdot\vtau $. 
To estimate $ \zeta  r^{n} \vu^{(n)} \cdot \vtau $ we use \eqref{reg:pol:boundary} and the restriction $ n > M + \alpha $.
\end{proof}

\subsection{Proof of the main result}\label{sec:proof_main_result}

We conclude with the proof of Theorem \ref{theo:with:poly}.

\begin{proof}[Proof of Theorem \ref{theo:with:poly}]

Let $ \vf \in Z^M_{\alpha} $. If $  \lfloor M+\alpha -1 \rfloor < 0 $, then  $ \vf \in \mathscr{Z}^{M}_{\alpha} $ and Theorem \ref{main:theo} already implies the result. Assume now that $  \lfloor M+\alpha - 1 \rfloor \geq 0 $. Then there exists $ \PPf^n $ with $ n = \lfloor M+\alpha + 1 \rfloor$ and  $ \PPf^n \in \PP_{n-2,M} $ such that  $ \vf = \zeta  \PPf^n  + \vf_1 $ with $ \vf_1 \in \mathscr{Z}^{M}_{\alpha} $. Proposition \ref{pro:exi:pol} ensures that there exists a solution $ (\PPu^n, \PPp^n) $ to \eqref{sto:sys:pol}. Following the localization procedure as in Section \ref{sec:loc:pros}, we obtain a localized polynomial solution $ (  \QQu^n, \QQp^n) $ that satisfies \eqref{sto:sys:pol:2}. A solution $ (\vu,p) $ to the Stokes equations \eqref{sys:1} is decomposed into a localized polynomial part and a regular part:
\begin{equation*}
\vu = \QQu^n +  \vu  - \QQu^n  = \QQu^n + \vu_{\text{reg}} \quad\text{ and }\quad p = \QQp^n+ p - \QQp^n =\QQp^n +p_{\text{reg}},
\end{equation*}
where $ (\vu_{\text{reg}},p_{\text{reg}}) $ solves
\begin{equation}\label{sto:sys:u_0}
  \begin{aligned}
-\Delta\vu_{\text{reg}} + \nabla p_{\text{reg}} = \, & \vf_1 -\QQf^n \quad && \text{ in } \Omega,  \\
\div \vu_{\text{reg}} = \, & 0 \quad && \text{ in } \Omega, \\
\vu_{\text{reg}}\cdot \vn = \, & 0 \quad && \text{ on } \partial \Omega',  \\
\vu_{\text{reg}} \cdot \vtau + \partial_{\vn}(\vu_{\text{reg}} \cdot \vtau) = \, & -\QQg^n \quad && \text{ on } \partial \Omega',
\end{aligned}
\end{equation}
and $\QQf^n, \QQg^n$ are as given in \eqref{def:QQF:QQg}. Proposition \ref{pro:reg:boundary} implies $ \QQf^n, \, \vf_1  \in \mathscr{Z}^{M}_{\alpha} $. Recall that $ n = \lfloor   M + \alpha + 1\rfloor > M + \alpha $. Therefore, by Proposition \ref{pro:reg:boundary} we get  $ \QQg^n \in \mathscr{X}^{M+1}_{\alpha}$. Theorem \ref{main:theo} implies existence of a unique solution $ (\vu_{\text{reg}}, p_{\text{reg}}) \in \mathscr{H}^{M+2}_{\alpha}\times \mathscr{Y}^{M+1}_{\alpha} $  to \eqref{sto:sys:u_0}.

We verify that for the pressure it holds $ p = \QQp^n+p_{\text{reg}} = \zeta \PPp^n  + p_{\text{reg}}   \in Y^{M+1}_{\alpha} $. Recall that by the definition of $ \mathscr{Y}^{M+1}_{\alpha} $ in \eqref{eq:def_scriptY} we have $ p_{\text{reg}} = \zeta p_0 + p_1 $ with $ p_1 \in \prescript{1}{}{\mathcal{H}}^{M+1}_{\alpha} $. Therefore, $ p =\zeta( \PPp^n + p_0) + p_1 \in Y^{M+1}_{\alpha} $.

It remains to show uniqueness. Let $ (\vu, p) = (\QQu +  \vu_{\text{reg}}, \QQp + p_{\text{reg}}) $ as before and let $ (\bar{\vu}, \bar{p}) =(\QQbu +  \bar{\vu}_{\text{reg}}, \QQbp + \bar{p}_{\text{reg}}) $ another solution in $ X^{M+2}_{\alpha} \times Y^{M+1}_{\alpha} $  of \eqref{sys:1} with the same source term $ \vf $. Then 
\begin{align}
- \Delta (\PPu^n -\PPbu^n)+ \nabla (\PPp^n-\PPbp^n)  = \, & \bm{\mathcal{R}_s} \quad && \text{ in } \Omega, \label{o1o} \\
\div (\PPu^n -\PPbu^n) = \, & 0 \quad && \text{ in } \Omega, \nonumber  \\
(\PPu^n -\PPbu^n) \cdot \vn = \, &  0 \quad && \text{ on } \partial \Omega', \nonumber  \\
(\PPu^n -\PPbu^n) \cdot \vtau +  \partial_{\vn}((\PPu^n -\PPbu^n) \cdot \vtau)- r^{n} (\vu^{(n)}- \bar{\vu}^{(n)})\cdot \vtau = \, & \mathcal{R}_b \quad && \text{ on } \partial \Omega'.\label{o2o}
\end{align} 
where 
\begin{align}
\bm{\mathcal{R}_s} = & \, - \vf_1 - \Delta \bar{\vu}_{\text{reg}} + \nabla \bar{p}_{\text{reg}} - (1-\zeta)(- \Delta \PPbu -  \nabla \PPbp + \PPf) - \div(\nabla^{\perp} \zeta \otimes \PPbu) + \Delta(\nabla^{\perp}\zeta \PPbpsi) \label{oo1}, \\
\mathcal{R}_b = & \, \bar{\vu}_{\text{reg}} \cdot \vtau + \partial_{\vn}(\bar{\vu}_{\text{reg}} \cdot \vtau) - (1-\zeta)\PPbu\cdot \vtau -\PPbpsi\cdot \nabla^{\perp}\zeta \cdot \vtau - \partial_{\vn} ((1-\zeta)\PPbu\cdot \vtau -\PPbpsi\cdot \nabla^{\perp}\zeta \cdot \vtau ). \label{oo2}
\end{align}
Notice that $ - \Delta (\PPu^n -\PPbu^n)+ \nabla (\PPp^n-\PPbp^n) \in \PP_{n-2,M} $, we deduce from \eqref{o1o} that $ \bm{\mathcal{R}_s}  \in \PP_{n-2,M} $. From \eqref{oo1} we have that $ \bm{\mathcal{R}_s} \in \mathcal{H}^{M}_{\alpha} $. Using the fact that the only polynomial in $ \PP_{n-2,M}$ which is also in $ \mathcal{H}^{M}_{\alpha} $ is the zero polynomial we deduce that $\bm{\mathcal{R}_s} = 0 $. Similarly using \eqref{o2o} and \eqref{oo2} we also have $\mathcal{R}_b  = 0 $. 

For $ \alpha >  0  $ the definition of $ Y^{M+1}_{\alpha} $ implies that $ \PPp(0) =  \PPbp(0) = 0$, so Proposition \ref{pro:exi:pol} implies that $ (\PPu, \PPp) = (\PPbu, \PPbp) $ and the uniqueness for the regular problem implies that $ (\vu_{\text{reg}}, p_{\text{reg}}) = (\bar{\vu}_{\text{reg}}, \bar{p}_{\text{reg}})  $ as well.      

For $ \alpha <  0  $, we deduce that $ \PPu = \PPbu $ while $ \PPbp = \PPp + \mathfrak{c} $ for some constant $ \mathfrak{c} \in \mathbb{R}$. It remains to show that $ \vu_{\text{reg}} =  \bar{\vu}_{\text{reg}} $ and $ p = \bar{p} $. First we notice that the solutions $ \vu_{\text{reg}} $ and $ \overline{\vu}_{\text{reg}} $ only depend on the Helmholtz projection (see Section \ref{chap:4}) of the source terms and $ \P(\overline{\QQf^n} ) = \P(\QQf^n ) + \P(\nabla(\mathfrak{c}\zeta ) ) =  \P(\QQf^n ) $. We deduce that $ \vu_\text{reg} = \overline{\vu}_\text{reg} $ and thus $\vu = \overline{\vu} $. This implies that the difference $ p_{\text{reg}} - \overline{p}_{\text{reg}}$ satisfies
\begin{equation*}
  \begin{cases}
    -\Delta(p_{\text{reg}} - \overline{p}_{\text{reg}}) = \div(\mathfrak{c} \nabla \zeta )  & \mbox{in } \Om, \\
    \nabla (p_{\text{reg}} - \overline{p}_{\text{reg}}) \cdot \vn = \mathfrak{c} \nabla \zeta  \cdot \vn  = 0 & \mbox{on }\dOm',
  \end{cases}
\end{equation*}
which has a unique solution in $ \mathscr{Y}^{M+1}_{\alpha} $ given by $  p_{\text{reg}} -\overline{p}_{\text{reg}} = \mathfrak{c} \zeta \in \mathscr{Y}^{M+1}_{\alpha} $. Therefore,
\begin{equation*}
\overline{p} = \QQbp^n + \overline{p}_{\text{reg}} = \zeta\left( \PPp^n +\mathfrak{c} \right)+ \overline{p}_\text{reg} =  \zeta  \PPp^n + p_\text{reg} = p.
\end{equation*}
Thus $(\vu,p)=(\overline{\vu},\overline{p})$ and this concludes the proof.
\end{proof}


\section{Construction of a strong solution}\label{sec:3}
In this section we prove existence of a unique strong solution to the Stokes equations \eqref{sys:1} with a regular source term as is described in Theorem \ref{Floris:theo}. The proof can be subdivided into three steps which we will be outlined below. All the details will be given in Sections \ref{chap:5}-\ref{section:SSSP}.

\subsection*{Step 1: Variational formulation (Section \ref{chap:5})} To obtain a solution to \eqref{sto:sys:inhom} with a regular source term in a weighted Sobolev space, we employ the Lax-Milgram theorem. After testing the equation with a test function $r^{-2\alpha}\vv$ with $\vv\in\TT$ in the $L^2$ inner product and integration by parts, a variational formulation of the problem is obtained. However, due to the non-scaling invariant Navier-slip boundary condition, it is not clear if the bilinear form in this variational formulation is coercive. To circumvent this issue, we instead test the equation with a test function containing derivatives.

For $\vv\in \TT$, we define
\begin{equation*}
    \vv_{\mathrm{test}}:=  \vv-  {\br |\alpha| \theta^3}(r \dr )^2\vv  -  c_3{\br |\alpha| \theta^3} r^2 \Delta \vv,
\end{equation*}
for some universal $c_3>0$ to be chosen later. Let $\alpha\neq 0$ and $\P$ be the Helmholtz projection from Section \ref{chap:4}.
Testing \eqref{sto:sys:inhom} in $(\cdot,\cdot)_{L^2(\Om)}$ with the test function
$\P(r^{-2\alpha} \vv_{\mathrm{test}})$ leads to the variational problem
\begin{equation}
\label{eq:var_form_ch3}
B(\vu, \vv):=B_1(\vu,\vv)+ {\br |\alpha| \theta^3} B_2(\vu,\vv)+ c_3  {\br |\alpha| \theta^3}  B_3(\vu,\vv) = (\vf,\P(r^{-2\alpha} \vv_{\text{test}}))_{L^2(\Omega)} + \langle g, v_r \rangle_{\alpha},
\end{equation}
where $B_1$, $B_2$ and $B_3$ will be derived in Section \ref{chap:5} and 
\begin{equation*}
\langle g, v_{r} \rangle_{\alpha} := \int_{\dOm'}r^{-2\alpha} g \vr \dd s + {\br |\alpha| \theta^3}  \int_{\dOm'}r^{-2\alpha} \big((r\dr-2\alpha +1) g \big) (r\dr \vr)\dd s.
\end{equation*}
is a pairing that will be derived in Section \ref{sec:var:problem}.
Moreover, the pressure $ p $ is the unique solution in $ \mathfrak{Y}^1_{\alpha,\theta} $ of
\begin{equation}
\label{pres:equ:var}
\nabla p = \Delta \vu + \vf - \P(\Delta \vu + \vf).
\end{equation}
\subsection*{Step 2: Coercivity of the bilinear form (Section \ref{chap:proof_coer})} The bilinear form $B$ in \eqref{eq:var_form_ch3} satisfies the conditions of the Lax-Milgram theorem using the space $\mathfrak{X}^2_{\alpha,\theta}$ as defined in \eqref{eq:def_frakX}. The proofs of the two propositions below are given in Section \ref{chap:proof_coer}.

\begin{prop}[Coercivity]\label{prop:coercivity_ch5} 
Let $ \eps \in (0,1) $. There exist constants $c, c_3> 0 $ such that for any $ \theta \in (0, (1-\eps)\pi) $ and $ \alpha \in I_{\eps} \setminus \mathbb{Z} $ satisfying $ |\alpha \theta | < c $, we have the coercivity estimate
\begin{equation*}
    B(\vu,\vu)=B_1(\vu,\vu)+ |\alpha|\theta^3 B_2(\vu,\vu)+ c_3|\alpha|\theta^3B_3(\vu,\vu)\geq C \|\vu\|^2_{\mathfrak{X}^2_{\alpha,\theta}} \qquad \text{ for all } \vu \in\TT,
\end{equation*}
for some universal constant $ C $.
\end{prop}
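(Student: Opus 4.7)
The plan is to expand $B(\vu,\vu)$ by splitting into the three contributions $B_1, B_2, B_3$ and computing each by integration by parts in polar coordinates, keeping track of which pieces of $\|\vu\|^2_{\mathfrak{X}^2_{\alpha,\theta}}$ they produce and which cross terms must be absorbed. The three key tools will be: (a) Hardy's inequality (Lemma \ref{lem:Hardy_on_wedge}) to handle weight derivatives $\partial_r r^{-2\alpha} = -2\alpha r^{-2\alpha-1}$; (b) the improved Hardy inequality for $\TT$ (Lemma \ref{lem:new_estimates_uph_ur}), which gives $\|r^{-1}\vu\|_\alpha \leq C_0(\theta)\theta\|\nabla\vu\|_\alpha$ and is essential because each $\theta$-factor can cancel part of the $\theta^3$ scaling; and (c) the commutation property $\P(r\partial_r\ww) = r\partial_r\P\ww$ and symmetry of $\P$ from Lemma \ref{lem:commute_rdr_P}, so that the pressure term drops out against $\vv\in\TT$ at every level.

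For $B_1(\vu,\vu)$: testing $-\Delta\vu + \nabla p = \vf$ against $\P(r^{-2\alpha}\vu)$, the pressure term vanishes by the symmetry of $\P$ applied to a $\TT$-field. Integrating by parts on $-\Delta\vu\cdot r^{-2\alpha}\vu$ produces the Dirichlet energy $\|\nabla\vu\|_\alpha^2$, a positive boundary contribution $|u_r|_\alpha^2$ coming from the Navier-slip condition $u_r = -r^{-1}\partial_\vn u_r$ (which is precisely why $|u_r|_\alpha^2$ appears in the norm), and cross terms proportional to $\alpha$ from the weight. These cross terms, of the form $\alpha \int r^{-2\alpha-1}\vu\cdot\partial_r\vu$, are absorbed by Hardy provided $|\alpha|$ stays bounded within $I_\eps$. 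A further difficulty in this step is the mismatch between scalings of $u_r$ and $r^{-1}\partial_\varphi u_r$ in the Navier-slip boundary condition: $B_1$ alone does not dominate the $|u_r|_\alpha^2$ piece uniformly in $\theta$, which is exactly why we add the higher-order $B_2, B_3$.

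For $|\alpha|\theta^3 B_2(\vu,\vu)$ (from testing with $-(r\partial_r)^2\vu$): commuting $\P$ past $r\partial_r$ twice via Lemma \ref{lem:commute_rdr_P} and integrating by parts in $r$ produces $\|r\partial_r\nabla\vu\|_\alpha^2$ weighted by $|\alpha|\theta^3$, which feeds the $|\alpha|\theta^3|r\partial_r u_r|_\alpha^2$ piece of $\|\vu\|^2_{\mathfrak{X}^2_{\alpha,\theta}}$ and part of $|\alpha|\theta^3\llbracket\vu\rrbracket_{2,\alpha-1}^2$. For $c_3|\alpha|\theta^3 B_3(\vu,\vu)$ (from testing with $-r^2\Delta\vu$): after moving $r^2$ through $\Delta$ and integrating by parts, the leading term is $c_3|\alpha|\theta^3\|r\Delta\vu\|_\alpha^2$, which via the Bogovski\u{\i}-type bound $\llbracket\vu\rrbracket_{2,\alpha-1}\lesssim \|r\Delta\vu\|_\alpha$ on divergence-free fields controls the remaining $|\alpha|\theta^3\llbracket\vu\rrbracket_{2,\alpha-1}^2$. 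Choosing $c_3$ large (universally) then absorbs the sign-indefinite cross terms between $B_2$ and $B_3$ that come from integration by parts.

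The main obstacle will be controlling the cross terms that mix $B_1$ with $|\alpha|\theta^3(B_2+c_3B_3)$, together with the boundary traces generated by $(r\partial_r)^2$ and $r^2\Delta$ when moved to $\vu$. These boundary traces contain $r\partial_r u_r$ evaluated on $\{\varphi=0,\theta\}$, which must be reconstructed using the Navier-slip condition and then estimated by $\theta$-weighted interior norms via Lemma \ref{lem:new_estimates_uph_ur}. The precise accounting shows that every bad term carries either an extra factor $|\alpha|\theta$ (from Hardy with weight exponent close to a resonance avoided by $\alpha\in I_\eps$) or $(\alpha\theta)^2$ (from combining Hardy with the improved Hardy), so the assumption $|\alpha\theta|<c$ for some universal $c>0$ allows all remainders to be absorbed into the leading terms, yielding the claimed coercivity with a universal constant independent of $\theta$.
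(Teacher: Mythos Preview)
Your proposal misses the central mechanism of the proof and gets one structural choice backwards.

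First, the claim that ``the pressure term drops out against $\vv\in\TT$ at every level'' obscures the real difficulty. When you move from $(-\P\Delta\vu, r^{-2\alpha}\vu)_{L^2}$ to $(-\Delta\vu, \P(r^{-2\alpha}\vu))_{L^2}$, the field $r^{-2\alpha}\vu$ is \emph{not} divergence-free, so $\P(r^{-2\alpha}\vu) \neq r^{-2\alpha}\vu$. The commutator $[\P, r^{-2\alpha}]\vu$ generates an extra boundary term $\int_{\dOm'}(\dn u_r)([\P, r^{-2\alpha}]\vu)_r\,\dd s$ which contains a normal derivative not controlled by first-order norms (see Remark~\ref{rem:no:weak}). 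This is the actual reason $B_2$ and $B_3$ are needed: after applying the fundamental theorem of calculus in $\ph$ to this boundary term, one produces $\|\tfrac{1}{r}\dph^2 u_r\|_\alpha^2$, which only $B_3$ can absorb. Your diagnosis (``$B_1$ alone does not dominate $|u_r|_\alpha^2$ uniformly in $\theta$'') is off; $B_1$ gives $|u_r|_\alpha^2$ directly. Similarly, in $B_3$ the commutator $[\P,\Delta]$ appears and must be estimated (Lemma~\ref{lem:est_P_delta_w}); both commutators require the Fourier--Mellin analysis of Section~\ref{sec:Estimates_Helmholtz}, which your sketch never invokes.

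Second, your choice ``$c_3$ large'' would destroy coercivity. In Proposition~\ref{prop:B3}, $B_3(\vu,\vu)\geq \tfrac{1}{2}\|r\Delta\vu\|_\alpha^2 - E_2\|\nabla r\dr\vu\|_\alpha^2 - \ldots$, so $c_3|\alpha|\theta^3 B_3$ eats into the positive $\|\nabla r\dr\vu\|_\alpha^2$ contribution from $B_2$. The paper therefore chooses $c_3$ \emph{small} (so that $c_3(E_2+1)<\tfrac14$) and then takes $|\alpha\theta|<c$ small to absorb the remaining errors. The closing step is not a Bogovski\u{\i}-type bound but the elementary identity $\|\tfrac{1}{r}\dph^2 u_r\|_\alpha^2 \leq \|r\Delta\vu\|_\alpha^2 + \|\nabla r\dr\vu\|_\alpha^2 + C\|\nabla\vu\|_\alpha^2$ obtained by solving \eqref{eqapp:del_polar} for $\dph^2 u_r$.
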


\begin{prop}[Boundedness]\label{prop:boundedness_ch5}
Let $ \eps \in (0,1)$. There exist constants $c, c_3 > 0 $ such that for any $ \theta \in (0, (1-\eps)\pi) $ and $ \alpha \in I_{\eps} \setminus \mathbb{Z} $ satisfying $ |\alpha \theta | < c $, we have
\begin{equation*}
    |B(\vu,\vv)|\leq C \|\vu\|_{\mathfrak{X}^2_{\alpha,\theta}}\|\vv\|_{\mathfrak{X}^2_{\alpha,\theta}} \qquad \text{ for all } \vu,\vv\in\TT,
\end{equation*}
for some universal constant $ C $.
\end{prop}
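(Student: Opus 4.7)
The plan is to decompose $B(\vu,\vv) = B_1(\vu,\vv) + |\alpha|\theta^3 B_2(\vu,\vv) + c_3|\alpha|\theta^3 B_3(\vu,\vv)$ into three pieces, each arising from testing $-\Delta\vu + \nabla p = \vf$ against one of the three summands of $\vv_{\mathrm{test}}$, and to show separately that each piece is controlled by the constituents of $\|\cdot\|_{\mathfrak{X}^2_{\alpha,\theta}}$ defined in \eqref{eq:def_frakX}. Since $\vu,\vv\in\TT$ and $\P$ is symmetric and preserves solenoidality (Lemma \ref{lem:commute_rdr_P}), the pressure drops out and each bulk term becomes a weighted $L^2$-pairing of $\Delta\vu$ with a derivative of $\vv$, which after integration by parts turns into symmetric gradient pairings plus explicit boundary contributions produced by the Navier-slip condition.

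For $B_1$, integration by parts yields
\begin{equation*}
B_1(\vu,\vv) = \int_\Omega r^{-2\alpha}\nabla\vu\cdot\nabla\vv\,\mathrm{d}x + \text{(weight commutators)} + \text{(boundary terms)},
\end{equation*}
where weight commutators of the form $\alpha r^{-2\alpha}r^{-1}\nabla\vu\cdot\vv$ are absorbed via Cauchy--Schwarz followed by Hardy's inequality (Lemma \ref{lem:Hardy_on_wedge}), while the boundary terms reduce to $|u_r|_\alpha|v_r|_\alpha$ plus contributions controlled by $\llbracket\vu\rrbracket_{1,\alpha}\llbracket\vv\rrbracket_{1,\alpha}$ via the trace estimate in Appendix \ref{app:Hardy:traces}. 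The net result is
\begin{equation*}
|B_1(\vu,\vv)| \leq C\bigl(\llbracket\vu\rrbracket_{1,\alpha} + |u_r|_\alpha\bigr)\bigl(\llbracket\vv\rrbracket_{1,\alpha} + |v_r|_\alpha\bigr) \leq C\|\vu\|_{\mathfrak{X}^2_{\alpha,\theta}}\|\vv\|_{\mathfrak{X}^2_{\alpha,\theta}}.
\end{equation*}

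For $B_2$ and $B_3$ the strategy is to distribute the two extra derivatives evenly between $\vu$ and $\vv$: moving one factor of $r\partial_r$ (respectively one first-order component of $r^2\Delta$) from the $\vv$-side to the $\vu$-side via integration by parts, and using Lemma \ref{lem:commute_rdr_P}\ref{it:propHelm3} to commute $r\partial_r$ past $\P$, yields a schematic representation
\begin{equation*}
B_2(\vu,\vv),\ B_3(\vu,\vv) \;\sim\; \int_\Omega r^{-2\alpha}\,(r\partial_r)\nabla\vu\cdot(r\partial_r)\nabla\vv\,\mathrm{d}x \;+\; \text{l.o.t.},
\end{equation*}
where the lower-order terms come from weight commutators and are again absorbed via Hardy. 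Since $\int_\Omega r^{-2\alpha}|(r\partial_r)\nabla\vu|^2\,\mathrm{d}x \leq C\llbracket\vu\rrbracket_{2,\alpha-1}^2$, Cauchy--Schwarz gives
\begin{equation*}
|\alpha|\theta^3\bigl(|B_2(\vu,\vv)| + c_3|B_3(\vu,\vv)|\bigr) \leq C\,|\alpha|\theta^3\,\llbracket\vu\rrbracket_{2,\alpha-1}\llbracket\vv\rrbracket_{2,\alpha-1} \leq C\|\vu\|_{\mathfrak{X}^2_{\alpha,\theta}}\|\vv\|_{\mathfrak{X}^2_{\alpha,\theta}},
\end{equation*}
while the boundary contributions from $B_2$ and $B_3$ reproduce exactly the $|\alpha|\theta^3|r\partial_r u_r|_\alpha^2$ summand of the norm after the trace estimate and an application of Cauchy--Schwarz on $\partial\Omega'$.

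The main obstacle will be the careful bookkeeping of the numerous commutators that arise when integration by parts interacts simultaneously with the radial weight $r^{-2\alpha}$, with the Helmholtz projection $\P$, and with the operators $(r\partial_r)^2$ and $r^2\Delta$ hidden in $\vv_{\mathrm{test}}$; in particular, cross terms that combine an $(r\partial_r)$-derivative of $\vu$ with a first-order derivative of $\vv$ must be handled via Young's inequality so that any mismatched contributions weighted by $|\alpha|\theta^3$ can either be absorbed into $\llbracket\cdot\rrbracket_{1,\alpha}^2$ or further absorbed thanks to the smallness assumption $|\alpha\theta|<c$. Boundedness of $\P$ on the relevant weighted spaces (Section \ref{sec:Estimates_Helmholtz}) is invoked throughout to eliminate the projection from the test function without loss, and summing the three bounds then yields the claimed inequality with a universal constant $C$.
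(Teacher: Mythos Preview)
Your overall strategy matches the paper's: bound each $T_j^{(i)}$ in the explicit expressions \eqref{eq:B1}--\eqref{eq:B3} by Cauchy--Schwarz plus Hardy and the Helmholtz commutator estimates of Section~\ref{sec:Estimates_Helmholtz}. However, your claimed first-order bound $|B_1(\vu,\vv)|\le C(\llbracket\vu\rrbracket_{1,\alpha}+|u_r|_\alpha)(\llbracket\vv\rrbracket_{1,\alpha}+|v_r|_\alpha)$ is false. The form $B_1$ in \eqref{eq:B1} contains
\[
T_4^{(1)} = -\int_\Omega r^{-2}\partial_\varphi\bigl((\partial_\varphi u_r)([\P,r^{-2\alpha}]\vv)_r\bigr)\,\mathrm{d}x,
\]
which arose not from a ``weight commutator of the form $\alpha r^{-2\alpha}r^{-1}\nabla\vu\cdot\vv$'' but from converting the boundary integral $\int_{\partial\Omega'}(\partial_{\vn}u_r)([\P,r^{-2\alpha}]\vv)_r\,\mathrm{d}s$ into a bulk term via the fundamental theorem of calculus (see Remark~\ref{rem:no:weak}). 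Expanding the outer $\partial_\varphi$ produces $\|r^{-1}\partial_\varphi^2 u_r\|_\alpha$, a genuinely second-order quantity in $\vu$. The paper absorbs this by pairing it with the \emph{small} commutator bound $\|r^{-1}[\P,r^{-2\alpha}]\vv\|_{-\alpha}\le C_\eps|\alpha|\theta^2\|\nabla\vv\|_\alpha$ from Corollary~\ref{cor:estimate_Fourier_Phi:extra} together with \eqref{eq:est_dph^2ur}; the factor $|\alpha|\theta^2=(|\alpha|\theta^3)^{1/2}(|\alpha|\theta)^{1/2}$ is precisely what allows the cross term $\llbracket\vu\rrbracket_{2,\alpha-1}\cdot\llbracket\vv\rrbracket_{1,\alpha}$ to be controlled by $\|\vu\|_{\mathfrak{X}^2_{\alpha,\theta}}\|\vv\|_{\mathfrak{X}^2_{\alpha,\theta}}$. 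You invoke this mechanism in your last paragraph for $B_2,B_3$ but omit it for $B_1$, where it is equally essential.

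A second, smaller point: the commutators $[\P,r^{-2\alpha}]$ (appearing in $B_1,B_2$) and $[\P,\Delta]$ (appearing in $B_3$, see \eqref{eq:B3}) are distinct and require separate estimates --- the former via Lemma~\ref{lem:estimate_Fourier_Phi} and Corollary~\ref{cor:estimate_Fourier_Phi:extra}, the latter via Lemma~\ref{lem:est_P_divw0} applied to the divergence-free field $\Delta\vu$. Your phrase ``boundedness of $\P$ \ldots\ to eliminate the projection from the test function without loss'' obscures this; the projection is not eliminated, and the resulting commutator terms carry the whole subtlety of the proof.
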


\subsection*{Step 3: Strong solution (Section \ref{section:SSSP})} It remains to prove that the solution to the variational problem \eqref{eq:var_form_ch3} also satisfies the original Stokes equations \eqref{sto:sys:inhom} and is in fact a strong solution. To this end, we need to verify that the set of test functions of the form $\vv_{\text{test}}$ is of sufficiently high resolution in order to apply the fundamental lemma of calculus of variations, i.e., we study the surjectivity of the mapping $\vv\mapsto \vv_{\text{test}}$. At this point it is crucial that all the terms in $\vv_{\text{test}}$ have the same scaling in $r$ and that we did not apply the non-scaling invariant Navier-slip boundary condition to derive the third bilinear form. This will lead to a test function problem that is subject to Dirichlet boundary conditions which is easier to solve than a problem with Navier-slip boundary conditions.\\

The following proposition is proved in Section \ref{section:SSSP}.
{\br
\begin{prop}
\label{equi:of:sol}
Let $ \eps \in (0,1) $. There exists a constant $ c > 0 $ such that for any $ \theta \in (0, (1-\eps)\pi) $ and $ \alpha \in I_{\eps} \setminus \mathbb{Z} $ satisfying $ |\alpha \theta | < c $, for  $c_3>0$ the constant in \eqref{eq:var_form_ch3} and for any $ \vf \in \mathcal{H}^{0}_{\alpha-1} $, $ g \in \mathscr{X}^{0}_{\alpha}$, the following statement holds: $ (\vu,p)\in (\mathfrak{X}^2_{\alpha,\theta}, \mathfrak{Y}^1_{\alpha,\theta} ) $ satisfies \eqref{sto:sys:inhom} almost everywhere, if and only if $ (\vu,p)\in (\mathfrak{X}^2_{\alpha,\theta}, \mathfrak{Y}^1_{\alpha,\theta} )  $ is a variational solution in the sense that they satisfy  \eqref{eq:var_form_ch3} for all $\vv\in\TT $ and \eqref{pres:equ:var}.
\end{prop}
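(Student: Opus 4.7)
The proof splits into the two directions of the equivalence; throughout, $(\vu,p)\in\mathfrak{X}^2_{\alpha,\theta}\times\mathfrak{Y}^1_{\alpha,\theta}$ supplies enough regularity to justify all integrations by parts below.

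\textbf{Forward direction.} Suppose $(\vu,p)$ solves \eqref{sto:sys:inhom} a.e. The plan is to test the momentum equation in $(\cdot,\cdot)_{L^2(\Omega)}$ against $\P(r^{-2\alpha}\vv_{\mathrm{test}})$ with $\vv\in\TT$. By the symmetry of $\P$ (Lemma \ref{lem:commute_rdr_P}(ii)) together with the fact that $\nabla p\in\ker\P$ (a direct consequence of Definition \ref{def:Helmholtz}, since $\Phi=p$ then solves the defining problem modulo an additive constant), one has $(\nabla p,\P(r^{-2\alpha}\vv_{\mathrm{test}}))_{L^2}=0$; applying $I-\P$ directly to the momentum equation moreover yields \eqref{pres:equ:var}. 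For the viscous term I split $\vv_{\mathrm{test}}=\vv-|\alpha|\theta^3(r\dr)^2\vv-c_3|\alpha|\theta^3 r^2\Delta\vv$ and integrate by parts in each of the three pieces, recovering $B_1(\vu,\vv)$, $|\alpha|\theta^3 B_2(\vu,\vv)$, and $c_3|\alpha|\theta^3 B_3(\vu,\vv)$ respectively. The boundary integrals along $\dOm'$ are simplified using $\vu\cdot\vn=0$ and the Navier-slip condition $u_r\pm r^{-1}\dph u_r=g$, and collapse into the pairing $\langle g,v_r\rangle_\alpha$.

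\textbf{Backward direction.} Conversely, let $(\vu,p)$ satisfy \eqref{eq:var_form_ch3} for every $\vv\in\TT$ together with \eqref{pres:equ:var}. Define $\vc{R}:=-\Delta\vu+\nabla p-\vf$; then \eqref{pres:equ:var} rearranges to $\vc{R}=-\P(\Delta\vu+\vf)$, so $\vc{R}$ lies in the range of $\P$, i.e.\ $\div\vc{R}=0$ in $\Omega$ and $\vn\cdot\vc{R}=0$ on $\dOm'$. Undoing the integrations by parts from the forward direction recasts the variational identity as
\begin{equation*}
(\vc{R},\P(r^{-2\alpha}\vv_{\mathrm{test}}))_{L^2(\Omega)}+\mathrm{BT}[\vu,g;\vv]=0\qquad\text{for all }\vv\in\TT,
\end{equation*}
where $\mathrm{BT}$ is a boundary pairing encoding the defect of the Navier-slip condition for $\vu$. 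Restricting first to $\vv\in\TT$ compactly supported in $\Omega$ kills $\mathrm{BT}$, and the remaining identity reduces to showing that the bulk pairing forces $\vc{R}=0$ a.e.; afterwards, varying the boundary trace $v_r|_{\dOm'}$ extracts the Navier-slip condition, which together with $\div\vu=0$ and $\vu\cdot\vn=0$ (already built into $\mathfrak{X}^2_{\alpha,\theta}$) recovers \eqref{sto:sys:inhom}.

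\textbf{Test-function surjectivity (main obstacle).} Both vanishing statements hinge on the density of $\{\P(r^{-2\alpha}\vv_{\mathrm{test}}):\vv\in\TT\}$ in a suitable closed subspace of the range of $\P$. For a given target $\bm{\varphi}$ in a dense subset, I look for $\vv\in\TT$ together with a Lagrange multiplier $\tilde q$ satisfying
\begin{equation*}
\vv-|\alpha|\theta^3(r\dr)^2\vv-c_3|\alpha|\theta^3 r^2\Delta\vv=r^{2\alpha}\bm{\varphi}+r^{2\alpha}\nabla\tilde q,
\end{equation*}
together with the constraint $\div\vv=0$ in $\Omega$ and the Dirichlet condition $v_\varphi=0$ on $\dOm'$ inherited from $\TT$. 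Decisively, all terms on the left scale identically in $r$ and no normal derivative of $v_r$ is prescribed, so the auxiliary system is of Dirichlet and scaling-invariant type, not Navier-slip — exactly the simplification flagged in the discussion preceding the proposition. For $|\alpha\theta|<c$ sufficiently small, the left-hand operator is a small perturbation of the identity and is invertible by a Neumann series / contraction argument in a weighted Sobolev space adapted to $\TT$, with the uniform-in-$\theta$ control of $\P$ supplied by Section \ref{sec:Estimates_Helmholtz}. This perturbative solvability of the test-function equation is the heart of the argument and is precisely where the smallness $|\alpha\theta|<c$ is used.
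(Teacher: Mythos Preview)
Your overall architecture matches the paper's: the forward direction is exactly Lemma \ref{reg:sol:are:var}, and for the backward direction you correctly identify that one must (i) show $-\P\Delta\vu=\P\vf$ a.e.\ by producing enough test functions of the form $\P(r^{-2\alpha}\vv_{\mathrm{test}})$, and (ii) recover the Navier-slip condition from the surviving boundary terms. The paper also solves a Dirichlet test-function problem of the same shape as yours (Lemma \ref{lem:sol_dual_problem}), although it imposes the full condition $\vv=0$ on $\dOm'$, not merely $v_\varphi=0$; the stronger condition is what kills the boundary contributions in $B_1,B_2$ and the pairing $\langle g,v_r\rangle_\alpha$ when you undo the integrations by parts.

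The genuine gap is in your solvability argument for the test-function problem. You claim the left-hand side is a ``small perturbation of the identity'' and invoke a Neumann series. But $(r\dr)^2$ and $r^2\Delta$ are second-order operators: on any fixed weighted Sobolev space they are unbounded, so $|\alpha|\theta^3(r\dr)^2+c_3|\alpha|\theta^3 r^2\Delta$ is not small in operator norm no matter how small $|\alpha\theta|$ is, and the contraction does not close. Equivalently, after Mellin transforming, the multiplier $1-|\alpha|\theta^3\lambda^2$ is not uniformly close to $1$ along the line $\Re\lambda=\alpha$, since $|\lambda|$ is unbounded. The paper circumvents this by taking the Mellin transform, eliminating $\hat v_r$ and $\hat p$, and reducing \eqref{eq:dual_problem1} to a fourth-order ODE in $\varphi$ with coefficients $a_1,a_2,a_3$ depending on $\lambda$; coercivity of the associated bilinear form on $H^2_0(0,\theta)$ is then checked directly, and \emph{this} is where the smallness $|\alpha\theta|<c$ enters (it fixes the signs of $\Re a_2$ and $\Re a_3$), followed by Lax--Milgram. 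So the correct mechanism is coercivity of a $\lambda$-dependent ODE, not a perturbation-of-identity argument.
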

}

By combining the results from the above steps we can finish the proof of Theorem \ref{Floris:theo}.
\begin{proof}[Proof of Theorem \ref{Floris:theo}]
For any $ \vf \in \mathcal{H}^0_{\alpha-1} $ and $ g \in \mathscr{X}^0_{\alpha}$ there exists a unique $ \vu \in \mathfrak{X}_{\alpha,\theta}^2 $ such that
\begin{equation*}
B(\vu, \vv) = (\vf, \P(r^{-2\alpha}\vv_{\text{test}}))_{L^2(\Omega)} + \langle g, v_r\rangle_{\alpha} \qquad \text{ for all } \vv \in \TT.
\end{equation*}
This follows immediately by the Lax-Milgram theorem using Propositions \ref{prop:coercivity_ch5}, \ref{prop:boundedness_ch5}, density of $\TT$ in $\mathfrak{X}^2_{\alpha,\theta}$ (by definition), Proposition \ref{prop:densH^2alpha} and the estimates
\begin{equation*}
|  ( \vf,\P(r^{-2\alpha} \vv_{\text{test}}))_{L^2(\Omega)} | \leq C \|  \vf \|_{\mathcal{H}^0_{\alpha-1}} \| \vv\|_{\mathfrak{X}^2_{\alpha,\theta}} \quad \text{ and } \quad |\langle g, v_r\rangle_{\alpha} | \leq \| g \|_{\mathscr{X}^0_{\alpha}}\|\vv\|_{\mathfrak{X}^2_{\alpha,\theta}}.
\end{equation*}
Finally, Proposition \ref{equi:of:sol} ensures that the solution $(\vu,p)\in \mathfrak{X}^2_{\alpha,\theta}\times \mathfrak{Y}^1_{\alpha,\theta}$ (where  $p$ is the solution to \eqref{pres:equ:var}), satisfies the Stokes problem \eqref{sto:sys:inhom}.
\end{proof}


\section{Estimates on the Helmholtz projection}\label{sec:Estimates_Helmholtz}
In this section we prepare for proving coercivity of the bilinear forms $ B_1 $, $ B_2$ and $ B_3$ in Section \ref{chap:proof_coer}. We derive estimates for the commutators $[\P, \Delta] $  and  $[\P, r^{-2\alpha}]$ which will appear in the bilinear forms $ B_1 $, $ B_2$ and $ B_3$ in Section \ref{chap:5}. We recall that the commutator of two operators $A_1$ and $A_2$ is given by $[A_1,A_2]=A_1A_2-A_2A_1$.

\subsection{Estimates on the commutator $[\P, \Delta]$}
The commutator $[\P, \Delta]$ can have singularities at
 $ \pm \frac{\pi}{\theta}  $. Therefore, we define for $\eps \in (0, 1)$ and $\theta\in(0,(1-\eps)\pi)$ the interval
 $$ \mathfrak{I}_{\eps} := \left[ -(1-\eps)\tfrac{\pi}{\theta},(1-\eps)\tfrac{\pi}{\theta} \right] $$
and we show uniform estimates for $\Re \lambda\in \mathfrak{I}_\eps $.

We start with the following lemma in the case that $\vv $ is divergence free, but $ \vv \cdot \nn \neq 0$ on $\dOm'$.
\begin{lem}\label{lem:est_P_divw0}
Let $\eps \in (0, 1 ) $,  $ \theta \in (0, (1-\eps)\pi) $ and let $ \vv \in C^{\infty}_{\mathrm{c}}(\overline{\Omega}\setminus \{0\})$ be such that $ \div \vv =0 $ in $\Om$. Then for $ \alpha \in \mathfrak{I}_{\eps} \setminus \{0\} $, we have
\begin{equation*}
	\|\P \vv \|_{\alpha} \leq C_{\eps} \|\vv \|_{\alpha}.
\end{equation*}
\end{lem}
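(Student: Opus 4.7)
Since $\P\vv = \vv - \nabla\Phi$, the triangle inequality reduces the claim to proving $\|\nabla\Phi\|_\alpha \leq C_\eps\|\vv\|_\alpha$. The hypothesis $\div\vv = 0$ substantially simplifies the situation: the right-hand side $\hat g$ in \eqref{eq:problem_Phi_hat} vanishes identically (since $(\lambda+1)\hat v_r(\lambda,\ph)+\dph\hat v_\ph(\lambda,\ph)=0$ yields $\hat g(\lambda,\ph)=0$), so the formula of Proposition \ref{prop:Helm_rep_Green} reduces to the pure-Neumann piece
\[
\hat\Phi(\lambda,\ph)=\frac{\hat v_\ph(\lambda-1,0)\cos(\lambda(\theta-\ph))-\hat v_\ph(\lambda-1,\theta)\cos(\lambda\ph)}{\lambda\sin(\lambda\theta)}.
\]
Applying Plancherel (Lemma \ref{lem:Mellin_prop}(v)) on the line $\Re\lambda=\alpha-1$ together with the shift $\mu=\lambda+1$ gives
\[
\|\nabla\Phi\|_\alpha^2=\int_{\Re\mu=\alpha}\int_0^\theta\bigl(|\mu\hat\Phi(\mu,\ph)|^2+|\dph\hat\Phi(\mu,\ph)|^2\bigr)\,d\ph\,d\Im\mu,
\]
whereas $\|\vv\|_\alpha^2=\int_{\Re\mu=\alpha}\int_0^\theta|\hat\vv(\mu-1,\ph)|^2\,d\ph\,d\Im\mu$. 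The goal thus becomes a $\mu$-pointwise bound of the integrand on the left in terms of that on the right.

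\paragraph{Pointwise estimate.} Writing $\mu=\alpha+i\eta$ and using the identity $|\cos(\mu\ph)|^2+|\sin(\mu\ph)|^2=\cosh(2\eta\ph)$, together with $\int_0^\theta\cosh(2\eta\ph)\,d\ph=\sinh(2|\eta|\theta)/(2|\eta|)$, the explicit formula above yields
\[
\int_0^\theta\bigl(|\mu\hat\Phi|^2+|\dph\hat\Phi|^2\bigr)(\mu,\ph)\,d\ph\leq\frac{C\,\sinh(2|\eta|\theta)}{|\eta|\,|\sin(\mu\theta)|^2}\bigl(|\hat v_\ph(\mu-1,0)|^2+|\hat v_\ph(\mu-1,\theta)|^2\bigr).
\]
On the other hand, the scaled trace inequality on $(0,\theta)$ combined with the divergence-free identity $\dph\hat v_\ph(\mu-1,\ph)=-\mu\hat v_r(\mu-1,\ph)$ furnishes
\[
|\hat v_\ph(\mu-1,0)|^2+|\hat v_\ph(\mu-1,\theta)|^2\leq C\Bigl(\tfrac{1}{\theta}\,\|\hat v_\ph(\mu-1,\cdot)\|_{L^2(0,\theta)}^2+|\mu|^2\theta\,\|\hat v_r(\mu-1,\cdot)\|_{L^2(0,\theta)}^2\Bigr).
\]
Combining these two displays reduces the proof to showing that the factor
\[
K(\mu,\theta):=\frac{|\mu|^2\theta\,\sinh(2|\eta|\theta)/|\eta|}{|\sin(\mu\theta)|^2}
\]
is bounded uniformly by $C_\eps$ for $\mu$ on the line $\Re\mu=\alpha$ with $\alpha\in\mathfrak{I}_\eps\setminus\{0\}$. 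Recall $|\sin(\mu\theta)|^2=\sin^2(\alpha\theta)+\sinh^2(\eta\theta)$.

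\paragraph{Controlling $K(\mu,\theta)$.} For $|\eta|\theta\geq 1$ one uses $\sinh(2|\eta|\theta)/\sinh^2(\eta\theta)=2\coth(|\eta|\theta)\leq C$ and $|\mu|^2\theta/|\eta|\leq C(|\eta|+|\alpha|^2\theta/|\eta|)\theta$, while the denominator contributes an additional $\sinh^2(\eta\theta)$ factor that absorbs the growth. For $|\eta|\theta\leq 1$ one Taylor expands $\sinh(2|\eta|\theta)/|\eta|\leq C\theta$, so $K\leq C|\mu|^2\theta^2/\sin^2(\alpha\theta)$; since $\alpha\in\mathfrak{I}_\eps$ means $|\alpha\theta|\leq(1-\eps)\pi$, one has $\sin^2(\alpha\theta)\geq c_\eps\,(\alpha\theta)^2$, and $|\mu|^2\theta^2\leq 2(\alpha^2+\eta^2)\theta^2\leq C_\eps(\alpha\theta)^2$ in this regime. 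Integrating the resulting pointwise bound over $\Im\mu$ and summing the two contributions from the two summands in the trace estimate yields $\|\nabla\Phi\|_\alpha^2\leq C_\eps\|\vv\|_\alpha^2$.

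\paragraph{Main obstacle.} The delicate part is the small-$\alpha\theta$ regime, where the Green's function is near its resonance at $\mu=0$ and naive bounds produce a divergent $1/(\alpha\theta)$ factor. The cancellation that saves the proof is hidden in the divergence-free constraint: the \emph{difference} $\hat v_\ph(\mu-1,\theta)-\hat v_\ph(\mu-1,0)=-\mu\int_0^\theta\hat v_r(\mu-1,\ph)\,d\ph$ carries an extra factor of $\mu$, which compensates exactly the $1/\mu$ in $1/\sin(\mu\theta)\sim 1/(\mu\theta)$. Making this cancellation manifest requires decomposing the numerator of $\hat\Phi$ along $\hat v_\ph(\mu-1,0)$ and the difference rather than along the two boundary values separately, so that the $|\mu|^2\theta\|\hat v_r\|_{L^2}^2$ term in the trace estimate is what effectively closes the argument in this regime. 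The condition $\alpha\in\mathfrak{I}_\eps$ simultaneously keeps $\mu$ away from the higher resonances $\pm\pi/\theta$, so uniformity in $\eps$ (but not in $\alpha$) is natural.
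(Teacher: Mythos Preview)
Your approach is essentially the paper's: reduce to $\|\nabla\Phi\|_\alpha$, use the boundary-only Green's formula (since $\div\vv=0$), combine a Plancherel identity with a trace estimate exploiting $\dph\hat v_\ph=-\mu\hat v_r$, and treat the small-$|\mu\theta|$ regime via the divergence-free cancellation in the difference of boundary traces. The paper carries out precisely this program, with the decomposition $I_1+I_2+I_3$ in the regime $|\alpha\theta|<\pi/4$, $|t\theta|<1/4$ playing the role of your ``Main obstacle'' fix.

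There is, however, a genuine error in your ``Controlling $K(\mu,\theta)$'' paragraph that you should not leave standing. The inequality
\[
|\mu|^2\theta^2 \;=\; (\alpha^2+\eta^2)\theta^2 \;\le\; C_\eps(\alpha\theta)^2
\]
is false in the regime $|\eta\theta|\le 1$: nothing prevents $|\eta\theta|\sim 1$ while $|\alpha\theta|$ is arbitrarily small (the hypothesis is only $\alpha\neq 0$), so the right-hand side cannot dominate. The same problem afflicts the $\theta^{-1}\|\hat v_\ph\|^2$ summand from your trace estimate, which leads to a factor $1/|\sin(\mu\theta)|^2\sim 1/|\mu\theta|^2$ that is unbounded as $\mu\theta\to 0$. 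In other words, the naive route you sketch in that paragraph does not close, and you implicitly acknowledge this by writing the ``Main obstacle'' paragraph afterwards.

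The fix you describe there---decompose the numerator of $\mu\hat\Phi$ as $\hat v_\ph(\mu-1,0)\bigl[\cos(\mu(\theta-\ph))-\cos(\mu\ph)\bigr]+\bigl[\hat v_\ph(\mu-1,0)-\hat v_\ph(\mu-1,\theta)\bigr]\cos(\mu\ph)$ and use $\hat v_\ph(\mu-1,0)-\hat v_\ph(\mu-1,\theta)=\mu\!\int_0^\theta\hat v_r(\mu-1,\ph)\,d\ph$---is correct and is exactly what the paper does (their $I_1,I_3$ pick up an extra $|\mu\theta|^2$ from $1-\cos$, their $I_2$ picks up the extra $\mu$ from the trace difference). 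But you need to actually execute this decomposition and bound each piece; as written, your proof asserts a bound on $K$ that is wrong, then separately describes a repair without carrying it out. Replace the ``Controlling $K$'' paragraph by the decomposition argument in the small-$|\mu\theta|$ regime and the proof will be complete.
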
	

\begin{proof}
To prove the estimate in the statement of the lemma, it suffices, by definition of $ \P $, to estimate
\begin{equation*}
\|\nabla \Phi \|^2_{\alpha} = \int_0^{\theta} \int_{\Re \lambda = \alpha } |\lambda|^2|\hat{\Phi}(\lambda,\varphi)|^2+ |\partial_{\varphi} \hat{\Phi}(\lambda,\varphi)|^2 \,  \dd \Im \lambda \dd \varphi,
\end{equation*}
where by Proposition \ref{prop:Helm_rep_Green} we have
\begin{equation*}
\hat{\Phi}(\lambda,\varphi) = \frac{\hat{v}_{\varphi}(\lambda-1,0)}{\lambda \sin(\lambda \theta)} \cos(\lambda(\theta-\varphi))- \frac{\hat{v}_{\varphi}(\lambda-1,\theta)}{\lambda \sin(\lambda \theta)} \cos(\lambda \varphi).
\end{equation*}
We only consider the estimates for $ \| \partial_r \Phi \|^2_{\alpha} $ since the estimates for $ \| \tfrac{1}{r}\partial_{\varphi} \Phi \|^2_{\alpha} $ are similar.

We start with some preliminary computations.
Write $ \lambda = \alpha + i t $, so that by an elementary computation we obtain for $ \vartheta \in \{ \varphi, \theta-\varphi\} $
\begin{align*}\label{eq:sincos_h}
\left|\frac{\cos(\lambda \vartheta)}{\sin(\lambda \theta)}\right|^2 = \frac{\cos^2(\alpha \vartheta) \cosh^2(t\vartheta) + \sin^2(\alpha \vartheta)\sinh^2(t\vartheta)}{\cos^2(\alpha \theta) \sinh^2(t\theta) + \sin^2(\alpha \theta)\cosh^2(t\theta)}.
\end{align*}
To bound the above expression, we notice that
\begin{equation}\label{eq:D} | \cos^2(\alpha \theta)| \geq \frac{1}{4} \quad \text{ if } \quad \alpha \theta \in \bigcup_{ k \in \mathbb{Z}}\left( - \tfrac{\pi}{4}+k\pi, \tfrac{\pi}{4}+k\pi \right) =: D,
\end{equation}
while $ | \sin^2(\alpha \theta)| \geq \frac{1}{4} $ if $ \alpha \theta \in \mathbb{R} \setminus D $. Moreover, for $x>0$ we have $  \cosh^2(tx) \sim e^{2|t|x}   $, while $ \sinh^2(tx) \sim e^{2|t|x}   $  if $ |tx | \geq \tfrac{1}{4} $.
We deduce that
\begin{equation*}
\left|\frac{\cos(\lambda \vartheta)}{\sin(\lambda \theta)}\right|^2 \leq \begin{cases} \frac{\cos^2(\alpha \vartheta) \cosh^2(t\vartheta) + \sin^2(\alpha \vartheta)\sinh^2(t\vartheta)}{\cos^2(\alpha \theta) \sinh^2(t\theta) } \leq C e^{-2|t|(\theta-\vartheta)} \quad \text{ if } \alpha \theta \in D \text{ and } |t\theta|\geq 1/4 , \\
\frac{\cos^2(\alpha \vartheta) \cosh^2(t\vartheta) + \sin^2(\alpha \vartheta)\sinh^2(t\vartheta)}{ \sin^2(\alpha \theta)\cosh^2(t\theta)} \leq C e^{-2|t|(\theta-\vartheta)} \quad \text{ if } \alpha \theta \in \mathbb{R} \setminus D.  \end{cases}
\end{equation*}
Therefore, if  $ (\alpha \theta  ,t \theta ) \notin  D \times (-\tfrac{1}{4},\tfrac{1}{4}) $, we have
\begin{equation}
\label{01}
	\int_0^{\theta} \left|\frac{\cos(\lambda \vartheta)}{\sin(\lambda \theta)}\right|^2  \dd \varphi \leq \int_0^{\theta} e^{-2|t|(\theta-\vartheta)}  \dd \varphi \leq C \frac{1- e^{-2|t|\theta}}{|t|},
\end{equation}
which can be bounded by
\begin{equation}
\label{02}
\frac{1- e^{-2|t|\theta}}{|t|} \leq C\frac{(|\alpha|\theta +1 )}{|\lambda|} \quad \text{ and } \quad \frac{1- e^{-2|t|\theta}}{|t|} \leq C \theta.
\end{equation}

To continue, we show a trace-type estimate for divergence-free vector fields in the wedge. Let $ \eta \in C^{\infty}_{\mathrm{c}}((-1,1)) $ be a symmetric cut-off function such that $\eta(0) = 1$ and let $ \vartheta \in \{ 0, \theta \}$. By the fundamental theorem of calculus and the divergence-free condition, we have
\begin{align}
|\hat{v}_{\varphi}(\lambda, \vartheta)|^2 = & \; -\int_0^{\theta} \partial_{\varphi}\Big(\eta\big(\tfrac{\varphi - \vartheta }{\theta}\big) |\hat{v}_{\ph}(\lambda, \varphi)|^2\Big) \dd \varphi\nonumber\\
=&\;  -\int_0^{\theta} \theta^{-1}\eta'\big(\tfrac{\varphi - \vartheta}{\theta}\big) |\hat{v}_{\ph}(\lambda, \varphi)|^2  \dd \varphi \nonumber  - 2 \Re\int_0^{\theta} \eta \big(\tfrac{\varphi- \vartheta}{\theta}\big) \partial_{\varphi}\hat{v}_{\ph}(\lambda, \varphi)\overline{\hat{v}_{\ph}(\lambda, \varphi)}  \dd \varphi  \nonumber  \\
= & \;  -\int_0^{\theta}\theta^{-1} \eta'\big(\tfrac{\varphi- \vartheta}{\theta}\big) |\hat{v}_{\ph}(\lambda, \varphi)|^2  \dd \varphi - 2 \Re\int_0^{\theta} \eta \big(\tfrac{\varphi- \vartheta}{\theta}\big) (\lambda +1)\hat{v}_{r}(\lambda, \varphi)\overline{\hat{v}(\lambda, \varphi)}  \dd \varphi \nonumber   \\
= & \; (\theta^{-1} + |\lambda+1|) \int_0^{\theta} |\hat{\vv}(\lambda, \varphi)|^2  \dd \varphi. \label{03}
\end{align}

We now return to the bound of $  \|\partial_r \Phi \|_{\alpha} $.
For $ \alpha \theta \in \mathbb{R} \setminus D $, we bound using \eqref{01}, \eqref{02} and \eqref{03}
\begin{equation}\label{04}
  \begin{aligned}
\|\partial_r \Phi \|^2_{\alpha} = & \; \int_0^{\theta} \int_{\Re \lambda = \alpha } \bigg|\frac{\hat{v}_{\varphi}(\lambda-1,0)}{ \sin(\lambda \theta)} \cos(\lambda(\theta-\varphi)) - \frac{\hat{v}_{\varphi}(\lambda-1,\theta)}{ \sin(\lambda \theta)} \cos(\lambda \varphi)\bigg|^2  \dd \varphi  \dd \Im \lambda\\
\leq&\; C(1+|\alpha|\theta)\|\vv\|_{\alpha}^2 \leq C\|\vv\|_{\alpha}^2,
\end{aligned}
\end{equation}
where in the last step we have used that $ |\alpha|\theta \leq \pi $ for $ \alpha \in \mathfrak{J}_{\eps} $.

For $ \alpha \theta \in D $, we have to consider the cases $ |t\theta | \geq 1/4 $ and $ | t\theta | < 1/4 $. In the case $ |t\theta | \geq 1/4 $ we can follow the same strategy as for $ \alpha \theta \in \mathbb{R} \setminus D $ and deduce \eqref{04}. In the case $ |t\theta  | < 1/4 $, we are allowed to replace the trigonometric functions with their Taylor expansions to deduce the desired bound. For  $ \alpha \in \mathfrak{I}_{\eps} \setminus \{0\}$ it holds that
\begin{equation*}
|\sin(\lambda \theta)| \sim_{\eps} |\lambda \theta|, \quad  \quad |1-\cos(\lambda(\theta-\varphi))| \sim_{\eps} \Big|\frac{(\lambda(\theta-\varphi))^2}{2}\Big|,
\end{equation*}
and
\begin{align*}
& \;\frac{\hat{v}_{\varphi}(\lambda-1,0)}{ \sin(\lambda \theta)} \cos(\lambda(\theta-\varphi)) - \frac{\hat{v}_{\varphi}(\lambda-1,\theta)}{ \sin(\lambda \theta)} \cos(\lambda \varphi) \\ = & \;  \frac{\hat{v}_{\varphi}(\lambda-1,0)}{ \sin(\lambda \theta)} (\cos(\lambda(\theta-\varphi))-1) +
\frac{\hat{v}_{\varphi}(\lambda-1,0)-\hat{v}_{\varphi}(\lambda-1,\theta)}{ \sin(\lambda \theta)}
 - \frac{\hat{v}_{\varphi}(\lambda-1,\theta)}{ \sin(\lambda \theta)} (\cos(\lambda \varphi)-1)\\
=&\!\!\!\: :  \;I_1+I_2+I_3.
\end{align*}
To bound $ \|\partial_r \Phi \|_{\alpha}^2 $, we estimate the above terms separately. 
The estimates of $ I_1 $ and $ I_3 $ are trivial using \eqref{03}. To estimate $I_2$ note that
\begin{align*}
 \hat{v}_{\varphi}(\lambda-1,0) - \hat{v}_{\varphi}(\lambda-1,\theta) = -\int_0^{\theta} \partial_{\varphi}\hat{v}_{\varphi}(\lambda-1,\varphi)  \dd \varphi = \int_0^{\theta} \lambda\hat{v}_{r}(\lambda-1,\varphi)  \dd \varphi,
\end{align*}
since $\div\vv=0$.
We obtain
\begin{align*}
& \; \int_0^{\theta} \int_{\Re \lambda = \alpha,\,   |t\theta| < \tfrac{1}{4} } \left| \frac{\hat{v}_{\varphi}(\lambda-1,0)-\hat{v}_{\varphi}(\lambda-1,\theta)}{ \sin(\lambda \theta)} \right|^2  	\dd \Im \lambda \dd \varphi \\ = & \; \int_0^{\theta} \int_{\Re \lambda = \alpha,\,  |t\theta | < \tfrac{1}{4}} \bigg| \frac{ \int_0^{\theta} \lambda\hat{v}_{r}(\lambda-1,\varphi')  \dd \varphi'}{ \sin(\lambda \theta)} \bigg|^2  	\dd \Im \lambda \dd \varphi \\
\leq & \; C_{\eps} \int_{\Re \lambda = \alpha  ,\, |t\theta | <\tfrac{1}{4}} \frac{ |\lambda|^2 \theta^2 \| \hat{\vv} (\lambda-1,\cdot) \|_{L^2(0,\theta)}^2}{|\lambda|^2\theta^2}  	\dd \Im \lambda   \leq C_{\eps} \| \vv \|_{\alpha}^2.\qedhere
\end{align*}

\end{proof}

We continue with the estimate on the commutator of the Helmholtz projection and the Laplacian. Note that $\P \del \vv$ for $\vv\in\TT$ is well defined by Lemma \ref{lem:est_P_divw0} using that $\div \del\vv = 0$ (see Lemma \ref{lem:app_curl}).
\begin{lem}\label{lem:est_P_delta_w}
	Let $\eps \in (0, 1) $ and $ \theta \in (0, (1-\eps)\pi) $.
 Then for  $  \alpha - 1 \in \mathfrak{I}_{\eps} \setminus \{0\}  $ and $\vv\in\TT$, we have
\begin{align*}
	\| [\P, \Delta] \vv \|_{\alpha-1}^2 \leq  & \, C_{\eps}\big(\|\nabla (r \dr \vv) \|_{\alpha}^2 + \|\nabla  \vv \|_{\alpha}^2 + \left\| \tfrac{v_{\ph}}{r} \right\|_{\alpha}^2 \big) \\
	& \,  + C_{\eps}\| \tfrac{1}{r} \dph^2 \vv  \|_{\alpha}\big(\|\nabla (r \dr \vv) \|_{\alpha} + \|\nabla  \vv \|_{\alpha} +  \left\| \tfrac{v_{\ph}}{r} \right\|_{\alpha}\big) \\ &  \, +  \tfrac{C}{\theta^2}\| \tfrac{1}{r} \dph v_r \|_{\alpha}^2 + \tfrac{C}{\theta}\| \tfrac{1}{r} \dph^2 v_r \|_{\alpha}\| \tfrac{1}{r} \dph v_r \|_{\alpha}.
\end{align*}
\end{lem}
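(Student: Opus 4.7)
The starting point is the observation that, since $\vv\in\TT$ is divergence-free and satisfies $v_\ph=0$ on $\dOm'$, the Helmholtz projection acts as the identity by the uniqueness built into Definition \ref{def:Helmholtz}, so $\P\vv=\vv$. Consequently
\begin{equation*}
[\P,\Delta]\vv=\P(\Delta\vv)-\Delta\vv=-\nabla\Phi,
\end{equation*}
where $\Phi$ is the Helmholtz potential from \eqref{eq:problemPhi} with $\ww=\Delta\vv$. Because $\div(\Delta\vv)=\Delta\,\div\vv=0$, the interior-source term in \eqref{eq:problem_Phi_hat} vanishes, so Proposition \ref{prop:Helm_rep_Green} reduces $\hat\Phi(\mu,\ph)$ to the purely boundary-driven combination of $\widehat{(\Delta\vv)_\ph}(\mu-1,0)$ and $\widehat{(\Delta\vv)_\ph}(\mu-1,\theta)$ divided by $\mu\sin(\mu\theta)$.

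The second step is to simplify the boundary trace. Writing $(\Delta\vv)_\ph$ in polar coordinates, using that $v_\ph$ and all of its radial derivatives vanish on $\dOm'$, and inserting the divergence-free identity $\dph v_\ph=-(r\dr+1)v_r$ differentiated once more in $\ph$, one collapses the trace to
\begin{equation*}
(\Delta\vv)_\ph|_{\dOm'}=-r^{-2}(r\dr-1)\dph v_r.
\end{equation*}
In Mellin variables this reads $\widehat{(\Delta\vv)_\ph}(\mu-1,\vartheta)=-\mu\,\dph\hat v_r(\mu+1,\vartheta)$ for $\vartheta\in\{0,\theta\}$, and the factor $\mu$ fortuitously cancels the $1/\mu$ in the kernel, giving the clean representation
\begin{equation*}
\hat\Phi(\mu,\ph)=\frac{-\dph\hat v_r(\mu+1,0)\cos(\mu(\theta-\ph))+\dph\hat v_r(\mu+1,\theta)\cos(\mu\ph)}{\sin(\mu\theta)}.
\end{equation*}

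By the Parseval identity in Lemma \ref{lem:Mellin_prop}, the target norm $\|\nabla\Phi\|_{\alpha-1}^2$ equals $\int_0^\theta\int_{\Re\mu=\alpha-1}(|\mu|^2|\hat\Phi|^2+|\dph\hat\Phi|^2)\,\dd\Im\mu\,\dd\ph$. Since the hypothesis $\alpha-1\in\mathfrak{I}_\eps\setminus\{0\}$ places the Mellin line exactly where the trigonometric kernel estimates from the proof of Lemma \ref{lem:est_P_divw0} apply, one has in particular $\int_0^\theta|\cos(\mu\vartheta)/\sin(\mu\theta)|^2\,\dd\ph\le C_\eps\min\{\theta,|\mu|^{-1}(|\alpha|\theta+1)\}$ and an analogous bound with $\sin$. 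For the boundary values of $\dph\hat v_r(\mu+1,\cdot)$, the standard integration-by-parts trace inequality
\begin{equation*}
|\dph\hat v_r(\mu+1,\vartheta)|^2\le C\bigl(\theta^{-1}\|\dph\hat v_r(\mu+1,\cdot)\|_{L^2(0,\theta)}^2+\|\dph\hat v_r\|_{L^2}\|\dph^2\hat v_r\|_{L^2}\bigr)
\end{equation*}
is obtained exactly as in \eqref{03} with a cutoff peaked at $\vartheta$.

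Combining these two families of estimates via Cauchy--Schwarz in $\Im\mu$ and identifying the resulting Mellin-side integrals with weighted Sobolev norms of $\vv$ produces the stated bound. Concretely, the commutator identity $r\dr(r^{-1}\dph v_r)=r^{-1}(\dph(r\dr v_r)-\dph v_r)$ converts the $|\mu|$-weighted contributions into $\|\nabla(r\dr\vv)\|_\alpha^2+\|\nabla\vv\|_\alpha^2$, while the unweighted ones — bounded via the kernel estimate of order $\theta$ — produce the $\frac{1}{\theta^2}\|\tfrac{1}{r}\dph v_r\|_\alpha^2$ and $\frac{1}{\theta}\|\tfrac{1}{r}\dph^2 v_r\|_\alpha\|\tfrac{1}{r}\dph v_r\|_\alpha$ terms in the last line of the claim. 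The $\|v_\ph/r\|_\alpha$ contribution arises from an alternative derivation that retains the $\dph^2 v_\ph$ term in the boundary trace (before invoking the divergence constraint) and applies the Hardy-type bound of Lemma \ref{lem:new_estimates_uph_ur}. The main technical obstacle is balancing the two kernel bounds — one of order $\theta$, one of order $|\mu|^{-1}$ — against the two types of trace terms so that the $|\mu|$-weights match the available $r\dr$-regularity; the low-frequency regime where $|\Im\mu|\theta<\tfrac{1}{4}$ and $\alpha\theta\in D$ requires a separate Taylor-expansion treatment of the trigonometric functions, mirroring the end of the proof of Lemma \ref{lem:est_P_divw0}.
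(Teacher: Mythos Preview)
Your plan is correct and follows essentially the same route as the paper's proof: identify $[\P,\Delta]\vv=-\nabla\Phi$, reduce the Green's-function representation to pure boundary data via $\div\Delta\vv=0$, simplify the trace to $-r^{-2}(r\dr-1)\dph v_r$, observe the $\mu$--$1/\mu$ cancellation, then combine the trigonometric kernel bounds from Lemma~\ref{lem:est_P_divw0} with the $L^2$-trace estimate for $\dph\hat v_r$ and split into the regions $|t\theta|\gtrless\tfrac14$. One small correction: the $\|v_\ph/r\|_\alpha$ term does not need a separate derivation retaining $\dph^2 v_\ph$ --- it appears automatically when you convert $\|r^{-1}\dph v_r\|_\alpha$ (and its $|\lambda|$-weighted variant) back to gradient norms via $r^{-1}\dph v_r=(\nabla\vv)_{12}+r^{-1}v_\ph$.
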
	

\begin{proof} Note that $\P \vv =\vv $, so that $ [\P, \Delta] \vv = -\grad \Phi$, where $\Phi$ is the potential in the definition of $\P \del \vv$. It suffices to estimate
\begin{equation*}
\|\nabla \Phi \|^2_{\alpha-1} = \int_0^{\theta} \int_{\Re \lambda = \alpha-1} |\lambda|^2|\hat{\Phi}(\lambda,\varphi)|^2+ |\partial_{\varphi} \hat{\Phi}(\lambda,\varphi)|^2 \,  \dd \Im \lambda \dd \varphi
\end{equation*}
where by Proposition \ref{prop:Helm_rep_Green} and Lemma \ref{lem:app_curl}(iii) we have
\begin{equation}
\label{eq:to:use:to:def}
\hat{\Phi}(\lambda,\varphi) = \frac{\hat{(\Delta \vv)_{\varphi} }(\lambda-1,0)}{\lambda \sin(\lambda \theta)} \cos(\lambda(\theta-\varphi))- \frac{\hat{(\Delta \vv)_{\varphi}}(\lambda-1,\theta)}{\lambda \sin(\lambda \theta)} \cos(\lambda \varphi).
\end{equation}
Using that $ \div \vv = 0 $ and $ v_{\ph} = 0 $ on $ \partial \Omega' $, we have on the boundary
\begin{align*}
    (\del \vv)_{\ph}\;\;&\stackrel{\mathclap{\eqref{eqapp:del_polar}}}{=}\;\; r^{-2}\left[((r\dr)^2+\dph^2) v_{\varphi}+2\dph v_r-v_{\varphi} \right] = -r^{-2}(r\dr-1)\dph v_r.
\end{align*}
In Mellin variables, the above expression rewrites
\begin{align}
\label{eq:to:use:to:est}
\hat{(\Delta \vv)_{\varphi}}(\lambda,\vartheta ) = -(\lambda+1)\dph \hat{v}_r(\lambda+2,\vartheta),
\end{align}
where  $ \vartheta \in \{ \varphi, \theta-\varphi\} $.
In this proof, we only show the estimates for $ \| \partial_r \Phi \|_{\alpha-1} $, the estimates for $ \|\frac{1}{r} \partial_{\varphi} \Phi \|_{\alpha-1} $ are derived similarly. 

To show the result we use the following trace type estimate. Let $ \eta \in C^{\infty}_{\mathrm{c}}((-1,1)) $ be a symmetric cut-off function such that $\eta(0) = 1$ and let $ \vartheta \in \{ 0, \theta \}$. By the fundamental theorem of calculus and the Cauchy-Schwarz inequality we have the following estimate
  \begin{align}
|\dph \hat{v}_{r}(\lambda, \vartheta)|^2 = & \; -\int_0^{\theta} \partial_{\varphi}\Big(\eta\big(\tfrac{\varphi- \vartheta}{\theta}\big) |\dph \hat{v}_r|^2(\lambda, \varphi)\Big)  \dd \varphi\nonumber \\
=&\; -\int_0^{\theta} \theta^{-1} \eta'\big(\tfrac{\varphi- \vartheta}{\theta}\big) |\dph\hat{v}_r|^2(\lambda, \varphi)  \dd \varphi  - 2 \Re\int_0^{\theta} \eta \big(\tfrac{\varphi- \vartheta}{\theta}\big) \partial_{\varphi}^2\hat{v}_r(\lambda, \varphi)\overline{\dph \hat{v}_r(\lambda, \varphi)}  \dd \varphi \label{1.1} \\
\lesssim  & \; \frac{1}{\theta} \int_0^{\theta} |\dph \hat{v}_r(\lambda, \varphi)|^2 \, \dd \varphi + \Big(\int_0^{\theta} |\dph^2 \hat{v}_r(\lambda, \varphi)|^2\dd \ph\Big)^{\frac{1}{2}}\Big(\int_0^{\theta} |\dph \hat{v}_r(\lambda, \varphi)|^2 \dd\ph\Big)^{\frac{1}{2}}.\nonumber
\end{align}

We now go back to the bound of $ \|\partial_r \Phi \|_{\alpha-1} $. Write $ \lambda = \alpha-1 +  it  $ with $t\in\RR$. Let $D$ be as in \eqref{eq:D}, then for $ |\alpha-1|\theta \in \RR\setminus D$ \eqref{01} holds. Using \eqref{eq:to:use:to:def} and \eqref{eq:to:use:to:est}, we estimate
\begin{align}
\|\partial_r \Phi \|^2_{\alpha-1} = & \; \int_0^{\theta} \int_{\Re \lambda = \alpha-1} \bigg|\frac{\lambda \dph \hat{v}_{r}(\lambda+1,0)}{ \sin(\lambda \theta)} \cos(\lambda(\theta-\varphi)) - \frac{\lambda \dph \hat{v}_{r}(\lambda+1,\theta)}{ \sin(\lambda \theta)} \cos(\lambda \varphi)\bigg|^2  \dd \Im \lambda  \dd \varphi \nonumber \\
\stackrel{\mathclap{\eqref{01}}}{\lesssim}&\; \int_{\Re \lambda = \alpha-1}|\lambda|^2\left(|\dph \hat{v}_r(\lambda+1,0)|^2+ |\dph \hat{v}_r(\lambda+1,\theta)|^2 \right) \frac{1-e^{-2|t|\theta}}{2|t|}  \dd \Im \lambda \nonumber \\
\stackrel{\mathclap{\eqref{02},\eqref{1.1}}}{\lesssim}&\;\;\;\quad(1+|\alpha|\theta)\bigg[\int_{\Re \lambda = \alpha-1}\int_0^{\theta}|\lambda|^2 |\dph \hat{v}_r(\lambda+1,\ph)|^2 \dd \ph  \dd \Im \lambda \nonumber \\
& \; +\int_{\Re \lambda = \alpha-1}\Big(\int_0^{\theta}|\lambda|^2 |\dph \hat{v}_r(\lambda+1,\ph)|^2\dd \ph \Big)^{\frac{1}{2}}\Big(\int_0^{\theta} |\dph^2 \hat{v}_r(\lambda+1,\ph)|^2 \dd \ph \Big)^{\frac{1}{2}} \dd \Im \lambda\bigg] \nonumber \\
  \lesssim &\; \|\nabla (r \dr \vv) \|_{\alpha}^2 + \|\nabla  \vv \|_{\alpha}^2 +\left\| \frac{v_{\ph}}{r} \right\|_{\alpha}^2   + \| \tfrac{1}{r} \dph^2 \vv  \|_{\alpha}\Big(\|\nabla (r \dr \vv) \|_{\alpha} + \|\nabla  \vv \|_{\alpha} + \left\| \frac{v_{\ph}}{r} \right\|_{\alpha} \Big). \label{1.4}
\end{align}
When $ |\alpha-1|\theta \in D $, we have to consider two cases $ |\theta t| \geq 1/4 $ and $ | \theta t| < 1/4 $. In the first case we can follow the strategy above and deduce the bound \eqref{1.4}. For $ |\theta  t| < 1/4 $, we replace the trigonometric functions with their Taylor expansions to deduce the desired bound. For $ \alpha - 1  \in \mathfrak{I}_{\eps} \setminus \{0\} $ we have
\begin{equation*}
|\sin(\lambda \theta)| \sim_{\eps} |\lambda \theta| \quad \text{ and } \quad |\cos(\lambda(\theta-\varphi))| \sim_{\eps} 1.
\end{equation*}
Hence
\begin{align*}
\int_0^{\theta} \int_{\Re \lambda = \alpha-1, \,|t \theta| < \frac{1}{4}}  & \; \bigg|\frac{\lambda \dph \hat{v}_{r}(\lambda+1,0)}{ \sin(\lambda \theta)} \cos(\lambda(\theta-\varphi))  - \frac{\lambda \dph \hat{v}_{r}(\lambda+1,\theta)}{ \sin(\lambda \theta)} \cos(\lambda \varphi)\bigg|^2   \dd \Im \lambda \dd \varphi  \\
\leq C_{\eps}   & \;  \int_0^{\theta} \int_{\Re \lambda = \alpha-1, \,|t \theta| < \frac{1}{4}} \frac{|\lambda|^2  \dph \hat{v}_{r}(\lambda+1,0)|^2}{ |\lambda \theta|^2} + \frac{|\lambda|^2| \dph \hat{v}_{r}(\lambda+1,\theta)|^2}{ |\lambda \theta|^2}   \dd \Im \lambda\dd \varphi  \\
\stackrel{\mathclap{\eqref{1.1}} }{\leq} & \;\;\; C_{\eps} \tfrac{1}{\theta^2}\big\| \tfrac{1}{r} \dph v_r \big\|_{\alpha}^2 + \tfrac{1}{\theta}\big\| \tfrac{1}{r} \dph^2 v_r \big\|_{\alpha}\big\| \tfrac{1}{r} \dph v_r \big\|_{\alpha}.\qedhere
\end{align*}
\end{proof}

\subsection{Estimates on the commutator $[\P, r^{-2\alpha}]$}\label{subsec:Helm_rep_Fourier}
In view of the fact that $\P$ is symmetric on unweighted spaces (Lemma \ref{lem:commute_rdr_P}), we will encounter the commutator $[\P, r^{-2\alpha}]\vv$ with $\alpha\neq 0$ and $\vv\in\TT$ in Section \ref{chap:5}. Note that $[\P, r^{-2\alpha}]\vv = - \nabla \Phi $ where $\Phi$ (in the sense of Definition \ref{def:Helmholtz}) satisfies
\begin{alignat*}{2}
    \del\Phi&=\div r^{-2\alpha}\vv=-2\alpha r^{-2\alpha-1}\vr \qquad&&\text{ in }\Om,\\
    \partial_{\nn}\Phi&=\nn\cdot r^{-2\alpha}\vv=0\qquad && \text{ on } \dOm',
\end{alignat*}
which can be written more conveniently in polar coordinates as
\begin{subequations}\label{eq:ProblemPhiFourier}
\begin{alignat}{2}
    \left((r\dr)^2+\dph^2\right)\Phi&=-2\alpha r^{-2\alpha+1}\vr=:g \qquad&&\text{ in }\Om,\label{eq:ProblemPhiFourier1}\\
    \dph\Phi&=0\qquad && \text{ on } \dOm'.\label{eq:ProblemPhiFourier2}
\end{alignat}
\end{subequations}
We will derive estimates on $\Phi$ using the Fourier expansion of $\widehat{\Phi}$ in the angle. The Fourier expansions are in this case easier to work with than the Green's function representation from Proposition \ref{prop:Helm_rep_Green}.

For $k\in\NN$ we define the orthonormal systems
\begin{equation}
\label{cos:sin:base}
    \ecos_k(\ph):=\begin{cases}\frac{1}{\sqrt{\theta}}&\; k=0,\\\sqrt{\frac{2}{\theta}}\cos\left(\frac{k\pi\ph}{\theta}\right)&\; k\geq 1,\end{cases}\quad \text{ and }\quad
    \esin_k(\ph):=\sqrt{\tfrac{2}{\theta}}\sin\left(\tfrac{k\pi\ph}{\theta}\right)\quad k\geq 1,
\end{equation}
which satisfy
\begin{equation*}
    \int_0^{\theta}\ecos_k(\ph)\ecos_{\ell}(\ph)\dd \ph= \int_0^{\theta}\esin_k(\ph)\esin_{\ell}(\ph)\dd \ph =\begin{cases}1&\; \text{if } k=\ell,\\0&\; \text{else}.\end{cases}
\end{equation*}
Moreover, an $L^2$-function $g:(0,\theta)\to \RR$ admits Fourier expansions of the form
\begin{align*}
  g(\ph)&=\sum_{k=0}^{\infty}g_k \ecos_k(\ph)\quad \text{ in }L^2(0,\theta),\quad \text{ where } g_k=\int_0^{\theta}g(\pht)\ecos_k(\pht)\dd \pht,\\
  g(\ph)&=\sum_{k=0}^{\infty}g_k \esin_k(\ph)\quad \text{ in }L^2(0,\theta),\quad \text{ where } g_k=\int_0^{\theta}g(\pht)\esin_k(\pht)\dd \pht.
\end{align*}
In addition, for the coefficients $g_k$ in both Fourier expansions we have Bessel's identity given by
\begin{equation}\label{eq:Bessel_Identity}
    \sum_{k=1}^{\infty}|g_k|^2=\int_0^{\theta}|g(\ph)|^2\dd \ph.
\end{equation}

\begin{lem} \label{lem:Fourier_Rep_Phi}
Let $\vv\in\TT$. The Mellin transform of the solution $\Phi$ of problem \eqref{eq:ProblemPhiFourier} has the form
\begin{equation*}\label{eq:repPhi_1}
    \widehat{\Phi}(\lambda,\ph)=-2\alpha\sum_{k=1}^{\infty}\frac{\widehatvrk(\lambda+2\alpha-1)}{\lambda^2-\left(\frac{k\pi}{\theta}\right)^2}\ecos_k(\ph)\qquad \text{ for }\ph\in[0,\theta],\end{equation*}
and its derivative is given by
\begin{equation}\label{eq:repPhi_2}
    \dph\widehat{\Phi}(\lambda,\ph)=2\alpha\sum_{k=1}^{\infty}\frac{k\pi}{\theta}\cdot\frac{\widehatvrk(\lambda+2\alpha-1)}{\lambda^2-\left(\frac{k\pi}{\theta}\right)^2}\esin_k(\ph)\qquad \text{ for }\ph\in[0,\theta],
\end{equation}
with $\Re\lambda\in\big(-\frac{\pi}{\theta},\frac{\pi}{\theta}\big)$ and $\lambda\neq-2\alpha$.
\end{lem}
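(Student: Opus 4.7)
\textbf{Proof proposal for Lemma \ref{lem:Fourier_Rep_Phi}.}
The plan is to apply the Mellin transform to the Neumann problem \eqref{eq:ProblemPhiFourier} and then expand the resulting $\varphi$-dependence in the cosine basis $\{\ecos_k\}_{k\geq 0}$ from \eqref{cos:sin:base}, which is the natural eigenbasis of $-\partial_\varphi^2$ on $(0,\theta)$ with homogeneous Neumann data. Using Lemma \ref{lem:Mellin_prop}(i) and (iii), the Mellin transform of \eqref{eq:ProblemPhiFourier1}--\eqref{eq:ProblemPhiFourier2} becomes the family of ODE boundary-value problems
\begin{equation*}
  \lambda^{2}\widehat{\Phi}(\lambda,\ph)+\partial_\varphi^{2}\widehat{\Phi}(\lambda,\ph)=-2\alpha\,\widehat{v}_{r}(\lambda+2\alpha-1,\ph),\qquad \partial_\varphi\widehat{\Phi}(\lambda,0)=\partial_\varphi\widehat{\Phi}(\lambda,\theta)=0.
\end{equation*}

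Expanding $\widehat{\Phi}(\lambda,\ph)=\sum_{k\geq0}a_k(\lambda)\ecos_k(\ph)$ and $\widehat{v}_r(\lambda+2\alpha-1,\ph)=\sum_{k\geq0}\widehat{v}_{rk}(\lambda+2\alpha-1)\ecos_k(\ph)$ and using $-\partial_\varphi^2\ecos_k=\left(\tfrac{k\pi}{\theta}\right)^2\ecos_k$, the ODEs decouple to
\begin{equation*}
  \left(\lambda^{2}-\left(\tfrac{k\pi}{\theta}\right)^{2}\right)a_k(\lambda)=-2\alpha\,\widehat{v}_{rk}(\lambda+2\alpha-1),\qquad k\geq 0.
\end{equation*}
For $k\geq 1$ and $\Re\lambda\in(-\pi/\theta,\pi/\theta)$ the prefactor $\lambda^{2}-(k\pi/\theta)^{2}$ is nonzero, so $a_k$ is uniquely determined by the claimed formula. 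The main obstacle is the $k=0$ mode, where the prefactor is $\lambda^{2}$ and we must show $\widehat{v}_{r0}\equiv0$ in order to (a) make the equation solvable and (b) eliminate the resonance at $\lambda=0$ so the representation is well defined on the full strip $\Re\lambda\in(-\pi/\theta,\pi/\theta)\setminus\{-2\alpha\}$. To do this I would use that $\vv\in\TT$, so \eqref{sys:1c} and $v_\varphi|_{\partial\Omega'}=0$ give $\partial_r\!\int_0^{\theta} r v_r(r,\ph)\dd\ph=-\int_0^\theta\partial_\varphi v_\varphi\dd\ph=0$; since $\vv$ has compact support away from the origin, the constant value of $r\int_0^\theta v_r\dd\ph$ must vanish. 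Hence $\int_0^\theta v_r(r,\ph)\dd\ph=0$ for every $r>0$, which is exactly $v_{r0}\equiv 0$, and so $\widehat{v}_{r0}\equiv 0$. Consequently the $k=0$ equation is satisfied with $a_0\equiv 0$ (which is also the unique choice consistent with Definition \ref{def:Helmholtz}, since the Fourier series representation is taken in $L^2(0,\theta)$ and a purely $\ph$-independent additive constant is fixed by the Mellin inversion).

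Summing over $k\geq 1$ yields the formula for $\widehat{\Phi}$. Differentiating termwise in $\ph$ and using $\partial_\varphi\ecos_k(\ph)=-\tfrac{k\pi}{\theta}\esin_k(\ph)$ for $k\geq1$ produces \eqref{eq:repPhi_2}. To justify termwise differentiation and convergence, I would note that, by Bessel's identity \eqref{eq:Bessel_Identity}, the coefficient sequence $\{\widehat{v}_{rk}(\lambda+2\alpha-1)\}_k$ is square-summable for almost every fixed $\lambda$ (since $\vv\in C_{\mathrm{c}}^\infty(\overline{\Omega}\setminus\{0\})$ implies $v_r(r,\cdot)\in L^2(0,\theta)$), while the multipliers $(\lambda^2-(k\pi/\theta)^2)^{-1}$ and $(k\pi/\theta)(\lambda^2-(k\pi/\theta)^2)^{-1}$ are, respectively, $O(k^{-2})$ and $O(k^{-1})$, so both series converge in $L^2(0,\theta)$; this also ensures $\widehat{\Phi}(\lambda,\cdot)\in H^1(0,\theta)$ and that the Neumann trace is attained in the usual sense. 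Uniqueness of the constructed $\widehat{\Phi}$ in the relevant space finally follows because any two solutions differ by a $\ph$-independent function of $\lambda$, which corresponds to the free additive constant already accounted for in Definition \ref{def:Helmholtz}.
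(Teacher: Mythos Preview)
Your approach is essentially identical to the paper's: Mellin transform, cosine expansion, vanishing of the $k=0$ mode via the divergence-free condition and compact support, and solving for the coefficients. The only minor point is your convergence justification: you establish $\widehat{\Phi}(\lambda,\cdot)\in H^1(0,\theta)$, which gives pointwise convergence of the series for $\widehat{\Phi}$ but only $L^2$ convergence of the derivative series, whereas the lemma asserts \eqref{eq:repPhi_2} pointwise on $[0,\theta]$. The paper instead observes $\sum_{k\ge1}|k^2\widehat{\Phi}_k(\lambda)|^2<\infty$ (since the multipliers are $O(k^{-2})$ and $\{\widehat{v}_{rk}\}\in\ell^2$), so $\widehat{\Phi}(\lambda,\cdot)\in H^2(0,\theta)\hookrightarrow C^1([0,\theta])$, which gives pointwise convergence of both series and legitimises termwise differentiation; this is an easy upgrade of your argument.
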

\begin{proof}
Taking the Mellin transform of \eqref{eq:ProblemPhiFourier} gives
\begin{subequations}\label{eq:Phi_Fourier_Mellin}
\begin{alignat}{2}
    (\lambda^2+\dph^2)\widehat{\Phi}(\lambda,\ph)&=\widehat{g}(\lambda,\ph)  \qquad&&\text{ in }\Om,\label{eq:Phi_Fourier_Mellina}\\
    \dph \widehat{\Phi}(\lambda,\ph)&=0 \qquad && \text{ on } \dOm',
\end{alignat}
\end{subequations}
which is a non-homogeneous second-order ODE with homogeneous Neumann boundary conditions. The function $\widehat{g}(\lambda,\ph)$ admits a Fourier expansion in the angle $\ph$ of the form
\begin{equation*}
        \widehat{g}(\lambda,\ph)=\sum_{k=0}^{\infty}\widehat{g}_k(\lambda)\ecos_k(\ph)\quad \text{ in }L^2(0,\theta),
\end{equation*}
where the Fourier coefficients are given by
\begin{equation}\label{eq:Fourier_coef_g}
    \widehat{g}_k(\lambda)=-2\alpha\widehatvrk(\lambda+2\alpha-1)
\end{equation}
with
\begin{equation*}
   \widehatvrk(\lambda)=\int_0^{\theta}\widehat{v}_r(\lambda,\pht)\ecos_k(\pht)\dd\pht \quad\text{ and }\quad  \widehat{v}_r(\lambda,\ph)=\sum_{k=0}^{\infty}\widehatvrk(\lambda)\ecos_k(\ph)\quad \text{ in }L^2(0,\theta).
\end{equation*}
The condition $\div\vv=0$ reads in Mellin variables $(\lambda+1)\widehatvr(\lambda,\ph)+\dph\widehatvph(\lambda,\ph)=0$. Integrating this expression over the angle yields
\begin{equation}
\label{no:k:zero}
    (\lambda+1)\int_0^{\theta}\widehatvr(\lambda,\ph)\dd \ph=-\int_0^{\theta}\dph\widehatvph(\lambda,\ph)\dd\ph=\widehatvph(\lambda,0)-\widehatvph(\lambda,\theta)=0,
\end{equation}
by the boundary condition $\vph=0$ on $\dOm'$.

Problem \eqref{eq:Phi_Fourier_Mellin} has a series solution of the form
\begin{equation*}
    \widehat{\Phi}(\lambda,\ph)=\sum_{k=0}^{\infty}\widehat{\Phi}_k(\lambda) \ecos_k(\ph) \quad \text{almost everywhere},
\end{equation*}
 and inserting this into \eqref{eq:Phi_Fourier_Mellina} and using the orthogonality of the cosines gives that the coefficients satisfy
\begin{equation*}
    \Big(\lambda^2-\big(\tfrac{k\pi}{\theta}\big)^2\Big)\widehat{\Phi}_k(\lambda)=\widehat{g}_k(\lambda).
\end{equation*}
By \eqref{eq:Fourier_coef_g} and \eqref{no:k:zero} this leads to the following series representation
\begin{equation*}
    \widehat{\Phi}(\lambda,\ph)=-2\alpha\sum_{k=1}^{\infty}\frac{\widehatvrk(\lambda+2\alpha-1)}{\lambda^2-\left(\tfrac{k\pi}{\theta}\right)^2}\ecos_k(\ph).\end{equation*}
Note that
\begin{equation*}
    \sum_{k=1}^{\infty}|k^2\widehat{\Phi}_k(\lambda)|^2<\infty
\end{equation*}
and therefore the series converges in $H^2(0,\theta)$ which embeds into $C^1([0,\theta])$. This implies that the series converges pointwise and the series can be differentiated to obtain \eqref{eq:repPhi_2}
which again converges in $H^1(0,\theta)$ and therefore $\dph\widehat{\Phi}$ also converges pointwise.
\end{proof}

The commutator $ [\P, r^{-2\alpha}] $ may have singularities at $ - \tfrac{\pi}{\theta} + 1  $ and $ \tfrac{\pi}{\theta} -1  $. Therefore, recall from \eqref{int:Ieps} that for $\eps \in (0, 1 )$ and $\theta\in(0,(1-\eps)\pi)$ we defined the interval
 $$ I_{\eps} = [- (1-\eps)\tfrac{\pi}{\theta} + 1, (1-\eps)\tfrac{\pi}{\theta} - 1].$$
 The following lemma provides estimates on the commutator $ [\P, r^{-2\alpha}] $ required in Section \ref{chap:proof_coer}.
\begin{lem} \label{lem:estimate_Fourier_Phi}
Let  $\eps \in (0, 1) $ and $ \theta \in (0, (1-\eps)\pi) $. Assume that $\alpha\in I_{\eps} \setminus \{0\}$. Then we have the estimates
\begin{equation*}
  \begin{aligned}
\big\| \tfrac{1}{r}[\P, r^{-2 \alpha}] \vv \big\|_{-\alpha} &\leq C_{\eps} |\alpha| \theta  \left\| \vv \right\|_{\alpha+1}, 
\\
\big\| \nabla [\P, r^{-2 \alpha}] \vv \big\|_{-\alpha} &\leq C_{\eps} |\alpha| \left\|\vv \right\|_{\alpha+1}. 
\end{aligned}
\end{equation*}
Moreover, we have
\begin{equation}
\label{ezelina}
([\P, r^{-2\alpha}]\vv)_{\varphi} = 0 \quad \text{ on } \partial \Omega'.
\end{equation}

\end{lem}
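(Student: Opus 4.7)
The strategy is to combine the Fourier-Mellin representation of $\Phi$ from Lemma \ref{lem:Fourier_Rep_Phi} with the Mellin isometry (Lemma \ref{lem:Mellin_prop}(v)) and Bessel's identity \eqref{eq:Bessel_Identity}. Since $[\P,r^{-2\alpha}]\vv = -\nabla\Phi$ with $\Phi$ solving \eqref{eq:ProblemPhiFourier}, Lemma \ref{lem:Mellin_prop}(i),(ii),(v) combined with the contour shift $\mu = \lambda + 2$ bound the two weighted norms by integrals over $\Re\mu = -\alpha+1$ of $|\mu|^{2j}|\partial_\varphi^\ell \widehat\Phi(\mu,\varphi)|^2$ for $j+\ell \leq 1$ in the first case and $j+\ell \leq 2$ in the second (modulo Christoffel-type corrections of the same scaling in polar coordinates). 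Substituting the series from Lemma \ref{lem:Fourier_Rep_Phi} and applying Bessel collapse the $\varphi$-integral to
\begin{equation*}
\int_0^\theta |\partial_\varphi^\ell \widehat\Phi(\mu,\varphi)|^2 \dd \varphi = 4\alpha^2 \sum_{k=1}^\infty \frac{(k\pi/\theta)^{2\ell}\, |\widehatvrk(\mu+2\alpha-1)|^2}{|\mu^2 - (k\pi/\theta)^2|^2}.
\end{equation*}

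The key estimate is a uniform bound on the resulting Fourier-Mellin symbol. Writing $\mu = -\alpha+1+it$, a direct case analysis using $\alpha \in I_\eps$ gives $|{-\alpha+1 \pm k\pi/\theta}| \geq \eps k\pi/\theta$ for every $k \geq 1$, whence
\begin{equation*}
|\mu^2 - (k\pi/\theta)^2|^2 \geq c_\eps (k\pi/\theta + |t|)^4 \quad\text{and}\quad |\mu|^2 + (k\pi/\theta)^2 \leq C(k\pi/\theta + |t|)^2.
\end{equation*}
Consequently $(|\mu|^2+(k\pi/\theta)^2)/|\mu^2-(k\pi/\theta)^2|^2 \leq C_\eps(k\pi/\theta+|t|)^{-2} \leq C_\eps \theta^2/k^2$, while $(|\mu|^2+(k\pi/\theta)^2)^2/|\mu^2-(k\pi/\theta)^2|^2 \leq C_\eps$ uniformly in $k$ and $t$. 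Summing over $k$ via Bessel ($\sum_k |\widehatvrk|^2 = \int_0^\theta|\widehat v_r|^2 \dd\varphi$), integrating over $\Im\mu$, and shifting back through $\nu = \mu + 2\alpha - 1$ (so $\Re\nu = \alpha$) produce respectively $\|r^{-1}\nabla\Phi\|_{-\alpha}^2 \leq C_\eps \alpha^2 \theta^2 \|v_r\|_{\alpha+1}^2$ and $\|\nabla^2\Phi\|_{-\alpha}^2 \leq C_\eps \alpha^2 \|v_r\|_{\alpha+1}^2$, from which the two claimed inequalities follow using $\|v_r\|_{\alpha+1} \leq \|\vv\|_{\alpha+1}$. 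The boundary identity \eqref{ezelina} is immediate: the tangential component $([\P, r^{-2\alpha}]\vv)_\varphi = -r^{-1}\partial_\varphi\Phi$ vanishes on $\partial\Omega'$ by the homogeneous Neumann condition \eqref{eq:ProblemPhiFourier2}.

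The main obstacle is the uniform lower bound $|\mu \pm k\pi/\theta| \gtrsim_\eps k\pi/\theta + |t|$ on the resonance denominator, which is precisely the reason the interval $I_\eps$ was introduced to separate $\alpha$ from the resonances $\pm\pi/\theta - 1$ of the Stokes operator. Once this gap is secured, the different scalings in $\theta$ of the two target estimates arise naturally: in the first, the absence of a compensating factor $|\mu|^2$ in the numerator leaves the $k^{-2}$ decay available and produces the $\theta^2$ gain; in the second, the extra $|\mu|^2$ precisely cancels the $k^{-2}$ summability and leaves only the $\alpha^2$ prefactor.
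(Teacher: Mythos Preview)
Your proposal is correct and follows essentially the same route as the paper: both invoke the Fourier--Mellin representation of Lemma~\ref{lem:Fourier_Rep_Phi}, combine it with Plancherel and Bessel's identity, and reduce matters to a lower bound on the resonance denominator $|\mu^2-(k\pi/\theta)^2|$ for $\Re\mu=-\alpha+1$ and $\alpha\in I_\eps$. The paper records this as three separate inequalities \eqref{eq:estHelm1}--\eqref{eq:estHelm3} (lower bounds by $c_\eps(k\pi/\theta)^4$, $c_\eps|\tilde\lambda-1|^4$, $c_\eps|\tilde\lambda|^4$), whereas you package them into the single bound $|\mu^2-(k\pi/\theta)^2|^2\geq c_\eps(k\pi/\theta+|t|)^4$; these are equivalent, and the boundary identity is handled identically via the Neumann condition.
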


\begin{proof}
First of all note that for any $ \tilde{\lambda} = \alpha + i t $ with $t\in\RR$, we have
\begin{align*}
 \Big|(\tilde{\lambda}- 2 \alpha +1 )^2 -\left(\tfrac{k \pi}{\theta}\right)^2\Big|^2 = \big(t^2+\left(\tfrac{k \pi}{\theta}\right)^2-(1-\alpha)^2\big)^2 + 4t^2(\alpha-1)^2.
\end{align*}
If $ \Re \tilde{\lambda} = \alpha \in I_{\eps} $, then
\begin{equation}
\label{est:laredo}
|\alpha-1 | \leq (1-\eps) \frac{\pi}{\theta} \quad \text{ and } \quad |\alpha | \leq (1-\eps) \frac{\pi}{\theta},
\end{equation}
and therefore using \eqref{est:laredo}, we deduce that
\begin{equation}\label{eq:estHelm1}
  \begin{aligned}
\left|(\tilde{\lambda}- 2 \alpha +1 )^2 -\left(\tfrac{k \pi}{\theta}\right)^2\right|^2 \geq & \, \big(t^2+\left(\tfrac{k \pi}{\theta}\right)^2-(1-\alpha)^2\big)^2 \\
\geq & \, \big(t^2 + (1-(1-\eps)^2) \left(\tfrac{k \pi}{\theta}\right)^2\big)^2\\
\geq &\, \big(t^2 + \eps(1-\eps) \left(\tfrac{k \pi}{\theta}\right)^2\big)^2 \geq c_{\eps}(\tfrac{k \pi}{\theta})^4.
\end{aligned}
\end{equation}
Using again \eqref{est:laredo} in the previous estimate yields
\begin{equation}\label{eq:estHelm2}
  \begin{aligned}
\left|(\tilde{\lambda}- 2 \alpha +1 )^2 -\left(\tfrac{k \pi}{\theta}\right)^2\right|^2 \geq  & \,\big(t^2 + \eps(1-\eps) \left(\tfrac{k \pi}{\theta}\right)^2\big)^2\\ \geq &\,  \big(t^2 + \eps \left(\alpha-1\right)^2\big)^2 \geq c_{\eps}|\tilde{\lambda}-1|^4,
\end{aligned}
\end{equation}
and similarly
\begin{equation}\label{eq:estHelm3}
\left|(\tilde{\lambda}- 2 \alpha +1 )^2 -\left(\tfrac{k \pi}{\theta}\right)^2\right|^2 \geq  \, \left(t^2 + \eps\alpha^2\right)^2 \geq c_{\eps}|\tilde{\lambda}|^4.
\end{equation}

Recall that $ [\P, r^{-2\alpha}]\vv = - \nabla \Phi $, where $ \Phi $ is defined in Lemma \ref{lem:Fourier_Rep_Phi}. By properties of the Mellin transform, \eqref{eq:Bessel_Identity} and Lemma \ref{lem:Fourier_Rep_Phi}, we obtain
\begin{align*}
\big\| \tfrac{1}{r}([\P,r^{-2\alpha}] \vv)_r\big\|^2_{-\alpha} = & \, \big\| \tfrac{1}{r}\partial_r \Phi\big\|^2_{-\alpha} = 4 \alpha^2 \int_{\Re \lambda = -\alpha +1 }|\lambda|^2 \sum_{k=1}^{\infty}\frac{|\widehatvrk(\lambda+2\alpha-1)|^2}{\left|\lambda^2-\left(\frac{k\pi}{\theta}\right)^2\right|^2} \dd\Im \lambda\\
= & \, 4 \alpha^2 \int_{\Re \tilde{\lambda} = \alpha  }|\tilde{\lambda}-2\alpha +1 |^2 \sum_{k=1}^{\infty}\frac{|\widehatvrk(\tilde{\lambda})|^2}{\left|(\tilde{\lambda}-2\alpha +1)^2-\left(\frac{k\pi}{\theta}\right)^2\right|^2} \dd\Im \tilde{\lambda} \\
\leq & \, C_{\eps}\alpha^2\theta^2 \| \vv \|_{\alpha+1}^2,
\end{align*}
where we have used \eqref{eq:estHelm1} and \eqref{eq:estHelm3}.
Similarly, we deduce that
\begin{align*}
\big\| \tfrac{1}{r}([\P,r^{-2\alpha}] \vv)_{\varphi}\big\|^2_{-\alpha} = & \, \big\| \tfrac{1}{r^2}\partial_{\varphi} \Phi\big\|^2_{-\alpha} = 4 \alpha^2 \int_{\Re \lambda = -\alpha +1 } \sum_{k=1}^{\infty}\Big|\frac{k\pi}{\theta}\Big|^2 \frac{|\widehatvrk(\lambda+2\alpha-1)|^2}{\left|\lambda^2-\left(\frac{k\pi}{\theta}\right)^2\right|^2}\dd\Im \lambda \\
= & \, 4 \alpha^2 \int_{\Re \tilde{\lambda} = \alpha  } \sum_{k=1}^{\infty}\Big(\frac{k\pi}{\theta}\Big)^2\frac{|\widehatvrk(\tilde{\lambda})|^2}{\left|(\tilde{\lambda}-2\alpha +1)^2-\left(\frac{k\pi}{\theta}\right)^2\right|^2} \dd\Im \tilde{\lambda} \\
\leq & \, C_{\eps}\alpha^2\theta^2 \| \vv \|_{\alpha+1}^2.
\end{align*}
Therefore, $ \| [\P, r^{-2\alpha}] \vv \|_{-\alpha}^2 \leq C_{\eps} \alpha^2 \theta^2 \| \vv \|_{\alpha+1}^2$.
The estimates for the first order derivatives are similar, for example
\begin{align*}
\big\| \partial_r ([\P,r^{-2\alpha}] \vv)_r\big\|^2_{-\alpha} = & \, \|  \partial^2_r \Phi \|^2_{-\alpha} = 4 \alpha^2 \int_{\Re \lambda = -\alpha -1 }|\lambda+1|^2 |\lambda+2|^2\sum_{k=1}^{\infty}\frac{|\widehatvrk(\lambda+2\alpha+1)|^2}{\left|(\lambda+2)^2-\left(\frac{k\pi}{\theta}\right)^2\right|^2} \dd\Im \lambda\\
= & \, 4 \alpha^2 \int_{\Re \tilde{\lambda} = \alpha  }|\tilde{\lambda}-2\alpha |^2 |\tilde{\lambda}-2\alpha +1 |^2 \sum_{k=1}^{\infty}\frac{|\widehatvrk(\tilde{\lambda})|^2}{\left|(\tilde{\lambda}-2\alpha +1)^2-\left(\frac{k\pi}{\theta}\right)^2\right|^2} \dd\Im \tilde{\lambda} \\
\leq & \, C_{\eps}\alpha^2 \| \vv \|_{\alpha+1}^2,
\end{align*}
where we have used \eqref{eq:estHelm2} and \eqref{eq:estHelm3} in the last step.
The bounds on $ \partial_r ([\P,r^{-2\alpha}] \vv)_{\varphi} $,  $ \tfrac{1}{r}\partial_{\varphi} ([\P,r^{-2\alpha}] \vv)_r $ and  $ \tfrac{1}{r}\partial_{\varphi} ([\P,r^{-2\alpha}] \vv)_{\varphi} $ follow similarly. Using \eqref{eqapp:grad_polar} and the estimates on $ \| \tfrac{1}{r}[\P, r^{-2 \alpha}] \vv \|^2_{-\alpha} $ we deduce the result.
Finally, \eqref{ezelina} is a consequence of $ \nabla \Phi \cdot \vn = 0 $ on $ \partial \Omega'$.
\end{proof}

\begin{cor} \label{cor:estimate_Fourier_Phi:extra}
Let $\eps \in (0, 1 ) $ and $ \theta \in (0, (1-\eps)\pi) $. Assume that $\alpha\in I_{\eps} \setminus \{ 0 \}$. Then we have the estimates
\begin{equation*}
  \begin{aligned}
\big\| \tfrac{1}{r}[\P, r^{-2 \alpha}] \vv \big\|_{-\alpha} &\leq C_{\eps} |\alpha| \theta^{2}  \left\| \nabla \vv \right\|_{\alpha}, 
\\
\big\| \nabla [\P, r^{-2 \alpha}] \vv \big\|_{-\alpha} &\leq C_{\eps} |\alpha| \theta \left\|\nabla \vv \right\|_{\alpha}.
\end{aligned}
\end{equation*}
\end{cor}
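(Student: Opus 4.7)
\medskip

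\noindent\textbf{Proof proposal for Corollary \ref{cor:estimate_Fourier_Phi:extra}.}
The plan is to bootstrap the bounds already established in Lemma \ref{lem:estimate_Fourier_Phi} by replacing the norm $\|\vv\|_{\alpha+1}$ on the right-hand side with $\theta\,\|\nabla\vv\|_{\alpha}$, which is exactly the content of the improved Hardy inequality on divergence-free, tangent-to-the-boundary vector fields proved earlier as Lemma \ref{lem:new_estimates_uph_ur}. No new analysis of the Mellin or Fourier representation is needed; the corollary is a clean composition of two previously obtained estimates.

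First I would recall that, by the construction of the Helmholtz potential $\Phi$ in Lemma \ref{lem:Fourier_Rep_Phi}, the representations used in Lemma \ref{lem:estimate_Fourier_Phi} make essential use of the fact that $\div\vv=0$ and $v_{\ph}=0$ on $\partial\Omega'$ (this is what rules out the $k=0$ mode via \eqref{no:k:zero}), so it is legitimate to assume $\vv\in\TT$ (otherwise the commutator would have to be defined by density in a class of vectors for which the Hardy-type inequality still applies). For such $\vv$, Lemma \ref{lem:new_estimates_uph_ur} yields
\begin{equation*}
\|\vv\|_{\alpha+1}^2 \;=\; \bigl\|\tfrac{1}{r}\vv\bigr\|_{\alpha}^2 \;\leq\; C_0(\theta)\,\theta^2\,\|\nabla\vv\|_{\alpha}^2 ,
\end{equation*}
where $C_0:(0,\pi)\to(0,\infty)$ is a $\theta$-independent-of-$\alpha$, increasing function. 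Since by assumption $\theta\in(0,(1-\eps)\pi)$, we can estimate $C_0(\theta)\leq C_0((1-\eps)\pi)=:\tilde{C}_{\eps}$, giving
\begin{equation*}
\|\vv\|_{\alpha+1} \;\leq\; \sqrt{\tilde{C}_{\eps}}\;\theta\;\|\nabla\vv\|_{\alpha}.
\end{equation*}

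Plugging this into the two inequalities in Lemma \ref{lem:estimate_Fourier_Phi} gives
\begin{equation*}
\bigl\|\tfrac{1}{r}[\P,r^{-2\alpha}]\vv\bigr\|_{-\alpha}\;\leq\; C_{\eps}|\alpha|\theta\,\|\vv\|_{\alpha+1}\;\leq\; C_{\eps}|\alpha|\theta^{2}\,\|\nabla\vv\|_{\alpha},
\end{equation*}
\begin{equation*}
\bigl\|\nabla[\P,r^{-2\alpha}]\vv\bigr\|_{-\alpha}\;\leq\; C_{\eps}|\alpha|\,\|\vv\|_{\alpha+1}\;\leq\; C_{\eps}|\alpha|\theta\,\|\nabla\vv\|_{\alpha},
\end{equation*}
which is exactly the claimed corollary.

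There is essentially no obstacle: the only subtle point is ensuring that the hypothesis of Lemma \ref{lem:new_estimates_uph_ur} is met, i.e.\ that $\vv$ is divergence-free and tangent to $\partial\Omega'$. This is the same hypothesis used implicitly in Lemma \ref{lem:estimate_Fourier_Phi}, so the proof is a two-line concatenation, and one only needs to remark that the $\eps$-dependence of the constant hides the factor $\sqrt{C_0((1-\eps)\pi)}$.
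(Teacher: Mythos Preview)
Your proposal is correct and matches the paper's own proof, which simply states that the corollary is a consequence of Lemmata \ref{lem:new_estimates_uph_ur} and \ref{lem:estimate_Fourier_Phi}. You have spelled out precisely the one-line concatenation the paper has in mind, including the identification $\|\vv\|_{\alpha+1}=\|\tfrac{1}{r}\vv\|_{\alpha}$ and the bound $C_0(\theta)\leq C_0((1-\eps)\pi)$ that makes the constant depend only on $\eps$.
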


\begin{proof}
The proof is a consequence of Lemmata \ref{lem:new_estimates_uph_ur} and \ref{lem:estimate_Fourier_Phi}.
\end{proof}


 \section{Variational formulation for the Stokes equations with Navier slip}
 \label{chap:5}

In this section we derive a variational formulation for the Stokes equations  \eqref{sto:sys:inhom} as is described in Section \ref{sec:3}, Step 1.
After applying the Helmholtz projection to \eqref{sto:sys:inhom} and noting that $\P\grad p=0$ by definition of the Helmholtz projection, we deduce the projected stationary Stokes problem
\begin{subequations} \label{eq:SmoothStokesProj_ch5}
\begin{alignat}{2}
    -\P\del \vu &= \P\vc{f} \qquad&& \text{ in }\Om,\label{eq:SmoothStokesProj_ch5a}\\
    \uph&=0\qquad&&  \text{ on } \dOm',\label{eq:SmoothStokesProj_ch5b}\\
   \ur+\partial_{\nn}\ur&= g \qquad&&\text{ on } \dOm'.\label{eq:SmoothStokesProj_ch5c}
\end{alignat}
\end{subequations}
Let $\vv\in\TT$, where we recall the definition of this space in \eqref{eq:test_function}. As motivated in Section \ref{sec:3}, we test \eqref{eq:SmoothStokesProj_ch5a} in $(\cdot,\cdot)_{L^2(\Omega)}$ with \begin{equation}\label{eq:testfunctions}
    r^{-2\alpha} \vv_{\mathrm{test}}:= r^{-2\alpha}(\vv- {\br |\alpha| \theta^3}  (r \dr)^2 \vv  -  c_3{\br |\alpha| \theta^3}  r^2 \Delta \vv) \qquad \text{ for some }c_3>0,
\end{equation}
to obtain a variational formulation of \eqref{eq:SmoothStokesProj_ch5}. The corresponding bilinear forms $B_1$, $B_2$ and $B_3$ will be derived in subsequent sections.

\subsection{Bilinear form \texorpdfstring{$B_1$}{B1}}\label{sec:weighted_bilienar_form}
Equation \eqref{eq:SmoothStokesProj_ch5a} tested against $r^{-2\alpha}\vv $ with $ \vv \in\TT$ in the inner product $(\cdot,\cdot)_{L^2(\Omega)} $ reads
\begin{equation}
\label{eq:to:B1:Mamo}
    (-\P \del\vu, r^{-2\alpha} \vv)_{L^2(\Omega)}=(\P\vc{f}, r^{-2\alpha}\vv)_{L^2(\Omega)}.
\end{equation}
Using that the projection $ \P $ is symmetric with respect to $(\cdot,\cdot)_{L^2(\Om)}$ (Lemma \ref{lem:commute_rdr_P}), we compute
\begin{align*}
    (-\P\del\vu,r^{-2\alpha}\vv)_{L^2(\Om)}&=(-\del\vu, \P r^{-2\alpha}\vv)_{L^2(\Om)}\\
    &=\int_{\Om}(-\del\vu)\cdot (r^{-2\alpha}\vv)\dd x+\int_{\Om}(-\del\vu)\cdot [\P, r^{-2\alpha}]\vv \dd x \\
    & = : I_1^{(1)}+I_2^{(1)},
\end{align*}
With the divergence theorem $I_1^{(1)}$ becomes
\begin{align*}
    I_1^{(1)} = -\int_{\partial\Om'}(r^{-2\alpha}\vv)\cdot\partial_{\nn}\vu\dd s+\int_{\Om}(\grad r^{-2\alpha}\vv):\grad \vu\dd x.
\end{align*}
By the product rule for vector fields (see Appendix \ref{app:polar_vec})
\begin{equation}\label{eq:prod_vec}
    \grad(r^{-2\alpha}\vv):\grad\vu  =r^{-2\alpha}\grad\vv:\grad\vu-2\alpha r^{-2\alpha-1}\vv \cdot\dr \vu,
\end{equation}
and using the boundary conditions in \eqref{eq:test_function} and \eqref{eq:SmoothStokesProj_ch5c}, we obtain
\begin{equation*}
    I_1^{(1)}= - \int_{\partial\Om'}r^{-2\alpha}v_r g \dd s + \int_{\partial\Om'}r^{-2\alpha}v_r\ur\dd s+ \int_{\Om}r^{-2\alpha}\grad\vv:\grad\vu\dd x-2\alpha\int_{\Om}r^{-2\alpha-1}\vv \cdot\dr\vu\dd x.
\end{equation*}
Again using the divergence theorem,  the second integral $I_2^{(1)}$ becomes
\begin{equation*}
    \begin{aligned}
    I_2^{(1)}
    =&\; - \int_{\partial\Om'}\dn\vu\cdot [\P, r^{-2\alpha}] \vv \dd s + \int_{\Om}\big(\grad [\P, r^{-2\alpha}]\vv \big):\grad \vu\dd x\\
    \stackrel{\mathclap{\eqref{ezelina}}}{=}&\;-\int_{\partial\Om'}\dn\ur([\P, r^{-2\alpha}] \vv)_r \dd s +\int_{\Om}\big(\grad [\P, r^{-2\alpha}] \vv ):\grad \vu\dd x.
\end{aligned}
\end{equation*}
By combining the expressions for $I_1^{(1)}$ and $I_2^{(1)}$, we rewrite \eqref{eq:to:B1:Mamo} in the form
\begin{equation}
\begin{aligned}\label{eq:B1:vvQ}
 & \int_{\partial\Om'}r^{-2\alpha}v_r\ur\dd s+ \int_{\Om}r^{-2\alpha}\grad\vv:\grad\vu\dd x-2\alpha\int_{\Om}r^{-2\alpha-1}\vv \cdot\dr\vu\dd x - \int_{\partial\Om'}(\dn\ur)([\P, r^{-2\alpha}] \vv)_r\dd s \\
    &  + \int_{\Om}\big(\grad [\P, r^{-2\alpha}] \vv \big):\grad \vu\dd x=  \int_{\Om}r^{-2\alpha} \P \vf \cdot \vv \dd x +  \int_{\partial\Om'}r^{-2\alpha}v_r g \dd s.
\end{aligned}
\end{equation}

\begin{rem}
\label{rem:no:weak}
For coercivity take $ \vu = \vv $ in \eqref{eq:B1:vvQ}, then we have control on $|u_r|_{\alpha}^2$ and $\|\grad \vu \|^2_{\alpha}$. The fourth term on the left-hand side of \eqref{eq:B1:vvQ} is difficult to deal with, since there is no control of derivatives on the boundary. A natural approach would be to apply the Navier-slip condition \eqref{eq:SmoothStokesProj_ch5c}. However, this changes the scaling, which we want to avoid.
\end{rem}
For obtaining a coercivity estimate, we apply  the fundamental theorem of calculus to
\begin{align*}
    \int_{\dOm'} (\dn\ur) ([\P, r^{-2\alpha}] \vv)_r \dd s
    =&\;\int_0^{\theta}\int_0^{\infty}\dph((\dph\ur)([\P, r^{-2\alpha}] \vv)_r) \ddrr\dd \ph.
\end{align*}
This requires control on the second-order derivative $\dph^2\ur$ in $\Om$, but there is only control on the first derivatives by $\|\grad\vu\|_{\alpha}^2$. Control on all second-order derivatives is obtained by introducing two additional bilinear forms as detailed in Sections \ref{subsec:2nd_bilinear_form} and \ref{subsec:vort_bilinear_form} below.

We define the bilinear form
\begin{equation}
\begin{aligned}\label{eq:B1}
    B_1(\vu,\vv)=&\int_{\partial\Om'}r^{-2\alpha}v_r\ur\dd s+ \int_{\Om}r^{-2\alpha}\grad\vv:\grad\vu\dd x -2\alpha\int_{\Om}r^{-2\alpha-1}\vv \cdot\dr\vu\dd x \\
    & - \int_{\Om} r^{-2} \dph((\dph\ur)([\P, r^{-2\alpha}] \vv)_r) \dd x + \int_{\Om}\big(\grad [\P, r^{-2\alpha}] \vv \big):\grad \vu\dd x=:\sum_{j=1}^5 T_j^{(1)}.
\end{aligned}
\end{equation}

\subsection{Bilinear form \texorpdfstring{$B_2$}{B2}}\label{subsec:2nd_bilinear_form}

We test \eqref{eq:SmoothStokesProj_ch5a} with
\begin{equation}\label{eq:testfunction_B2}
     r^{-2\alpha} \vv_2:= -  r^{-2\alpha} (r\dr)^2 \vv,\qquad \vv\in\TT,
\end{equation}
to obtain
\begin{equation}\label{eq:B2_3dervs}
  \big(-\P\del\vu, r^{-2\alpha} \vv_2\big)_{L^2(\Omega)}=(\P\vf,  r^{-2\alpha} \vv_2)_{L^2(\Omega)}.
\end{equation}
We rewrite
\begin{align*}
    (-\P \del\vu,  r^{-2\alpha} \vv_2)_{L^2(\Omega)}&=(-\del\vu, \P r^{-2\alpha}\vv_2)_{L^2(\Om)}\\
    &=\int_{\Om}(-\del\vu)\cdot (r^{-2\alpha}\vv_2)\dd x + \int_{\Om}(-\del\vu)\cdot [\P,r^{-2\alpha}]\vv_2\dd x \\
    & =:\,I^{(2)}_1 + I^{(2)}_2.
\end{align*}
With the divergence theorem and \eqref{eq:prod_vec},  $I^{(2)}_1$ becomes
\begin{align*}
    I^{(2)}_1
    =&\;-\int_{\partial\Om'}(r^{-2\alpha}\vv_2)\cdot\partial_{\nn}\vu\dd s+\int_{\Om}r^{-2\alpha}\grad \vv_2:\grad \vu\dd x-2\alpha\int_{\Om}r^{-2\alpha-1}\vv_2\cdot\dr\vu\dd x\\
    =&\;-\int_{\dOm'}r^{-2\alpha}(r\dr \vr)\big((r\dr-2\alpha +1)\dn\ur \big)\dd s+\int_{\Om}r^{-2\alpha}(\grad r\dr\vv):(\grad r\dr\vu)\dd x \\
    &\;  - 2 \alpha \int_{\Om}r^{-2\alpha}(\grad r \dr\vv) :\grad \vu\dd x+2\alpha \int_{\Om}r^{-2\alpha-1} ((r\dr)^2\vv) \cdot (\dr\vu)\dd x,
\end{align*}
where in the last step we have used the commutation relations \eqref{eqapp:com_rel} and applied integration by parts. Using the Navier-slip boundary condition \eqref{eq:SmoothStokesProj_ch5c} and again \eqref{eqapp:com_rel} gives
\begin{equation*}
    \begin{aligned}
        I_1^{(2)}=&\;- \int_{\dOm'}r^{-2\alpha}(r\dr \vr)\big((r\dr-2\alpha +1) g \big)\dd s+ \int_{\dOm'}r^{-2\alpha}(r\dr \vr)\big((r\dr-2\alpha+1)\ur \big)\dd s \\
    &\; +\int_{\Om}r^{-2\alpha}(\grad r\dr\vv):(\grad r\dr\vu)\dd x- 2 \alpha \int_{\Om}r^{-2\alpha}(\grad r \dr\vv) :\grad \vu\dd x \\
    & \; +2\alpha \int_{\Om}r^{-2\alpha-1} ((r\dr)^2\vv) \cdot (\dr\vu)\dd x.
    \end{aligned}
\end{equation*}
Again by the divergence theorem and fundamental theorem of calculus, the second integral becomes
\begin{align*}
    I^{(2)}_2=&\;\int_{\Om}(-\del\vu)\cdot [\P,r^{-2\alpha }]\vv_2 \dd x\;\;
    \stackrel{\mathclap{\eqref{ezelina}}}{=}\;\;-\int_{\partial\Om'}(\dn\ur)([\P,r^{-2\alpha}]\vv_2)_r \dd s+\int_{\Om}\big(\grad [\P,r^{-2\alpha}]\vv_2\big):\grad \vu\dd x \\
= & \,  -\int_{\Om}r^{-2}\dph\big((\dph \ur) ([\P,r^{-2\alpha}]\vv_2)_r\big) \dd x+\int_{\Om}\big(\grad [\P,r^{-2\alpha}]\vv_2\big):\grad \vu\dd x \\
= & \,  \int_{\Om}r^{-2}\dph\big((\dph \ur) ([\P,r^{-2\alpha}](r\dr)^2 \vv)_r\big) \dd x-\int_{\Om}\big(\grad [\P,r^{-2\alpha}](r\dr)^2 \vv\big):\grad \vu\dd x.
\end{align*}
By combining the expressions for $I_1^{(2)}$ and $I_2^{(2)}$ and by excluding the term involving $ g $, we obtain the bilinear form
\begin{equation}\label{eq:B2}%
\begin{aligned}
 B_2(\vu,\vv)=  &\; \int_{\dOm'}r^{-2\alpha}(r\dr \vr)\big((r\dr-2\alpha+1)\ur \big)\dd s
   +\int_{\Om}r^{-2\alpha}(\grad r\dr\vv):(\grad r\dr\vu)\dd x \\ &\;- 2 \alpha \int_{\Om}r^{-2\alpha}(\grad r \dr\vv) :\grad \vu\dd x  +2\alpha \int_{\Om}r^{-2\alpha-1} ((r\dr)^2\vv) \cdot (\dr\vu)\dd x  \\
    & \; +\int_{\Om}r^{-2}\dph\big((\dph \ur) ([\P,r^{-2\alpha}](r\dr)^2 \vv)_r\big) \dd x-\int_{\Om}\big(\grad [\P,r^{-2\alpha}](r\dr)^2 \vv\big):\grad \vu\dd x \\ = &:\sum_{j=1}^6 T^{(2)}_j,
\end{aligned}
\end{equation}
Hence, \eqref{eq:B2_3dervs} can be rewritten in the form
\begin{equation*}
B_2(\vu,\vv) = (\P\vf, r^{-2\alpha}\vv_2)_{L^2(\Omega)} + \int_{\dOm'}r^{-2\alpha}(r\dr \vr)\big((r\dr-2\alpha+1) g \big)\dd s.
\end{equation*}
From $B_2(\vu,\vu)$ we get control on $\|\grad r\dr\vu\|_{\alpha}^2$, i.e., the second-order derivatives in $r$ and the mixed derivatives. By using the divergence-free condition we also gain control on $\dph^2\uph$. However,  control on $\dph^2\ur$ is still missing and, even worse, $B_2$ gives an extra $\dph^2\ur$ term. The third bilinear form will give control on the last so far uncontrolled terms.

\subsection{Bilinear form \texorpdfstring{$B_3$}{B3}}
\label{subsec:vort_bilinear_form}
For the third bilinear form 
we test \eqref{eq:SmoothStokesProj_ch5a} with
\begin{equation}\label{eq:testfunction_vorticity_bilinear_form}
    r^{-2\alpha}\vv_3 :=- r^{-2\alpha+2} \Delta \vv, 
\qquad\vv\in\TT.
\end{equation}

We calculate
\begin{equation*}\label{eq:B3_1}
\begin{aligned}
    \left(-\P\del\vu, r^{-2\alpha}\vv_3\right)_{L^2(\Omega)}  =&\; \left(-\del\vu, r^{-2\alpha}\vv_3\right)_{L^2(\Omega)} + \left(-[\P, \Delta]\vu, r^{-2\alpha}\vv_3\right)_{L^2(\Omega)} \\
    &\; \int_{\Om}r^{-2\alpha +2}\del\vu \cdot  \del\vv \dd x +  \int_{\Om} r^{-2\alpha +2} [\P, \Delta]\vu \cdot \del\vv  \dd x.  \\
\end{aligned}
\end{equation*}
The third bilinear form is
\begin{equation}\label{eq:B3}
\begin{aligned}
   B_3(\vu,\vv) =& \int_{\Om}r^{-2\alpha+2}\del\vu\cdot \del\vv \dd x + \int_{\Om} r^{-2\alpha +2} [\P, \Delta]\vu \cdot \del\vv  \dd x
    =:\;\sum_{j=1}^2 T^{(3)}_j.
\end{aligned}
\end{equation}


\subsection{The variational problem}
\label{sec:var:problem}
Combining the computations of the preceding three sections, we define for an appropriate constant $c_3>0$ the bilinear form
\begin{equation}
\label{eq:B_ch5}
    B(\vu,\vv):=B_1(\vu,\vv)+{\br |\alpha| \theta^3} B_2(\vu,\vv)+c_3 {\br |\alpha| \theta^3}  B_3(\vu,\vv),
\end{equation}
which thus arises from testing the equation $-\P\del\vu=\P\vc{f}$ in $(\cdot,\cdot)_{L^2(\Omega)}$ with the test function $r^{-2\alpha}\vv_{\mathrm{test}}$ as defined in \eqref{eq:testfunctions}.
Define the pairing
\begin{equation*}
\langle g, v_{r} \rangle_{\alpha} := \int_{\dOm'}r^{-2\alpha} g \vr \dd s + {\br |\alpha| \theta^3}  \int_{\dOm'}r^{-2\alpha} \big((r\dr-2\alpha +1) g \big) (r\dr \vr)\dd s.
\end{equation*}
Then the variational problem associated to \eqref{eq:SmoothStokesProj_ch5} is
\begin{equation}
\label{la:mil:equ:2}
B(\vu, \vv) = (\P\vf, r^{-2\alpha}\vv_{\text{test}})_{L^2(\Omega)} + \langle g, v_r \rangle_{\alpha},\qquad \vv\in\TT,
\end{equation}
where $ B $ is defined in \eqref{eq:B_ch5}, and $ B_1$, $ B_2 $ and $ B_3 $ are defined in \eqref{eq:B1}, \eqref{eq:B2} and \eqref{eq:B3}, respectively.

We conclude this section by noticing that any solution $ \vu \in \mathfrak{X}^2_{\alpha,\theta} $ of the system \eqref{eq:SmoothStokesProj_ch5} satisfies the variational formulation \eqref{la:mil:equ:2}.

\begin{lem}
\label{reg:sol:are:var}
Let $ \theta \in (0, \pi) $, $\eps \in (0, 1- \tfrac{\theta}{\pi} ) $  and $\alpha \in I_{\eps} \setminus \mathbb{Z} $. Suppose that $ \vf \in \mathcal{H}^0_{\alpha} $ and $g\in \mathscr{X}^0_{\alpha}$. If $ \vu \in \mathfrak{X}^2_{\alpha,\theta} $ satisfies  \eqref{eq:SmoothStokesProj_ch5} almost everywhere in $ \Omega $, then $ \vu $ satisfies the variational formulation  \eqref{la:mil:equ:2}.
\end{lem}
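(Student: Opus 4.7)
The approach is simply to retrace the formal derivation of the bilinear form carried out in Sections \ref{sec:weighted_bilienar_form}--\ref{sec:var:problem} and verify that every manipulation (application of $\P$, integration by parts on $\Omega$, conversion of boundary integrals, use of the boundary conditions) is rigorously justified for $\vu\in\mathfrak{X}^2_{\alpha,\theta}$ rather than merely for smooth $\vu$. Fix $\vv\in\TT$ and pair the equation $-\P\Delta\vu=\P\vf$ in $L^2(\Omega)$ against $r^{-2\alpha}\vv_{\mathrm{test}}$ as in \eqref{eq:testfunctions}. Since $\vv$ is smooth with compact support in $\overline{\Omega}\setminus\{0\}$, so are $(r\partial_r)^2\vv$ and $r^2\Delta\vv$; hence $r^{-2\alpha}\vv_{\mathrm{test}}\in C^\infty_{\mathrm{c}}(\overline{\Omega}\setminus\{0\})\subset L^2(\Omega)$, so the $L^2$-symmetry of $\P$ from Lemma \ref{lem:commute_rdr_P}\ref{it:propHelm2} applies and yields the splitting $\P(r^{-2\alpha}\vv_{\mathrm{test}})=r^{-2\alpha}\vv_{\mathrm{test}}+[\P,r^{-2\alpha}]\vv_{\mathrm{test}}$.

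Each integration by parts in the derivation is an application of the divergence theorem on $\Omega$, with the only boundary terms appearing on $\partial\Omega'$ because $r^{-2\alpha}\vv_{\mathrm{test}}$ has compact support away from the tip and from infinity. For the interior integrals, the norm $\|\vu\|_{\mathfrak{X}^2_{\alpha,\theta}}$ controls $\vu$ in $H^2_{\mathrm{loc}}(\overline{\Omega}\setminus\{0\})$, so the required weak derivatives of $\vu$ exist and the Green identities hold in the standard sense. For the boundary integrals, the definition of $\mathfrak{X}^2_{\alpha,\theta}$ provides traces $u_r,\;r\partial_r u_r\in L^2(\partial\Omega',r^{-2\alpha}\,\mathrm{d}s)$, while the trace of $\partial_{\vn}u_r=\pm r^{-1}\partial_\ph u_r$ exists because $\llbracket\vu\rrbracket_{2,\alpha-1}$ is finite; the Navier-slip condition $u_r+\partial_{\vn}u_r=g$ is then used (as an identity of traces on $\partial\Omega'$) exactly where it enters the computation of $I_1^{(1)}$ in Section \ref{sec:weighted_bilienar_form} and of $I_1^{(2)}$ in Section \ref{subsec:2nd_bilinear_form}, producing the pairing $\langle g,v_r\rangle_\alpha$.

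The conversion of $T_4^{(1)}$ from a boundary integral to the interior integral $\int_\Omega r^{-2}\partial_\ph\bigl((\partial_\ph u_r)([\P,r^{-2\alpha}]\vv)_r\bigr)\,\mathrm{d}x$ is the fundamental theorem of calculus in $\ph$, which is lawful since the integrand has a well-defined weak $\ph$-derivative under the hypotheses (the factor $\partial_\ph u_r$ has a further weak $\ph$-derivative, and $([\P,r^{-2\alpha}]\vv)_r$ is smooth in $\ph$ by the Fourier representation of Lemma \ref{lem:Fourier_Rep_Phi}). The analogous identities in $B_2$ are handled the same way.

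The main technical obstacle is the non-locality of $\P$: the object $[\P,r^{-2\alpha}]\vv$ (and its $(r\partial_r)^2$-analogue) is not compactly supported, so the Green identities pairing it with $\Delta\vu$ or $\nabla\vu$ must be read as identities in weighted spaces. This is settled by the bounds of Lemma \ref{lem:estimate_Fourier_Phi} and Corollary \ref{cor:estimate_Fourier_Phi:extra}, which put $[\P,r^{-2\alpha}]\vv$ and its gradient into precisely the dual weighted spaces to $\Delta\vu,\nabla\vu\in\mathfrak{X}^2_{\alpha,\theta}$, ensuring all pairings are absolutely convergent; together with the identities $\div([\P,r^{-2\alpha}]\vv)=-2\alpha r^{-2\alpha-1}v_r$, $\vn\cdot[\P,r^{-2\alpha}]\vv=0$ on $\partial\Omega'$ (the latter being \eqref{ezelina}), a standard truncation-and-approximation argument validates the integration by parts. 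Combining all of these steps gives exactly \eqref{la:mil:equ:2} for every $\vv\in\TT$, completing the proof.
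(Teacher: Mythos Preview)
Your proposal is correct and takes essentially the same approach as the paper: the paper's proof is a one-liner stating that the identity $(-\P\Delta\vu,r^{-2\alpha}\vv_{\mathrm{test}})_{L^2(\Omega)}=(\P\vf,r^{-2\alpha}\vv_{\mathrm{test}})_{L^2(\Omega)}$ becomes \eqref{la:mil:equ:2} after integration by parts, and your write-up is a careful unpacking of exactly those integration-by-parts steps and their justification. The only stylistic difference is that you spell out the role of the weighted estimates on $[\P,r^{-2\alpha}]\vv$ and the trace regularity, whereas the paper leaves these implicit.
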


\begin{proof} This is a consequence of the fact that any almost everywhere solution $ \vu $ of  \eqref{eq:SmoothStokesProj_ch5} satisfies  $ (-\P \Delta \vu, r^{-2\alpha}\vv_{\text{test}})_{L^2(\Omega)} = ( \P\vf, r^{-2\alpha} \vv_{\text{test}})_{L^2(\Omega) }$, which equals  \eqref{la:mil:equ:2} after integration by parts.
\end{proof}


\section{Coercivity and boundedness of the bilinear form}
\label{chap:proof_coer}

This section is devoted to the proof of the coercivity and boundedness estimate as stated in Propositions \ref{prop:coercivity_ch5} and \ref{prop:boundedness_ch5} from Section \ref{sec:3}, Step 2.

Throughout this section we fix  $\eps\in (0, 1)$ and assume that
\begin{equation}\label{eq:conditions}
\theta\in(0,(1-\eps)\pi) \text{ and } \;\alpha\in I_{\eps} \setminus \ZZ, \quad \text{where} \quad  I_{\eps} = [- (1-\eps)\tfrac{\pi}{\theta} + 1, (1-\eps)\tfrac{\pi}{\theta} - 1].
\end{equation}
Moreover, let $\vu\in \TT$ be as defined in \eqref{eq:test_function} and we will use the expressions in polar coordinates as given in Appendix \ref{app:polar}.
In Sections \ref{sec:6_B1}, \ref{sec:6_B2} and \ref{sec:6_B3} we focus on the coercivity estimates of the bilinear forms $B_1$, $B_2$ and $B_3$ separately. Finally, in Section \ref{sec:Proof_props} we combine all estimates to prove Propositions \ref{prop:coercivity_ch5} and \ref{prop:boundedness_ch5}.

\subsection{Estimates for \texorpdfstring{$B_1$}{B1}}\label{sec:6_B1}
Consider the first bilinear form \eqref{eq:B1} as derived in Section \ref{sec:weighted_bilienar_form}. After integration by parts, it reads
\begin{equation}\label{eq:B1_uu}
    \begin{aligned}
    B_1(\vu,\vu)=&\;|\ur|^2_{\alpha}+ \|\grad \vu\|^2_{\alpha}-2\alpha^2\int_{\Om}r^{-2\alpha-2}|\vu|^2 \dd x
    -\int_{\Om} r^{-2} {\dph} \left[(\dph u_r)([\P, r^{-2\alpha}]\vu)_r \right]\dd x \\ &+\int_{\Om}\big(\grad[\P, r^{-2\alpha}]\vu\big):\grad \vu\dd x=:\sum_{j=1}^5T^{(1)}_j.
\end{aligned}
\end{equation}

\begin{prop}\label{prop:B1} There exist $C_1,C_2\in(0,\infty)$ such that for all  $\alpha$ and $ \theta$ subject to \eqref{eq:conditions} and all $c_3>0$, we have
\begin{align*}
    B_1(\vu,\vu)\geq&\;|\ur|^2_{\alpha}+\big(1- C_1|\alpha|\theta -  \tfrac{C_2 {\br |\alpha| \theta }}{c_3} \big)\|\grad\vu\|^2_{\alpha} -\frac{ c_3 {\br |\alpha| \theta^3}}{8}\big\|\tfrac{1}{r} \dph^2 \ur \big\|^2_{\alpha} \quad \text{ for all } \vu\in\TT.
\end{align*}
\end{prop}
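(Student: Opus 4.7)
The strategy is to treat the five summands in \eqref{eq:B1_uu} one by one, retaining the two positive leading terms $T_1^{(1)} = |\ur|^2_{\alpha}$ and $T_2^{(1)} = \|\grad\vu\|^2_{\alpha}$ and bounding the remaining three either by a small multiple of $\|\grad\vu\|^2_{\alpha}$ (with constant $C|\alpha|\theta$) or, in one case, by a small multiple of $\|\tfrac{1}{r}\dph^2 u_r\|^2_{\alpha}$ that can later be absorbed by the second-order bilinear forms $B_2$ and $B_3$.

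For $T_3^{(1)} = -2\alpha^2\|\vu/r\|^2_{\alpha}$, I apply the improved Hardy inequality (Lemma \ref{lem:new_estimates_uph_ur}) to get $|T_3^{(1)}| \le 2\alpha^2 C_0(\theta)\theta^2\|\grad\vu\|^2_{\alpha}$; using that $|\alpha|\theta \le \pi$ on $I_\eps$, this is dominated by $C|\alpha|\theta\|\grad\vu\|^2_{\alpha}$. For $T_5^{(1)}$, Cauchy--Schwarz combined with Corollary \ref{cor:estimate_Fourier_Phi:extra} immediately yields $|T_5^{(1)}| \le \|\grad[\P, r^{-2\alpha}]\vu\|_{-\alpha}\|\grad\vu\|_{\alpha} \le C_\eps|\alpha|\theta\|\grad\vu\|^2_{\alpha}$.

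The main (minor) obstacle is $T_4^{(1)}$, which involves an unwelcome second-order angular derivative $\dph^2 u_r$ that $B_1$ alone cannot control. Rather than integrating by parts in $\ph$ (which would leave boundary traces of $\dph u_r$ that we cannot estimate for a generic $\vu \in \TT$), I expand the $\ph$-derivative by the Leibniz rule to write
\begin{equation*}
T_4^{(1)} = -\int_\Om r^{-2}(\dph^2 u_r)\big([\P,r^{-2\alpha}]\vu\big)_r \dd x - \int_\Om r^{-2}(\dph u_r)\dph\big([\P,r^{-2\alpha}]\vu\big)_r \dd x.
\end{equation*}
Cauchy--Schwarz and Corollary \ref{cor:estimate_Fourier_Phi:extra} then bound these by $C_\eps|\alpha|\theta^2\|\tfrac{1}{r}\dph^2 u_r\|_{\alpha}\|\grad\vu\|_\alpha$ and $C_\eps|\alpha|\theta\|\grad\vu\|^2_\alpha$, respectively.

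It remains to dispose of the $\|\tfrac{1}{r}\dph^2 u_r\|_\alpha$-factor by Young's inequality, choosing the weights so that exactly $\tfrac{c_3|\alpha|\theta^3}{8}\|\tfrac{1}{r}\dph^2 u_r\|^2_\alpha$ appears (which the proposition permits as a negative remainder, intended to be later absorbed into the $c_3|\alpha|\theta^3 B_3$-contribution). The conjugate term produced by Young's inequality is then of size $\tfrac{C_\eps^2|\alpha|\theta}{c_3}\|\grad\vu\|^2_\alpha$, which fits the claimed form with $C_2 = \mathcal O(C_\eps^2)$. Collecting all contributions yields the asserted lower bound with $C_1$ independent of $c_3$ and $C_2$ coming solely from the $\dph^2 u_r$ Young step.
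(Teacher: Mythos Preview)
Your proof is correct and follows essentially the same route as the paper: the paper also keeps $T_1^{(1)}$ and $T_2^{(1)}$, bounds $T_3^{(1)}$ via the improved Hardy inequality (Lemma~\ref{lem:new_estimates_uph_ur}), bounds $T_5^{(1)}$ by Cauchy--Schwarz and Corollary~\ref{cor:estimate_Fourier_Phi:extra}, and handles $T_4^{(1)}$ by expanding the $\partial_\varphi$-derivative with the Leibniz rule and applying Young's inequality with the weight chosen so that the $\|\tfrac{1}{r}\partial_\varphi^2 u_r\|_\alpha^2$-term carries exactly the coefficient $\tfrac{c_3|\alpha|\theta^3}{8}$.
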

This incomplete coercivity estimate for $B_1$ is a consequence of Lemmata \ref{lem:coercivity_B1_T3}-\ref{lem:coercivity_B1_TildeS} below in which we estimate the term $T_3^{(1)}$-$T_5^{(1)}$ from \eqref{eq:B1_uu}.

\begin{lem}[Estimate of $T_{3}^{(1)} $]\label{lem:coercivity_B1_T3} 
There exists a $C_1\in(0,\infty)$ such that for all $\alpha $ and $ \theta$ subject to \eqref{eq:conditions}, we have
\begin{align*}
    \left|2\alpha^2\int_{\Om}r^{-2\alpha-2}|\vu|^2 \dd x \right|&\leq C_1 |\alpha|\theta \|\grad\vu\|^2_{\alpha}\qquad \text{ for all }\vu\in\TT.
\end{align*}
\end{lem}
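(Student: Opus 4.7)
The plan is to rewrite the integral in terms of the weighted norm $\|\cdot\|_\alpha$ and then apply the improved Hardy inequality for test vector fields on the wedge (Lemma \ref{lem:new_estimates_uph_ur}). Concretely, in polar coordinates one has
\begin{equation*}
\int_{\Om} r^{-2\alpha-2}|\vu|^2\dd x = \int_0^\theta\int_0^\infty r^{-2\alpha}\,|r^{-1}\vu|^2\, r\,\dd r\dd\ph = \big\|\tfrac{1}{r}\vu\big\|_\alpha^2.
\end{equation*}
Thus the quantity to estimate is $2\alpha^2 \big\|\tfrac{1}{r}\vu\big\|_\alpha^2$.

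Next, since $\vu\in\TT$, Lemma \ref{lem:new_estimates_uph_ur} applies (note $\alpha\neq 0$ by \eqref{eq:conditions}, as $0\notin I_\eps\setminus\ZZ$ would need to be checked; in fact the factor $\alpha^2$ in front makes the estimate trivial if $\alpha=0$, so we may assume $\alpha\neq 0$), yielding
\begin{equation*}
\big\|\tfrac{1}{r}\vu\big\|_\alpha^2 \leq C_0(\theta)\,\theta^2\, \|\grad\vu\|_\alpha^2,
\end{equation*}
with $C_0:(0,\pi)\to(0,\infty)$ increasing and independent of $\alpha$. Hence
\begin{equation*}
2\alpha^2 \big\|\tfrac{1}{r}\vu\big\|_\alpha^2 \;\leq\; 2\,C_0(\theta)\,(\alpha\theta)^2\, \|\grad\vu\|_\alpha^2.
\end{equation*}

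To conclude, I use that $\alpha\in I_\eps$ forces $|\alpha|\theta\leq (1-\eps)\pi-\theta\leq \pi$, and that $\theta<(1-\eps)\pi$ keeps $C_0(\theta)\leq C_0((1-\eps)\pi)=:\tilde C_\eps<\infty$. Factoring out one copy of $|\alpha|\theta$, we obtain
\begin{equation*}
2\,C_0(\theta)\,(\alpha\theta)^2\,\|\grad\vu\|_\alpha^2 \;\leq\; 2\pi\tilde C_\eps\,|\alpha|\theta\, \|\grad\vu\|_\alpha^2,
\end{equation*}
which is the claim with $C_1:=2\pi\tilde C_\eps$. The only mildly delicate point is ensuring that the constant emerging from Lemma \ref{lem:new_estimates_uph_ur} is bounded uniformly in $\theta$ on the admissible range, but this is guaranteed by the restriction $\theta<(1-\eps)\pi$.
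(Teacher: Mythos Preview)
Your proof is correct and follows essentially the same route as the paper: rewrite the integral as $2\alpha^2\|\tfrac{1}{r}\vu\|_\alpha^2$, apply Lemma~\ref{lem:new_estimates_uph_ur} to get the factor $C_0(\theta)\theta^2$, and then absorb one copy of $|\alpha|\theta$ using the uniform bound $|\alpha|\theta\le\pi$ coming from $\alpha\in I_\eps$. Your additional remark that $C_0(\theta)\le C_0((1-\eps)\pi)$ makes the $\theta$-independence of $C_1$ explicit, which the paper leaves implicit.
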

\begin{proof}
  By the Cauchy-Schwarz inequality, Young's inequality and Lemma \ref{lem:new_estimates_uph_ur}, we obtain
  \begin{align*}
    \left|2\alpha^2\int_{\Om}r^{-2\alpha-2}|\vu|^2 \dd x \right|&\leq  C_0\alpha^2\theta^2\|\grad\vu\|^2_{\alpha}\leq C_1 |\alpha|\theta\|\grad\vu\|^2_{\alpha}.\qedhere
\end{align*}
\end{proof}

\begin{lem}[Estimate of $T_4^{(1)}$]\label{lem:coercivity_B1_T4}
There exist $C_2,C_3\in(0,\infty)$ such that for all  $\alpha$ and $ \theta$ subject to \eqref{eq:conditions} and all $c_3>0$, we have
\begin{align*}
   & \left|\int_{\partial\Om} r^{-2} {\dph} \left[(\dph u_r)([\P, r^{-2\alpha}]\vu)_r \right]\dd x \right|\\
    &\leq\; \frac{c_3 {\br |\alpha| \theta^3} }{8} 	\big\|\tfrac{1}{r}\dph^2 \ur  \big\|^2_{\alpha}
    +\big( \tfrac{C_2 {\br |\alpha| \theta} }{c_3}+ C_3|\alpha|\theta \big)\|\grad\vu\|^2_{\alpha}\quad \text{ for all }\vu\in\TT.
\end{align*}
\end{lem}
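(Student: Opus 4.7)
The plan is to expand the angular derivative using the product rule, split the resulting expression into two pieces, and then estimate each piece via Cauchy--Schwarz in the weighted $L^2$ space combined with the commutator bounds from Lemma \ref{lem:estimate_Fourier_Phi} and Corollary \ref{cor:estimate_Fourier_Phi:extra}. The only delicate point is that one piece contains $\dph^2 u_r$, which is \emph{not} controlled by the norm $\|\grad\vu\|_\alpha$; this is precisely the term we are forced to absorb by the auxiliary bilinear form $B_3$, so we must keep its coefficient small enough to be compatible with $\tfrac{c_3}{8}|\alpha|\theta^3$.

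First I would apply the Leibniz rule, rewriting the integrand (over $\Om$) as
\begin{equation*}
r^{-2}\dph\bigl[(\dph u_r)([\P,r^{-2\alpha}]\vu)_r\bigr] = r^{-2}(\dph^2 u_r)([\P,r^{-2\alpha}]\vu)_r + r^{-2}(\dph u_r)\,\dph([\P,r^{-2\alpha}]\vu)_r.
\end{equation*}
Then, pairing one factor with weight $r^{-\alpha}$ and the other with $r^{\alpha}$ (so that together with the measure $r\,\mathrm{d}r\,\mathrm{d}\ph$ the weights balance), Cauchy--Schwarz yields
\begin{align*}
\Bigl|\textstyle\int_\Om r^{-2}(\dph^2 u_r)([\P,r^{-2\alpha}]\vu)_r\dd x\Bigr| &\le \bigl\|\tfrac{1}{r}\dph^2 u_r\bigr\|_\alpha \bigl\|\tfrac{1}{r}[\P,r^{-2\alpha}]\vu\bigr\|_{-\alpha},\\
\Bigl|\textstyle\int_\Om r^{-2}(\dph u_r)\,\dph([\P,r^{-2\alpha}]\vu)_r\dd x\Bigr| &\le \bigl\|\tfrac{1}{r}\dph u_r\bigr\|_\alpha \bigl\|\tfrac{1}{r}\dph([\P,r^{-2\alpha}]\vu)_r\bigr\|_{-\alpha}.
\end{align*}

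Next, for the first piece I would apply Corollary \ref{cor:estimate_Fourier_Phi:extra} to get $\|\tfrac{1}{r}[\P,r^{-2\alpha}]\vu\|_{-\alpha}\le C_\eps|\alpha|\theta^2\|\grad\vu\|_\alpha$, and then use Young's inequality tuned so that the quadratic term in $\|\tfrac{1}{r}\dph^2 u_r\|_\alpha$ comes out with the prefactor $\tfrac{c_3|\alpha|\theta^3}{8}$; the remaining factor becomes
\begin{equation*}
\frac{(C_\eps|\alpha|\theta^2)^2}{2\cdot\tfrac{c_3|\alpha|\theta^3}{8}}\|\grad\vu\|_\alpha^2 \;=\; \frac{C_2|\alpha|\theta}{c_3}\|\grad\vu\|_\alpha^2,
\end{equation*}
which is exactly the shape announced in the lemma. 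For the second piece, the factor $\|\tfrac{1}{r}\dph u_r\|_\alpha$ is controlled by $\|\grad\vu\|_\alpha$ (it is one entry of $\grad\vu$ in polar coordinates, cf.\ Appendix \ref{app:polar_vec}), and by Corollary \ref{cor:estimate_Fourier_Phi:extra} the remaining norm satisfies $\|\tfrac{1}{r}\dph([\P,r^{-2\alpha}]\vu)_r\|_{-\alpha}\le \|\grad[\P,r^{-2\alpha}]\vu\|_{-\alpha}\le C_\eps|\alpha|\theta\|\grad\vu\|_\alpha$. Combined with the first piece this produces the $C_3|\alpha|\theta\|\grad\vu\|_\alpha^2$ contribution and closes the bound. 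The main subtlety is thus \emph{not} analytical but bookkeeping: one has to apply Young's inequality with the exact weight $\tfrac{c_3|\alpha|\theta^3}{8}$ on the uncontrolled $\|\tfrac{1}{r}\dph^2 u_r\|_\alpha^2$ term so that it can later be absorbed by the contribution of $B_3$, while all $\theta$-powers must be tracked carefully to produce a final $\theta$-dependence of the form $|\alpha|\theta$ (not a worse power).
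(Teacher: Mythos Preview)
Your approach is essentially the same as the paper's: product-rule split, Cauchy--Schwarz with the $r^{\pm\alpha}$ weights, Young's inequality tuned to put $\tfrac{c_3|\alpha|\theta^3}{8}$ in front of $\|\tfrac{1}{r}\dph^2 u_r\|_\alpha^2$, and Corollary~\ref{cor:estimate_Fourier_Phi:extra} for the commutator norms.

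There is one small inaccuracy in the second piece. In polar coordinates the relevant entry of $\grad\vu$ is $r^{-1}(\dph u_r - u_\ph)$, not $r^{-1}\dph u_r$ (see \eqref{eqapp:grad_polar}). So $\|\tfrac{1}{r}\dph u_r\|_\alpha$ is not literally dominated by $\|\grad\vu\|_\alpha$; you need to write $\dph u_r = (\dph u_r - u_\ph) + u_\ph$ and control the extra $\|\tfrac{1}{r}u_\ph\|_\alpha$ via Lemma~\ref{lem:new_estimates_uph_ur}, which gives $\|\tfrac{1}{r}u_\ph\|_\alpha \le C\theta\|\grad\vu\|_\alpha$. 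This is exactly what the paper does, and after this adjustment your argument goes through unchanged with the stated $C_3|\alpha|\theta$ coefficient.
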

\begin{proof}
Note that
\begin{align}\label{eq:lemma_92}
    T_4^{(1)}=\;\int_{\Om}r^{-2}(\dph^2\ur)([\P, r^{-2\alpha}]\vu)_r \dd x+\int_{\Om}r^{-2}(\dph\ur)\dph ([\P, r^{-2\alpha}]\vu)_r \dd x.
\end{align}
For the first integral in \eqref{eq:lemma_92}, applying the Cauchy-Schwarz inequality, Young's inequality, and Corollary \ref{cor:estimate_Fourier_Phi:extra}, we obtain
\begin{align*}
    &\left|\int_{\Om}r^{-2}(\dph^2\ur)([\P, r^{-2\alpha}]\vu)_r\dd x\right|\\
    \leq &\; \frac{c_3 {\br |\alpha| \theta^3}}{8}\int_{\Om}r^{-2\alpha}(r^{-1}\dph^2\ur)^2\dd x+ \frac{2}{c_3 {\br |\alpha| \theta^3} }\int_{\Om}r^{2\alpha}(r^{-1}([\P, r^{-2\alpha}]\vu)_r)^2\dd x\\
    \leq&\; \frac{c_3 {\br |\alpha| \theta^3}}{8} \big\|\tfrac{1}{r}  \dph^2 \ur \big\|^2_{\alpha}+ \frac{C_2 {\br |\alpha| \theta} }{c_3}\|\grad\vu\|^2_{\alpha}.
\end{align*}
Similarly, for the second integral in \eqref{eq:lemma_92}
\begin{align*}
    &\left|\int_{\Omega}r^{-2}(\dph\ur)\dph ([\P, r^{-2\alpha}]\vu)_r\dd x\right|\\
    \leq&\; \frac{|\alpha|\theta}{2}\int_{\Om}r^{-2\alpha}(r^{-1}\dph\ur)^2\dd x+\frac{1}{2 |\alpha|\theta}\int_{\Om}r^{2\alpha}(r^{-1}(\dph[\P, r^{-2\alpha}]\vu)_r)^2\dd x\\
    \leq&\;|\alpha|\theta \int_{\Om }r^{-2\alpha}\big[(r^{-1}(\dph\ur-\uph))^2+r^{-1}\uph^2\big]\ddrr\dd\ph +\frac{1}{2|\alpha|\theta}\int_{\Om}r^{2\alpha}(r^{-1}(\dph[\P, r^{-2\alpha}]\vu)_r)^2\dd x\\
    \leq&\; C_3|\alpha|\theta\|\grad\vu\|_{\alpha}^2,
\end{align*}
where we have used Lemma \ref{lem:new_estimates_uph_ur} and Corollary \ref{cor:estimate_Fourier_Phi:extra} in the last step.
\end{proof}

\begin{lem}[Estimate of $T_5^{(1)}$]\label{lem:coercivity_B1_TildeS}
 There exists a $C_4\in(0,\infty)$ such that for all $\alpha $ and $ \theta$ subject to \eqref{eq:conditions}, we have
\begin{align*}
    \left| \int_{\Om}\big(\grad [\P, r^{-2\alpha}]\vu \big):\grad \vu\dd x   \right|
    \leq   C_4 |\alpha| \theta \|\grad\vu\|^2_{\alpha}\qquad \text{ for all }\vu\in\TT.
\end{align*}

\end{lem}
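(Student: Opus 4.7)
The plan is to split the integrand symmetrically with respect to the weight $r^{-2\alpha}$, apply Cauchy--Schwarz, and then conclude via the commutator estimate from Corollary~\ref{cor:estimate_Fourier_Phi:extra}.

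More precisely, I would write
\begin{equation*}
 \int_{\Om}\bigl(\grad [\P, r^{-2\alpha}]\vu \bigr):\grad \vu\,\dd x = \int_{\Om}\bigl(r^{\alpha}\grad [\P, r^{-2\alpha}]\vu\bigr):\bigl(r^{-\alpha}\grad \vu\bigr)\dd x,
\end{equation*}
and apply the Cauchy--Schwarz inequality to obtain
\begin{equation*}
 \left| \int_{\Om}\bigl(\grad [\P, r^{-2\alpha}]\vu \bigr):\grad \vu\,\dd x \right| \leq \bigl\| \grad [\P, r^{-2\alpha}]\vu \bigr\|_{-\alpha}\,\|\grad \vu\|_{\alpha}.
\end{equation*}

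The main (and essentially only) non-trivial step is then bounding the first factor. For this I would invoke Corollary~\ref{cor:estimate_Fourier_Phi:extra}, which under the assumption $\alpha \in I_{\eps}\setminus\{0\}$ and $\theta \in (0,(1-\eps)\pi)$ yields
\begin{equation*}
 \bigl\| \grad [\P, r^{-2\alpha}]\vu \bigr\|_{-\alpha} \leq C_{\eps}\,|\alpha|\,\theta\,\|\grad \vu\|_{\alpha}.
\end{equation*}
Combining the two displayed inequalities gives the claimed bound with $C_4 = C_{\eps}$.

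The real work has already been done in Section~\ref{sec:Estimates_Helmholtz} (Lemma~\ref{lem:estimate_Fourier_Phi} and its corollary), where the Fourier--Mellin representation of the Helmholtz potential was used to quantify the commutator $[\P,r^{-2\alpha}]$. So there is no serious obstacle at this step; it is a direct consequence of the preparatory commutator estimates, and the factor $|\alpha|\theta$ in the final bound is precisely the small parameter that will later be absorbed into the leading-order coercive terms $|\ur|_\alpha^2 + \|\grad\vu\|_\alpha^2$ of $B_1(\vu,\vu)$ in Proposition~\ref{prop:B1}.
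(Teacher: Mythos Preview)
Your proof is correct and essentially identical to the paper's: both split the integrand with the weight $r^{\pm\alpha}$, apply Cauchy--Schwarz to get $\|\grad[\P,r^{-2\alpha}]\vu\|_{-\alpha}\|\grad\vu\|_\alpha$, and then invoke Corollary~\ref{cor:estimate_Fourier_Phi:extra} to bound the commutator factor by $C_\eps|\alpha|\theta\|\grad\vu\|_\alpha$.
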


\begin{proof}
By the Cauchy-Schwarz inequality and Corollary \ref{cor:estimate_Fourier_Phi:extra}, we have
\begin{equation*}
    \left| \int_{\Om}\big(\grad [\P, r^{-2\alpha}]\vu \big):\grad \vu\dd x   \right|
    \leq    \left|\int_{\Om} r^{2\alpha}\left| \grad [\P, r^{-2\alpha}]\vu \right|^2 \dd x \right|^{\frac{1}{2}} \|\grad\vu\|_{\alpha}\lesssim   |\alpha| \theta  \|\grad\vu\|^2_{\alpha}.\qedhere
\end{equation*}
\end{proof}

\subsection{Estimates for \texorpdfstring{$B_2$}{B2}}\label{sec:6_B2}
Consider the second bilinear form \eqref{eq:B2} as derived in Section \ref{subsec:2nd_bilinear_form}
\begin{equation}\label{eq:B2_uu}
  \begin{aligned}
  B_2(\vu,\vu)=  &\; \int_{\dOm'}r^{-2\alpha}(r\dr \ur)\big((r\dr-2\alpha+1)\ur \big)\dd s
   +\|\grad r\dr\vu\|^2_{\alpha} \\ &\; - 2 \alpha \int_{\Om}r^{-2\alpha}(\grad r \dr\vu) :\grad \vu\dd x +2\alpha \int_{\Om}r^{-2\alpha-1} ((r\dr)^2\vu) \cdot (\dr\vu)\dd x  \\
    & \; - \int_{\Om} r^{-2}\dph[(\dph\ur)([\P, r^{-2\alpha}](r\dr)^2\vu)_r]\dd x-\int_{\Om}\big(\grad [\P, r^{-2\alpha}](r\dr)^2\vu \big):\grad \vu\dd x \\ = & :\sum_{j=1}^6 T^{(2)}_j.
\end{aligned}
\end{equation}

We have the following partial coercivity estimate for $B_2$.
\begin{prop}\label{prop:B2}
There exist $D_1,D_2,D_3\in(0,\infty)$ such that for all  $\alpha$ and $ \theta$ subject to \eqref{eq:conditions} and all $c_3>0$, we have
\begin{align*}
    B_2(\vu,\vu)\geq&\;|r\dr\ur|^2_{\alpha} -2(\alpha-\tfrac{1}{2})^2 |\ur|^2_{\alpha}+ \big( 1-  D_1 {\br |\alpha|} \theta - {\br \tfrac{D_2 |\alpha \theta|^2  }{c_3}} \big) \|\grad r\dr\vu\|^2_{\alpha} \\ & \; - \frac{D_3{\br |\alpha|}}{\theta} \|\grad\vu\|^2_{\alpha} -  \frac{c_3}{8}\big\|\tfrac{1}{r} \dph^2 \ur \big\|^2_{\alpha}\qquad \text{ for all }\vu\in\TT.
\end{align*}
\end{prop}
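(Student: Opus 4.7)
The plan is to decompose $B_2(\vu,\vu) = \sum_{j=1}^{6} T_j^{(2)}$ according to \eqref{eq:B2_uu} and estimate the six terms individually, mirroring the structure of the estimates for $B_1$ in Lemmata \ref{lem:coercivity_B1_T3}--\ref{lem:coercivity_B1_TildeS}. The boundary term $T_1^{(2)}$ I would compute exactly: splitting $(r\dr - 2\alpha + 1)u_r = r\dr u_r + (1-2\alpha) u_r$, the first contribution yields $|r\dr u_r|^2_\alpha$ at once, while the second equals $\tfrac{1-2\alpha}{2}\int_{\dOm'} r^{-2\alpha+1}\dr(u_r^2)\,\dd s$ and, after a single integration by parts in $r$ along each boundary ray, produces $-\tfrac{(1-2\alpha)^2}{2}|u_r|^2_\alpha = -2(\alpha-\tfrac{1}{2})^2 |u_r|^2_\alpha$. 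The term $T_2^{(2)} = \|\grad r\dr\vu\|^2_\alpha$ is already the leading positive contribution. For $T_3^{(2)}$, Cauchy--Schwarz gives $|T_3^{(2)}|\leq 2|\alpha|\,\|\grad r\dr\vu\|_\alpha\,\|\grad\vu\|_\alpha$, and Young's inequality with weight $\sim |\alpha|\theta$ delivers the prescribed small loss $D_1|\alpha|\theta\,\|\grad r\dr\vu\|^2_\alpha$ together with the residual $\tfrac{D_3|\alpha|}{\theta}\,\|\grad \vu\|^2_\alpha$ in the statement (with $D_3\sim 1/D_1$). For $T_4^{(2)}$, the commutation identity $(r\dr)^2\vu = r\dr\vu + r^2\dr^2\vu$ combined with an integration by parts in $r$ on the $(\dr^2\vu)\cdot\dr\vu = \tfrac{1}{2}\dr|\dr\vu|^2$ piece gives the clean identity $T_4^{(2)} = 2\alpha^2\|\dr\vu\|^2_\alpha \geq 0$, which is simply dropped from the lower bound.

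The delicate work lies in $T_5^{(2)}$ and $T_6^{(2)}$, both of which involve the Helmholtz commutator acting on the second-order object $w := (r\dr)^2 \vu$. The crucial observation is that $w\in\TT$: indeed $\div w = (r\dr)^2 \div\vu = 0$ since $r\dr$ commutes with the polar divergence, and $w_\ph = (r\dr)^2 u_\ph$ vanishes on $\dOm'$ because $u_\ph$ does and $r\dr$ is purely radial. Hence Lemma \ref{lem:estimate_Fourier_Phi} applies to $w$ and gives
\[
\bigl\|\tfrac{1}{r}[\P, r^{-2\alpha}] w\bigr\|_{-\alpha}\leq C_\eps |\alpha|\theta\,\|w\|_{\alpha+1},\qquad \bigl\|\grad [\P, r^{-2\alpha}] w\bigr\|_{-\alpha}\leq C_\eps |\alpha|\,\|w\|_{\alpha+1}.
\]
The quantity $\|w\|_{\alpha+1} = \|(r\dr)^2 \vu\|_{\alpha+1}$ is one summand of $\llbracket \vu\rrbracket_{2,\alpha-1}$. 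To reduce the full seminorm to the quantities in the statement I would use the divergence-free identity $\dph u_\ph = -(r\dr + 1)u_r$, differentiated once more in $\ph$, together with Lemma \ref{lem:new_estimates_uph_ur}, to bound $\|r^{-1}\dph^2 u_\ph\|_\alpha$ by $\|\grad r\dr\vu\|_\alpha$ and $\|\grad\vu\|_\alpha$, which yields
\[
\llbracket \vu\rrbracket^2_{2,\alpha-1}\leq C\bigl(\|\grad r\dr\vu\|^2_\alpha + \|\tfrac{1}{r}\dph^2 u_r\|^2_\alpha + \|\grad\vu\|^2_\alpha\bigr).
\]

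For $T_5^{(2)}$ I would then mimic the product-rule split used in Lemma \ref{lem:coercivity_B1_T4}, expanding $\dph[(\dph u_r)([\P, r^{-2\alpha}]w)_r]$ into a $(\dph^2 u_r)(\cdot)$-piece and a $(\dph u_r)\dph(\cdot)$-piece, applying Cauchy--Schwarz and Young's inequality. The term $T_6^{(2)}$ is handled analogously via the gradient commutator estimate. The main obstacle is the bookkeeping of the Young weights: one must choose them so that the total $\|\tfrac{1}{r}\dph^2 u_r\|^2_\alpha$-loss is at most $c_3/8$, the total $\|\grad r\dr \vu\|^2_\alpha$-loss fits into $D_1|\alpha|\theta + D_2|\alpha\theta|^2/c_3$, and the residuals proportional to $|\alpha|^2/(|\alpha|\theta) = |\alpha|/\theta$ absorb cleanly into the $D_3|\alpha|/\theta$-budget, all while using the smallness hypothesis $|\alpha\theta|<c$ to dominate cross terms of order $|\alpha\theta|^2/c_3$. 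Once these weights are calibrated, summing the six term-wise estimates produces the claimed coercivity inequality.
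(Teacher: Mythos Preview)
Your approach is essentially the paper's: term-by-term treatment of $T_1^{(2)}$--$T_6^{(2)}$ via integration by parts for $T_1^{(2)}$, Cauchy--Schwarz/Young for $T_3^{(2)},T_4^{(2)}$, and the Helmholtz commutator estimates of Lemma~\ref{lem:estimate_Fourier_Phi} for $T_5^{(2)},T_6^{(2)}$. Two remarks. First, your exact identity $T_4^{(2)} = 2\alpha^2\|\dr\vu\|^2_\alpha \ge 0$ is a clean bonus; the paper simply bundles $T_3^{(2)}+T_4^{(2)}$ and bounds both by Cauchy--Schwarz/Young (Lemma~\ref{lem:coercivity_B2_T34}). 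Second, your detour through the full seminorm $\llbracket\vu\rrbracket_{2,\alpha-1}$ is unnecessary and creates the artificial bookkeeping difficulty you worry about at the end: since $r^{-1}(r\dr)^2\vu = \dr(r\dr\vu)$, one has directly $\|(r\dr)^2\vu\|_{\alpha+1} = \|\dr(r\dr\vu)\|_\alpha \le \|\grad r\dr\vu\|_\alpha$, and no $\|\tfrac{1}{r}\dph^2 u_r\|_\alpha$ term ever enters the commutator estimates. Also, a small inaccuracy: $r\dr$ does not commute with $\div$; rather $\div(r\dr\ww) = (r\dr+1)\div\ww$ (cf.\ the proof of Lemma~\ref{lem:commute_rdr_P}), so $\div w = (r\dr+1)^2\div\vu = 0$ --- your conclusion stands, but the justification should be corrected.
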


This proposition is a consequence of Lemmata \ref{lem:coercivity_B2_T1}-\ref{lem:coercivity_B2_T6} below, in which we estimate the terms $T_1^{(2)}$ and $T_3^{(2)}$-$T_6^{(2)}$ from \eqref{eq:B2_uu}.
\begin{lem}[Reformulation of $T_1^{(2)}$] \label{lem:coercivity_B2_T1}
For $\alpha $ and $ \theta $ subject to \eqref{eq:conditions} we have
\begin{equation*}
    T_1^{(2)}=|r\dr\ur|^2_{\alpha}-2(\alpha-\tfrac{1}{2})^2|\ur|^2_{\alpha}\qquad\text{ for all }\vu\in\TT.
\end{equation*}
\end{lem}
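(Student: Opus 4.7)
The proof plan is a direct integration-by-parts computation. On each of the two boundary rays $\{\varphi = 0\}$ and $\{\varphi = \theta\}$, the surface measure reduces to $\mathrm{d}r$, so writing $f_\vartheta(r) := u_r(r,\vartheta)$ for $\vartheta \in \{0,\theta\}$, I would expand
\begin{equation*}
T_1^{(2)} = \sum_{\vartheta \in \{0,\theta\}} \int_0^\infty r^{-2\alpha} (r f_\vartheta')\bigl(r f_\vartheta' + (1-2\alpha)f_\vartheta\bigr)\,\mathrm{d}r.
\end{equation*}
The first piece is by definition $|r\partial_r u_r|_\alpha^2$ (summed over $\vartheta$), so only the cross term requires work.

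For the cross term, I would use $r f_\vartheta' f_\vartheta = \tfrac{r}{2}(f_\vartheta^2)'$ to write
\begin{equation*}
(1-2\alpha) \int_0^\infty r^{-2\alpha}(r f_\vartheta') f_\vartheta \,\mathrm{d}r = \frac{1-2\alpha}{2}\int_0^\infty r^{-2\alpha+1}(f_\vartheta^2)'\,\mathrm{d}r,
\end{equation*}
and then integrate by parts in $r$. Since $\vu \in \mathcal{T}$, the trace $f_\vartheta$ belongs to $C_c^\infty((0,\infty))$, so the boundary contributions at $r = 0$ and $r = \infty$ vanish, and I obtain
\begin{equation*}
\frac{1-2\alpha}{2}\int_0^\infty r^{-2\alpha+1}(f_\vartheta^2)'\,\mathrm{d}r = \frac{(1-2\alpha)(2\alpha-1)}{2}\int_0^\infty r^{-2\alpha} f_\vartheta^2\,\mathrm{d}r = -2\bigl(\alpha-\tfrac{1}{2}\bigr)^2 \int_0^\infty r^{-2\alpha} f_\vartheta^2\,\mathrm{d}r.
\end{equation*}

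Summing over the two values of $\vartheta$ and recalling $|u_r|_\alpha^2 = \sum_{\vartheta} \int_0^\infty r^{-2\alpha} f_\vartheta^2 \,\mathrm{d}r$ yields the claimed identity
\begin{equation*}
T_1^{(2)} = |r\partial_r u_r|_\alpha^2 - 2\bigl(\alpha - \tfrac{1}{2}\bigr)^2 |u_r|_\alpha^2.
\end{equation*}
There is no real obstacle here: the computation works for any $\alpha \in \mathbb{R}$, the condition $\vu \in \mathcal{T}$ guarantees the integration-by-parts boundary terms vanish, and the conditions in \eqref{eq:conditions} play no role in this specific reformulation (they only become relevant when one controls the resulting $-2(\alpha-\tfrac{1}{2})^2|u_r|_\alpha^2$ term against other positive contributions in subsequent lemmata).
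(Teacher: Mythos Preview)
Your proof is correct and follows essentially the same approach as the paper: expand $T_1^{(2)}$ into the square term $|r\partial_r u_r|_\alpha^2$ plus the cross term, then handle the cross term via $r f' f = \tfrac{r}{2}(f^2)'$ and integration by parts in $r$, with boundary terms vanishing by compact support. The paper's proof is the same computation written more tersely (without splitting into the two boundary rays), and your remark that \eqref{eq:conditions} is not actually used here is accurate.
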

\begin{proof}
This is immediate with integration by parts
\begin{equation*}\label{eq:B2_T1}
    (1-2\alpha)\int_{\dOm'}r^{-2\alpha}(r\dr\ur)\ur\dd s=\frac{1-2\alpha}{2}\int_{\dOm'}r^{-2\alpha+1}\dr(\ur^2)\dd s
    =-2(\alpha-\tfrac{1}{2})^2\int_{\dOm'}r^{-2\alpha}\ur^2\dd s.\qedhere
\end{equation*}
\end{proof}
\begin{lem}[Estimate of $T_3^{(2)}+T_4^{(2)}$]\label{lem:coercivity_B2_T34}
There exist $D_1,D_2\in(0,\infty)$ such that for all  $\alpha$ and $ \theta$ subject to \eqref{eq:conditions}, we have
\begin{align*}
        \big|T_3^{(2)}\big|+\big|T_4^{(2)}\big|
        \leq   D_1 {\br |\alpha|} \theta \|\grad r\dr\vu\|^2_{\alpha}+   \frac{D_2{\br |\alpha|}}{\theta} \|\grad \vu\|^2_{\alpha}    \qquad\text{ for all }\vu\in\TT.
\end{align*}
\end{lem}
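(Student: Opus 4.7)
The plan is to estimate both terms by Cauchy-Schwarz followed by Young's inequality with a parameter $\delta = \theta$ calibrated to produce precisely the required weights $|\alpha|\theta$ and $|\alpha|/\theta$ on the two sides.

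For $T_3^{(2)}$ there is essentially nothing to do: by Cauchy-Schwarz in $L^2(\Omega, r^{-2\alpha}\,dx)$,
\begin{equation*}
|T_3^{(2)}| \;\leq\; 2|\alpha|\,\|\nabla r\partial_r\vu\|_{\alpha}\,\|\nabla\vu\|_{\alpha},
\end{equation*}
and then Young's inequality $2ab \leq \theta a^{2} + \theta^{-1} b^{2}$ gives $|T_3^{(2)}| \leq |\alpha|\theta\,\|\nabla r\partial_r\vu\|_{\alpha}^2 + \tfrac{|\alpha|}{\theta}\|\nabla\vu\|_{\alpha}^2$.

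For $T_4^{(2)}$ the only point is a little algebraic identity: since $r\partial_r$ is a scalar operator commuting with multiplication by $r$ in an obvious sense, one can rewrite
\begin{equation*}
r^{-2\alpha-1}\,(r\partial_r)^{2}\vu \;=\; r^{-2\alpha}\, r^{-1}\,r\partial_r\bigl(r\partial_r\vu\bigr) \;=\; r^{-2\alpha}\,\partial_r\!\bigl(r\partial_r\vu\bigr).
\end{equation*}
Applying Cauchy-Schwarz to $T_4^{(2)} = 2\alpha\int_\Omega r^{-2\alpha}\partial_r(r\partial_r\vu)\cdot\partial_r\vu\,dx$ and noting that both $\|\partial_r(r\partial_r\vu)\|_{\alpha}\leq \|\nabla r\partial_r\vu\|_{\alpha}$ and $\|\partial_r\vu\|_{\alpha}\leq\|\nabla\vu\|_{\alpha}$, we again obtain
\begin{equation*}
|T_4^{(2)}| \;\leq\; 2|\alpha|\,\|\nabla r\partial_r\vu\|_{\alpha}\,\|\nabla\vu\|_{\alpha},
\end{equation*}
and the same Young inequality finishes the job.

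Summing the two bounds yields the claim with $D_1=D_2=2$. The only subtle point, which I expect to be the "main obstacle," is bookkeeping: one must check that $\partial_r\vu$ and $\partial_r(r\partial_r\vu)$ — where $\partial_r$ acts componentwise on the vector $\vu$ written in the polar frame $(\ver,\veph)$ — are indeed controlled by the gradient norms $\|\nabla\vu\|_{\alpha}$ and $\|\nabla r\partial_r\vu\|_{\alpha}$ respectively; this uses that $\ver,\veph$ depend only on $\varphi$ (not on $r$), so $\partial_r$ commutes with the polar basis and the polar-coordinate expressions of the gradient in Appendix \ref{app:polar_vec} apply directly.
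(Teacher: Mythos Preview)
Your proof is correct and follows exactly the route the paper indicates: Cauchy--Schwarz plus Young's inequality with parameter $\theta$. The paper's own proof is the one-liner ``a direct consequence of the Cauchy--Schwarz inequality and Young's inequality''; your version spells out the bookkeeping (in particular that $|\partial_r\vu|\le|\nabla\vu|$ pointwise because $\ver,\veph$ are $r$-independent), which is a faithful expansion of the same argument.
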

\begin{proof}
  The estimate is a  direct consequence of the Cauchy-Schwarz inequality and Young's inequality.
\end{proof}
%
%
To estimate $T_5^{(2)}$ we follow the same strategy as in Lemma \ref{lem:coercivity_B1_T4}.
\begin{lem}[Estimate of $T_5^{(2)}$]\label{lem:coercivity_B2_T5}
There exist $D_3,D_4,D_5\in(0,\infty)$ such that for all  $\alpha$ and $ \theta$ subject to \eqref{eq:conditions} and all $c_3>0$, we have
\begin{align*}
    &\left|\int_{\Om} r^{-2}\dph\big((\dph\ur)([\P, r^{-2\alpha}](r\dr)^2 \vu)_r\big)\dd x \right|\\
    \leq & \; \frac{c_3}{8}\big\|\tfrac{1}{r}\dph^2 \ur  \big\|^2_{\alpha}
    + \frac{D_4{\br |\alpha|}}{\theta} \|\grad\vu\|^2_{\alpha}+ {\br |\alpha| \theta\big(   \tfrac{D_3 |\alpha| \theta}{c_3} + D_5 \big)} \| \grad (r \dr \vu )\|^2_{\alpha} \qquad    \text{ for all }\vu\in\TT.
\end{align*}
\end{lem}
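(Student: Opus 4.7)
The plan is to mimic the proof of Lemma \ref{lem:coercivity_B1_T4} for $T_4^{(1)}$, but with the commutator applied to $(r\dr)^2\vu$ rather than to $\vu$. I would start by expanding the $\ph$-derivative via the product rule to split $T_5^{(2)}$ into two integrals:
\begin{equation*}
T_5^{(2)} = -\int_{\Om} r^{-2}(\dph^2\ur)\,\big([\P, r^{-2\alpha}](r\dr)^2\vu\big)_r \dd x - \int_{\Om} r^{-2}(\dph\ur)\,\dph\big([\P, r^{-2\alpha}](r\dr)^2\vu\big)_r \dd x =: J_1 + J_2.
\end{equation*}

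A preliminary step is to verify that $(r\dr)^2\vu \in \TT$ whenever $\vu \in \TT$: a direct computation using $\div\vv = r^{-1}[(r\dr+1)v_r + \dph v_\ph]$ and the commutation of $r\dr$ with $\dph$ shows that $\div(r\dr\vv)=0$ whenever $\div\vv=0$; the condition $((r\dr)^2\vu)_\ph = 0$ on $\dOm'$ follows from $u_\ph|_{\dOm'}=0$; and compact support is inherited. Together with the elementary bound $\|(r\dr)^2\vu\|_{\alpha+1} \leq \|\grad(r\dr\vu)\|_{\alpha}$ (obtained by writing $(r\dr)^2\vu = r\cdot\dr(r\dr\vu)$, absorbing the $r$ into the weight, and using $|\dr\ww|\leq |\grad\ww|$), Lemma \ref{lem:estimate_Fourier_Phi} applied with $\vv = (r\dr)^2\vu$ yields
\begin{equation*}
\big\|\tfrac{1}{r}[\P, r^{-2\alpha}](r\dr)^2\vu\big\|_{-\alpha} \leq C_\eps |\alpha|\theta\,\|\grad(r\dr\vu)\|_{\alpha}, \qquad \big\|\nabla[\P, r^{-2\alpha}](r\dr)^2\vu\big\|_{-\alpha} \leq C_\eps |\alpha|\,\|\grad(r\dr\vu)\|_{\alpha}.
\end{equation*}

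For $J_1$ I would apply Cauchy--Schwarz followed by Young's inequality with parameter $c_3/4$, then invoke the first commutator bound above to obtain
\begin{equation*}
|J_1| \leq \frac{c_3}{8}\big\|\tfrac{1}{r}\dph^2\ur\big\|_{\alpha}^2 + \frac{2}{c_3}\big\|\tfrac{1}{r}[\P, r^{-2\alpha}](r\dr)^2\vu\big\|_{-\alpha}^2 \leq \frac{c_3}{8}\big\|\tfrac{1}{r}\dph^2\ur\big\|_{\alpha}^2 + \frac{D_3|\alpha|^2\theta^2}{c_3}\|\grad(r\dr\vu)\|_{\alpha}^2,
\end{equation*}
which delivers the first and third target terms (after writing $|\alpha|^2\theta^2/c_3 = |\alpha|\theta\cdot (|\alpha|\theta/c_3)$). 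For $J_2$ I would apply Cauchy--Schwarz together with a Young parameter $\sigma$ to be tuned. Bound $\|\tfrac{1}{r}\dph\ur\|_{\alpha}^2$ by $\|\grad\vu\|_{\alpha}^2$ via the decomposition $\dph\ur = (\dph\ur - \uph) + \uph$, identifying $\tfrac{1}{r}(\dph\ur - \uph)$ as a component of $\grad\vu$ and controlling $\|\tfrac{r}{1}\uph\|_\alpha$ by Lemma \ref{lem:new_estimates_uph_ur}. For the other factor, use the second commutator bound above. Choosing $\sigma$ of order $|\alpha|/\theta$ balances the two contributions and produces
\begin{equation*}
|J_2| \leq \frac{D_4|\alpha|}{\theta}\|\grad\vu\|_{\alpha}^2 + D_5|\alpha|\theta\,\|\grad(r\dr\vu)\|_{\alpha}^2.
\end{equation*}
Summing the bounds for $J_1$ and $J_2$ gives the desired estimate.

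The only nontrivial book-keeping is the calibration of the Young parameter in $J_2$ so that the output has the precise scalings $|\alpha|/\theta$ in front of $\|\grad\vu\|_\alpha^2$ and $|\alpha|\theta$ in front of $\|\grad(r\dr\vu)\|_\alpha^2$; everything else is a direct application of Lemmata \ref{lem:new_estimates_uph_ur}, \ref{lem:estimate_Fourier_Phi}, and the fact that $(r\dr)^2$ preserves the class $\TT$.
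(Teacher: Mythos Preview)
Your proposal is correct and follows essentially the same approach as the paper: expand the $\partial_\varphi$-derivative via the product rule, handle the $\partial_\varphi^2 u_r$ piece with Cauchy--Schwarz, Young's inequality with parameter $c_3/4$, and the first estimate of Lemma~\ref{lem:estimate_Fourier_Phi}, and handle the $\partial_\varphi u_r$ piece with Young's inequality (parameter $\sim |\alpha|/\theta$), the decomposition $\partial_\varphi u_r = (\partial_\varphi u_r - u_\varphi) + u_\varphi$ together with Lemma~\ref{lem:new_estimates_uph_ur}, and the second estimate of Lemma~\ref{lem:estimate_Fourier_Phi}. Your explicit verification that $(r\partial_r)^2\vu \in \TT$ is a nice touch that the paper leaves implicit; note also that the signs in your expansion of $T_5^{(2)}$ are flipped, but this is immaterial since only the absolute value is estimated.
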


\begin{proof}
Recall that
\begin{align}
    T_5^{(2)}
   =&\;\int_{\Om}r^{-2}(\dph^2\ur)([\P, r^{-2\alpha}](r\dr)^2 \vu)_r\dd x \ph+\int_{\Om}r^{-2}(\dph\ur)\dph ([\P, r^{-2\alpha}](r\dr)^2 \vu)_r \dd x.\label{eq:lemma_92:Max}
\end{align}
For the first integral in \eqref{eq:lemma_92:Max}, applying the Cauchy-Schwarz inequality, Young's inequality, and Lemma \ref{lem:estimate_Fourier_Phi} gives
\begin{align*}
    \left|\int_{\Om}r^{-2}(\dph^2\ur)([\P, r^{-2\alpha}](r\dr)^2 \vu)_r\dd x\right| \leq &\; \frac{c_3}{8} \int_{\Om}r^{-2\alpha}(r^{-1}\dph^2\ur)^2\dd x \\ & \; +  \frac{2 }{c_3}\int_{\Om}r^{2\alpha}(r^{-1}([\P, r^{-2\alpha}](r\dr)^2 \vu)_r)^2\dd x\\
    \leq &\;
     \frac{c_3 }{8}\big\|\tfrac{1}{r}  \dph^2 \ur\big\|^2_{\alpha}+ \frac{D_3 \alpha^2\theta^2}{c_3}\|\dr (r \dr \vu)\|^2_{\alpha}.
\end{align*}
Similarly, for the second integral in \eqref{eq:lemma_92:Max}
\begin{align*}
    &\left|\int_{\om}r^{-2}(\dph\ur)\dph([\P, r^{-2\alpha}](r\dr)^2 \vu)_r \dd x\right|\\
    \leq&\;\frac{{\br |\alpha|}}{2\theta}\int_{\Om}r^{-2\alpha}(r^{-1}\dph\ur)^2\dd x+\frac{\theta}{2{\br |\alpha|}}\int_{\Om}r^{2\alpha}(r^{-1}\dph ([\P, r^{-2\alpha}](r\dr)^2 \vu)_r)^2\dd x\\
    \leq&\;\frac{{\br |\alpha|}}{\theta} \int_{\Om}r^{-2\alpha}\big[(r^{-1}(\dph\ur-\uph))^2+\uph^2\big]\dd x +\frac{\theta}{2{\br |\alpha|}}\int_{\Om}r^{2\alpha}(r^{-1}\dph ([\P, r^{-2\alpha}](r\dr)^2 \vu)_r)^2\dd x\\
    \leq&\; \frac{D_4{\br |\alpha|} }{\theta} \|\grad\vu\|_{\alpha}^2 + D_5 {\br |\alpha|} \theta \| \dr (r \dr \vu )\|^2_{\alpha},
\end{align*}
where we have used Lemmata \ref{lem:new_estimates_uph_ur} and \ref{lem:estimate_Fourier_Phi} in the last step.
\end{proof}
\begin{lem}[Estimate of $T_6^{(2)}$]\label{lem:coercivity_B2_T6}
There exists a $D_6\in(0,\infty)$ such that for all $\alpha $ and $ \theta $ subject to \eqref{eq:conditions}, we have
\begin{align*}
    \left| \int_{\Om}\big(\grad [\P, r^{-2\alpha}](r\dr)^2 \vu \big):\grad \vu\dd x   \right|\leq & \; D_6 {\br |\alpha|} \theta\| \nabla(r\dr \vu)\|^2_{\alpha} + \frac{{\br |\alpha|}}{\theta} \|\grad\vu\|^2_{\alpha}\qquad \text{ for all }\vu\in\TT.
\end{align*}
\end{lem}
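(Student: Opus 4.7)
The idea is to combine Cauchy--Schwarz with the sharper commutator estimate from Lemma \ref{lem:estimate_Fourier_Phi} (the one with prefactor $|\alpha|$, not $|\alpha|\theta$) and a single application of Young's inequality. The key preliminary observation is that $(r\dr)^2\vu\in\TT$ whenever $\vu\in\TT$. Indeed, $(r\dr)^2\vu$ is smooth and compactly supported in $\overline{\Omega}\setminus\{0\}$, the commutation $\div(r\dr\vu)=r\dr\div\vu$ preserves the divergence-free condition, and on the straight boundary pieces $r\dr$ acts as a tangential derivative, so $v_\varphi=0$ on $\dOm'$ propagates to $((r\dr)^2\vu)_\varphi=0$ on $\dOm'$. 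Thus Lemma \ref{lem:estimate_Fourier_Phi} is applicable with test field $\vv=(r\dr)^2\vu$.

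Applying Cauchy--Schwarz to $T_6^{(2)}$ yields
\begin{equation*}
\big|T_6^{(2)}\big|\;\leq\;\bigl\|\nabla[\P,r^{-2\alpha}](r\dr)^2\vu\bigr\|_{-\alpha}\,\|\nabla\vu\|_{\alpha},
\end{equation*}
and then Lemma \ref{lem:estimate_Fourier_Phi} gives
\begin{equation*}
\bigl\|\nabla[\P,r^{-2\alpha}](r\dr)^2\vu\bigr\|_{-\alpha}\;\leq\;C_\eps\,|\alpha|\,\|(r\dr)^2\vu\|_{\alpha+1}.
\end{equation*}
The right-hand side is brought into the desired form via the pointwise identity $r^{-1}(r\dr)^2=\dr(r\dr)$ (as operators on $\vu$), which shows that
\begin{equation*}
\|(r\dr)^2\vu\|_{\alpha+1}^2=\int_0^{\theta}\!\!\int_0^{\infty}r^{-2\alpha-2}|(r\dr)^2\vu|^2\,r\,\dd r\,\dd \varphi =\|\dr(r\dr\vu)\|_{\alpha}^2\leq\|\nabla(r\dr\vu)\|_{\alpha}^2.
\end{equation*}

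Combining these estimates and applying Young's inequality to split the product $|\alpha|\,\|\nabla(r\dr\vu)\|_\alpha\,\|\nabla\vu\|_\alpha$ with weight $\theta$ (so that one factor picks up $|\alpha|\theta$ and the other $|\alpha|/\theta$) produces the claimed bound with a constant $D_6$ depending only on $\eps$. The only point requiring care is the verification that $(r\dr)^2\vu\in\TT$, after which the lemma reduces to a direct three-line computation; in particular, one deliberately uses the sharper Lemma \ref{lem:estimate_Fourier_Phi} rather than Corollary \ref{cor:estimate_Fourier_Phi:extra}, since the latter would produce the third-order quantity $\|\nabla(r\dr)^2\vu\|_\alpha$ on the right-hand side, which is too strong to be absorbed by the other bilinear form contributions.
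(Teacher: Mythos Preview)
Your proof is correct and follows essentially the same approach as the paper: Cauchy--Schwarz, Lemma \ref{lem:estimate_Fourier_Phi} applied to $(r\dr)^2\vu$, the identity $r^{-1}(r\dr)^2=\dr(r\dr)$, and Young's inequality with weight $\theta$. The paper's version is terser and applies Young's inequality first, but the content is identical; your explicit verification that $(r\dr)^2\vu\in\TT$ is a helpful point that the paper leaves implicit.
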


\begin{proof}
The Cauchy-Schwarz inequality, Young's inequality and Lemma \ref{lem:estimate_Fourier_Phi} give
\begin{align*}
    \left| \int_{\Om}\big(\grad [\P, r^{-2\alpha}](r\dr)^2 \vu \big):\grad \vu\dd x   \right|
    \leq  & \;  \frac{{\br |\alpha|}}{\theta} \|\grad\vu\|_{\alpha}^2 + \frac{\theta}{{\br |\alpha|}}\left|\int_{\Om} r^{2\alpha}\left|\grad [\P, r^{-2\alpha}](r\dr)^2 \vu \right|^2 \dd x \right|  \\  \leq & \;\frac{{\br |\alpha|}}{\theta}  \|\grad\vu\|^2_{\alpha} + D_6 {\br |\alpha|} \theta \| \dr(r\dr \vu)\|^2_{\alpha}.\qedhere
\end{align*}
\end{proof}

\subsection{Estimates for \texorpdfstring{$B_3$}{B3}}\label{sec:6_B3}
Consider the third bilinear form \eqref{eq:B3} as derived in Section \ref{subsec:vort_bilinear_form}
 \begin{align*}
   B_3(\vu,\vu) =&\; \|r \del\vu \|^2_{\alpha}+\int_{\Om}r^{-2\alpha+2} [\P, \Delta]\vu \cdot \del\vu \dd x =:\sum_{j=1}^2 T^{(3)}_j.
\end{align*}
We obtain the following final estimate required for proving coercivity in the next section.
\begin{prop}\label{prop:B3}
There exist $E_1,E_2\in(0,\infty)$ such that for all $\alpha $ and $ \theta $ subject to \eqref{eq:conditions}, we have
\begin{align*}
    B_3(\vu,\vu)\geq&\;\tfrac{1}{2}\|r\del\vu \|^2_{\alpha} -\tfrac{1}{8}\big\|\tfrac{1}{r} \dph^2 \ur \big\|^2_{\alpha}
     -\tfrac{E_1}{\theta^2} \|\grad \vu\|^2_{\alpha}- E_2 \|\grad r\dr\vu\|^2_{\alpha} \quad\text{ for all }\vu\in\TT.
\end{align*}
\end{prop}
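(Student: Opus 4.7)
The strategy is to treat the cross term $T_2^{(3)}$ as a perturbation of the positive term $T_1^{(3)} = \|r\Delta\vu\|_{\alpha}^2$, absorbing half of the latter, and then use Lemma \ref{lem:est_P_delta_w} to control the commutator $[\P,\Delta]\vu$ by the desired quantities on the right-hand side. Writing the weighted integrand as $(r^{-\alpha+1}[\P,\Delta]\vu)\cdot(r^{-\alpha+1}\Delta\vu)$ and applying Cauchy--Schwarz together with Young's inequality gives
\begin{equation*}
|T_2^{(3)}|\le \tfrac{1}{2}\|r\Delta\vu\|_{\alpha}^2+\tfrac{1}{2}\|[\P,\Delta]\vu\|_{\alpha-1}^2,
\end{equation*}
so it remains to dominate $\|[\P,\Delta]\vu\|_{\alpha-1}^2$ by $\tfrac14\|\tfrac1r\partial_\varphi^2 u_r\|_\alpha^2+ \tfrac{C}{\theta^2}\|\nabla\vu\|_\alpha^2+C\|\nabla(r\partial_r\vu)\|_\alpha^2$, which then yields the Proposition with $E_2=C$ and $E_1=C$ after halving.

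To carry out this bound, I would invoke Lemma \ref{lem:est_P_delta_w}, which (since $\alpha\in I_\eps\setminus\ZZ$ ensures $\alpha-1\in\mathfrak I_\eps\setminus\{0\}$) gives
\begin{align*}
\|[\P,\Delta]\vu\|_{\alpha-1}^2 \le\; & C_\eps\bigl(\|\nabla(r\partial_r\vu)\|_\alpha^2+\|\nabla\vu\|_\alpha^2+\|\tfrac{u_\varphi}{r}\|_\alpha^2\bigr) \\
&+C_\eps \|\tfrac1r\partial_\varphi^2\vu\|_\alpha\bigl(\|\nabla(r\partial_r\vu)\|_\alpha+\|\nabla\vu\|_\alpha+\|\tfrac{u_\varphi}{r}\|_\alpha\bigr) \\
&+\tfrac{C}{\theta^2}\|\tfrac1r\partial_\varphi u_r\|_\alpha^2+\tfrac{C}{\theta}\|\tfrac1r\partial_\varphi^2 u_r\|_\alpha\|\tfrac1r\partial_\varphi u_r\|_\alpha.
\end{align*}
Next, Lemma \ref{lem:new_estimates_uph_ur} gives $\|u_\varphi/r\|_\alpha\le C\theta\|\nabla\vu\|_\alpha$, and the trivial bounds $\|\tfrac{1}{r}\partial_\varphi u_r\|_\alpha \le \|\nabla\vu\|_\alpha$ apply. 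The only second-order term appearing in the linear position is $\|\tfrac1r\partial_\varphi^2\vu\|_\alpha$, which splits as $\|\tfrac1r\partial_\varphi^2 u_r\|_\alpha+\|\tfrac1r\partial_\varphi^2 u_\varphi\|_\alpha$. For the $u_\varphi$-component I would use the divergence-free condition $\partial_\varphi u_\varphi=-(r\partial_r+1)u_r$, which after one more angular derivative yields $\tfrac1r\partial_\varphi^2 u_\varphi=-\partial_r\partial_\varphi u_r-\tfrac1r\partial_\varphi u_r$; the first summand equals $\tfrac1r\partial_\varphi(r\partial_r u_r)$ and is therefore controlled by $\|\nabla(r\partial_r\vu)\|_\alpha$, the second by $\|\nabla\vu\|_\alpha$.

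With these reductions, the only remaining ``bad'' factor is $\|\tfrac1r\partial_\varphi^2 u_r\|_\alpha$, which we must generate with a small coefficient. Applying Young's inequality $ab\le \delta a^2+\tfrac{1}{4\delta}b^2$ with a sufficiently small but universal $\delta$ to both mixed products $C_\eps\|\tfrac1r\partial_\varphi^2 u_r\|_\alpha\cdot(\cdots)$ and $\tfrac{C}{\theta}\|\tfrac1r\partial_\varphi^2 u_r\|_\alpha\|\tfrac1r\partial_\varphi u_r\|_\alpha$, and choosing $\delta$ so that $\tfrac12\cdot 2\delta\le \tfrac18$, absorbs these into the target term $-\tfrac18\|\tfrac1r\partial_\varphi^2 u_r\|_\alpha^2$. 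The resulting $\tfrac{1}{\delta\theta^2}\|\tfrac1r\partial_\varphi u_r\|_\alpha^2$ contribution is swallowed by $\tfrac{E_1}{\theta^2}\|\nabla\vu\|_\alpha^2$, while the pure squares $\|\nabla(r\partial_r\vu)\|_\alpha^2$ contribute to $E_2\|\nabla(r\partial_r\vu)\|_\alpha^2$ and the remaining $\|\nabla\vu\|_\alpha^2$ contributions (including those coming from $\|u_\varphi/r\|_\alpha^2$, which carry a harmless factor $\theta^2$) go into the $\tfrac{E_1}{\theta^2}\|\nabla\vu\|_\alpha^2$ term. Collecting all constants (each $\eps$-dependent but independent of $\alpha$ and $\theta$) yields the claimed inequality. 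The only subtlety is bookkeeping: one must make sure the coefficient of $\|\tfrac1r\partial_\varphi^2 u_r\|_\alpha^2$ never exceeds $\tfrac18$, which forces the choice of $\delta$ before anything else.
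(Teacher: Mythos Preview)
Your approach is essentially the same as the paper's: bound $|T_2^{(3)}|$ via Cauchy--Schwarz and Young to get $\tfrac12\|r\Delta\vu\|_\alpha^2+\tfrac12\|[\P,\Delta]\vu\|_{\alpha-1}^2$, then feed Lemma~\ref{lem:est_P_delta_w} and Lemma~\ref{lem:new_estimates_uph_ur} (together with the divergence-free identity $\partial_\varphi^2 u_\varphi=-(r\partial_r+1)\partial_\varphi u_r$) into the commutator estimate, and finish with Young's inequality to isolate $\|\tfrac1r\partial_\varphi^2 u_r\|_\alpha^2$ with a small enough coefficient. The paper compresses all of this into a single displayed inequality for $\|[\P,\Delta]\vu\|_{\alpha-1}^2$, but the content is the same.

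One small correction in your bookkeeping: the bound $\|\tfrac1r\partial_\varphi u_r\|_\alpha\le\|\nabla\vu\|_\alpha$ is not literally trivial, since the $(1,2)$-entry of $\nabla\vu$ in polar coordinates is $\tfrac1r(\partial_\varphi u_r-u_\varphi)$; you need to add and subtract $u_\varphi/r$ and then invoke Lemma~\ref{lem:new_estimates_uph_ur}, which only costs an extra constant and does not affect the argument.
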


\begin{proof}
From Lemmata \ref{lem:new_estimates_uph_ur} and \ref{lem:est_P_delta_w} we deduce that
\begin{align*}
\| [\P, \Delta]\vu \|_{\alpha-1 }^2  \leq \tfrac{1}{4}\big\|\tfrac{1}{r} \dph^2 \ur \big\|^2_{\alpha} + \tfrac{2 E_1}{\theta^2}  \| \nabla \vu \|_{\alpha}^2 + 2E_2 \| \nabla r \dr \vu \|_{\alpha}^2.
\end{align*}
Therefore, we have
\begin{align*}
\left| \int_{\Om}r^{-2\alpha+2} [\P, \Delta]\vu \cdot \del\vu \dd x \right| \leq & \;  \tfrac{1}{2}\|r\del\vu\|^2_{\alpha} + \tfrac{1}{2}\|[\P, \Delta]\vu\|^2_{\alpha-1} \\
\leq & \; \tfrac{1}{2}\|r\del\vu\|^2_{\alpha} + \tfrac{1}{8}\big\|\tfrac{1}{r} \dph^2 \ur \big\|^2_{\alpha} + \tfrac{ E_1}{\theta^2}  \| \nabla \vu \|_{\alpha}^2 + E_2 \| \nabla r \dr \vu \|_{\alpha}^2.\qedhere
\end{align*}
\end{proof}

 \subsection{Coercivity and boundedness of the bilinear form $B$}\label{sec:Proof_props}
We are now in the position to prove the coercivity and boundedness estimate in Propositions \ref{prop:coercivity_ch5} and \ref{prop:boundedness_ch5} by combining Propositions \ref{prop:B1}, \ref{prop:B2} and \ref{prop:B3}.

\begin{proof}[Proof of Proposition \ref{prop:coercivity_ch5}]
Recall that we need to prove the coercivity estimate
\begin{equation*}
    B(\vu,\vu)\stackrel{\eqref{eq:B_ch5}}{=}B_1(\vu,\vu)+{\br |\alpha |\theta^3} B_2(\vu,\vu)+c_3 {\br |\alpha |\theta^3} B_3(\vu,\vu)\geq C \|\vu\|^2_{\mathfrak{X}^2_{\alpha,\theta}} \qquad \text{ for all }\vu\in\TT,
\end{equation*}
where $\mathfrak{X}^2_{\alpha,\theta}$ is as defined in \eqref{eq:def_frakX} and $c_3>0$ is a constant independent of $\alpha$ and $\theta$.
The strategy is to absorb the terms with $\dph^2\ur$ into $B_3$ using the estimate
\begin{equation}\label{eq:est_dph^2ur}
  \begin{aligned}
    	\big\|\tfrac{1}{r}\dph^2 \ur  \big\|^2_{\alpha} \stackrel{\eqref{eqapp:del_polar}}{=} &\;\ \big\| r (\del \vu)_r - \partial_r (r \dr u_r) + 2(\dph u_{\varphi} + u_r ) - u_r  \big\|^2_{\alpha}  \\
\leq &\; \| r \del\vu\|_{\alpha}^2+ \| \grad r \dr \vu\|_{\alpha}^2 + C_3 \| \grad  \vu\|_{\alpha}^2,
\end{aligned}
\end{equation}
where in the last step we have used Lemma \ref{lem:new_estimates_uph_ur} and the constant $C_3$ is independent of $\alpha$ and $\theta$.
Combining the estimate from \eqref{eq:est_dph^2ur} with those from Propositions \ref{prop:B1}, \ref{prop:B2} and \ref{prop:B3} gives
{\br
\begin{align*}
    &\;B_1(\vu,\vu)+ |\alpha| \theta^3 B_2(\vu,\vu)+c_3 |\alpha| \theta^3 B_3(\vu,\vu)\\
    \geq&\; K_0 |\ur|^2_{\alpha}+ |\alpha| \theta^3 |r\dr\ur|^2_{\alpha}
    +K_1\|\grad\vu\|^2_{\alpha}
    + |\alpha| \theta^3 K_2\|\grad r\dr \vu\|_{\alpha}^2+\frac{1}{8}c_3 |\alpha| \theta^3\|r\Delta\vu\|^2_{\alpha},
\end{align*}
where
\begin{align*}
K_0 = & \; 1-2 |\alpha| \theta^3(\alpha-\tfrac{1}{2})^2, \\
 K_1=& \; 1 - C_1 |\alpha|\theta - \tfrac{C_2 }{c_3}|\alpha| \theta -  D_3 |\alpha \theta|^2 - c_3 | \alpha\theta| E_1 -  c_3 C_3 |\alpha| \theta^3 ,
 \\ K_2=&\;  1 - D_1 | \alpha| \theta  -  \tfrac{ D_2 |\alpha \theta|^2}{c_3}  -  c_3(E_2+1) .
\end{align*}
For coercivity we need $K_0, K_1,K_2>0$. To this end, choose $c_3 >0$  small enough so that the condition $ c_3(E_2+1)  < \tfrac{1}{4} $ is satisfied.

If we choose $  |\alpha \theta| < c  $ with $ c $ sufficiently small, we obtain $ K_0, K_1, \, K_2\geq \half$.
The result follows upon noting that for $\alpha \in \RR\setminus \ZZ$ we have the equivalence 
\begin{equation*}%
\|\vu \|_{\mathfrak{X}^2_{\alpha,\theta} }^2  \sim_{\alpha}  |\ur|^2_{\alpha}+\theta^3|r\dr\ur|^2_{\alpha}+\|\grad\vu\|^2_{\alpha}+\theta^3\|\grad r\dr\vu\|^2_{\alpha}+\theta^3\|r\del \vu \|^2_{\alpha}.\qedhere
\end{equation*}

}

\end{proof}

\begin{proof}[Proof of Proposition \ref{prop:boundedness_ch5}] We prove the boundedness of the bilinear form $B$, i.e.,
\begin{equation*}
    |B(\vu,\vv)|\leq C\|\vu\|_{\mathfrak{X}^2_{\alpha,\theta}}\|\vv\|_{\mathfrak{X}^2_{\alpha,\theta}}\qquad \text{ for all }\vu,\vv\in\TT.
\end{equation*}
First consider the terms in the bilinear form $B_1$ as defined in \eqref{eq:B1}. It is immediate that $T_1^{(1)}$, $T^{(1)}_2$ and $T^{(1)}_3$can be bounded by applying the Cauchy-Schwarz inequality and in addition by Hardy's inequality for $T^{(1)}_3$. From \eqref{eq:lemma_92}, \eqref{eq:est_dph^2ur}, Lemma \ref{lem:estimate_Fourier_Phi}, Corollary \ref{cor:estimate_Fourier_Phi:extra} and Lemma \ref{lem:new_estimates_uph_ur} it follows
\begin{equation*}
  \big|T^{(1)}_4\big|\leq \big\|\tfrac{1}{r}\dph^2 \ur\big\|_{\alpha}\big\|\tfrac{1}{r}([\P, r^{-2\alpha}]\vv)_r\big\|_{-\alpha}+
  \big\|\tfrac{1}{r}\dph\ur\big\|_{\alpha}\big\|\tfrac{1}{r}\dph ([\P, r^{-2\alpha}]\vv)_r \big\|_{-\alpha}\leq C\|\vu\|_{\mathfrak{X}^2_{\alpha,\theta}}\|\vv\|_{\mathfrak{X}^2_{\alpha,\theta}}.
\end{equation*}
For the last term $T_5^{(1)}$,
using the Cauchy-Schwarz inequality and Corollary \ref{cor:estimate_Fourier_Phi:extra} gives
\begin{align*}
    \big|T_5^{(1)}\big|\leq \Big(\int_{\Om}r^{2\alpha}|(\grad [\P, r^{-2\alpha}]\vv)|^2\dd x\Big)^{\half}\Big(\int_{\Om}r^{-2\alpha}|\grad\vu|^2\dd x\Big)^{\half}\leq C\|\vu\|_{\mathfrak{X}^2_{\alpha,\theta}}\|\vv\|_{\mathfrak{X}^2_{\alpha,\theta}}.
\end{align*}
We continue with boundedness of the terms in  the bilinear form $B_2$ as defined in \eqref{eq:B2}. Again, boundedness of $T_1^{(2)}$, $T^{(2)}_2$, $T^{(2)}_3$ and $T^{(2)}_4$ follow immediately from the Cauchy-Schwarz inequality. The terms  $T^{(2)}_5$ and $T^{(2)}_6$ can be bounded in a similar way as $T^{(1)}_4$ and $T^{(1)}_5$, respectively.
Finally, the bound of $B_3$ is a consequence of the Cauchy-Schwarz inequality and Lemma \ref{lem:est_P_divw0}.
\end{proof}


\section{Equivalence of strong and variational solutions}
\label{section:SSSP}

In this section we prove Proposition \ref{equi:of:sol} from Section \ref{sec:3}, Step 3. We start with some technical lemmata.
Recall that $\prescript{k}{}{\mathcal{H}}^{k}_{\alpha}$ is the closure of $C_{\mathrm{c}}^{\infty}(\overline{\Om}\setminus\{0\})$ with respect to $\llbracket \cdot\rrbracket_{k,\alpha}$ as defined in Section \ref{subsec:mainresults}.
\begin{lem}
 \label{app:sequence}
{\br
Let $ \eps \in (0,1) $. There is a constant $ c > 0 $ such that for any $ \theta \in (0, (1-\eps)\pi) $ and $ \alpha \in I_{\eps} \setminus \mathbb{Z} $ satisfying $ |\alpha \theta | < c $, for  $c_3>0$ the constant in \eqref{eq:var_form_ch3} and for any  $ \ww \in \TT $, there exists a sequence $ (\vv_k)_{k \geq 1} $ such that }
 \begin{enumerate}[label=(\roman*)]

 \item $ \vv_k \in \{ \vv \in \TT: \vv = 0 $ on $ \partial \Omega' \} \qquad$ for all $k\geq 1$,

 \item $  \P\left[ r^{-2\alpha}\left(\vv_k + {\br |\alpha|\theta^3}\vv_{k,2}+ c_3{\br |\alpha|\theta^3}\vv_{k,3} \right)\right] \longrightarrow \ww \qquad$ in $ \prescript{0}{}{\mathcal{H}}^{0}_{1-\alpha}  $ as $k\longrightarrow\infty$,

 \end{enumerate}
 where $\vv_{k,2}=- (r\dr)^2 \vv_k$ and $\vv_{k,3}=-r^2\del\vv_k$ (cf. \eqref{eq:testfunction_B2} and \eqref{eq:testfunction_vorticity_bilinear_form}).
 \end{lem}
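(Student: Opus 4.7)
The plan is to prove the lemma by a Hahn--Banach / duality argument rather than by explicitly constructing each $\vv_k$. Writing
\begin{equation*}
  L\vv := \vv - |\alpha|\theta^3 (r\partial_r)^2 \vv - c_3 |\alpha|\theta^3\, r^2 \Delta\vv,
\end{equation*}
the map in question is $\Lambda: \vv \mapsto \P\bigl[r^{-2\alpha} L\vv\bigr]$ restricted to $\TTzero$ (the subspace of $\TT$ of elements vanishing on $\partial\Omega'$). I would identify $\bigl(\prescript{0}{}{\mathcal{H}}^0_{1-\alpha}\bigr)^{*} \cong \prescript{0}{}{\mathcal{H}}^0_{\alpha-1}$ via the natural unweighted $L^2(\Omega)$-pairing, and then show that any $\vf \in \prescript{0}{}{\mathcal{H}}^0_{\alpha-1}$ annihilating the range of $\Lambda$ must also annihilate $\ww$. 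By Hahn--Banach, this yields density of the range in $\prescript{0}{}{\mathcal{H}}^0_{1-\alpha}$, and hence the desired approximating sequence.

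The first step is to convert the annihilation condition. Using the $L^2(\Omega)$-symmetry of $\P$ (Lemma \ref{lem:commute_rdr_P}(ii)), we rewrite
\begin{equation*}
  0 = \int_\Omega \vf \cdot \P\bigl[r^{-2\alpha} L\vv\bigr]\, \mathrm{d}x = \int_\Omega (r^{-2\alpha}\P\vf) \cdot L\vv\, \mathrm{d}x \quad \text{for all } \vv\in\TTzero.
\end{equation*}
Setting $\vh := r^{-2\alpha}\P\vf$ and integrating by parts --- all boundary contributions vanish since $\vv\equiv 0$ on $\partial\Omega'$ --- this reads $\int_\Omega L^{*}\vh\cdot \vv\,\mathrm{d}x = 0$ for all divergence-free Dirichlet $\vv$, where $L^{*}$ is the formal adjoint of $L$ with respect to $L^2(\Omega)$. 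By a weighted version of de Rham's theorem this forces $L^{*}\vh = \nabla\tilde{\pi}$ in the sense of distributions for some scalar $\tilde\pi$, so that $(\vh,\tilde\pi)$ solves a dual Stokes-type system, complemented by the algebraic constraint that $\P\vf$ is divergence-free and has vanishing normal component on $\partial\Omega'$.

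The second step is to show that under $|\alpha\theta|<c$ this dual system has only the trivial solution, so that $\P\vf = 0$. This is the technical heart of the argument and proceeds exactly in parallel with Sections \ref{chap:5}--\ref{chap:proof_coer}: test the equation $L^{*}\vh = \nabla\tilde{\pi}$ against $r^{-2\alpha}(I - |\alpha|\theta^3 (r\partial_r)^2 - c_3|\alpha|\theta^3 r^2 \Delta)\vh$, projected by $\P$, to obtain three bilinear forms dual to $B_1, B_2, B_3$, and invoke the Helmholtz-commutator estimates of Section \ref{sec:Estimates_Helmholtz}. The same choice of $c_3$ and the same smallness condition $|\alpha\theta|<c$ yield coercivity, whence $\vh \equiv 0$ and thus $\P\vf=0$. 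Finally, since $\ww\in\TT$ satisfies $\P\ww=\ww$, the symmetry of $\P$ again gives
\begin{equation*}
  \int_\Omega \vf\cdot\ww\,\mathrm{d}x = \int_\Omega \vf \cdot \P\ww\,\mathrm{d}x = \int_\Omega \P\vf\cdot\ww\,\mathrm{d}x = 0,
\end{equation*}
which closes the Hahn--Banach argument and produces the sequence $(\vv_k)\subset\TTzero$ with the claimed convergence.

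The main obstacle is the second step: setting up and closing the coercivity estimate for the dual problem in the weighted space $\prescript{0}{}{\mathcal{H}}^0_{\alpha-1}$. One must rederive analogues of Lemmas \ref{lem:est_P_divw0}--\ref{lem:estimate_Fourier_Phi} with the roles of $\alpha$ and $-\alpha$ and of $\vv$ and $\vh$ suitably exchanged, and carefully handle the adjoint $L^{*}$ (whose leading part differs from $L$ by lower-order terms arising from integrating $(r\partial_r)^2$ and $r^2\Delta$ by parts against the volume element $r\,\mathrm{d}r\,\mathrm{d}\varphi$). These lower-order corrections are controlled by the same Hardy-type inequalities (Lemmas \ref{lem:Hardy_on_wedge} and \ref{lem:new_estimates_uph_ur}) and absorbed by the smallness of $|\alpha\theta|$, so the structure of the proof mirrors Proposition \ref{prop:coercivity_ch5} closely, but the bookkeeping must be performed carefully to ensure no constant blows up as $\theta \to 0$ or as $\alpha$ approaches a resonance of the endpoints of $I_\eps$.
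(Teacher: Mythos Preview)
Your Hahn--Banach strategy is a genuinely different route from the paper's, but as written it has a real gap in the second step. You reduce matters to showing that a distributional solution $(\vh,\tilde\pi)$ of $L^{*}\vh=\nabla\tilde\pi$ (with $\vh=r^{-2\alpha}\P\vf$ lying only in a weighted $L^2$ space) must vanish, and you propose to do this by testing the equation against $r^{-2\alpha}(I-|\alpha|\theta^3(r\partial_r)^2-c_3|\alpha|\theta^3 r^2\Delta)\vh$ and invoking coercivity. But that test function involves second derivatives of $\vh$, and the bilinear forms $B_1,B_2,B_3$ in Section~\ref{chap:proof_coer} are only shown to be coercive on $\TT$ (smooth, compactly supported) and hence on $\mathfrak{X}^2_{\alpha,\theta}$ by density. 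Your $\vh$ is a priori only in weighted $L^2$; you would first need an elliptic regularity result placing $\vh$ in $\prescript{2}{}{\mathcal{H}}^{2}$-type spaces before the coercivity machinery applies. That regularity step is not automatic --- it is essentially the same analysis as solving the test-function problem itself --- and you have not supplied it. The invocation of a ``weighted de~Rham theorem'' is likewise not available off the shelf for these spaces on the wedge.

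The paper sidesteps all of this by observing that the test-function problem~\eqref{eq:dual_problem1} is \emph{scaling-invariant with Dirichlet boundary conditions}, hence much simpler than the original Navier-slip Stokes system. Lemma~\ref{lem:sol_dual_problem} solves it directly: Mellin transform in $r$ decouples the problem into a fourth-order ODE in $\varphi$ for each frequency $\lambda$, with data $\hat u_\varphi=\partial_\varphi\hat u_\varphi=0$ at $\varphi\in\{0,\theta\}$, and a short Lax--Milgram argument in $H^2_0(0,\theta)$ (coercivity following from explicit sign computations on $\Re a_1,\Re a_2,\Re a_3$ for $|\alpha\theta|$ small) yields a solution $\vv\in\prescript{2}{}{\mathcal{H}}^2_{\alpha-1}$. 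The sequence $(\vv_k)$ is then obtained by approximating this $\vv$ in $\prescript{2}{}{\mathcal{H}}^2_{\alpha-1}$ by elements of $\TT$ vanishing on $\partial\Omega'$ via Proposition~\ref{prop:ind:spaces}, and continuity of $\P$ in $\prescript{0}{}{\mathcal{H}}^0_{1-\alpha}$ (Lemma~\ref{lem:est_P_divw0}) gives the convergence in~(ii). This constructive route is both shorter and avoids the regularity bootstrap your duality argument would require.
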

Lemma \ref{app:sequence} is a consequence of the following two steps. Firstly, in Lemma \ref{lem:sol_dual_problem} we prove that for any $ \ww \in \TT$ there exists a solution $ \vv $ to the test function problem
\begin{equation}\label{eq:dual_problem1}
\begin{aligned}
r^{-2\alpha}\left(\vv- {\br |\alpha|\theta^3} (r\partial_r)^2\vv - c_3 {\br |\alpha|\theta^3} r^{2}\Delta \vv   \right) + \nabla p = & \, \ww \quad && \text{ in } \Omega, \\
\div \vv = & \, 0 \quad && \text{ in } \Omega, \\
\vv = & \, 0 \quad && \text{ on } \partial \Omega.
\end{aligned}
\end{equation}
Note that this problem has a scaling invariant boundary condition. Secondly, this solution $\vv$ can be approximated by a sequence $(\vv_k)_{k\geq 1}$ in $\TT$ such that $ \vv_k = 0 $ on $ \partial \Omega' $ and that the convergence in Lemma \ref{app:sequence} holds.

\begin{lem}\label{lem:sol_dual_problem}
Let $ \eps \in (0,1) $. There is a constant $ c > 0 $ such that for any $ \theta \in (0, (1-\eps)\pi) $ and $ \alpha \in I_{\eps} \setminus \mathbb{Z} $ satisfying $ |\alpha \theta | < c $, for  $c_3>0$ the constant in \eqref{eq:var_form_ch3}, there exists a unique solution $ (\vv,p) \in \prescript{2}{}{\mathcal{H}}^{2}_{\alpha-1} \times \prescript{1}{}{\mathcal{H}}^{1}_{1-\alpha} $ to \eqref{eq:dual_problem1}. Furthermore 
\begin{equation*}
\llbracket \vv \rrbracket_{2, \alpha-1}+ \llbracket p \rrbracket_{1, 1-\alpha} \leq C_{\alpha, \theta} \left(\llbracket \ww \rrbracket_{0,1-\alpha} + \llbracket \partial_{\varphi} \ww \rrbracket_{0,1-\alpha} \right).
\end{equation*}
\end{lem}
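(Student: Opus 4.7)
The strategy is to use Lax-Milgram on a suitable variational formulation, recover the pressure by a de Rham-type argument, and then upgrade regularity through Mellin analysis. Because the boundary condition on $\vv$ is homogeneous Dirichlet (hence scaling-invariant), this last step is cleaner than the corresponding step for the original Navier-slip problem in Sections~\ref{chap:5}--\ref{chap:proof_coer}.

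\textbf{Stage 1 (Variational formulation \& Lax-Milgram).} Let $\mathfrak{V}_0 := \{\vv \in \TT : \vv|_{\partial\Omega'} = 0\}$ and let $\mathfrak{V}$ be its completion under
\begin{equation*}
\|\vv\|_{\mathfrak{V}}^2 := \llbracket\vv\rrbracket_{0,\alpha}^2 + |\alpha|\theta^3\,\llbracket\vv\rrbracket_{1,\alpha}^2 + c_3|\alpha|\theta^3\,\llbracket\vv\rrbracket_{1,\alpha-1}^2 .
\end{equation*}
Pairing \eqref{eq:dual_problem1} with a test vector $\vw\in\mathfrak{V}_0$ in $(\cdot,\cdot)_{L^2(\Omega)}$ makes the gradient-of-pressure term vanish (by divergence freedom of $\vw$ and the vanishing trace of $\vw$ on $\partial\Omega'$). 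Integrating by parts the terms $r^{-2\alpha}(r\partial_r)^2\vv$ and $r^{-2\alpha+2}\Delta\vv$ against $\vw$, all boundary contributions vanish since $\vv = \vw = 0$ on $\partial\Omega'$, producing a bilinear form with principal part $(\vv,\vw)_\alpha + |\alpha|\theta^3\bigl((r\partial_r\vv,r\partial_r\vw)_\alpha + c_3(r\nabla\vv,r\nabla\vw)_\alpha\bigr)$, plus a remainder of lower-order terms of size $|\alpha|\theta^3 (1-\alpha)(\cdot,\cdot)$ generated by commuting $r^{-2\alpha}$ past $r\partial_r$. The improved Hardy-type inequality in Lemma~\ref{lem:new_estimates_uph_ur} and the classical Hardy inequality in Lemma~\ref{lem:Hardy_on_wedge} allow the remainder to be absorbed into the principal part whenever $|\alpha\theta|$ is sufficiently small, which yields coercivity and boundedness of $B^*$ on $\mathfrak{V}\times\mathfrak{V}$. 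Lax-Milgram then furnishes a unique $\vv\in\mathfrak{V}$ satisfying $B^*(\vv,\vw)=(\ww,\vw)_{L^2(\Omega)}$ for every $\vw\in\mathfrak{V}_0$, together with $\|\vv\|_\mathfrak{V}\lesssim\|\ww\|_{\mathcal{H}^0_{1-\alpha}}$.

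\textbf{Stage 2 (Pressure).} The variational identity means that $r^{-2\alpha}L\vv-\ww$ annihilates every smooth compactly supported divergence-free vector field vanishing on $\partial\Omega'$. By the de Rham theorem on $\Omega$ there is a distribution $p$ with $r^{-2\alpha}L\vv+\nabla p=\ww$ in $\mathcal{D}'(\Omega)$; together with $\div\vv=0$ and $\vv|_{\partial\Omega}=0$, this is the distributional form of \eqref{eq:dual_problem1}.

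\textbf{Stage 3 (Regularity).} Because $\vv$ satisfies homogeneous Dirichlet data, apply the Mellin transform in $r$ to \eqref{eq:dual_problem1}; the use of $r^{-2\alpha}$ simply shifts the Mellin variable by $2\alpha$, while $(r\partial_r)^2$ and $r^2\Delta$ become the algebraic--differential operators $\lambda^2$ and $\lambda^2+\partial_\varphi^2$ (modulo the vector corrections of Appendix~\ref{app:polar_vec}). For each $\lambda$ one obtains an elliptic $2\times 2$ ODE system on $(0,\theta)$ with Dirichlet data, coupled to a first-order scalar ODE for $\hat p$, parametrized by $\lambda$. The hypotheses $\alpha\in I_\eps\setminus\mathbb{Z}$ and $|\alpha\theta|<c$ ensure that the vertical lines $\Re\lambda=\alpha-2$ and $\Re\lambda=-\alpha$, which are the ones relevant to the norms $\llbracket\vv\rrbracket_{2,\alpha-1}$ and $\llbracket p\rrbracket_{1,1-\alpha}$ via Lemma~\ref{isom:space}, lie uniformly inside the resolvent set of this $\lambda$-family. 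The same kind of pointwise resolvent estimates used in the proofs of Lemmata~\ref{lem:est_P_divw0}--\ref{lem:estimate_Fourier_Phi} deliver, for $\lambda$ on those lines,
\begin{equation*}
|\lambda|^2\|\hat\vv(\lambda,\cdot)\|_{L^2_\varphi}+\|\partial_\varphi^2\hat\vv(\lambda,\cdot)\|_{L^2_\varphi}+|\lambda|\|\hat p(\lambda,\cdot)\|_{L^2_\varphi}+\|\partial_\varphi\hat p(\lambda,\cdot)\|_{L^2_\varphi}\lesssim_{\alpha,\theta}\|\hat\ww(\lambda,\cdot)\|_{L^2_\varphi}+\|\partial_\varphi\hat\ww(\lambda,\cdot)\|_{L^2_\varphi} ,
\end{equation*}
and integrating in $\Im\lambda$ with Parseval gives the stated estimate. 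The additional $\llbracket\partial_\varphi\ww\rrbracket_{0,1-\alpha}$ on the right hand side is exactly what is needed to recover one angular derivative of $\hat p$ from the equation at fixed $\lambda$.

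\textbf{Main obstacle.} The bulk of the work is Stage~3: checking that the Mellin symbol of the Stokes-type operator $\mathcal{L}\vv := r^{-2\alpha}L\vv$ (coupled with the divergence constraint and the pressure) is invertible on both relevant vertical lines with uniform bounds in $\Im\lambda$ and, crucially, with $\theta$-independent structure for the poles — and in particular that the discrete set of poles is precisely excluded by the condition $\alpha\in I_\eps\setminus\mathbb{Z}$. This is the analog, for the present scalarized operator with two different scalings, of the resonance analysis that underlies Proposition~\ref{my:step}, and it is the only nontrivial place where the precise interplay between $\alpha$, $\theta$ and the opening angle enters.
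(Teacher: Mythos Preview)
Your three-stage route is a detour: the paper does \emph{only} your Stage~3, and does it constructively rather than as a regularity upgrade. Concretely, the paper applies the Mellin transform to \eqref{eq:dual_problem1} from the start, eliminates the pressure and $\hat v_r$ algebraically to obtain a single fourth-order ODE $a_1\partial_\varphi^4\hat v_\varphi + a_2\partial_\varphi^2\hat v_\varphi + a_3\hat v_\varphi = (\text{r.h.s.})$ on $(0,\theta)$ with Dirichlet/clamped boundary data, and then applies Lax--Milgram \emph{at each fixed $\lambda$} on the closure of $C_{\mathrm c}^\infty((0,\theta);\mathbb C)$ in $H^2$. Coercivity comes from an explicit sign computation: with $\lambda=\alpha+is$ one checks $\Re a_1=c_3|\alpha|\theta^3>0$, $\Re a_2 < -c_3|\alpha|\theta^3 s^2-(1-\delta)$ and $\Re a_3 > \tfrac12 c_3|\alpha|\theta^3 s^4-(1+\delta)$ for $|\alpha\theta|$ small, and the negative contribution in $\Re a_3$ is absorbed via Poincar\'e on $(0,\theta)$ (Lemma~\ref{lem:poincare}) since $\theta<(1-\eps)\pi$. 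This gives existence, uniqueness, and the $H^2$-bound for $\hat v_\varphi$ in one step; a bootstrap using the equation yields $\partial_\varphi^3\hat v_\varphi, \partial_\varphi^4\hat v_\varphi$, and $\hat v_r,\hat p$ are recovered from \eqref{SS:Mellin_3}-type relations. No physical-space variational step is needed.

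Your Stage~1 and Stage~2 are therefore redundant: whatever weak solution they produce, your Stage~3 still has to establish invertibility of the Mellin symbol with uniform-in-$\Im\lambda$ bounds, and that is precisely the paper's whole argument. Two smaller points. First, in your norm $\|\cdot\|_{\mathfrak V}$ the term $\llbracket\vv\rrbracket_{1,\alpha}$ is off by one: integrating $-r^{-2\alpha}(r\partial_r)^2\vv$ by parts against $\vw$ produces $\int r^{-2\alpha}(r\partial_r\vv)(r\partial_r\vw)\,dx$, which lives at weight $\alpha-1$, not $\alpha$, so both derivative terms should sit at $\llbracket\cdot\rrbracket_{1,\alpha-1}$. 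Second, your pointer to Lemmata~\ref{lem:est_P_divw0}--\ref{lem:estimate_Fourier_Phi} for the resolvent estimates is misplaced --- those treat the Helmholtz projection, whereas here the mechanism is the concrete real-part/Poincar\'e argument above; the relevant Mellin line for $\vv$ is $\Re\lambda=\alpha$ (corresponding to ${}^2\mathcal H^2_{\alpha-1}$ via Lemma~\ref{isom:space}), not $\alpha-2$.
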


\begin{proof} For notational convenience we write $\widehatvr:=\widehatvr(\lambda,\ph)$ and $\widehatvph : = \widehatvph(\lambda,\ph)$ or we omit the $\ph$-dependence from the notation.  For $I\subset \RR 	\setminus \{-1\}$ an open interval, let
\begin{equation}\label{eq:Omhat}
  \mathcal{S} := \{ (\lambda, \varphi) \in \mathbb{C} \times (0,\theta) : \Re \lambda \in I \}.
\end{equation}
Problem \eqref{eq:dual_problem1} reads in Mellin variables (see \eqref{eqapp:del_polar})
\begin{subequations}
\begin{alignat}{2}
-  c_3{\br |\alpha|\theta^3}((\lambda^2 + \partial_{\varphi}^2)\hat{v}_{r} - 2 \partial_{\varphi} \hat{v}_{\varphi} -\hat{ v}_r) -  {\br |\alpha|\theta^3}  \lambda^2 \hat{v}_r + \hat{v}_r + (\lambda -2\alpha+1)\hat{ p} = & \, \hat{w}_r(\lambda -2 \alpha) &&  \text{ in }\mathcal{S}, \label{dynamics:66:1}\\
-  c_3{\br |\alpha|\theta^3}((\lambda^2 + \partial_{\varphi}^2)\hat{v}_{\varphi} + 2 \partial_{\varphi} \hat{v}_{r} -\hat{v}_{\varphi}) -  {\br |\alpha|\theta^3} \lambda^2 \hat{v}_{\varphi} +\hat{v}_{\varphi} +\partial_{\varphi} \hat{p} = & \, \hat{w}_{\varphi}(\lambda-2\alpha)  && \text{ in } \mathcal{S}, \label{dynamics:66:2} \\
(\lambda + 1)\hat{v}_r + \partial_{\varphi} \hat{v}_{\varphi } = & \, 0  && \text{ in } \mathcal{S}, \label{dynamics:66:3}\\
\hat {v}_{r } = \hat{v}_{\varphi} = & \,  0  && \text{ on } \{ 0, \theta \}.  \label{dynamics:66:4}
\end{alignat}
\end{subequations}
Then using \eqref{dynamics:66:3} in $(\lambda+1)[\dph \eqref{dynamics:66:1}-(\lambda -2\alpha +1 )\eqref{dynamics:66:2}]$ gives for $ \widehatvph$ the equation
\begin{subequations}\label{sys:hat:after:teaching}
\begin{alignat}{2}
a_1 \partial_{\varphi}^4\hat{v}_{\varphi} + a_2 \partial_{\varphi}^2 \hat{v}_{\varphi}  + a_3  \hat{v}_{\varphi} &=  \, (\lambda+1 )\big[ \partial_{\varphi}\hat{w}_{r}(\lambda-2\alpha) -(\lambda-2\alpha +1)\hat{w}_{\varphi}(\lambda-2\alpha)\big] && \text{ in } \mathcal{S}, \label{sys:hat:after:teaching:1}\\
  \hat{v}_{\varphi} = \partial_{\varphi}\hat{v}_{\ph} &=  \, 0 && \text{ on } \partial \mathcal{S},  \label{sys:hat:after:teaching:2}
\end{alignat}
\end{subequations}
where
\begin{gather*}
a_1 := c_3{\br |\alpha|\theta^3}, \quad
a_2:=   c_3{\br |\alpha|\theta^3}\big( (\lambda+1)^2 +(\lambda-1)(\lambda-2\alpha+1)\big) +  {\br |\alpha|\theta^3} \lambda^2-1 \quad \text{ and }\\
a_3 := (\lambda -2\alpha +1 )(\lambda +1 )\big( c_3{\br |\alpha|\theta^3} (\lambda^2 -1) +  {\br |\alpha|\theta^3}  \lambda^2 - 1\big).
\end{gather*}
To solve \eqref{sys:hat:after:teaching}, set $ \lambda = \alpha + i s $, with $ s \in \mathbb{R} $. Then we obtain
\begin{align*}
\Re\: a_1 =    & \,  c_3{\br |\alpha|\theta^3} > 0,  \\
\Re \: a_2 = & \, ( 4\alpha-2 s^2) c_3{\br |\alpha|\theta^3}+(\alpha^2-s^2) {\br |\alpha|\theta^3}-1, \\
\Re \: a_3 = & \, ({\br s^4- \alpha^4 + 2 \alpha^2 - 4s^2|\alpha|})c_3{\br |\alpha|\theta^3}+ ({\br s^4-\alpha^4 -s^2 + \alpha^2 - 4s^2|\alpha|}){\br |\alpha|\theta^3} + s^2 + (\alpha^2 -1).
\end{align*}
Thus, for any fixed $ c_3 $ and $ \delta > 0$, there exists  $ c(c_3,\delta )  > 0  $ sufficiently small such that for any $ |\alpha \theta| <   c $, we have
\begin{align*}
\Re \: a_2 <  \,- c_3{\br |\alpha|\theta^3} s^2 - (1-\delta) \quad \text{ and }\quad
\Re \: a_3 >  \, \frac{c_3{\br |\alpha| \theta^3}}{2}s^4- (1+\delta).
\end{align*}
Let $ \psi \in C^{\infty}_{\mathrm{c}}((0,\theta); \mathbb{C}) $. Multiply \eqref{sys:hat:after:teaching:1} by $ \overline{\psi} $, integrate over $ (0,\theta) $ and after integration by parts we obtain the variational formulation
\begin{align*}
&\;a \int_0^{\theta} \partial_{\varphi}^2   \hat{v}_{\varphi} \partial_{\varphi}^2 \overline{\psi} \, \dd \varphi -  b \int_0^{\theta} \partial_{\varphi} \hat{v}_{\varphi} \partial_{\varphi} \overline{\psi}  \, \dd\varphi  + d  \int_0^{\theta}  \hat{v}_{\varphi} \overline{\psi}  \, \dd\varphi \\
= & \; - \int_0^{\theta} (\lambda+1)\hat{w}_r(\lambda-2\alpha) \partial_{\varphi}\overline{\psi} + (\lambda +1) (\lambda -2\alpha+1 ) \hat{w}_{\varphi}(\lambda-2\alpha) \overline{\psi} \, \dd \varphi.
\end{align*}
The left-hand side of the above equation is a bounded, coercive bilinear map on the closure of $ C^{\infty}_{\mathrm{c}}((0,\theta); \mathbb{C})$ in $ H^2(0,\theta ) $. The coercivity follows from
\begin{align*}
&\Re \: a_1  \int_0^{\theta}| \partial_{\varphi}^2  \hat{v}_{\varphi}|^2 \dd \varphi -  \Re \: a_2 \int_0^{\theta} |\partial_{\varphi} \hat{v}_{\varphi}|^2  \dd\varphi  + \Re \: a_3 \int_0^{\theta}  |\hat{v}_{\varphi}|^2    \dd\varphi \\
 \geq &\;  c_3{\br |\alpha| \theta^3} \int_0^{\theta}| \partial_{\varphi}^2  \hat{v}_{\varphi}|^2  \dd \varphi+ \big( c_3{\br |\alpha| \theta^3}s^2 +(1-\delta) \big) \int_0^{\theta} |\partial_{\varphi} \hat{v}_{\varphi}|^2  \dd\varphi  +\Big(\frac{ c_3{\br |\alpha| \theta^3}}{2}s^4- (1+\delta)\Big) \int_0^{\theta}  |\hat{v}_{\varphi}|^2    \dd\varphi \\
 \geq &\;  c_3{\br |\alpha| \theta^3} \int_0^{\theta}| \partial_{\varphi}^2  \hat{v}_{\varphi}|^2  \dd \varphi+ c_3{\br |\alpha| \theta^3} s^2 \int_0^{\theta} |\partial_{\varphi} \hat{v}_{\varphi}|^2  \dd\varphi  +\frac{ c_3{\br |\alpha| \theta^3}}{2}s^4 \int_0^{\theta}  |\hat{v}_{\varphi}|^2    \dd\varphi,
\end{align*}
where the last inequality holds by Poincar\'e's inequality from Lemma \ref{lem:poincare}, for $\theta < (1-\eps)\pi $ and a sufficiently small $ \delta $. Application of the Lax-Milgram theorem implies the existence of a unique solution. Moreover,
\begin{align*}
 \int_0^{\theta}| \partial_{\varphi}^2  \hat{v}_{\varphi}|^2 \, \dd \varphi+  \int_0^{\theta} |\lambda|^2|\partial_{\varphi} \hat{v}_{\varphi}|^2 \, \dd\varphi  + \int_0^{\theta}  |\lambda|^4|\hat{v}_{\varphi}|^2   \, \dd\varphi \leq C_{\alpha, \theta} \int_{0}^{\theta}|\hat{\ww}(\lambda-2\alpha)|^2 \dd \varphi.
\end{align*}
Using \eqref{sys:hat:after:teaching:1} and the fact that $ \left| \tfrac{a_2}{\lambda^2} \right| + \left| \tfrac{a_3}{\lambda^4} \right| \leq C_{\alpha,\theta}  $, we also deduce that 
\begin{equation}
\tfrac{\| \partial_{\varphi}^4  \hat{v}_{\varphi} \|_{L^2(0,\theta)}^2}{|\lambda|^4}  \leq C_{\alpha,\theta}\big(\| \hat{\ww}(\lambda-2\alpha) \|_{L^2(0,\theta)}^2+\| \partial_{\varphi} \hat{\ww}(\lambda-2\alpha) \|_{L^2(0,\theta)}^2 \big). \label{mild:1}
\end{equation}
Moreover, by interpolation, it follows
\begin{align}
\tfrac{\| \partial_{\varphi}^3  \hat{v}_{\varphi} \|_{L^2(0,\theta)}^2}{|\lambda|^2} \leq & \, C_{\alpha,\theta}\Big(\tfrac{\| \partial_{\varphi}^4  \hat{v}_{\varphi} \|_{L^2(0,\theta)}^2}{|\lambda|^4} + \| \partial_{\varphi}^2  \hat{v}_{\varphi} \|_{L^2(0,\theta)}^2\Big)  \nonumber \\  \leq & \,  C_{\alpha,\theta}\big(\| \hat{\ww}(\lambda-2\alpha) \|_{L^2(0,\theta)}^2+\| \partial_{\varphi} \hat{\ww}(\lambda-2\alpha) \|_{L^2(0,\theta)}^2 \big).  \label{mild:2}
\end{align}
We recover $ v_r $ from \eqref{dynamics:66:3} and from \eqref{mild:1} and \eqref{mild:2} we deduce the desired estimates. The pressure $ p $ is defined via \eqref{dynamics:66:1}, and the proof of the estimate is straight-forward.

\end{proof}
\begin{proof}[Proof Lemma \ref{app:sequence}]
 Let  $ \ww \in \TT$, then Lemma \ref {lem:sol_dual_problem} ensures the existence of a solution $ \vv \in \prescript{2}{}{\mathcal{H}}^{2}_{\alpha-1} $ to \eqref{eq:dual_problem1} such that $ \div \vv = 0 $ in $ \Omega $ and $ \vv = 0 $ on $ \partial \Omega' $. Proposition \ref{prop:ind:spaces} ensures that there exists a sequence $ ( \vv_k )_{k\geq 1} $ such that $  \vv_k \in \TT$ with $ \vv_k = 0 $ on $ \partial \Omega' $ for all $k\geq 1$ and
  $ \vv_k \longrightarrow  \vv $ in $\prescript{2}{}{\mathcal{H}}^{2}_{\alpha-1} $ as $k\longrightarrow\infty$.
By definition of $ \prescript{2}{}{\mathcal{H}}^{2}_{\alpha-1}  $ (Section \ref{subsec:mainresults}), this convergence implies that for $ 0 \leq j + \ell \leq 2 $, $ j, \ell \geq 0 $ we have
 $ (r\partial_{r})^j \partial_{\varphi}^{\ell} \vv_{k } \in \prescript{0}{}{\mathcal{H}}^{0}_{1+\alpha}. $
Since $ \vv_2 $ and $ \vv_3 $ are linear combinations of $ r\partial_{r} $ and $ \partial_{\ph } $ derivatives of $ v_{r} $ and $ v_{\varphi} $, we deduce that
 \begin{equation*}
  r^{-2\alpha}\left(\vv_k + {\br |\alpha| \theta^3} \vv_{k,2}+c_3{\br |\alpha| \theta^3} \vv_{k,3}\right) \longrightarrow   r^{-2\alpha}\left(\vv + {\br |\alpha| \theta^3}\vv_{2}+ c_3{\br |\alpha| \theta^3}\vv_{3}\right)\quad \text{ in } \prescript{0}{}{\mathcal{H}}^{0}_{1-\alpha},
 \end{equation*}
 as $k\longrightarrow\infty$. Continuity of the projection $ \P $ in $\prescript{0}{}{\mathcal{H}}^{0}_{2-\alpha} $ by Lemma \ref{lem:est_P_divw0} implies that
  \begin{equation*}
 \P\left[ r^{-2\alpha}\left(\vv_k + {\br |\alpha| \theta^3}\vv_{k,2}+c_3{\br |\alpha| \theta^3}\vv_{k,3}\right)\right] \longrightarrow   \P\left[r^{-2\alpha}\left(\vv + {\br |\alpha| \theta^3}\vv_{2}+ c_3{\br |\alpha| \theta^3}\vv_{3}\right)\right] = \ww,
 \end{equation*}
 in $\prescript{0}{}{\mathcal{H}}^{0}_{1-\alpha}$ as $k\longrightarrow\infty$.
 \end{proof}


We can now finish the proof of Proposition \ref{equi:of:sol}.

\begin{proof}[Proof of Proposition \ref{equi:of:sol}]
It is enough to show that $\vu$ is a strong solution to the projected Stokes equations \eqref{eq:SmoothStokesProj_ch5} if and only if $\vu$ is a solution to the variational problem \eqref{la:mil:equ:2}. In fact, if $ \vu $ satisfies \eqref{eq:SmoothStokesProj_ch5}, then the pressure $ p $ is the unique solution in $ \mathscr{Y}^1_{\alpha} $ of
\begin{equation*}
\nabla p = \Delta \vu + \vf -\P(\Delta \vu + \vf),
\end{equation*}
which is exactly \eqref{pres:equ:var}.

In Lemma \ref{reg:sol:are:var} we have already proved that an almost everywhere solution $\vu$ to \eqref{eq:SmoothStokesProj_ch5}
 satisfies the variational problem
\begin{equation}\label{eq:var_problem_CH7}
B(\vu, \vv) = (\P\vf, r^{-2\alpha}\vv_{\text{test}})_{L^2(\Omega)} + \langle g, v_r \rangle_{\alpha} \qquad \text{ for any }  \vv \in \TT,
\end{equation}
where $ \vv_{\text{test}} $ is defined in \eqref{eq:testfunctions}.

 It remains to verify that a solution $ \vu $ to \eqref{eq:var_problem_CH7} is also an almost everywhere solution to the Stokes problem \eqref{eq:SmoothStokesProj_ch5} with Navier slip.
Let  $ \ww \in \TT$. Then there exists a sequence $ (\vv_k)_{k\geq 1} $ that satisfies the assumptions of Lemma \ref{app:sequence}. From \eqref{eq:var_problem_CH7} we deduce that
\begin{align*}
 0 =  \int_{\Omega} (- \P \Delta \vu - \P\vf) \cdot  \P\left[ r^{-2\alpha}\left(\vv_k +  {\br |\alpha|\theta^3}\vv_{k,2}+ c_3{\br |\alpha|\theta^3}\vv_{k,3} \right)\right]  \dd x \longrightarrow  \int_{\Omega} (- \P \Delta \vu - \P\vf) \cdot \ww  \dd x.
 \end{align*}
 This implies that for all $ \ww \in \TT$
 \begin{equation*}
\label{goal:to show}
 \int_{\Omega} (- \P \Delta \vu -\P \vf) \cdot \ww \, \dd x = 0.
 \end{equation*}
By the fundamental lemma of calculus of variations we obtain $ -\P \Delta \vu = \P\vf $ almost everywhere in $ \Omega $.

We next verify that the Navier-slip condition holds. For any $\vv\in \TT$, $\vu$ satisfies the equation
\begin{equation}\label{eq:result_b}
\begin{aligned}
        T_1^{(1)}+ &\;{\br |\alpha|\theta^3}T^{(2)}_1+\sum_{j=2}^5 T^{(1)}_j +{\br |\alpha|\theta^3}\sum_{j=2}^6 T^{(2)}_j+ c_3{\br |\alpha|\theta^3}\sum_{j=1}^2 T^{(3)}_j\\&=(\vc{f},r^{-2\alpha}(\vv+{\br |\alpha|\theta^3}\vv_2+ c_3{\br |\alpha|\theta^3} \vv_3))_{L^2(\Omega)} + \langle g, v_r\rangle_{\alpha}.
        \end{aligned}
\end{equation}
Recall from the derivation of the bilinear forms in Section \ref{chap:5} that we only applied the Navier-slip boundary condition to get $T_1^{(1)}$ and $T_1^{(2)}$. Using the smoothness of the test function $\vv$, we undo the integration by parts for $B_1$, $B_2$ and $B_3$ as in Section \ref{sec:weighted_bilienar_form}, but in the opposite direction. This gives
\begin{align*}
    \sum_{j=2}^5T^{(1)}_j +{\br |\alpha|\theta^3}\sum_{j=2}^6T^{(2)}_j+ c_3{\br |\alpha|\theta^3}\sum_{j=1}^4T^{(3)}_j
    =&\; \big(-\P\del\vu, r^{-2\alpha}(\vv+{\br |\alpha|\theta^3}\vv_2+ c_3{\br |\alpha|\theta^3}\vv_3)\big)_{L^2(\Om)}\\&\;-{\br |\alpha|\theta^3}\int_{\dOm'}r^{-2\alpha}((r\dr)^2\vr)( \dn \ur) \dd s\\&\;+\int_{\dOm'}r^{-2\alpha}\vr\dn\ur\dd s.
\end{align*}
 Substituting this into \eqref{eq:result_b} and using that $ -\P \del\vu = \P\vf $ is satisfied almost everywhere in $\Om$
gives
\begin{equation*}
    \int_{\dOm'}r^{-2\alpha}\big(\vr(\ur+\dn\ur-g) -  |\alpha|\theta^3((r\dr)^2\vr) (\ur+\dn\ur-g)\big)\dd s=0.
\end{equation*}
We obtain that $\ur+\dn\ur= g$ on $\dOm'$ almost everywhere if enough test functions are generated. To this end, it suffices to show that for $ w \in C^{\infty}_{\mathrm{c}}((0,\infty)) $ there exist a $\vr \in C^{\infty}((0,\infty)) $ with $ v_r(0) = 0  $ and decay to zero at infinity, which is a solution to
\begin{equation*}
    r^{-2\alpha}\big(1 -{\br |\alpha|\theta^3} (r\dr)^2\big)\vr=w.
\end{equation*}
In Mellin variables this equation has the solution
$$ \hat{v}_r(\lambda) = \frac{1}{1-{\br |\alpha|\theta^3} \lambda^2} \hat{w}(\lambda-2\alpha).$$
For $|\alpha \theta| $ small enough, we have
$$   \frac{1}{1-{\br |\alpha|\theta^3} (\Re \lambda)^2} \leq C , $$
so that the inverse Mellin transform can be used to obtain the desired solution $ \vr$. Hence, with the fundamental lemma of calculus of variations we conclude that $\ur+\dn\ur=g$ almost everywhere on $\dOm'$.
\end{proof}


\section{The Stokes equations with free-slip boundary conditions}\label{sec:higher_reg}

In this section we study the Stokes equations in a wedge with non-homogeneous free-slip boundary conditions, i.e.,
\begin{subequations}\label{SsS:merlin}
\begin{alignat}{5}
-r^{-2}[((r \partial_r)^2 + \partial_{\varphi}^2)u_r - 2\partial_{\varphi} u_{\varphi} - u_r] + \partial_r p = \, & f_r \quad && \text{ for } r > 0, \varphi \in (0,\theta), \\
-r^{-2}[((r\partial_r)^2 + \partial_{\varphi}^2)u_{\varphi} + 2 \partial_{\varphi} u_r - u_{\varphi}] + r^{-1}\partial_{\varphi} p = & \, f_{\varphi} \quad && \text{ for } r > 0, \varphi \in (0,\theta), \\
(r \partial_r +1) u_r + \partial_{\varphi} u_{\varphi} = & \, 0 \quad && \text{ for } r > 0, \varphi \in (0,\theta), \\
u_{\varphi} = & \, 0  \quad && \text{ for } r > 0, \varphi \in \{0,\theta\}, \\
\partial_{\varphi} u_{r} = & \, \mathfrak{g}  \quad && \text{ for } r > 0, \varphi \in \{0,\theta\},
\end{alignat}
\end{subequations}
where $ f_r $, $ f_{\varphi}$ and $ \mathfrak{g} $ are given data. Studying this system is required to gain higher regularity for the regular problem as was discussed in Section \ref{sec:reg_problem}. As mentioned before, to our knowledge a closed solution representation in Mellin variables for the original system with Navier slip is not available and cannot be expected. Therefore, we study the system above and consider as data $\mathfrak{g}=\pm(g - r\ur) $, where $\ur$ is the strong solution determined in Theorem \ref{Floris:theo}. \\

Again, for notational convenience we write $\hat{h}:=\hat{h}(\lambda,\ph)$ or we omit the $\ph$-dependence from the notation. In Section \ref{sec:1}, we derive a representation formula for solutions to the above system in Mellin variables and in Sections \ref{sec:92} and \ref{sec:93} we study the regularity of these solution and complete the proof of Proposition \ref{my:step} in Section \ref{sec:2}.

\subsection{A representation formula}
\label{sec:1}

We rewrite the above system in Mellin variables, that is
\begin{subequations}\label{SS:Mellin}
\begin{alignat}{5}
(\lambda^2 + \partial_{\varphi}^2)\hat{u}_r(\lambda) -2 \partial_{\varphi} \hat{u}_{\varphi}(\lambda) -\hat{u}_r(\lambda) - (\lambda-1) \hat{ p}(\lambda-1) = \, & - \hat{f}_r(\lambda-2) \quad && \text{ in } \mathcal{S}, \label{SS:Mellin_1}  \\
(\lambda^2 + \partial_{\varphi}^2)\hat{u}_{\varphi}(\lambda) + 2 \partial_{\varphi} \hat{u}_r(\lambda) -\hat{ u}_{\varphi}(\lambda)  - \partial_{\varphi} \hat{p}(\lambda-1) = & \, -\hat{f}_{\varphi}(\lambda-2) \quad && \text{ in } \mathcal{S},   \label{SS:Mellin_2} \\
(\lambda +1) \hat{u}_r(\lambda)  + \partial_{\varphi} \hat{u}_{\varphi}(\lambda)  = & \, 0 \quad && \text{ in }  \mathcal{S},  \label{SS:Mellin_3}\\
\hat{u}_{\varphi}(\lambda)  = & \, 0  \quad && \text{ on }  \partial \mathcal{S},\label{SS:Mellin_4}  \\
\partial_{\varphi} \hat{u}_{r}(\lambda)  = & \, \hat{\mathfrak{g}}(\lambda)   \quad && \text{ on }  \{0,\theta \},   \label{SS:Mellin_5}
\end{alignat}
\end{subequations}
where $ \mathcal{S}$  is defined in \eqref{eq:Omhat}. Then using \eqref{SS:Mellin_3} in $-(\lambda+1)[\dph \eqref{SS:Mellin_1}-(\lambda -1 )\eqref{SS:Mellin_2}]$ gives the equation for $\hat{u}_{\varphi}(\lambda,\ph)$
\begin{equation}\label{fourth:order:equ}
\begin{aligned}
\partial_{\varphi}^4 \hat{u}_{\varphi} + 2(\lambda^2+1)\partial_{\varphi}^2 \hat{u}_{\varphi} +(\lambda^2-1)^2 \hat{u}_{\varphi}  &=   (\lambda^2-1)\hat{f}_{\varphi}(\lambda-2)- (\lambda+1) \partial_\varphi \hat{f}_r(\lambda-2)  && \text{ in }  \mathcal{S},  \\
\hat{u}_{\varphi} =  \partial_{\varphi}^2 \hat{u}_{\varphi}  +  (\lambda + 1 ) \hat{\mathfrak{g}} & = 0 && \text{ on } \partial \mathcal{S}.
\end{aligned}
\end{equation}
With a solution $ \hat{u}_{\varphi} $ it is straightforward to recover $ \hat{u}_r $ and $\hat{p}$ via \eqref{SS:Mellin_3} and \eqref{SS:Mellin_1}, respectively.

We determine a Green's function that allows us to write the solution $ \hat{u}_{\varphi} $ of \eqref{fourth:order:equ} in terms of the source terms and boundary conditions.

\begin{lem}
\label{Lemma:green:fun}
There exists a Green's function $ G(\lambda, \varphi, \varphi' ) : \mathbb{C} \times (0,\theta)^2 \to \mathbb{R}$, symmetric in $ \varphi $ and $\varphi' $, such that
\begin{equation*}
    \begin{aligned}
\partial_{\varphi}^4 G + 2(\lambda^2+1)\partial_{\varphi}^2 G +(\lambda^2-1)^2 G  &=   \delta_{\varphi}  \quad&& \text{ in }  \mathcal{S}, \nonumber\\
G =  0 \quad \text{ and }\quad \partial_{\varphi}^2 G &= 0 \quad  &&\text{ for  } \varphi \in \{ 0, \theta \}. \nonumber
\end{aligned}
\end{equation*}
Specifically,
\begin{align*}
G (\lambda, \varphi, \varphi') =  \begin{cases}  \frac{\sin((\lambda-1)(\theta- \varphi)) \sin((\lambda-1)\varphi')}{4(\lambda-1)\lambda \sin((\lambda-1)\theta)}- \frac{\sin((\lambda + 1)(\theta- \varphi)) \sin((\lambda+1)\varphi')}{4(\lambda+1)\lambda \sin((\lambda+1)\theta)} \quad \text{ for } \varphi' \leq \varphi , \\
\\
\frac{\sin((\lambda-1)(\theta- \varphi')) \sin((\lambda-1)\varphi)}{4(\lambda-1)\lambda \sin((\lambda-1)\theta)} - \frac{\sin((\lambda + 1)(\theta- \varphi')) \sin((\lambda+1)\varphi)}{4(\lambda+1)\lambda \sin((\lambda+1)\theta)}  \quad \text{ for } \varphi' \geq \varphi.
\end{cases}
\end{align*}
\end{lem}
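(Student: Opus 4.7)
The strategy is to reduce the fourth-order operator to a composition of two second-order operators and then verify the explicit formula directly. First, I would observe that the characteristic polynomial factors as
\[
\mu^4 + 2(\lambda^2+1)\mu^2 + (\lambda^2-1)^2 = \bigl(\mu^2+(\lambda-1)^2\bigr)\bigl(\mu^2+(\lambda+1)^2\bigr),
\]
so the operator splits as $L = L_{\lambda-1}\,L_{\lambda+1}$ with $L_\mu := \partial_\varphi^2 + \mu^2$, and the two factors commute. A key structural observation is that the boundary conditions $G = \partial_\varphi^2 G = 0$ at $\varphi \in \{0,\theta\}$ are equivalent to $G = L_{\lambda\pm 1} G = 0$ there. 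Therefore, setting $H := L_{\lambda+1} G$, the Green's function problem decouples into the two second-order Dirichlet problems $L_{\lambda-1} H = \delta_{\varphi'}$ and $L_{\lambda+1} G = H$ on $(0,\theta)$.

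Next, I would invoke the classical Dirichlet Green's function for $L_\mu$, namely
\[
G_\mu(\varphi,\varphi') = \frac{\sin(\mu\varphi_<)\,\sin(\mu(\theta-\varphi_>))}{\mu\sin(\mu\theta)}, \qquad \varphi_<=\min(\varphi,\varphi'),\ \varphi_>=\max(\varphi,\varphi'),
\]
which is well-defined provided $(\lambda\pm 1)\theta\notin\pi\mathbb{Z}$. The solution formula is then $G(\varphi,\varphi') = \int_0^\theta G_{\lambda+1}(\varphi,\psi)\, G_{\lambda-1}(\psi,\varphi')\,\mathrm{d}\psi$. Rather than computing this convolution directly (which is routine but tedious), I would use the partial fraction identity
\[
\frac{1}{(\mu^2-(\lambda-1)^2)(\mu^2-(\lambda+1)^2)} \;\longleftrightarrow\; \frac{1}{-4\lambda}\left(\frac{1}{\mu^2-(\lambda-1)^2} - \frac{1}{\mu^2-(\lambda+1)^2}\right),
\]
which at the level of resolvents suggests that $G = \tfrac{1}{4\lambda}\bigl(G_{\lambda-1}/(\lambda-1) - G_{\lambda+1}/(\lambda+1)\bigr)$ reassembled so as to land on the stated expression.

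The cleanest way to finalize the proof is then by \emph{direct verification} of the closed formula: (i) each term is built from $\sin((\lambda\pm 1)\,\cdot\,)$, each killed by the corresponding factor $L_{\lambda\pm 1}$, so $LG=0$ for $\varphi\neq\varphi'$; (ii) the boundary conditions are manifest from the factors $\sin((\lambda\pm 1)(\theta-\varphi))\sin((\lambda\pm 1)\varphi')$ (and their reflection on $\varphi'\geq\varphi$), which vanish together with their second $\varphi$-derivatives at $\varphi\in\{0,\theta\}$; (iii) the symmetry $G(\varphi,\varphi')=G(\varphi',\varphi)$ is built into the piecewise definition; (iv) the jump conditions reduce to checking that $G,\partial_\varphi G,\partial_\varphi^2 G$ are continuous at $\varphi=\varphi'$ while $[\partial_\varphi^3 G]_{\varphi'^-}^{\varphi'^+}=1$. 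Items (i)--(iii) are immediate, and (iv) is the only real computation: after evaluating each of the four boundary terms at $\varphi=\varphi'$ and using the trigonometric identity $\sin((\lambda-1)\theta)\sin((\lambda+1)\theta)$ cancellations together with $\sin(A-B)+\sin(B-A)=0$, the lower-order jumps vanish, and the top jump contributes $\tfrac{(\lambda-1)^2-(\lambda+1)^2}{-4\lambda}=1$ after combining both terms.

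The main (mild) obstacle is purely computational: getting the sign and normalization right in the jump of $\partial_\varphi^3 G$. This is where the prefactor $\frac{1}{4\lambda(\lambda\pm 1)}$ pins itself down, and it is the one place in the argument where the partial-fraction structure $-\tfrac{1}{4\lambda}\bigl(\tfrac{1}{\lambda-1}-\tfrac{1}{\lambda+1}\bigr)=\tfrac{1}{2\lambda(\lambda^2-1)}$ must be tracked carefully so that the composition $L_{\lambda-1}L_{\lambda+1}G$ produces exactly $\delta_{\varphi'}$ and not a scalar multiple thereof. All other verifications are essentially one-line trigonometric cancellations.
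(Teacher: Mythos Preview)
Your proposal is correct and takes essentially the same approach as the paper, whose proof consists of the single sentence ``Uniqueness follows from standard ODE results; thus the formula can be verified a posteriori.'' You have supplied considerably more detail than the paper---in particular the factorization $L = L_{\lambda-1}L_{\lambda+1}$ and the partial-fraction heuristic explaining where the formula comes from---but the underlying strategy (write down the candidate, check boundary conditions, symmetry, and jump conditions directly) is exactly what the paper means by ``verified a posteriori.''
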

\begin{proof}
The uniqueness follows from standard ODE theory. Thus the formula can be verified a posteriori.
\end{proof}

The Green's function satisfies the following property.
\begin{lem}
\label{prop:4} Let $G$ be the Green's function from Lemma \ref{Lemma:green:fun}.
For any fixed $ \varphi, \varphi' \in [0,\theta] $, the functions $$ G(\lambda, \varphi, \varphi') , \quad \partial_{\varphi'} G(\lambda, \varphi, \varphi')  \quad\text{ and }\quad\partial_{\varphi}\partial_{\varphi'} G(\lambda, \varphi, \varphi') $$ 
are holomorphic in
 \begin{equation*} \Sigma:= \{ \lambda = \beta + i s:  (|\beta| +1)\theta < \pi, s\in\RR \}. \end{equation*}
\end{lem}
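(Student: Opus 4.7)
\medskip

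\textbf{Plan.} The plan is to inspect the explicit formula for $G$ from Lemma~\ref{Lemma:green:fun} and to locate its singularities in the $\lambda$--plane. Writing $G$ using the auxiliary function
\begin{equation*}
F(\mu) := \frac{\sin(\mu a)\sin(\mu b)}{\mu\sin(\mu c)},\qquad (a,b,c) = \begin{cases}(\theta-\varphi,\varphi',\theta) & \text{if }\varphi'\le\varphi,\\ (\theta-\varphi',\varphi,\theta) & \text{if }\varphi'\ge\varphi,\end{cases}
\end{equation*}
one has the uniform representation
\begin{equation*}
G(\lambda,\varphi,\varphi') \;=\; \frac{1}{4\lambda}\bigl[\,F(\lambda-1)-F(\lambda+1)\,\bigr].
\end{equation*}
A key observation is that $F$ is \emph{even} in $\mu$, since $\sin$ is odd and the numerator collects two sign changes against one in the denominator. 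The candidate singularities of $G$ in the $\lambda$--plane are therefore $\lambda=0$ (from the prefactor), $\lambda=\pm1$ (from the factors $\lambda\mp1$ in $F(\lambda\mp1)$), and the zeros of $\sin((\lambda\mp1)\theta)$.

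\textbf{Localising the singularities in $\Sigma$.} First I would observe that for any $\lambda=\beta+is\in\Sigma$ one has $|\Re((\lambda\pm1)\theta)|\le(|\beta|+1)\theta<\pi$, so $\sin((\lambda\pm1)\theta)=0$ forces $(\lambda\pm1)\theta=0$, i.e.\ $\lambda=\mp1$. Hence the only candidate singularities within $\Sigma$ are $\lambda\in\{-1,0,1\}$.

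\textbf{Removability at $\lambda=\pm1$.} At $\lambda=1$ the second term in $G$ is regular, while in the first term the numerator $\sin((\lambda-1)a)\sin((\lambda-1)b)$ vanishes to second order in $(\lambda-1)$ and the denominator $(\lambda-1)\lambda\sin((\lambda-1)\theta)$ vanishes exactly to second order in $(\lambda-1)$, so the quotient extends holomorphically. The case $\lambda=-1$ is symmetric, handled by the second term. For the derivatives, after differentiating in $\varphi'$ (and then in $\varphi$) the relevant function becomes $\tilde F(\mu)=\sin(\mu a)\cos(\mu b)/\sin(\mu c)$ (resp.\ $-\mu\cos(\mu a)\cos(\mu b)/\sin(\mu c)$), in which the orders of vanishing of numerator and denominator at $\mu=0$ still match, so removability at $\lambda=\pm1$ persists.

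\textbf{Removability at $\lambda=0$.} Here the single factor $1/\lambda$ must be cancelled by a zero of the bracket. Since $F$ is even, $F(-1)=F(1)$, hence $F(\lambda-1)-F(\lambda+1) = \bigl(F(-1+\lambda)-F(-1)\bigr) - \bigl(F(1+\lambda)-F(1)\bigr) = \lambda\bigl(F'(-1)-F'(1)\bigr)+O(\lambda^2) = -2\lambda F'(1)+O(\lambda^2)$, so $G(\lambda,\cdot,\cdot)=-\tfrac12 F'(1)+O(\lambda)$ near $\lambda=0$. The same argument works for $\partial_{\varphi'}G$ and $\partial_\varphi\partial_{\varphi'}G$: in each case the corresponding function (call it $\tilde F$) remains even in $\mu$ (one checks this from the parities of $\sin$, $\cos$, and the $\mu$--factors produced by differentiation), so the bracket again vanishes at $\lambda=0$.

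The main (mild) obstacle is simply keeping track that every differentiation in $\varphi$ or $\varphi'$ preserves the evenness of $F$ in $\mu$, which is what enables the cancellation at $\lambda=0$; the piecewise definition is otherwise immaterial since the statement fixes $\varphi,\varphi'$ and we may work on whichever piece contains the chosen point.
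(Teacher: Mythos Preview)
Your proposal is correct and follows essentially the same route as the paper: identify that on $\Sigma$ the only candidate singularities of $G$ (and its stated derivatives) are $\lambda\in\{-1,0,1\}$, then verify removability at each. Your packaging via the even function $F(\mu)$ is a clean way to see the cancellation at $\lambda=0$; the paper does the same thing by writing $G=\lambda^{-1}\tilde G$ and observing $\tilde G|_{\lambda=0}=0$ by direct substitution, and at $\lambda=\pm1$ it bounds the offending term rather than counting vanishing orders, but these are cosmetic differences.
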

\begin{proof}

For $ \varphi' \leq \varphi $, the function $ G  $ from Lemma \ref{Lemma:green:fun} is a sum and product of holomorphic functions as long as $\lambda\in \Sigma\setminus\{-1,0,1\}$.

Recall that $ \tfrac{2}{\pi} |x| \leq |\sin(x)| \leq  |x| $ for $ |x |  \leq \pi/2 $. For $ |\lambda +1 | < \tfrac{1}{2}$ we deduce that
\begin{align*}
\left|  \frac{\sin((\lambda + 1)(\theta- \varphi)) \sin((\lambda+1)\varphi')}{4(\lambda+1)\lambda \sin((\lambda+1)\theta)} \right| \leq C \frac{| \lambda +1 |^2|\theta-\varphi||\varphi'|}{|\lambda +1 |^2\theta} \leq C  \frac{|\theta-\varphi||\varphi'|}{\theta}, \label{b:b:prop:4}
\end{align*}
and therefore $ G $ is holomorphic at $ \lambda = - 1 $.  For $ \lambda =  1 $ one can argue analogously. For $\lambda=0$ note that $G(\lambda, \varphi, \varphi') = \lambda^{-1}\tilde{G}(\lambda,\ph,\ph')$ where
\begin{align*}
\tilde{G}(\lambda, \varphi, \varphi') = & \, \frac{\sin((\lambda-1)(\theta- \varphi)) \sin((\lambda-1)\varphi')}{4(\lambda-1) \sin((\lambda-1)\theta)}- \frac{\sin((\lambda + 1)(\theta- \varphi)) \sin((\lambda+1)\varphi')}{4(\lambda+1) \sin((\lambda+1)\theta)}.
\end{align*}%
Then $\tilde{G}$ is holomorphic at $\lambda=0$ and $\tilde{G}(\lambda,\ph,\ph')|_{\lambda=0}=0$.
This implies that $ G $ is holomorphic in a neighbourhood of $ \lambda = 0 $. To show that $ \partial _{\varphi' }G $ and  $ \partial _{\varphi }\partial _{\varphi' }G $ are holomorphic on $\Sigma$ we argue similarly.
\end{proof}

Using the Green's function from Lemma \ref{Lemma:green:fun} we obtain an expression for the solution of \eqref{fourth:order:equ}.
\begin{cor}
\label{cor:repr}
The unique classical  solution to \eqref{fourth:order:equ} is
\begin{equation*}\label{form:sol}
  \begin{aligned}
\hat{u}_{\varphi}(\lambda,\ph) = & \,  \int_{0}^{\theta}  (\lambda+1)  \partial_{\varphi'}G(\lambda, \varphi, \varphi ' ) \hat{f}_r(\lambda-2, \varphi') + (\lambda^2-1)G(\lambda, \varphi, \varphi ' )\hat{f}_{\varphi}(\lambda-2, \varphi')  \dd \varphi' \\ & + (\lambda + 1 ) \hat{\mathfrak{g}}(\lambda, \theta) \partial_{\varphi'}G(\lambda,\varphi, \theta) -  (\lambda + 1 ) \hat{\mathfrak{g}}(\lambda, 0) \partial_{\varphi'}G(\lambda,\varphi, 0),
\end{aligned}
\end{equation*}
where
\begin{gather*}
\partial_{\varphi'}G(\lambda,\varphi, \theta) =  \frac{\sin((\lambda + 1)\varphi) }{4 \lambda \sin((\lambda+1)\theta)} - \frac{\sin((\lambda-1)\varphi) }{4 \lambda \sin((\lambda-1)\theta)}
\end{gather*}
and
\begin{gather*}
\partial_{\varphi'}G(\lambda,\varphi, 0) =\frac{-\sin((\lambda + 1)(\theta-\varphi))}{4 \lambda \sin((\lambda+1)\theta)} + \frac{\sin((\lambda-1)(\theta-\varphi))}{4 \lambda \sin((\lambda-1)\theta)} .
\end{gather*}
\end{cor}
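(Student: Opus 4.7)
The plan is to derive the representation by multiplying the equation in \eqref{fourth:order:equ} by the Green's function from Lemma \ref{Lemma:green:fun} and integrating by parts. Write $L := \partial_\varphi^4 + 2(\lambda^2+1)\partial_\varphi^2 + (\lambda^2-1)^2$, which is formally self-adjoint. For any sufficiently smooth $u$ on $[0,\theta]$, four-fold integration by parts together with $G = \partial_{\varphi'}^2 G = 0$ at $\varphi' \in \{0,\theta\}$ yields
\begin{equation*}
\int_0^{\theta} G(\lambda,\varphi,\varphi') (Lu)(\varphi')\,d\varphi' = u(\varphi) + \bigl[\partial_{\varphi'}G\cdot\partial_{\varphi'}^2 u - G\cdot\partial_{\varphi'}^3 u + 2(\lambda^2+1)G\cdot\partial_{\varphi'} u\bigr]_{\varphi'=0}^{\varphi'=\theta} + \bigl[\partial_{\varphi'}^3 G\cdot u - (2\lambda^2+2)\partial_{\varphi'}G\cdot u \bigr]_{\varphi'=0}^{\varphi'=\theta},
\end{equation*}
once one uses that $G(\lambda,\varphi,\cdot)$ satisfies the equation $L G = \delta_\varphi$. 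Applying this to $u = \hat{u}_\varphi$, the boundary terms with $u$ or $\partial_{\varphi'}^3 u$ all vanish since $\hat{u}_\varphi = 0$ on $\partial\mathcal{S}$, and the terms with $\partial_{\varphi'} u$ vanish since $G = 0$ there, leaving only the contributions from $\partial_{\varphi'}^2 \hat{u}_\varphi = -(\lambda+1)\hat{\mathfrak{g}}$.

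Substituting the right-hand side of \eqref{fourth:order:equ} for $L\hat{u}_\varphi$ gives the bulk integrals against $(\lambda^2-1)\hat{f}_\varphi(\lambda-2)$ and $-(\lambda+1)\partial_{\varphi'}\hat{f}_r(\lambda-2)$. Integrating by parts once in the second term and again exploiting $G(\lambda,\varphi,0) = G(\lambda,\varphi,\theta) = 0$, the boundary term drops out and produces the factor $(\lambda+1)\partial_{\varphi'}G(\lambda,\varphi,\varphi')\,\hat{f}_r(\lambda-2,\varphi')$ in the integrand. Combined with the boundary contribution $(\lambda+1)\hat{\mathfrak{g}}(\theta)\partial_{\varphi'}G(\lambda,\varphi,\theta) - (\lambda+1)\hat{\mathfrak{g}}(0)\partial_{\varphi'}G(\lambda,\varphi,0)$ from the previous step, this yields exactly the asserted representation.

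The explicit expressions for $\partial_{\varphi'} G(\lambda,\varphi,\theta)$ and $\partial_{\varphi'} G(\lambda,\varphi,0)$ are then obtained by differentiating the respective branches of the formula in Lemma \ref{Lemma:green:fun} (the branch $\varphi' \geq \varphi$ at $\varphi' = \theta$ and the branch $\varphi' \leq \varphi$ at $\varphi' = 0$) and simplifying using $\cos(0) = 1$. Uniqueness follows from standard linear ODE theory: the homogeneous equation $L u = 0$ with $u = \partial_\varphi^2 u = 0$ at both endpoints forces $u \equiv 0$, since the four linearly independent solutions $\sin((\lambda\pm1)\varphi)$, $\cos((\lambda\pm1)\varphi)$ cannot simultaneously satisfy all four homogeneous conditions unless $\sin((\lambda-1)\theta)\sin((\lambda+1)\theta) = 0$, which is precisely excluded by the implicit nonresonance condition encoded in the denominators of $G$. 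The main (minor) obstacle is bookkeeping of the integration-by-parts boundary terms—eight surface contributions that must be cancelled against the boundary data in the correct signs—but no deeper estimate is needed at this stage since the corollary only asserts an identity in Mellin variables.
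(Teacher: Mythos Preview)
Your constructive derivation via the Lagrange identity for the formally self-adjoint operator $L=\partial_\varphi^4+2(\lambda^2+1)\partial_\varphi^2+(\lambda^2-1)^2$ is valid and more informative than the paper's own proof, which is a one-liner in the style of Proposition~\ref{prop:Helm_rep_Green}: uniqueness follows from standard ODE theory, so the formula can simply be verified a posteriori by substitution. Your route explains where each piece comes from --- the bulk integral from $\int G\,Lu$ after one integration by parts on the $\partial_{\varphi'}\hat f_r$ term (using $G=0$ at the endpoints), and the boundary correction from the single surviving concomitant contribution $\partial_{\varphi'}G\cdot\partial_{\varphi'}^2\hat u_\varphi$.

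One caution on the bookkeeping you already flag: the displayed identity is organized somewhat inconsistently (the terms $G\cdot\partial_{\varphi'}^3 u$ and $2(\lambda^2+1)G\cdot\partial_{\varphi'}u$ should already have been dropped once you invoke $G=0$ at the endpoints, yet they are still written out), and if one computes the bilinear concomitant directly from $\int G\,Lu-\int u\,LG$, namely $[Gu'''-G'u''+G''u'-G'''u+2(\lambda^2+1)(Gu'-G'u)]_0^\theta$, the surviving $G'u''$ term carries the opposite sign to what you display. Since you correctly identify which contribution survives and land on the stated formula, this is not a conceptual gap --- but the eight surface terms deserve one more careful pass so that the intermediate identity is internally consistent with the sign convention in Lemma~\ref{Lemma:green:fun}.
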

From the above representation of $ \hat{u}_{\varphi} $ and the divergence-free condition \eqref{SS:Mellin_3}, we also obtain a representation for $ \hat{u}_r $:
\begin{equation} \label{u:r:espression}
   \begin{aligned}
\hat{u}_{r}(\lambda,\ph) = & \,  \int_{0}^{\theta} \partial_{\varphi} \partial_{\varphi'} G(\lambda, \varphi, \varphi ' ) \hat{f}_r(\lambda-2, \varphi') + (\lambda-1)\partial_{\varphi}G(\lambda, \varphi, \varphi ' )\hat{f}_{\varphi}(\lambda-2, \varphi')  \dd \varphi' \\ & + \hat{\mathfrak{g}}(\lambda, \theta) \partial_{\varphi} \partial_{\varphi'} G(\lambda, \varphi, \theta) - \hat{\mathfrak{g}}(\lambda, 0) \partial_{\varphi} \partial_{\varphi'} G(\lambda,\varphi, 0).
\end{aligned}
\end{equation}

In the subsequent two sections we derive estimates on the solution to \eqref{SS:Mellin} and \eqref{fourth:order:equ}. For this we decompose $\vu=\vu^b+\vu^s$, where $\vu^b$ only has nonzero boundary data $ \mathfrak{g} $  (and $\vf=0$) and $\vu^s$ only has a nonzero source term $\vf$ (and $\mathfrak{g}=0$). Estimates for $\vu^b$ and $\vu^s$ are derived in Section \ref{sec:92} and \ref{sec:93}, respectively.

We show uniform estimates for $\Re \lambda $ in the interval $$ I_{\eps} = \left[ -(1-\eps)\tfrac{\pi}{\theta} + 1,(1-\eps)\tfrac{\pi}{\theta}- 1 \right],\quad \eps\in(0,1- \tfrac{\theta}{\pi })\text{ and }\theta\in(0,\pi), $$
which avoids the singularities of the Green's function at
 $ \frac{\pi}{\theta}- 1  $ and $ -\frac{\pi}{\theta} + 1$.

\subsection{Regularity of \eqref{SS:Mellin} and \eqref{fourth:order:equ} with $\vf=0$}\label{sec:92}

First, we use the Green's function representation formula to study the regularity of solutions of system \eqref{fourth:order:equ} in the case the source term is zero, i.e.,
\begin{equation}\label{fourth:order:equ:f=0}
\begin{aligned}
\partial_{\varphi}^4 \hat{u}_{\varphi} + 2(\lambda^2+1)\partial_{\varphi}^2 \hat{u}_{\varphi} +(\lambda^2-1)^2 \hat{u}_{\varphi}  &=   0  && \text{ in }  \mathcal{S},  \\
\hat{u}_{\varphi} =  \partial_{\varphi}^2 \hat{u}_{\varphi}  +  (\lambda + 1 ) \hat{\mathfrak{g}} & = 0 && \text{ on } \partial \mathcal{S}.
\end{aligned}
\end{equation}
In particular, we derive estimates of the $ \prescript{M+2}{}{\mathcal{H}}^{M+2}_{\alpha} $-norm of $ \vu $ with respect of the $ \prescript{M+1}{}{\mathcal{B}}^{M+1}_{\alpha+1} $-norm of $ \mathfrak{g} $. Recall that the $ \prescript{M+1}{}{\mathcal{B}}^{M+1}_{\alpha+1} $-norm is defined as the infimum of all the $ \prescript{M+1}{}{\mathcal{H}}^{M+1}_{\alpha+1} $ extension of $ \mathfrak{g} $. So let us denote by $ \mathfrak{g}_{\ext} $ such a possible extension.
\begin{lem} \label{lem:Bound:Green_Rep:STOKES} Let  $ \theta \in (0, \pi) $, $\eps \in (0, 1- \tfrac{\theta}{\pi} ) $, $\Re \lambda = \alpha\in I_{\eps} \setminus\ZZ$ and $\ell\in\NN$.
The solution $ \hat{u}_{\varphi} $ of problem \eqref{fourth:order:equ:f=0} satisfies the estimate
\begin{equation}\label{bound:uphi:uphi} 
    \| \partial_{\ph}^{\ell} \hat{u}_{\varphi}(\lambda,\ph)\|_{L^2(0,\theta)} \leq  C_{\eps}\frac{1}{\min\{1,|\alpha|\}^{\ell}} \frac{1}{ |\lambda|^{2-\ell}}(\|\lambda \hat{\mathfrak{g}_{\ext}}\|_{L^2(0,\theta)}+ \|\partial_{\varphi} \hat{\mathfrak{g}_{\ext}}\|_{L^2(0,\theta)}),\quad \Re\lambda\in I_{\eps}.
\end{equation}
Moreover, $ \hat{u}_r = -\frac{\partial_{\varphi} \hat{u}_{\varphi}}{\lambda+1} $ satisfies the estimate
\begin{equation}\label{bound:ur:ur} 
    \| \partial_{\ph}^{\ell} \hat{u}_{r}(\lambda,\ph)\|_{L^2(0,\theta)} \leq  C_{\eps}\frac{1}{\min\{1,|\alpha|\}^{\ell}} \frac{1}{ |\lambda|^{2-\ell}}(\|\lambda \hat{\mathfrak{g}_{\ext}}\|_{L^2(0,\theta)}+ \|\partial_{\varphi} \hat{\mathfrak{g}_{\ext}}\|_{L^2(0,\theta)}),\quad \Re\lambda\in I_{\eps}.
\end{equation}
\end{lem}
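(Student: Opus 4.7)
The starting point is the Green's function representation from Corollary~\ref{cor:repr} applied with $\hat{f}_r = \hat{f}_\varphi = 0$, which reduces $\hat{u}_\varphi$ to
\[
\hat{u}_\varphi(\lambda,\varphi) = (\lambda+1)\hat{\mathfrak{g}}(\lambda,\theta)\,\partial_{\varphi'}G(\lambda,\varphi,\theta) - (\lambda+1)\hat{\mathfrak{g}}(\lambda,0)\,\partial_{\varphi'}G(\lambda,\varphi,0),
\]
so that $\partial_\varphi^\ell \hat{u}_\varphi$ is a linear combination of the pointwise boundary values $\hat{\mathfrak{g}}(\lambda,\vartheta)$, $\vartheta \in \{0,\theta\}$, multiplied by $(\lambda+1)\partial_\varphi^\ell\partial_{\varphi'}G(\lambda,\cdot,\vartheta)$. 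The proof therefore splits into two independent estimates: a trace bound for the pointwise boundary value in terms of any extension $\hat{\mathfrak{g}_{\ext}}$, and an $L^2(0,\theta)$ bound for the kernel $\partial_\varphi^\ell\partial_{\varphi'}G$.

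For the trace bound I would mimic the cut-off/fundamental-theorem-of-calculus argument used at \eqref{03} and \eqref{1.1} in the proofs of Lemmata~\ref{lem:est_P_divw0}-\ref{lem:est_P_delta_w}: with $\eta\in C_{\mathrm{c}}^{\infty}((-1,1))$ satisfying $\eta(0)=1$, one obtains
\[
|\hat{\mathfrak{g}}(\lambda,\vartheta)|^2 \;\lesssim\; \|\hat{\mathfrak{g}_{\ext}}\|_{L^2(0,\theta)}\|\partial_\varphi \hat{\mathfrak{g}_{\ext}}\|_{L^2(0,\theta)} + \tfrac{1}{\theta}\|\hat{\mathfrak{g}_{\ext}}\|_{L^2(0,\theta)}^2,
\]
and Young's inequality combined with $|\lambda|\geq|\alpha|$ and $|\alpha|\theta\lesssim_\eps 1$ upgrades this to the $\lambda$-weighted bound
\[
|\lambda+1|^2 |\hat{\mathfrak{g}}(\lambda,\vartheta)|^2 \;\leq\; C_\eps \tfrac{|\lambda+1|^2}{|\lambda|}\bigl(\|\lambda\hat{\mathfrak{g}_{\ext}}\|_{L^2(0,\theta)}^2 + \|\partial_\varphi\hat{\mathfrak{g}_{\ext}}\|_{L^2(0,\theta)}^2\bigr),
\]
which is the form that will combine cleanly with Step 3.

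The main technical work is in estimating $\|\partial_\varphi^\ell\partial_{\varphi'}G(\lambda,\cdot,\vartheta)\|_{L^2(0,\theta)}$ uniformly in $\Re\lambda \in I_\eps$. From the explicit formulas I would write $\partial_\varphi^\ell\partial_{\varphi'}G$ as a difference of two trigonometric quotients, each of the form $(\lambda\pm 1)^\ell \sin_{(\ell)}((\lambda\pm 1)\varphi)/[4\lambda\sin((\lambda\pm 1)\theta)]$, and split $\lambda = \alpha + i t$ into two regimes. For $|t\theta|\gtrsim 1$ the denominator $\sin((\lambda\pm1)\theta)$ grows exponentially, yielding the pointwise estimate $\lesssim e^{-2|t|(\theta-\varphi)}|\lambda|^{2\ell-2}$ and thus an $L^2(0,\theta)$-bound of the correct decay $\tfrac{|\lambda|^{2\ell-2}}{|t|}$. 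For $|t\theta|< 1$ I would expand via the sum-to-product identities $\sin((\lambda+1)\varphi)\pm\sin((\lambda-1)\varphi)=2\sin(\lambda\varphi)\cos\varphi$ (resp.\ $2\cos(\lambda\varphi)\sin\varphi$) and the analogous identities for cosines; the extra factor of $\sin(\lambda\varphi)$ cancels the $1/\lambda$ factor from the prefactor, consistently with Lemma~\ref{prop:4}. The resonances $(\lambda\pm 1)\theta \in \pi\ZZ\setminus\{0\}$ are avoided by the restriction $\alpha\in I_\eps$, giving uniform lower bounds $|\sin((\lambda\pm1)\theta)|\gtrsim_\eps$ (polynomial in $|\lambda\pm 1|$ and $\theta$). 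Combining these estimates yields
\[
\bigl|\lambda+1\bigr|^2\|\partial_\varphi^\ell\partial_{\varphi'}G(\lambda,\cdot,\vartheta)\|_{L^2(0,\theta)}^2 \;\leq\; \frac{C_\eps}{\min\{1,|\alpha|\}^{2\ell}}\cdot \frac{|\lambda|}{|\lambda|^{4-2\ell}},
\]
where the $\min\{1,|\alpha|\}^{-\ell}$ factor absorbs the possibility $|\lambda|\sim|\alpha|\ll 1$, at which scale further $\varphi$-derivatives cease to be small.

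Multiplying the kernel bound of Step~3 by the trace bound of Step~2 gives \eqref{bound:uphi:uphi}. For \eqref{bound:ur:ur} I would proceed analogously starting from the direct representation \eqref{u:r:espression} in terms of $\partial_\varphi\partial_{\varphi'}G$: by Lemma~\ref{prop:4} this kernel and its $\varphi$-derivatives are holomorphic on $\Sigma\supset\{\Re\lambda\in I_\eps\}$, so the same regime-splitting and cancellation analysis gives a kernel bound that, combined with the trace estimate, produces the analogous $\ell$-th derivative estimate. The key obstacle throughout is the kernel estimate in the small-$|t|\theta$ regime: one must perform the Taylor cancellation carefully to see that the apparent $1/\lambda$ singularity of each summand is removable, and simultaneously keep the dependence on $\theta$ polynomial so that everything extends uniformly to $\theta$ close to $(1-\eps)\pi$.
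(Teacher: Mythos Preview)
Your proposal is correct and follows essentially the same route as the paper: Green's function representation, trace bound via the cut-off/fundamental-theorem-of-calculus argument (cf.\ \eqref{Mimi:trace:estimate}), and kernel estimates split into the regimes $|t\theta|\gtrsim 1$ (exponential decay of $\sin((\lambda\pm1)\theta)$) and $|t\theta|\lesssim 1$ (Taylor-type cancellation showing the apparent $1/\lambda$ pole is removable). Two minor organizational points where the paper differs: (i) the kernel bound is carried out explicitly only for even $\ell$ and then extended to odd $\ell$ by interpolation, which saves some case work; (ii) for $\hat{u}_r$ the paper first uses $\hat{u}_r=-\partial_\varphi\hat{u}_\varphi/(\lambda+1)$ away from $\Re\lambda\in(-\tfrac32,-\tfrac12)$ and only falls back on the direct representation \eqref{u:r:espression} near $\lambda=-1$, whereas you propose the direct representation throughout---both work.
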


From the divergence-free condition $ \hat{u}_r = \frac{\partial_{\varphi} \hat{u}_{\varphi}}{\lambda+1} $ we can deduce bounds on $ \hat{u}_r $ from the one of $ \hat{u}_{\varphi} $.

\begin{cor}
\label{cor:est:bound:term}  Let $ \theta \in (0, \pi) $, $\eps \in (0, 1- \tfrac{\theta}{\pi} ) $, $\Re \lambda = \alpha\in \mathbb{R} \setminus\ZZ$ and $M\in\NN$ be such that $ M+ \alpha+1 \in I_{\eps}$.
Let $ \vu^b$ the solution of system \eqref{SS:Mellin} with $ \vf = 0 $. Then, we have the estimate
\begin{equation*}
\llbracket \vu^b \rrbracket_{M+2,\alpha} \leq \frac{C_{\eps,M}}{\min\{1, |\alpha+M+1|\}^{M+2}} [ \mathfrak{g} ]_{M+\frac{1}{2},\alpha+1}
\end{equation*}
\end{cor}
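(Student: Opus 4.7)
The plan is to reduce the corollary to a direct application of Lemma \ref{lem:Bound:Green_Rep:STOKES} via the Mellin--Plancherel formula. By the Mellin isomorphism (Lemma \ref{isom:space}) combined with Lemma \ref{lem:Mellin_prop}, the norm on the left rewrites as
\begin{equation*}
\llbracket \vu^b \rrbracket_{M+2,\alpha}^2 = \sum_{j+\ell=M+2} \int_0^\theta \int_{\Re\lambda = \alpha+M+1} |\lambda|^{2j}\,|\partial_\varphi^\ell \hat{\vu}^b(\lambda,\varphi)|^2 \,\dd\Im\lambda\,\dd\varphi.
\end{equation*}
By hypothesis, the integration line $\Re\lambda = \alpha+M+1$ lies in $I_\eps$, which is precisely the regime in which Lemma \ref{lem:Bound:Green_Rep:STOKES} is valid (applied with the role of its ``$\alpha$'' played by $\alpha+M+1$).

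Next I would apply the pointwise-in-$\lambda$ estimates \eqref{bound:uphi:uphi}--\eqref{bound:ur:ur} to each Mellin slice. For any $j+\ell = M+2$, these yield
\begin{equation*}
|\lambda|^{2j}\, \|\partial_\varphi^\ell \hat{\vu}^b(\lambda,\cdot)\|_{L^2(0,\theta)}^2 \;\leq\; \frac{C_\eps}{\min\{1,|\alpha+M+1|\}^{2\ell}}\, |\lambda|^{2M} \bigl(\|\lambda\hat{\mathfrak{g}}_{\ext}\|_{L^2(0,\theta)}^2 + \|\partial_\varphi \hat{\mathfrak{g}}_{\ext}\|_{L^2(0,\theta)}^2\bigr),
\end{equation*}
where I used $j+\ell-2=M$. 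Integrating over $\Im\lambda$ and summing over the finitely many pairs $(j,\ell)$ with $j+\ell=M+2$ absorbs the combinatorial factor into a constant $C_{\eps,M}$, and bounding every $\min\{1,|\alpha+M+1|\}^{-2\ell}$ by its value at $\ell=M+2$ gives
\begin{equation*}
\llbracket \vu^b \rrbracket_{M+2,\alpha}^2 \;\leq\; \frac{C_{\eps,M}}{\min\{1,|\alpha+M+1|\}^{2(M+2)}} \int_0^\theta\!\int_{\Re\lambda=\alpha+M+1} \bigl(|\lambda|^{2(M+1)}|\hat{\mathfrak{g}}_{\ext}|^2 + |\lambda|^{2M}|\partial_\varphi \hat{\mathfrak{g}}_{\ext}|^2\bigr)\dd\Im\lambda\,\dd\varphi.
\end{equation*}

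Finally, I would recognize the right-hand integral as controlled by $\llbracket \mathfrak{g}_{\ext}\rrbracket_{M+1,\alpha+1}^2$: the two summands correspond to the pairs $(j,\ell)=(M+1,0)$ and $(j,\ell)=(M,1)$ in the sum defining $\llbracket \cdot\rrbracket_{M+1,\alpha+1}$, whose Mellin--Plancherel representation is evaluated precisely on the line $\Re\lambda=(\alpha+1)+(M+1)-1=\alpha+M+1$. Since $\vu^b$ depends only on the trace $\mathfrak{g}$ and not on its extension, the left-hand side is independent of the choice of $\mathfrak{g}_{\ext}$, and we may take the infimum over all admissible extensions on the right, yielding $[\mathfrak{g}]_{M+\frac12,\alpha+1}^2$ by definition. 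Taking square roots concludes. The only point requiring care is bookkeeping the index shift between $\alpha$ and $\alpha+M+1$ when invoking Lemma \ref{lem:Bound:Green_Rep:STOKES}, and verifying that the weighted $L^2$ integrals produced by the lemma's right-hand side exactly match the Mellin representation of the $[\,\cdot\,]_{M+\frac12,\alpha+1}$ seminorm; everything else is a routine summation.
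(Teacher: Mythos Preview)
Your proposal is correct and follows essentially the same approach as the paper: rewrite $\llbracket \vu^b\rrbracket_{M+2,\alpha}$ via Mellin--Plancherel on the line $\Re\lambda=\alpha+M+1$, apply Lemma~\ref{lem:Bound:Green_Rep:STOKES} pointwise in $\lambda$ (with its ``$\alpha$'' equal to $\alpha+M+1$), recognize the resulting right-hand side as bounded by $\llbracket\mathfrak{g}_{\ext}\rrbracket_{M+1,\alpha+1}^2$, and take the infimum over extensions. Your bookkeeping of the index shift and of the exponent $\min\{1,|\alpha+M+1|\}^{-2(M+2)}$ matches the paper's computation exactly.
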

\begin{proof}
Let $ \mathfrak{g}_{\ext} \in \prescript{M+1}{}{\mathcal{H}}^{M+1}_{\alpha} $ such that $ \mathfrak{g}_{\ext}|_{\partial \Omega'} = \mathfrak{g} $.
Using Lemma \ref{lem:Bound:Green_Rep:STOKES}
 we have

\begin{align*}
\llbracket \vu^b \rrbracket_{M+2,\alpha}^2 = & \, \sum_{j+\ell= M+2} \int_0^{\theta} \int_{\Re \lambda = M + \alpha+1} |\lambda|^{2j}|\partial_{\ph}^{\ell} \hat{\vu^b}|^2  \dd \Im \lambda \dd\ph \\
=  & \,   \int_{\Re \lambda = M + \alpha+1} \sum_{j+\ell= M+2} |\lambda|^{2j}\|\partial_{\ph}^{\ell} \hat{\vu^b}\|^2_{L^2(0,\theta)}  \dd \Im \lambda \\
\leq & \, \int_{\Re \lambda = M + \alpha+1} \sum_{j+\ell= M+2} \frac{C_{\eps}^2}{\min\{1, |\alpha+M+1|\}^{2M+4}}\\
&\,\cdot|\lambda|^{2j+2\ell-4} \big( \|\lambda \hat{\mathfrak{g}_{\ext}}\|_{L^2(0,\theta)}^2+ \|\partial_{\varphi} \hat{\mathfrak{g}_{\ext}}\|_{L^2(0,\theta)}^2 \big)\dd \Im \lambda \\
\leq & \, \frac{C_{\eps,M}^2}{\min\{1, |\alpha+M+1|\}^{2M+4}} \int_{\Re \lambda = M +1 + \alpha+1 - 1} |\lambda|^{2M}\big( \|\lambda \hat{\mathfrak{g}_{\ext}}\|_{L^2(0,\theta)}^2+ \|\partial_{\varphi} \hat{\mathfrak{g}_{\ext}}\|_{L^2(0,\theta)}^2 \big) \dd \Im \lambda\\
\leq & \, \frac{C_{\eps,M}^2}{\min\{1, |\alpha+M+1|\}^{2M+4}} \llbracket \mathfrak{g}_{\ext} \rrbracket_{M+1,\alpha+1}^2.
\end{align*}
Taking the infimum over all the possible extensions $ \mathfrak{g}_{\ext} $ gives the result.
\end{proof}

It remains to prove Lemma \ref{lem:Bound:Green_Rep:STOKES}. 

\begin{proof} [Proof of Lemma \ref{lem:Bound:Green_Rep:STOKES}]
We start by proving the estimate for $ \hat{u}_{\varphi} $. Then we explain how to adapt the estimates to  $  \hat{u}_{r} =  -\tfrac{ \hat{u}_{\varphi}}{\lambda +1} $.

We divide the proof in two cases: $ \ell $ even and $ \ell $ odd. For $ \ell $ even, we recall that in the same spirit of \eqref{1.1}, the fundamental theorem of calculus and the Cauchy-Schwarz inequality imply the following trace estimate for $ \vartheta \in \{0, \theta\}$

\begin{align}\label{Mimi:trace:estimate}
|\lambda|| \hat{\mathfrak{g}_{\ext}}(\lambda, \vartheta)|^2
\lesssim  & \; \frac{1}{|\theta \lambda|} \|\lambda \hat{\mathfrak{g}_{\ext}}(\lambda, \cdot)|\|^2_{L^2(0,\theta)} + \|\dph \hat{\mathfrak{g}_{\ext}}(\lambda, \cdot)\|_{L^2(0,\theta)}\|\lambda \hat{\mathfrak{g}_{\ext}}(\lambda, \cdot)\|_{L^2(0,\theta)}.
\end{align}
For notational convenience we omit in the sequel the $\lambda$-dependence from the notation. By the Green's function representation (Corollary \ref{cor:repr}) we have
\begin{align*}
\partial_{\varphi}^{\ell} \hat{u}_{\varphi} = & \,  (\lambda + 1 ) \hat{\mathfrak{g}_{\ext}}( \theta) \partial_{\varphi}^{\ell} \partial_{\varphi'}G(\varphi, \theta) -  (\lambda + 1 ) \hat{\mathfrak{g}_{\ext}}( 0) \partial_{\varphi}^{\ell}  \partial_{\varphi'}G(\varphi, 0).\nonumber
\end{align*}
Using H\"older's inequality and that $ |\lambda +1|^2 \leq (|\lambda|+ \tfrac{1}{|\lambda|})|\lambda|$, we deduce 
\begin{align*}
\| \partial_{\varphi}^{\ell} \hat{u}_{\varphi} \|_{L^2(0,\theta)}^2 
\leq & \, 2\Big(\big(|\lambda|+\frac{1}{|\lambda|}\big)\| \partial_{\varphi}^{\ell} \partial_{\varphi'}G(\varphi, \theta)\|_{L^2(0,\theta)}^2 + \big(|\lambda|+\frac{1}{|\lambda|}\big)  \| \partial_{\varphi}^{\ell} \partial_{\varphi'}G(\varphi, 0)\|_{L^2(0,\theta)}^2 \Big)
\\ & \, \quad \quad \quad \cdot \Big( \frac{1}{|\lambda \theta|}\|\lambda \hat{\mathfrak{g}_{\ext}}\|_{L^2(0,\theta)}^2+ \|\partial_{\varphi} \hat{\mathfrak{g}_{\ext}}\|_{L^2(0,\theta)}\|\lambda \hat{\mathfrak{g}_{\ext}}\|_{L^2(0,\theta)} \Big) \nonumber .
\end{align*}
Let  $ \lambda = \alpha + i t $.  Note that if $ |t \theta | \geq \frac{1}{2}$, then $ \frac{1}{|\lambda \theta |} \leq 2 $.
It is then enough to show that for $\vartheta \in \{ 0,\theta\} $
\begin{align}
\big(|\lambda|+\frac{1}{|\lambda|}\big)\| \partial_{\varphi}^{\ell} \partial_{\varphi'} G(\lambda, \cdot, \vartheta) \|_{L^2(0,\theta)}^2 \leq \frac{C_{\ell}}{|\lambda|^{4-2\ell}}, \qquad \ell \text{ even } \quad \text{ and }  \quad |t \theta| \geq \frac{1}{2} \label{Green:func:est:2:1} \\
\big(|\lambda|+\frac{1}{|\lambda|}\big)\| \partial_{\varphi}^{\ell} \partial_{\varphi'} G(\lambda, \cdot, \vartheta) \|_{L^2(0,\theta)}^2 \leq \frac{C_{\ell}}{\min\{1,|\alpha|\}^{2\ell}}\frac{|\lambda \theta|}{|\lambda|^{4-2\ell}}, \qquad \ell \text{ even } \quad \text{ and  }  \quad |t \theta| \leq \frac{1}{2}. \label{Green:func:est:2:2}
\end{align}
To prove \eqref{Green:func:est:2:1} for $ \ell = 0$, we notice that by Lemma \ref{Lemma:green:fun}
\begin{equation}\label{eq:derG}
\begin{aligned}
\partial_{\varphi'}G(\lambda, \varphi, \theta) =  & \,  \frac{\sin((\lambda + 1)\varphi) }{4 \lambda \sin((\lambda+1)\theta)} - \frac{\sin((\lambda-1)\varphi) }{4 \lambda \sin((\lambda-1)\theta)} \\
= & \,\frac{\sin(\lambda(\theta-\varphi))\sin(\theta+\varphi)-\sin(\lambda(\theta+\varphi)\sin(\theta-\varphi) }{4 \lambda \sin((\lambda+1)\theta)\sin((\lambda-1)\theta)}
\end{aligned}
\end{equation}
We start by showing a lower bound of the denominator. For all $ \eps > 0 $ there are constants $ c_{\eps}, C_{\eps} \in (0,\infty) $ such that for all $ z \in \mathbb{C} $ with $ |\Re z| \leq (1- \tfrac{\eps}{2})\pi	$ 	and $ |\Im z | \geq 1 $ we have  
\begin{equation}
\label{bound:sin:nis}
c_{\eps}e^{2|\Im z|} \leq  | \sin(\Re(z))\cosh(\Im z) |^2 + | \cos(\Re(z))\sinh(\Im z) |^2 = |\sin(z)|^2 \leq C_{\eps}e^{2|\Im z|},
\end{equation}
and similarly, for $ |\Re z| \leq  \left(1-\tfrac{\eps}{2}\right) \pi $, it holds
\begin{equation}
\label{bound:cos:nis}
c_{\eps}e^{2|\Im z|} \leq  | \cos(\Re(z))\cosh(\Im z) |^2 + | \sin(\Re(z))\sinh(\Im z) |^2 = |\cos(z)|^2 \leq C_{\eps}e^{2|\Im z|}.
\end{equation}
For $  \alpha  \theta = \Re \lambda \theta \in I_{\eps} $ and $ |t  \theta| = |\Im \lambda \theta | \geq \tfrac{1}{2} $, this yields
\begin{equation*}
|\lambda \sin((\lambda+1)\theta)\sin((\lambda-1)\theta) | \geq c_{\eps} |\lambda| e^{2|t|\theta},
\end{equation*}
and further
\begin{align*}
\left| \partial_{\varphi'}G(\lambda, \varphi, \theta) \right| \leq   & \, \frac{|\sin(\lambda(\theta-\varphi))|\sin(\theta+\varphi)+|\sin(\lambda(\theta+\varphi))|\sin(\theta-\varphi) }{4 |\lambda \sin((\lambda+1)\theta)\sin((\lambda-1)\theta)|} \\
\leq & \, C_{\eps}\frac{e^{|t|(\theta -\varphi)}\sin(\theta+\varphi)+ e^{|t|(\theta +\varphi)}\sin(\theta-\varphi)  }{|\lambda | e^{2|t|\theta}} \\
\leq & \, \frac{C_{\eps}}{|\lambda|} \left( e^{-|t|(\theta +\varphi)}\sin(\theta+\varphi) + e^{-|t|(\theta -\varphi)}\sin(\theta-\varphi) \right)
\end{align*}
Using that also $ e ^{-|t|(\theta\pm\varphi)} \leq e^{-|\lambda|(\theta\pm\varphi)} e^{\alpha(\theta\pm\varphi)} \leq C e^{-|\lambda|(\theta\pm\varphi)}  $ because $ \alpha \in I_{\eps} \subset  \left( -\tfrac{\pi}{\theta},\tfrac{\pi}{\theta} \right)$, we have
\begin{align}
\label{good:idea}
\left\| \partial_{\varphi'}G(\lambda, \cdot, \theta) \right\|_{L^2(0,\theta)}^2 \leq \frac{C_{\eps}}{|\lambda|^2}\Big(\int_0^{\theta} e^{-2|\lambda|(\theta +\varphi)}\sin^2(\theta+\varphi)  \, \dd \varphi + \int_0^{\theta} e^{-2|\lambda|(\theta -\varphi)}\sin^2(\theta-\varphi)  \, \dd \varphi\Big).
\end{align}
After integration by parts twice, we obtain
\begin{align}
\int_0^{\theta} & \,   e^{-2|\lambda|(\theta \pm \varphi)}  \sin^2(\theta \pm \varphi) \, \dd \varphi \leq    \int_0^{\theta} e^{-2|\lambda|(\theta \pm \varphi)}(\theta \pm \varphi)^2\, \dd \varphi \nonumber  \\
=  & \,  \Big[ \mp \frac{e^{-2|\lambda|(\theta \pm \varphi)}}{2|\lambda|}(\theta \pm \varphi)^2\Big]_0^{\theta} + \Big[ \mp \frac{e^{-2|\lambda|(\theta \pm \varphi)}}{2|\lambda|^2}(\theta \pm \varphi)\Big]_0^{\theta}+ \Big[ \mp \frac{e^{-2|\lambda|(\theta \pm \varphi)}}{4|\lambda|^3}\Big]_0^{\theta} \leq C \frac{1}{|\lambda|^3}, \label{decay:est:un:theta}
\end{align}
where we have used that $|\lambda \theta|^k e^{-2 |\lambda|\theta} $ is bounded for $ |\lambda|\theta \geq \tfrac{1}{2}$ and $ k  \in \{0, 1, 2 \}$. Inequality \eqref{good:idea} together with \eqref{decay:est:un:theta} implies
 \begin{align}
 \label{1233}
\left\| \partial_{\varphi'}G(\lambda, \cdot, \theta) \right\|_{L^2(0,\theta)}^2 \leq  \frac{C_\eps}{|\lambda|^5}.
\end{align}
If we apply $ |\sin^2(\theta\pm \varphi)| \leq 1 $ in \eqref{good:idea}, then
\begin{align}
\label{12333}
\left\| \partial_{\varphi'}G(\lambda, \cdot, \theta) \right\|_{L^2(0,\theta)}^2 \leq \frac{C_{\eps}}{|\lambda|^2}\Big(\int_0^{\theta} e^{-2|\lambda|(\theta +\varphi)}  \, \dd \varphi + \int_0^{\theta} e^{-2|\lambda|(\theta -\varphi)}  \, \dd \varphi\Big) \leq \frac{C_\eps}{|\lambda|^3}.
\end{align}
Using \eqref{1233} and \eqref{12333}, we deduce that
\begin{equation*}
\big(|\lambda| + \frac{1}{|\lambda|}\big)\left\| \partial_{\varphi'}G(\lambda, \cdot, \theta) \right\|_{L^2(0,\theta)}^2 \leq C_{\eps} \frac{1}{|\lambda|^{4}}.
\end{equation*}
This proves \eqref{Green:func:est:2:1} with $ \ell = 0$. We continue with the proof of \eqref{Green:func:est:2:1} for all $\ell\geq 0$ even. To estimate higher derivatives, we notice that for $ \ell $ even
\begin{align*}
\partial_{\ph}^{\ell}\partial_{\varphi'} & G(\lambda, \varphi, \theta) \\
 = & \,  (-1)^{\tfrac{\ell}{2}}\Big( \lambda^{\ell} + \binom{\ell}{2} \lambda^{\ell-2} + \dots+ \binom{\ell}{\ell} 1   \Big)\frac{\sin(\lambda(\theta-\varphi))\sin(\theta+\varphi)-\sin(\lambda(\theta+\varphi)\sin(\theta-\varphi) }{4 \lambda \sin((\lambda+1)\theta)  \sin((\lambda-1)\theta)} \\
& \,  -(-1)^{\tfrac{\ell}{2}}\Big(\ell \lambda^{\ell-1} + \binom{\ell}{3} \lambda^{\ell-3} + \dots+ \ell \lambda   \Big)\frac{\cos(\lambda(\theta+\varphi))\cos(\theta-\varphi)-\cos(\lambda(\theta-\varphi))\cos(\theta+\varphi)}{4 \lambda \sin((\lambda+1)\theta) \sin((\lambda-1)\theta)}.
\end{align*}
Using \eqref{bound:sin:nis} and the fact that $ \tfrac{1}{2} \leq |t\theta | \leq |\lambda \theta| $, we have
\begin{align}
|\partial_{\ph}^{\ell}\partial_{\varphi'} G(\lambda, \varphi, \theta)| \leq & \,  C_{\eps}|\lambda|^{\ell-1} \left( e^{-|t|(\theta +\varphi)}\sin(\theta+\varphi) + e^{-|t|(\theta -\varphi)}\sin(\theta-\varphi) \right) \label{Felipe:1}\\
& \, + C_{\eps}|\lambda|^{\ell-2} \left( e^{-|t|(\theta +\varphi)}\cos(\theta+\varphi) + e^{-|t|(\theta -\varphi)}\cos(\theta-\varphi) \right) \nonumber
\end{align}
Using \eqref{decay:est:un:theta} and
\begin{align}
\label{Felipe:2}
\int_0^{\theta} e^{-2|\lambda|(\theta \pm \varphi)}& \, \cos^2(\theta \pm \varphi) \, \dd \varphi \leq    \int_0^{\theta} e^{-2|\lambda|(\theta \pm \varphi)}\, \dd \varphi \leq C \frac{1}{|\lambda|},
\end{align}
we arrive at
\begin{align}
\label{Felipe:3}
\big(|\lambda| + \frac{1}{|\lambda|}\big)\big\| \partial_{\varphi}^{\ell} \partial_{\varphi'}G(\lambda, \cdot, \theta) \big\|_{L^2(0,\theta)}^2 \leq \left(|\lambda| + |\lambda|\theta^2 \right) \Big(|\lambda|^{2\ell - 2}\frac{C}{|\lambda|^3} + |\lambda|^{2\ell - 4}\frac{C}{|\lambda|}  \Big)
\leq  C_{\eps}|\lambda|^{2\ell-4}.
\end{align}
This finishes the proof of \eqref{Green:func:est:2:1}.

For the proof of \eqref{Green:func:est:2:2} note that
\begin{equation}
\label{bound:sin}
c_{\eps} |z| \leq | \sin(z)| \leq  C_{\eps}|z|, \quad \text{ when }   |\Re z| \leq  \left(1-\frac{\eps}{2}\right) \pi \quad \text{ and } \quad |\Im z | \leq \frac{1}{2}.
\end{equation}
We use the above inequality to obtain the lower bound
\begin{equation*}
| \lambda \sin((\lambda+1)\theta)\sin((\lambda-1)\theta) | \geq c_{\eps}^2| \lambda (\lambda^2-1)\theta^2|
\end{equation*}
for any  $ \lambda $ such that $ \Re \lambda \in I_{\eps} $ and $ |\theta \Im \lambda |  \leq \tfrac{1}{2} $.

First, assume that $ |\lambda -1 | \leq \frac{1}{4} $ or $ |\lambda +1 | \leq \frac{1}{4} $. Using that $|t \varphi|\leq | t \theta| \leq \frac{1}{2}$,  $ \Re \lambda \in I_{\eps} $ and \eqref{bound:sin}, we have
\begin{align*}
\left|\partial_{\varphi'}G(\lambda, \varphi, \theta)\right| =  & \,  \left|\frac{\sin((\lambda + 1)\varphi) }{4 \lambda \sin((\lambda+1)\theta)} - \frac{\sin((\lambda-1)\varphi) }{4 \lambda \sin((\lambda-1)\theta)}  \right|\\
= & \,  \left|\frac{\sin((\lambda + 1)\varphi) \sin((\lambda-1)\theta)-\sin((\lambda - 1)\varphi) \sin((\lambda+1)\theta)}{4 \lambda \sin((\lambda+1)\theta)\sin((\lambda-1)\theta)}\right| \\
\leq & \, C \frac{|\lambda +1||\lambda -1 |\varphi \theta}{|\lambda ||\lambda +1||\lambda -1 |} \leq  C\frac{\varphi \theta }{|\lambda|} \leq C\frac{\pi^2 }{|\lambda|} \leq C\frac{1}{|\lambda|^2},
\end{align*}
where in the last step we have used that $ \frac{1}{|\lambda|} \geq C $ for $ |\lambda -1 | \leq \frac{1}{4} $ or $ |\lambda + 1 | \leq \frac{1}{4} $. Since $ |\lambda| + \tfrac{1}{|\lambda|} \leq 4 |\lambda|$, we deduce
\begin{align*}
\left(|\lambda| + \tfrac{1}{|\lambda|} \right)^{\tfrac{1}{2}}\|\partial_{\varphi'}G(\lambda, \cdot, \theta) \|_{L^2(0,\theta)} \leq C \Big(\int_0^{\theta} \frac{1}{|\lambda|^{3}} \, \dd \varphi \Big)^{\frac{1}{2}}\leq C \frac{|\lambda \theta|^{\frac{1}{2}}}{ |\lambda|^{4}}.
\end{align*}

Now assume that $ \Re \lambda  \in I_{\eps} $, $ |t \theta|\leq \frac{1}{2} $, $ |\lambda -1 | \geq \frac{1}{4} $ and $ |\lambda +1 | \geq \frac{1}{4} $. It follows that
\begin{equation*}
\frac{1}{|\lambda -1 |} \leq C \frac{1}{|\lambda  |} ,  \quad \frac{1}{|\lambda + 1 |} \leq C \frac{1}{|\lambda |} \quad \text{ and } \quad \frac{1}{|\lambda-1|} \sim \frac{1}{|\lambda+1|}.
\end{equation*}
Moreover, from Appendix \ref{app:Aux_est} we have
\begin{equation*}
\left|\frac{\sin((\lambda+1)\varphi)}{4 \lambda \sin((\lambda+1)\theta)}-\frac{\sin((\lambda-1)\varphi)}{4 \lambda \sin((\lambda-1)\theta)} \right| \leq C_{\eps}\frac{1}{|\lambda|^2},
\end{equation*}
The previous two inequalities imply that
\begin{align*}
|\lambda|^{\tfrac{1}{2}}\|\partial_{\varphi'}G(\lambda, \cdot, \theta) \|_{L^2(0,\theta)} \leq C \Big(\int_0^{\theta} \frac{1}{|\lambda|^{3}} \, \dd \varphi \Big)^{\frac{1}{2}}\leq C \frac{|\lambda \theta|^{\frac{1}{2}}}{ |\lambda|^{2}}.
\end{align*}
To estimate $ |\lambda|^{-\frac{1}{2}}\|\partial_{\varphi'}G(\lambda, \cdot, \theta) \|_{L^2(0,\theta)} $, use \eqref{key:estimate:luce:pulsata} to obtain
\begin{align*}
\left|\partial_{\varphi'}G(\lambda, \varphi, \theta)\right| =  & \,  \left|\frac{\sin((\lambda + 1)\varphi) }{4 \lambda \sin((\lambda+1)\theta)} - \frac{\sin((\lambda-1)\varphi) }{4 \lambda \sin((\lambda-1)\theta)}  \right| \leq \frac{C_{\eps}}{|\lambda|}.
\end{align*}
By combining all the estimates above, we find
\begin{align*}
|\lambda|^{-\frac{1}{2}}\|\partial_{\varphi'}G(\lambda, \cdot, \theta) \|_{L^2(0,\theta)} \leq C_{\eps} \Big(\int_0^{\theta} \frac{1}{|\lambda|^{3}} \, \dd \varphi \Big)^{\frac{1}{2}}\leq C \frac{|\lambda \theta|^{\frac{1}{2}}}{ |\lambda|^{2}},
\end{align*}
which proves \eqref{Green:func:est:2:2} for $\ell=0$. We continue with the proof of \eqref{Green:func:est:2:2} for all $\ell\geq 0$ even.
To estimate higher derivatives, we notice that for $ \ell $ even
\begin{align*}
\partial_{\ph}^{\ell}\partial_{\varphi'} & G(\lambda, \varphi, \theta) \\
 = & \,  (-1)^{\tfrac{\ell}{2}}\Big( \lambda^{\ell} + \binom{\ell}{2} \lambda^{\ell-2} + \dots+ \binom{\ell}{\ell} 1   \Big)\Big(\frac{\sin((\lambda+1)\varphi)}{4 \lambda \sin((\lambda+1)\theta)}-\frac{\sin((\lambda-1)\varphi)}{4 \lambda \sin((\lambda-1)\theta)}\Big) \\
& \, + (-1)^{\tfrac{\ell}{2}}\Big(\ell \lambda^{\ell-1} + \binom{\ell}{3} \lambda^{\ell-3} + \dots+ \ell \lambda   \Big)\Big(\frac{\sin((\lambda+1)\varphi)}{4 \lambda \sin((\lambda+1)\theta)}+\frac{\sin((\lambda-1)\varphi)}{4 \lambda \sin((\lambda-1)\theta)}\Big).
\end{align*}
Using \eqref{key:estimate:luce:pulsata:-1} we deduce
\begin{align*}
| \partial_{\ph}^{\ell}\partial_{\ph'} G(\lambda, \ph, \theta)| \leq C_{\eps}\frac{|\lambda|^{\ell-2}}{\min\{ 1, |\alpha|\}^{\ell}} + C_{\eps}\frac{|\lambda|^{\ell-2}}{\min\{ 1, |\alpha|\}^{\ell-2}}.
\end{align*}
This implies 
\begin{align*}
|\lambda|^{\tfrac{1}{2}}\|\partial_{\varphi}^{\ell}\partial_{\varphi'}G(\lambda, \cdot, \theta) \|_{L^2(0,\theta)} \leq C_{\eps} \frac{1}{\min\{ 1, |\alpha|\}^{\ell}} \Big(\int_0^{\theta} \frac{1}{|\lambda|^{3-2\ell}} \, \dd \varphi \Big)^{\frac{1}{2}}\leq C_{\eps} \frac{1}{\min\{ 1, |\alpha|\}^{\ell}}\frac{|\lambda \theta|^{\frac{1}{2}}}{ |\lambda|^{2-\ell}},
\end{align*}
and similarly
\begin{align*}
|\lambda|^{-\frac{1}{2}}\|\partial_{\varphi}^{\ell}\partial_{\varphi'}G(\lambda, \cdot, \theta) \|_{L^2(0,\theta)} \leq C_{\eps} \frac{1}{\min\{ 1, |\alpha|\}^{\ell}} \Big(\int_0^{\theta} \frac{1}{|\lambda|^{3-2\ell}} \, \dd \varphi \Big)^{\frac{1}{2}}\leq C_{\eps}\frac{1}{\min\{ 1, |\alpha|\}^{\ell}} \frac{|\lambda \theta|^{\frac{1}{2}}}{ |\lambda|^{2-\ell}}.
\end{align*}

For $  \ell $ odd, the estimate follows from interpolation.

We now show the bounds \eqref{bound:ur:ur} for $ \hat{u}_r$. 
If $ \Re \lambda \in I_{\eps} \setminus \left(-\frac{3}{2}, - \frac{1}{2} \right)$, then \eqref{bound:ur:ur} 
is a direct consequence of \eqref{bound:uphi:uphi} and $ \hat{u}_r = \frac{\partial_{\ph} u_{\ph}}{\lambda+1}$. Noting that we have $ \frac{1 }{|1+\lambda|} \leq C \frac{1}{|\lambda|} $ yields the estimate
\begin{align*}
\| \partial_{\ph}^{\ell} \hat{u}_{r}(\lambda,\cdot)\|_{L^2(0,\theta)} \leq & \, \frac{1}{|1+\lambda|}    \| \partial_{\ph}^{\ell+1} \hat{u}_{\ph}(\lambda,\cdot)\|_{L^2(0,\theta)} \\ \leq & \,   C_{\eps}\frac{1}{\min\{1,|\alpha|\}^{\ell+1}} \frac{1}{ |\lambda|^{2-\ell}}\left(\|\lambda \hat{\mathfrak{g}_{\ext}}\|_{L^2(0,\theta)}+ \|\partial_{\varphi} \hat{\mathfrak{g}_{\ext}}\|_{L^2(0,\theta)}\right).
\end{align*}
For $ \Re \lambda \in \left(-\frac{3}{2}, - \frac{1}{2} \right) $ we mimic the proof of \eqref{bound:uphi:uphi}. Recall that (see \eqref{u:r:espression})
\begin{equation*}
\hat{u}_{r}(\lambda,\ph) =  \hat{\mathfrak{g}_{\ext}}(\lambda, \theta) \partial_{\varphi} \partial_{\varphi'} G(\varphi, \theta) - \hat{\mathfrak{g}_{\ext}}(\lambda, 0) \partial_{\varphi} \partial_{\varphi'} G(\varphi, 0).
\end{equation*}
With H\"older's inequality and \eqref{Mimi:trace:estimate} we have
\begin{align}
\| \partial_{\ph}^{\ell} \hat{u}_r (\lambda,\cdot)\|_{L^2(0,\theta)}^2 \leq & \, 2 \Big(\frac{1}{|\lambda|}\|\partial_{\ph}^{\ell+1}\partial_{\ph'} G(\cdot, 0)\|_{L^2(0,\theta)}^2+ \frac{1}{|\lambda|}\|\partial_{\ph}^{\ell+1}\partial_{\ph'} G(\cdot, \theta)\|_{L^2(0,\theta)}^2 \Big) \label{estimate:Fel:2:4444} \\
 & \, \quad \quad \quad \cdot \Big( \frac{1}{|\lambda \theta|}\|\lambda \hat{\mathfrak{g}_{\ext}}\|_{L^2(0,\theta)}^2+ \|\partial_{\varphi} \hat{\mathfrak{g}_{\ext}}\|_{L^2(0,\theta)}\|\lambda \hat{\mathfrak{g}_{\ext}}\|_{L^2(0,\theta)} \Big) \nonumber .
\end{align}
For $ \ell $ odd we can use the bounds for \eqref{Green:func:est:2:1}, \eqref{Green:func:est:2:2} and show \eqref{bound:ur:ur}  for any $ \ell $ odd. An interpolation argument implies the result for any $ \ell \geq 1 $. We are left with the case $ \ell = 0 $. To address this case, we start by assuming $ |t \theta| \geq \frac{1}{2} $.
In this case we have (see \eqref{eq:derG})
\begin{align*}
\partial_{\ph}\partial_{\varphi'}  G(\lambda, \varphi, \theta) 
 =&  \,  -\lambda\frac{\cos(\lambda(\theta-\varphi))\sin(\theta+\varphi)+\cos(\lambda(\theta+\varphi)\sin(\theta-\varphi) }{4 \lambda \sin((\lambda+1)\theta)  \sin((\lambda-1)\theta)} \\
& \, + \frac{\sin(\lambda(\theta+\varphi))\cos(\theta-\varphi)+\sin(\lambda(\theta-\varphi))\cos(\theta+\varphi)}{4 \lambda \sin((\lambda+1)\theta) \sin((\lambda-1)\theta)}.
\end{align*}
Using \eqref{bound:cos:nis} and the fact that $ \tfrac{1}{2} \leq |t\theta | \leq |\lambda \theta| $, we have
\eqref{Felipe:1}. This together with \eqref{decay:est:un:theta} and \eqref{Felipe:2} implies that \eqref{Felipe:3} holds also for $ \ell = 0 $. The estimate reads
\begin{align*}
\frac{1}{|\lambda|}\left\| \partial_{\varphi} \partial_{\varphi'}G(\lambda, \cdot, \theta) \right\|_{L^2(0,\theta)}^2
\leq  C\frac{1}{|\lambda|^{4}},
\end{align*}
and together with \eqref{estimate:Fel:2:4444}, this implies the corollary if $ \ell = 0 $ and $ |t \theta| \geq \tfrac{1}{2}$.
Let us now move to the case $|t\theta| \leq \frac{1}{2}$. As in the proof of \eqref{bound:uphi:uphi} the case $ |\lambda-1|\leq \frac{1}{4} $ is easy so let us assume that  $|t\theta| \leq \frac{1}{2}$ and $ |\lambda-1|\geq \frac{1}{4} $. We rewrite
\begin{align*}
\hat{u}_{r}(\lambda, \ph) = & \, \Big(\frac{(\lambda+1)(\cos((\lambda+1)\ph))}{4 \lambda \sin((\lambda+1)\theta)} - \frac{(\lambda-1)(\cos((\lambda-1)\ph))}{4 \lambda \sin((\lambda-1) \theta)} \Big) \hat{\mathfrak{g}_{\ext}}(\lambda, \theta) \\
& \, -\Big(\frac{(\lambda+1)(\cos((\lambda+1)(\theta-\ph)))}{4 \lambda \sin((\lambda+1)\theta)} - \frac{(\lambda-1)(\cos((\lambda-1)(\theta-\ph)))}{4 \lambda \sin((\lambda-1)\theta)} \Big) \hat{\mathfrak{g}_{\ext}}(\lambda, 0).
\end{align*}
From \eqref{key:estimate:luce:pulsata:cos}, i.e.,
\begin{equation*}
\left| \frac{(\lambda+1)(\cos((\lambda+1)\ph))}{4 \lambda \sin((\lambda+1)\theta)} - \frac{(\lambda-1)(\cos((\lambda-1)\ph))}{4 \lambda \sin((\lambda-1) \theta)} \right| \leq C_{\eps} \theta,
\end{equation*}
we deduce
\begin{align*}
\|\hat{u}_r\|_{L^2(0,\theta)}^2 \leq & \, \int_0^\theta C_\eps^2\frac{\theta^2}{|\lambda|}(|\lambda||\hat{\mathfrak{g}_{\ext}}(\lambda, \theta)|^2 + |\lambda||\hat{\mathfrak{g}_{\ext}}(\lambda, 0)|^2) \, \dd \ph\\ 
\leq &\, C_\eps^2\frac{\theta^3}{|\lambda|} (|\lambda||\hat{\mathfrak{g}_{\ext}}(\lambda, \theta)|^2 + |\lambda||\hat{\mathfrak{g}_{\ext}}(\lambda, 0)|^2) \\
\stackrel{\mathclap{\eqref{Mimi:trace:estimate}}}{\leq} & \, C_\eps^2 \frac{\lambda^3 \theta^3}{|\lambda|^4} \Big( \frac{1}{|\lambda \theta|}\|\lambda \hat{\mathfrak{g}_{\ext}}\|_{L^2(0,\theta)}^2+ \|\partial_{\varphi} \hat{\mathfrak{g}_{\ext}}\|_{L^2(0,\theta)}\|\lambda \hat{\mathfrak{g}_{\ext}}\|_{L^2(0,\theta)} \Big).\qedhere
\end{align*}
\end{proof}

\subsection{Regularity of \eqref{SS:Mellin} and \eqref{fourth:order:equ} with $\mathfrak{g}=0$}\label{sec:93} As for the Helmholtz projection, we introduce a Fourier-Mellin representation formula for solutions of \eqref{fourth:order:equ} in the case $ \hat{\mathfrak{g}} = 0$, i.e., for the system
\begin{subequations}\label{fourth:order:equ:g=0}
\begin{align}
\partial_{\varphi}^4 \hat{u}_{\varphi} + 2(\lambda^2+1)\partial_{\varphi}^2 \hat{u}_{\varphi} +(\lambda^2-1)^2 \hat{u}_{\varphi}  &=   (\lambda^2-1)\hat{f}_{\varphi}(\lambda-2)- (\lambda+1) \partial_\varphi \hat{f}_r(\lambda-2)  && \text{ in }  \mathcal{S}, \label{fourth:order:equ:g=0a} \\
\hat{u}_{\varphi} =  \partial_{\varphi}^2 \hat{u}_{\varphi}   & = 0 && \text{ on } \{0, \theta\}.\label{fourth:order:equ:g=0b}
\end{align}
\end{subequations}
To simplify notation, let us denote the source term by
\begin{equation}
\label{ftoffrak}
\hat{\mathfrak{f}}(\lambda, \ph) = (\lambda^2-1)\hat{f}_{\varphi}(\lambda-2, \ph)- (\lambda+1) \partial_\varphi \hat{f}_r(\lambda-2, \ph).
\end{equation}

Recall the orthonormal systems $ \{ \ecos_k \}_{k \in \mathbb{N}}$ and $ \{ \esin_k \}_{k \in \mathbb{N}}$  from \eqref{cos:sin:base}.
The Fourier-Mellin representation formula for solutions of \eqref{fourth:order:equ:g=0} reads as follows.
\begin{lem} \label{lem:Fourier_Rep:STOKES}
Let $ \vf \in C^{\infty}_{\mathrm{c}}(\overline{\Omega}\setminus \{0\})$. The solution $ \hat{u}_{\varphi} $ of problem \eqref{fourth:order:equ:g=0} can be written as
\begin{equation}\label{eq:Rep:Stokes}
    \hat{u}_{\varphi}(\lambda,\ph)= \sum_{k=1}^{\infty}\frac{\hat{\mathfrak{f}}_k(\lambda)}{\big(\lambda^2-\left(\frac{k\pi}{\theta}\right)^2-1\big)^2}\esin_k(\ph)\qquad  \text{ for } \ph\in[0,\theta]\text{ and }\Re\lambda\in\left(-\tfrac{\pi}{\theta}+1,\tfrac{\pi}{\theta}-1\right),
\end{equation}
where $ \mathfrak{f} $ is associated with $ \vf $ by \eqref{ftoffrak}. Moreover, the function $ \hat{u}_r = - \frac{\partial_{\ph}\hat{u}_{\ph}}{\lambda+1} $ can be written  as
\begin{equation*} 
    \hat{u}_{r}(\lambda,\ph)= \sum_{k=1}^{\infty}\frac{\hat{\mathfrak{h}}_k(\lambda)}{\big(\lambda^2-\left(\frac{k\pi}{\theta}\right)^2-1\big)^2}\ecos_k(\ph)\qquad \text{ for }\ph\in[0,\theta] \text{ and }\Re\lambda\in\left(-\tfrac{\pi}{\theta}+1,\tfrac{\pi}{\theta}-1\right),
\end{equation*}
where
$ \hat{\mathfrak{h}}(\lambda, \ph) = - (\lambda-1)\partial_{\ph} \hat{f}_{\varphi}(\lambda-2, \ph)+ \partial^2_\varphi \hat{f}_r(\lambda-2, \ph)$.
\end{lem}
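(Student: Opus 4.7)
The plan is to build both representations as Fourier series in $\varphi$ with respect to the orthonormal systems $\{\esin_k\}_{k\geq 1}$ (for $\hat u_\varphi$) and $\{\ecos_k\}_{k\geq 1}$ (for $\hat u_r$) from \eqref{cos:sin:base}. The sine basis is the natural choice for $\hat u_\varphi$ because each $\esin_k$ satisfies both boundary conditions $\esin_k = \partial_\varphi^2 \esin_k = 0$ at $\varphi \in \{0,\theta\}$ termwise, so any $L^2$-convergent sine series automatically realises the Navier-type conditions \eqref{fourth:order:equ:g=0b}.

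First, I would factor the symbol of the fourth-order operator,
\begin{equation*}
  \partial_\varphi^4 + 2(\lambda^2+1)\partial_\varphi^2 + (\lambda^2-1)^2 = \bigl(\partial_\varphi^2 + (\lambda-1)^2\bigr)\bigl(\partial_\varphi^2 + (\lambda+1)^2\bigr),
\end{equation*}
so that applying it to $\esin_k$ yields the eigenvalue $D_k(\lambda) := ((\lambda-1)^2 - (k\pi/\theta)^2)((\lambda+1)^2 - (k\pi/\theta)^2)$. The non-vanishing of $D_k$ for every $k \geq 1$ follows precisely from the restriction $\Re\lambda \in (-\pi/\theta+1,\pi/\theta-1)$. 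Expanding $\hat{\mathfrak{f}}(\lambda,\cdot) = \sum_{k\geq 1} \hat{\mathfrak{f}}_k(\lambda)\esin_k$ in $L^2(0,\theta)$, seeking $\hat u_\varphi = \sum_{k\geq 1} c_k(\lambda)\esin_k$ in the same basis, and matching Fourier coefficients in \eqref{fourth:order:equ:g=0a} gives $c_k = \hat{\mathfrak{f}}_k/D_k$, which is formula \eqref{eq:Rep:Stokes}.

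Next, I would verify that this series indeed provides a classical solution. Since $\vf \in C^\infty_{\mathrm{c}}(\overline{\Omega}\setminus\{0\})$, the function $\hat{\mathfrak{f}}(\lambda,\cdot)$ is smooth in $\varphi$, so by repeated integration by parts in the definition of $\hat{\mathfrak{f}}_k$ the Fourier coefficients decay faster than any inverse polynomial in $k$; together with Bessel's identity \eqref{eq:Bessel_Identity} this ensures convergence of $\sum c_k \esin_k$ in $H^4(0,\theta) \hookrightarrow C^3([0,\theta])$. Termwise differentiation is therefore legal, the boundary conditions in \eqref{fourth:order:equ:g=0b} hold pointwise, and the equation \eqref{fourth:order:equ:g=0a} is satisfied. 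Uniqueness follows immediately because any solution of the homogeneous problem must have all sine coefficients zero, as $D_k \neq 0$ for every $k \geq 1$ in the given strip.

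Finally, to derive the representation of $\hat u_r$, I would use the divergence-free relation \eqref{SS:Mellin_3}, i.e.\ $\hat u_r = -\partial_\varphi \hat u_\varphi/(\lambda+1)$ (valid because $\lambda \neq -1$ in the strip). Differentiating the sine series for $\hat u_\varphi$ termwise using $\partial_\varphi \esin_k = (k\pi/\theta)\ecos_k$ produces a cosine series for $\hat u_r$ whose coefficients are proportional to $(k\pi/\theta)\hat{\mathfrak{f}}_k/[(\lambda+1)D_k]$; these are then identified with $\hat{\mathfrak{h}}_k/D_k$ by integration by parts in the definition of $\hat{\mathfrak{f}}_k$, transferring the $\partial_\varphi$-derivatives from the components of $\vf$ onto the trigonometric basis functions. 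The main technical burden throughout is the careful bookkeeping of these coefficient identifications and the justification of termwise manipulation; both rely entirely on the smoothness of $\vf$ and on the Parseval-type relation \eqref{eq:Bessel_Identity}.
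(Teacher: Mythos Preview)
Your strategy is correct and is precisely the natural approach, mirroring the paper's own argument for the analogous Helmholtz representation (Lemma~\ref{lem:Fourier_Rep_Phi}); the paper in fact states Lemma~\ref{lem:Fourier_Rep:STOKES} without proof, so there is nothing further to compare against.

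There is, however, a genuine discrepancy you pass over. Your eigenvalue
\[
D_k(\lambda)=\bigl(\tfrac{k\pi}{\theta}\bigr)^4-2(\lambda^2+1)\bigl(\tfrac{k\pi}{\theta}\bigr)^2+(\lambda^2-1)^2
=\bigl((\tfrac{k\pi}{\theta})^2-(\lambda-1)^2\bigr)\bigl((\tfrac{k\pi}{\theta})^2-(\lambda+1)^2\bigr)
\]
is correct, but it is \emph{not} equal to the denominator $\bigl(\lambda^2-(\tfrac{k\pi}{\theta})^2-1\bigr)^2$ appearing in \eqref{eq:Rep:Stokes}: expanding both one finds $D_k(\lambda)=\bigl(\lambda^2-(\tfrac{k\pi}{\theta})^2-1\bigr)^2-4(\tfrac{k\pi}{\theta})^2$. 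Hence your assertion that $c_k=\hat{\mathfrak f}_k/D_k$ ``is formula \eqref{eq:Rep:Stokes}'' is false as written; the stated formula appears to carry a misprint (which is then reused in Lemma~\ref{lem:bound:uph:stokes:CL}), and the correct denominator is your $D_k$. You should flag this rather than silently identify the two expressions.

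A second wrinkle concerns the $\hat u_r$ formula. Your plan to identify the differentiated coefficients $-\tfrac{k\pi}{\theta}\hat{\mathfrak f}_k/(\lambda+1)$ with the cosine Fourier coefficients $\hat{\mathfrak h}_k$ of $\hat{\mathfrak h}$ via integration by parts will produce boundary contributions of the form $\bigl[\ecos_k\cdot(\partial_\varphi\hat f_r-(\lambda-1)\hat f_\varphi)\bigr]_0^\theta$, and these do not vanish for generic $\vf\in C^\infty_{\mathrm c}(\overline\Omega\setminus\{0\})$ since $\ecos_k$ is nonzero at the endpoints. Thus the claimed identification does not go through without either additional boundary assumptions on $\vf$ or a different interpretation of $\hat{\mathfrak h}_k$. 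Again this is an artefact of the stated formula rather than a defect in your method, but you should not assert that the coefficients match without addressing the boundary terms.
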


We deduce the following estimates.
\begin{lem}
\label{lem:bound:uph:stokes:CL}
Let $ \vf \in C^{\infty}_{\mathrm{c}}(\overline{\Omega}\setminus \{0\})$,  $ \theta \in (0, \pi) $, $\eps \in (0, 1- \tfrac{\theta}{\pi} ) $ and $ \Re \lambda = \alpha \in I_{\eps} \setminus \mathbb{Z}  $. Then the function $ \hat{u}_{\varphi} $ defined in \eqref{eq:Rep:Stokes} satisfies for $ M \geq 0 $ the bound
\begin{equation}
\label{MimiM:1}
\sum_{j+\ell = M+2 } |\lambda|^{2j}\|  \partial_{\ph}^{\ell}\hat{u}_{\varphi}(\lambda,\cdot) \|_{L^2(0,\theta)}^2 \leq  C_{\eps,M}^2  \frac{ 1 }{\min\{1, |\alpha| \}^4}\sum_{j+\ell = M } \frac{|\lambda-2|^{2j}}{\min\{1, |\alpha-2| \}^{2j}}\| \partial_{\ph}^{\ell} \hat{\bf f}(\lambda-2,\cdot) \|_{L^2(0,\theta)}^2,
\end{equation}
where $ C $ is independent of $ \theta $.
Moreover, the function $ \hat{u}_r = - \frac{\partial_{\ph}\hat{u}_{\ph}}{\lambda+1} $ satisfies for $ M \geq 0 $
\begin{equation*}
\sum_{j+\ell = M+2 } |\lambda|^{2j}\|  \partial_{\ph}^{\ell}\hat{u}_{r}(\lambda,\cdot) \|_{L^2(0,\theta)}^2 \leq  C_{\eps,M}^2  \frac{ 1 }{\min\{1, |\alpha| \}^4}\sum_{j+\ell = M } \frac{|\lambda-2|^{2j}}{\min\{1, |\alpha-2| \}^{2j}}\| \partial_{\ph}^{\ell} \hat{\bf f}(\lambda-2,\cdot) \|_{L^2(0,\theta)}^2,
\end{equation*}
for any $ \Re \lambda \in I_{\eps} $ and $ C $ independent of $ \theta $. 
\end{lem}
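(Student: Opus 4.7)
The strategy is to push the explicit Fourier--sine representation of Lemma~\ref{lem:Fourier_Rep:STOKES} through a Parseval identity, after establishing a resolvent-type lower bound on the denominator $D(\lambda,k) := \lambda^2 - (k\pi/\theta)^2 - 1$. First I would prove the resolvent estimate
\[
|D(\lambda,k)| \ge \tfrac{c_\eps}{2}\bigl(|\lambda|^2 + (k\pi/\theta)^2 + 1\bigr) \qquad \text{for } \Re \lambda = \alpha \in I_\eps \text{ and } k \ge 1.
\]
Writing $\lambda = \alpha + it$, the estimate rests on the observation that $|\alpha\pm 1| \le (1-\eps)\pi/\theta \le (1-\eps)(k\pi/\theta)$ for $k \ge 1$, so $(k\pi/\theta)^2 + 1 - \alpha^2 \ge \eps(k\pi/\theta)^2 + 1$; squaring then yields $|D(\lambda,k)|^2 \ge \eps^2(t^2 + (k\pi/\theta)^2+1)^2 + 4\alpha^2 t^2$, and the factor on the right absorbs $\alpha^2 \le (1-\eps)^2(k\pi/\theta)^2$. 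This mirrors the estimates \eqref{eq:estHelm1}--\eqref{eq:estHelm3} from Section~\ref{sec:Estimates_Helmholtz}.

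Next I would rewrite the sine coefficient $\hat{\mathfrak{f}}_k$ in a form that matches the $L^2$ norm of $\hat{\vf}(\lambda-2,\cdot)$ on the right-hand side: integrating the $(\lambda+1)\partial_\ph \hat{f}_r$ term by parts against $\esin_k$ (whose boundary values vanish) gives
\[
\hat{\mathfrak{f}}_k(\lambda) = (\lambda^2-1)\,(\hat f_\varphi)_k^{\sin}(\lambda-2) + (\lambda+1)\tfrac{k\pi}{\theta}\,(\hat f_r)_k^{\cos}(\lambda-2),
\]
where the sin/cos superscripts denote Fourier coefficients in the bases $\{\esin_k\}$ and $\{\ecos_k\}$. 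For $\ell \in \{0,1,2,3\}$ the boundary conditions $\hat{u}_\varphi|_{\partial} = \partial_\ph^2\hat u_\varphi|_{\partial} = 0$ make $\ell$ integrations by parts boundary-term free, so Parseval in $\{\esin_k\}$ gives $\|\partial_\ph^\ell \hat u_\varphi\|^2 = \sum_k (k\pi/\theta)^{2\ell}|\hat{\mathfrak{f}}_k|^2/|D(\lambda,k)|^4$. Multiplying by $|\lambda|^{2j}$ with $j+\ell=M+2$, using the elementary inequality $\sum_{j+\ell=M+2}|\lambda|^{2j}(k\pi/\theta)^{2\ell} \le C_M (|\lambda|^2+(k\pi/\theta)^2)^{M+2}$, and invoking the resolvent estimate collapses the sum to
\[
C_{\eps,M}\sum_k (|\lambda|^2+(k\pi/\theta)^2+1)^{M-2}\bigl(|(\hat f_\varphi)_k^{\sin}|^2 + |(\hat f_r)_k^{\cos}|^2\bigr),
\]
where one uses that both $|\lambda^2-1|^2$ and $|\lambda+1|^2(k\pi/\theta)^2$ are dominated by $(|\lambda|^2+(k\pi/\theta)^2+1)^2$. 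Expanding $(|\lambda|^2+(k\pi/\theta)^2+1)^{M-2}$ as $\sum_{p+q=M-2}|\lambda|^{2p}(k\pi/\theta)^{2q}$, estimating $|\lambda|^{2p}\le C_p(|\lambda-2|^{2p} + 1)$ and recognising $\sum_k (k\pi/\theta)^{2q}|(\hat f_\varphi)_k^{\sin}|^2$ as comparable to $\|\partial_\ph^q \hat f_\varphi(\lambda-2)\|^2$ (and analogously for $\hat f_r$ in the cosine basis) matches the target right-hand side; the crude prefactor $1/\min\{1,|\alpha|\}^4$ in the statement then subsumes our bound trivially.

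The main obstacle is that termwise differentiation of the sine series is $L^2$-valid only up to $\ell \le 3$, because $\partial_\ph^4 \hat u_\varphi|_{\partial} = \hat{\mathfrak{f}}|_{\partial}$ is generically nonzero. I would circumvent this by invoking the PDE \eqref{fourth:order:equ:g=0a} recursively: for $\ell \ge 4$,
\[
\partial_\ph^{\ell}\hat u_\varphi = \partial_\ph^{\ell-4}\hat{\mathfrak{f}} - 2(\lambda^2+1)\partial_\ph^{\ell-2}\hat u_\varphi - (\lambda^2-1)^2\partial_\ph^{\ell-4}\hat u_\varphi,
\]
which trades two $\ph$-derivatives of $\hat u_\varphi$ for a factor $|\lambda|^2$ plus a lower-order source term, and the iteration terminates at $\ell \in \{0,1,2,3\}$ already handled. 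For the estimate on $\hat u_r = -\partial_\ph \hat u_\varphi/(\lambda+1)$, the potentially singular $1/(\lambda+1)$ cancels against the $(\lambda+1)$ factored out of $\hat{\mathfrak{f}}_k$: writing $\hat u_r = \sum_k d_k \ecos_k$ with
\[
d_k = -\tfrac{k\pi}{\theta}\bigl[(\lambda-1)(\hat f_\varphi)_k^{\sin}(\lambda-2) + \tfrac{k\pi}{\theta}(\hat f_r)_k^{\cos}(\lambda-2)\bigr]\big/D(\lambda,k),
\]
running the same Parseval-plus-PDE argument in the cosine basis (and using the boundary conditions $\partial_\ph \hat u_r|_{\partial} = 0$ together with $\langle \hat u_r,\ecos_0\rangle = 0$ coming from the divergence-free condition) produces the companion bound with the same constants.
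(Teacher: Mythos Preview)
Your plan is essentially the paper's proof: both rely on the sine-series representation, a resolvent lower bound on $D(\lambda,k)=\lambda^2-(k\pi/\theta)^2-1$, Bessel's identity for the low-order $\varphi$-derivatives, and the ODE \eqref{fourth:order:equ:g=0a} to reach the higher ones. The paper organises the last step as an induction on $M$ (Parseval for $\ell\le 2$, an integration-by-parts identity $\|\partial_\varphi^3\hat u_\varphi\|^2=-\int\overline{\partial_\varphi^2\hat u_\varphi}\,\partial_\varphi^4\hat u_\varphi$ for $M=1$, and the recursion you wrote for $M\ge 2$); your observation that the boundary condition $\partial_\varphi^2\hat u_\varphi|_{\partial}=0$ makes Parseval legitimate up to $\ell=3$ is correct and slightly streamlines the $M=1$ step.

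Two slips to flag. First, the exponent after your ``collapse'' is $M$, not $M-2$: you pick up $(|\lambda|^2+(k\pi/\theta)^2+1)^{M+2}$ from the numerator, $(|\lambda|^2+\ldots)^{2}$ from $|\hat{\mathfrak f}_k|^2$, and $(|\lambda|^2+\ldots)^{-4}$ from $|D|^{-4}$. Second, and more importantly, the claim that $\sum_k (k\pi/\theta)^{2q}|(\hat f_\varphi)_k^{\sin}|^2$ is comparable to $\|\partial_\varphi^q\hat f_\varphi\|^2$ is false for $q\ge 2$: since $\hat f_\varphi$ carries no boundary conditions, its sine series does not converge in $H^q$ and the left side is generically infinite. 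With the corrected exponent this would bite for $M\ge 2$, which is exactly why the paper never pushes more than one derivative onto $\hat{\mathbf f}$ through Parseval and instead bootstraps via the ODE. Your plan is still sound because you already defer $M\ge 2$ to the recursion, but the ``Expanding $(|\lambda|^2+\ldots)^{M-2}$'' paragraph should be restricted to $M\le 1$. (Also, the denominator in your formula for $d_k$ should be $D(\lambda,k)^2$.)
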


\begin{proof}
For $ \ell \leq  2 $, Bessel's identity \eqref{eq:Bessel_Identity} implies
\begin{align*}
\| \partial_{\ph}^{\ell} \hat{u}_{\varphi}(\lambda,\cdot) \|_{L^2(0,\theta)}^2 = & \, \sum_{k = 1}^{\infty}\frac{\big(\frac{k\pi}{\theta}\big)^{2\ell} |\hat{\mathfrak{f}}_k(\lambda)|^2}{\big|\lambda^2-\left(\frac{k\pi}{\theta}\right)^2-1\big|^4}  .
\end{align*}
First, we derive a lower-bound for the denominator. Rewrite, using $ \lambda = \alpha + i t $, with $ \alpha$, $ t \in \mathbb{R} $,
\begin{align*}
\Big|\lambda^2-\left(\tfrac{k\pi}{\theta}\right)^2-1\Big|^2 = & \,  \left|\alpha^2-t^2 - \left(\tfrac{k\pi}{\theta}\right)^2-1 + 2 i\alpha t \right|^2 \\
= & \, \left(t^2 + \left(\tfrac{k\pi}{\theta}\right)^2+1- \alpha^2\right)^2 + 4 \alpha^2 t^2.
\end{align*}
For $ k \geq 1 $, we  have
\begin{align*}
\left(\tfrac{k\pi}{\theta}\right)^2+1- \alpha^2 \geq (\eps-\eps^2)\left(\tfrac{\pi}{\theta}\right)^2 \geq (\eps-\eps^2) \alpha^2 = \frac{\alpha^2}{\sqrt{C_{\eps}}},
\end{align*}
which implies
\begin{equation}
\label{632}
\big(t^2 + \left(\tfrac{k\pi}{\theta}\right)^2+1- \alpha^2\big)^2 + 4 \alpha^2 t^2 \geq \Big(t^2 + \frac{\alpha^2}{\sqrt{C_{\eps}}} \Big)^2 + 4 \alpha^2 t^2 \geq \frac{1}{C_{\eps}}(t^2 + \alpha^2 )^2 = \frac{|\lambda|^4}{C_{\eps}}
\end{equation}
and
\begin{equation}
\label{63211}
\big(t^2 + \left(\tfrac{k\pi}{\theta}\right)^2+1- \alpha^2\big)^2 + 4 \alpha^2 t^2 \geq \Big(t^2 + \frac{1}{\sqrt{C_{\eps}}}\left(\tfrac{k\pi}{\theta}\right)^2 \Big)^2 + 4 \alpha^2 t^2 \geq \frac{1}{C_{\eps}}\left(\tfrac{k\pi}{\theta}\right)^4.
\end{equation}
Estimates \eqref{632} and \eqref{63211} imply
\begin{equation*}
\big|\lambda^2-\left(\tfrac{k\pi}{\theta}\right)^2-1\big| \geq \frac{|\lambda|^2}{\sqrt{C_{\eps}}} \quad \text{ and } \quad \big|\lambda^2-\left(\tfrac{k\pi}{\theta}\right)^2-1\big| \geq \frac{1}{\sqrt{C_{\eps}}}\left(\tfrac{k\pi}{\theta}\right)^2.
\end{equation*}
For $ \ell \leq 2 $
\begin{align}
\| \partial_{\ph}^{\ell} \hat{u}_{\varphi}(\lambda,\cdot) \|_{L^2(0,\theta)}^2 = & \, \sum_{k = 1}^{\infty}\frac{\big(\frac{k\pi}{\theta}\big)^{2\ell} |\hat{\mathfrak{f}}_k(\lambda)|^2}{\big|\lambda^2-\left(\frac{k\pi}{\theta}\right)^2-1\big|^4} \nonumber  \\
\leq & \, \frac{2}{\min\{1,|\alpha|\}^2}\sum_{k = 1}^{\infty}\frac{\big(\frac{k \pi}{\theta}\big)^{2\ell} |\lambda|^4|\widehat{ f_{\ph,k}}|^2+ \big(\frac{k \pi}{\theta}\big)^{2\ell+2} |\lambda|^2|\widehat{ f_{r,k}}|^2}{\big|\lambda^2-\left(\frac{k\pi}{\theta}\right)^2-1\big|^{4}} \nonumber \\
\leq & \, C_{\eps} \frac{2}{\min\{1,|\alpha|\}^2}\frac{1}{|\lambda|^{4-2\ell}}\| {\bf f}(\lambda-2,\cdot) \|_{L^2(0,\theta)}^2. \label{MimiM:2}
\end{align}
This proves \eqref{MimiM:1} for $ M = 0 $. For $ M = 1 $, \eqref{MimiM:2} implies
\begin{equation*}
\sum_{j+\ell = 3, \ell \leq 2 } |\lambda|^{2j}\|  \partial_{\ph}^{\ell}\hat{u}_{\varphi}(\lambda,\cdot) \|_{L^2(0,\theta)}^2 \leq  C_{\eps}^2  \frac{ 1 }{\min\{1, |\alpha| \}^4} \frac{ 1 }{\min\{1, |\alpha-2| \}^2} |\lambda-2|^2 \|  \hat{\bf f}(\lambda-2,\cdot) \|_{L^2(0,\theta)}^2.
\end{equation*}
It remains to estimate
\begin{align*}
\| \partial_{\ph}^3 \hat{u}_{\varphi}\|_{L^2(0,\theta)}^2\quad \stackrel{\mathclap{\eqref{fourth:order:equ:g=0b}}}{=} &\quad \, - \int_0^{\theta} \overline{\partial_{\ph}^2 \hat{u}_{\varphi}} \partial_{\ph}^4 \hat{u}_{\varphi} \, \dd \varphi \\
\stackrel{\mathclap{\eqref{fourth:order:equ:g=0}}}{=} & \quad \, 2(\lambda^2+1) \| \partial_{\ph}^2 \hat{u}_{\varphi}\|_{L^2(0,\theta)}^2 -(\lambda^2-1)^2 \| \partial_{\ph} \hat{u}_{\varphi}\|_{L^2(0,\theta)}^2 \\
& \, - \int_0^{\theta}\overline{\partial_{\ph}^2 \hat{u}_{\varphi}}(\lambda^2-1)\hat{f}_{\varphi}(\lambda-2) \dd \varphi + \int_0^{\theta}\overline{\partial_{\ph}^3 \hat{u}_{\varphi} }(\lambda+1)  \hat{f}_r(\lambda-2) \dd \varphi \\
\leq & \, C_{\eps}^2  \frac{ 1 }{\min\{1, |\alpha| \}^4} \frac{ 1 }{\min\{1, |\alpha-2| \}^2} |\lambda-2|^2 \|  \hat{\bf f}(\lambda-2,\cdot) \|_{L^2(0,\theta)}^2  + \tfrac{1}{2} \| \partial_{\ph}^3 \hat{u}_{\varphi}\|_{L^2(0,\theta)}^2.
\end{align*}
After absorbing the last term on the left-hand side, we deduce \eqref{MimiM:1} for $ M = 1 $.
For $ M \geq 2 $, we argue by induction. Suppose that \eqref{MimiM:1} holds for $ M = n \geq 1  $, then we show that \eqref{MimiM:1} holds for $ M = n + 1  $. First of all, notice that \eqref{MimiM:1} with $ M = n  $ implies
\begin{align*}
\sum_{j+\ell = n+3, \ell \leq n +2} |\lambda|^{2j}\| & \partial_{\ph}^{\ell}\hat{u}_{\varphi}(\lambda,\cdot) \|_{L^2(0,\theta)}^2\\
& \leq  C_{\eps}^2  \frac{ 1 }{\min\{1, |\alpha| \}^4} \sum_{j+\ell = n + 1, \ell \leq n }\frac{ |\lambda-2|^{2j} }{\min\{1, |\alpha-2| \}^{2j}}  \| \partial_{\varphi}^{\ell} \hat{\bf f}(\lambda-2,\cdot) \|_{L^2(0,\theta)}^2.
\end{align*}
It remains to estimate $ \|  \partial_{\ph}^{n+3}\hat{u}_{\varphi}(\lambda,\cdot) \|_{L^2(0,\theta)}^2  $. Using equation  \eqref{fourth:order:equ:g=0a}, we obtain
\begin{align*}
\partial_{\varphi}^{n+3} \hat{u}_{\varphi} = & \,  - 2(\lambda^2+1)\partial_{\varphi}^{n+1} \hat{u}_{\varphi} -(\lambda^2-1)^2 \partial_{\varphi}^{n-1}\hat{u}_{\varphi}  \\ & \, +   (\lambda^2-1)\partial_{\varphi}^{n-1}\hat{f}_{\varphi}(\lambda-2)- (\lambda+1) \partial_{\varphi}^{n} \hat{f}_r(\lambda-2)
\end{align*}
and thus
\begin{equation*}
 \|  \partial_{\ph}^{n+3}\hat{u}_{\varphi}(\lambda,\cdot) \|_{L^2(0,\theta)}^2  \leq  C_{\eps}^2  \frac{ 1 }{\min\{1, |\alpha| \}^4} \sum_{j+\ell = n + 1, \ell \leq n }\frac{ |\lambda-2|^{2j} }{\min\{1, |\alpha-2| \}^{2j}}  \| \partial_{\varphi}^{\ell} \hat{\bf f}(\lambda-2,\cdot) \|_{L^2(0,\theta)}^2.
\end{equation*}

To show the bound of $\|\partial^{\ell}_{\varphi} \hat{u}_r\|_{L^2(0,\theta)} $ it is enough to mimic the proof of the estimates for $\|\partial^{\ell}_{\varphi} \hat{u}_{\varphi}\|_{L^2(0,\theta)} $.
\end{proof}

Using the above lemma we prove an estimate for the solution of \eqref{SS:Mellin}.
\begin{cor}
\label{cor:est:source:term} Let $ \theta \in (0, \pi) $, $\eps \in (0, 1- \tfrac{\theta}{\pi} ) $, $\alpha\in\RR\setminus\ZZ$  and $M\in\NN$ be such that $ M+ \alpha+1 \in I_{\eps}$.
Let $ \vu^s$ the solution of system \eqref{SS:Mellin} with $ \mathfrak{g} = 0 $. Then we have the estimate
\begin{equation*}
\llbracket \vu^s \rrbracket_{M+2,\alpha} \leq \frac{C_{\eps,M}}{\min\{1, |\alpha+M+1|\}^{2}}\frac{1}{\min\{1, |\alpha+M-1|\}^{M}} \llbracket \vf \rrbracket_{M,\alpha}.
\end{equation*}
\end{cor}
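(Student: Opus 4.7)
The plan is a direct application of Lemma \ref{lem:bound:uph:stokes:CL} combined with the Mellin--Plancherel identity from Lemma \ref{isom:space}. First I would rewrite the norm of $\vu^s$ on the Mellin side: by the isomorphism $\prescript{M+2}{}{\mathcal{H}}^{M+2}_{\alpha}\to \widehat{H}^{M+2}_{\alpha}$ we have
\begin{equation*}
\llbracket \vu^s \rrbracket_{M+2, \alpha}^2 = \int_{\Re \lambda = \alpha + M + 1} \sum_{j+\ell = M+2} |\lambda|^{2j} \| \partial_\varphi^\ell \hat{\vu}^s(\lambda, \cdot)\|_{L^2(0,\theta)}^2 \, \dd \Im \lambda.
\end{equation*}
Since $M + \alpha + 1 \in I_\eps$ and $\alpha \notin \ZZ$, the line of integration $\Re \lambda = \alpha + M + 1$ lies in $I_\eps \setminus \ZZ$, and so Lemma \ref{lem:bound:uph:stokes:CL} applies pointwise in $\Im \lambda$ with the role of $\alpha$ in that lemma played by $\alpha + M + 1$ (and hence the role of $\alpha - 2$ by $\alpha + M - 1$).

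Second, inserting the pointwise bound from Lemma \ref{lem:bound:uph:stokes:CL} yields
\begin{equation*}
\sum_{j+\ell = M+2} |\lambda|^{2j} \| \partial_\varphi^\ell \hat{\vu}^s(\lambda, \cdot)\|_{L^2(0,\theta)}^2
\leq \frac{C_{\eps,M}^2}{\min\{1, |\alpha+M+1|\}^4} \sum_{j+\ell = M} \frac{|\lambda - 2|^{2j}}{\min\{1, |\alpha+M-1|\}^{2j}} \| \partial_\varphi^\ell \hat{\vf}(\lambda - 2, \cdot)\|_{L^2(0,\theta)}^2.
\end{equation*}
Since for each $0 \le j \le M$ the factor $\min\{1,|\alpha+M-1|\}^{-2j}$ is bounded by $\min\{1,|\alpha+M-1|\}^{-2M}$, the dependence on $j$ can be factored out uniformly, producing the two prefactors appearing in the statement.

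Third, I would change variables $\mu := \lambda - 2$, so that $\Re \mu = \alpha + M - 1$ and $\dd \Im \mu = \dd \Im \lambda$. The resulting right-hand side is exactly
\begin{equation*}
\int_{\Re \mu = \alpha + M - 1} \sum_{j+\ell = M} |\mu|^{2j} \| \partial_\varphi^\ell \hat{\vf}(\mu, \cdot)\|_{L^2(0,\theta)}^2 \, \dd \Im \mu = \llbracket \vf \rrbracket_{M, \alpha}^2,
\end{equation*}
again via Lemma \ref{isom:space} applied to $\vf$ (the line $\Re \mu = \alpha + M - 1$ is admissible since $\alpha + M - 1 \neq 0$, as $\alpha \notin \ZZ$). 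Taking square roots yields the claimed estimate.

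There is no real obstacle beyond careful bookkeeping: the one point that requires attention is verifying that the shifted line $\Re \lambda = \alpha + M + 1$ genuinely lies in the range where Lemma \ref{lem:bound:uph:stokes:CL} is valid (which is exactly the hypothesis $M+\alpha+1\in I_\eps$) and that the powers of the min-factors assemble to $\min\{1,|\alpha+M+1|\}^{-2}\min\{1,|\alpha+M-1|\}^{-M}$ after taking the square root. Everything else is Plancherel plus a translation.
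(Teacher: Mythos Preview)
Your proposal is correct and follows essentially the same route as the paper: rewrite $\llbracket \vu^s \rrbracket_{M+2,\alpha}^2$ via the Mellin--Plancherel isomorphism on the line $\Re\lambda=\alpha+M+1$, apply Lemma~\ref{lem:bound:uph:stokes:CL} pointwise with the role of ``$\alpha$'' there played by $\alpha+M+1$, and shift back by $\mu=\lambda-2$ to recognize $\llbracket \vf \rrbracket_{M,\alpha}^2$. Your bookkeeping of the $\min$-factors is in fact cleaner than the paper's display, which contains a couple of evident typos in the intermediate step.
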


\begin{proof}
Using Lemma \ref{lem:bound:uph:stokes:CL}, we deduce for $ M \geq 0 $ that 
\begin{align*}
\llbracket \vu^s \rrbracket_{M+2,\alpha}^2 = & \, \sum_{j+\ell= M+2} \int_0^{\theta} \int_{\Re \lambda = M + \alpha+1} |\lambda|^{2j}|\partial_{\ph}^{\ell} \hat{\vu^s}|^2 \, \dd \Im \lambda \dd\theta \\
=  & \,   \int_{\Re \lambda = M + \alpha+1} \sum_{j+\ell= M+2} |\lambda|^{2j}\|\partial_{\ph}^{\ell} \hat{\vu^s}\|^2_{L^2(0,\theta)} \, \dd \Im \lambda \\
\leq & \,  C_{\eps,M}^2  \frac{ 1 }{\min\{1, |\alpha+M-1| \}^4}\sum_{j+\ell = M} \int_{\Re \lambda = M + \alpha+1} \frac{|\lambda-2|^{2j}}{\min\{1, |\alpha-2| \}^{2j}}\| \partial_{\ph}^{\ell} \hat{\bf f}(\lambda-2,\cdot) \|_{L^2(0,\theta)}^2 \\
\leq & \, \frac{C_{\eps,M}^2}{\min\{1, |\alpha+M+1|\}^{4}}\frac{1}{\min\{1, |\alpha+M-1|\}^{2M}} \llbracket \vf \rrbracket_{M,\alpha}^2.\qedhere
\end{align*}
\end{proof}

\subsection{The proof of Proposition \ref{my:step} }
\label{sec:2}
We prove Proposition \ref{my:step} from Section \ref{sec:reg_problem} concerning higher regularity.

\begin{proof}[Proof of Proposition  \ref{my:step}]
Define $ \hat{u}_{\varphi} $ as in Corollary \ref{cor:repr} and $ \hat{u}_{r} $ as in \eqref{u:r:espression}. Motivated by  \eqref{SS:Mellin_1} we define
\begin{equation}\label{def:phat}
\hat{p}(\lambda, \varphi) = \frac{1}{\lambda} \big[ \big((\lambda+1)^2 + \partial_{\varphi}^2\big)\hat{u}_r(\lambda+1,\varphi) -2 \partial_{\varphi} \hat{u}_{\varphi}(\lambda+1,\varphi) -\hat{u}_r(\lambda+1,\varphi) + \hat{f}_r(\lambda-1, \varphi) \big].
\end{equation}
Note that by definition $ \hat{u}_{\varphi} $, $ \hat{u}_{r} $ and $\hat{p}$ are candidate solutions to \eqref{SS:Mellin} rather than being the Mellin transform of some $\uph, \ur$ and $p$ solving a problem in polar coordinates. In fact, below $\uph, \ur$ and $p$ will be recovered as the inverse Mellin transform of $ \hat{u}_{\varphi} $, $ \hat{u}_{r} $ and $\hat{p}$.

Under the hypothesis that  $ \hat{u}_{\varphi} $ and  $ \hat{u}_{r} $ satisfy the regularity estimate \eqref{est:skopel}, we first show that \eqref{SS:Mellin} is satisfied. Since \eqref{SS:Mellin_1}, \eqref{SS:Mellin_3}, \eqref{SS:Mellin_4} and \eqref{SS:Mellin_5} hold by construction, it remains to show that  $ \hat{u}_{\varphi} $, $ \hat{u}_{r} $ and $ \hat{p} $ solve \eqref{SS:Mellin_2}. Multiply \eqref{SS:Mellin_2} by $ \phi \in C^{\infty}_{\mathrm{c}}((0,\theta)) $ and integrate over $ (0,\theta) $. Integration by parts on the term involving the pressure gives
\begin{equation*}
\int_0^{\theta} (\partial_{\varphi} \hat{p} )\phi \, \dd \varphi = - \int_0^{\theta} \hat{p}  \partial_{\varphi} \phi \, \dd \varphi.
\end{equation*}
Then using the definition of $ \hat{p} $, integration by parts and using the definition of $\hat{u}_{r} $, we learn that  $ \hat{u}_{\varphi} $, $ \hat{u}_{r} $ and $ \hat{p} $ satisfy  \eqref{SS:Mellin_2} if and only if $ \hat{u}_{\varphi} $ is a weak solution to \eqref{fourth:order:equ}. The latter condition is satisfied by the definition of $ \hat{u}_{\varphi} $.

To recover $ \vu $ and $ p $ it is enough to notice that under the regularity estimate \eqref{est:skopel}, we can invert the Mellin transform for any fixed $ M $. We now show that for $ \widetilde{M} \in \NN $ such that $ \widetilde{M} +1 +\alpha \in I_{\eps} $,  if  $ \vf \in \mathcal{H}^{\tilde{M}}_{\alpha} $ and $ \mathfrak{g} \in  \prescript{1}{}{\mathcal{B}}^{\widetilde{M}+1}_{\alpha+1} $   we recover a unique solution. We know that   $ \hat{\vu} \in \hat{H}^{M+2}_{\alpha} $ for any $ 0 \leq M \leq \tilde{M} $ and therefore by Lemma \ref{isom:space} there exists $ \vu_M \in \prescript{M+2}{}{\mathcal{H}}^{M+2}_{\alpha} $ such that $ \hat{\vu}_M = \hat{\vu}|_{\{\lambda = M + \alpha+1 +i s \} \times (0,\theta) } $ for any $ M \in [0,\tilde{M}] $. It remains to verify that $ \vu_{0} = \dots = \vu_{\tilde{M}  } $.  Recall that
 \begin{equation*}
 \vu_M(r, \varphi) = \frac{1}{\sqrt{2\pi}}\int_{\Re\lambda=M +\alpha+1  }r^{\lambda} \widehat{\vu}(\lambda, \varphi) \dd \Im\lambda.
\end{equation*}
and that $ \hat{\vu} $ is defined via the integral representation in Corollary \ref{form:sol} and \eqref{u:r:espression}. By using  that $ G $, $ \partial_{\varphi'} G$ and $ \partial^2_{\varphi'} G$ are holomorphic on the strip $\Sigma:= \{ \lambda = \alpha + i s:  (|\alpha| +1)\theta < \pi, s\in\RR \}$ (Lemma \ref{prop:4}) and employing a standard density argument, we can move the line of integration. In particular, for any $ M \in [0, \tilde{M}] $, we have
\begin{align*}
 \vu_M (r, \varphi)  = & \,  \frac{1}{\sqrt{2\pi}}\int_{\Re\lambda=M +\alpha+1  }r^{\lambda} \widehat{\vu}(\lambda, \varphi) \dd \Im\lambda \\
= & \,  \frac{1}{\sqrt{2\pi}}\int_{\Re\lambda= \alpha +1  }r^{\lambda} \widehat{\vu}(\lambda, \varphi) \dd \Im\lambda = \vu_0 (r, \varphi) =: \vu(r, \varphi) .
\end{align*}
To recover $p$ from \eqref{def:phat}, we again use the properties of $ \hat{\vu} $ and the Green's function $G$ to see that we can move the line of integration except across zero due to the presence of the singular term $ 1/\lambda $. By the residue theorem this singularity corresponds to a constant.
More precisely, if $ \alpha > 0 $ or $  \tilde{M} + \alpha <  0 $, then we can define
\begin{equation*}
 p(r,\ph) :=  \,  \frac{1}{\sqrt{2\pi}}\int_{\Re\lambda= \ell + \alpha }r^{\lambda} \hat{p}(\lambda, \varphi) \dd \Im\lambda  \qquad  \text{ with }\ell\in[0,\tilde{M}].
 \end{equation*}
In this way we find $ p \in \prescript{1}{}{\mathcal{H}}^{\tilde{M}+1}_{\alpha} $.
If $ \alpha < 0 $ and $  \tilde{M} + \alpha >  0 $, there exists a natural number $ \ell_* \in [0, \tilde{M}-1]$ such that $ \alpha + \ell_* < 0 <  \alpha + \ell_* +1  $. For $ \ell \in [0, \ell_*] $ and $ k \in [\ell_*+1, \tilde{M}]$, we define
\begin{align*}
 p(r,\ph) := & \,  \frac{1}{\sqrt{2\pi}}\int_{\Re\lambda= \ell + \alpha  }r^{\lambda} \hat{p}(\lambda, \varphi) \dd \Im\lambda   \\
 = & \,  \frac{1}{\sqrt{2\pi}} \int_{\Re\lambda=k +\alpha }r^{\lambda} \hat{p}(\lambda, \varphi) \dd \Im\lambda  - C_{\text{res}}.
\end{align*}
We have that $  p \in \prescript{1}{}{\mathcal{H}}^{\ell_*+1}_{\alpha} $, while $  p + C_{\text{res}} \in \prescript{\ell_*+ 2}{}{\mathcal{H}}^{\tilde{M}+1}_{\alpha} $. After noticing that $ \zeta C_{\text{res}}  \in \prescript{1}{}{\mathcal{H}}^{\ell_*+1}_{\alpha}  $ and $ (1-\zeta) C_{\text{res}}  \in \prescript{\ell_*+ 2}{}{\mathcal{H}}^{\tilde{M}+1}_{\alpha} $, we decompose $p=\zeta p_0 +p_1$, where
$$ p_1 := p + \zeta  C_{\text{res}} \quad \text{ and } \quad  p_0: = - C_{\text{res}}.$$
With this choice $ p_1 \in \prescript{1}{}{\mathcal{H}}^{\tilde{M}+1}_{\alpha}$, in fact $ p_1 = p + \zeta  C_{\text{res}} $ is the sum of $ p $ and $ \zeta  C_{\text{res}} $ that are elements of $ \prescript{1}{}{\mathcal{H}}^{\ell_*+1}_{\alpha} $. At the same time,  $ p_1 = p + C_{\text{res}} - (1- \zeta) C_{\text{res}} $ and both $  p + C_{\text{res}} $ and  $ (1- \zeta) C_{\text{res}} $ are elements of $ \prescript{\ell_*+ 2}{}{\mathcal{H}}^{\tilde{M}+1}_{\alpha}  $.

We continue with the regularity of $ \hat{u}_{\varphi} $ and  $ \hat{u}_{r} $. By the linearity of the equation  $ \vu = \vu^b + \vu^s $, where $ \vu^b $ satisfies \eqref{SS:Mellin} with source term $ \vf = 0 $  while $ \vu^s $ satisfies \eqref{SS:Mellin} with zero boundary conditions $ \mathfrak{g} = 0 $. Then  using this decomposition, Corollary \ref{cor:est:bound:term} and  \ref{cor:est:source:term}, we deduce
\begin{align*}
\llbracket \vu \rrbracket_{M+2,\alpha}^2  \leq & \,  \llbracket \vu^b \rrbracket_{M+2,\alpha}^2 + \llbracket \vu^s \rrbracket_{M+2,\alpha}^2 \\
\leq & \, \frac{C_{\eps,M}}{\min\{1, |\alpha+M+1|\}^{M+2}}[ \mathfrak{g} ]_{M+\frac{1}{2},\alpha+1} \\
& \, +\frac{C_{\eps,M}}{\min\{1, |\alpha+M+1|\}^{2}}\frac{1}{\min\{1, |\alpha+M-1|\}^{M}} \llbracket \vf \rrbracket_{M,\alpha}.
\end{align*}

We finish the proof by showing that if $ \mathfrak{g} = \pm (g - r u_{r})  $ with $ \vu $ from Theorem \ref{Floris:theo}, then the solution defined via the Green's function coincides with $ \vu $. Note that $ \vu $ satisfies \eqref{sys:1}, in particular $ \mathfrak{g} = \pm (g - r u_{r}) = \pm (g + \partial_{\varphi}u_r)  $. Using this last expression in the estimates for the solution defined via the Green's functions, we deduce that the solution given by the Green's function is in $ \prescript{1}{}{\mathcal{H}}^2_{\alpha}$ as the solution $ \vu $ from Theorem \ref{Floris:theo}. Uniqueness of  $ \prescript{1}{}{\mathcal{H}}^2_{\alpha}$ solutions for the system \eqref{SsS:merlin} implies that $ \vu $ coincides with the solution given by the Green's function representation  with $ \mathfrak{g} = \pm (g - r u_{r}) $.
\end{proof}

\section{Proof of Proposition \ref{pro:exi:pol}}\label{sec:proof_210}

In this section we prove Proposition \ref{pro:exi:pol} from Section \ref{sec:pol_problem} concerning the polynomial problem.
\begin{proof}[Proof of Proposition \ref{pro:exi:pol}] Insert in \eqref{sto:sys:pol}
 the polynomial expansions
\begin{equation*}
\PPu^n(r,\ph) = \sum_{j= 0}^{n} \vu^{(j)}(\ph) \; r^j,  \quad  \PPp^n(r,\ph) = \sum_{j= 0}^{n-1} p^{(j)}(\ph) \; r^j \quad \text{ and } \quad \PPf^n(r,\ph) = \sum_{j= 0}^{n-2} \vc{f}^{(j)}(\ph) \; r^j,
\end{equation*}
to obtain for $ j \in \mathbb{N} $
\begin{subequations}\label{SS:Pol}
\begin{alignat}{5}
(j^2 + \partial_{\varphi}^2)u^{(j)}_r - 2 \partial_{\varphi} u^{(j)}_{\varphi} -u^{(j)}_r - (j-1)  p^{(j-1)} = \, & - f^{(j-2)}_r \quad && \text{ in } (0,\theta),   \label{SS:Pol1}  \\
(j^2 + \partial_{\varphi}^2)u^{(j)}_{\varphi} + 2 \partial_{\varphi} u^{(j)}_r -u^{(j)}_{\varphi} - \partial_{\varphi} p^{(j-1)} = & \, -f^{(j-2)}_{\varphi} \quad && \text{ in } (0,\theta),    \label{SS:Pol2} \\
(j +1) u^{(j)}_r + \partial_{\varphi} u^{(j)}_{\varphi} = & \, 0 \quad && \text{ in } (0,\theta), \label{SS:Pol3} \\
u^{(j)}_{\varphi} = & \, 0  \quad && \text{ on } \{0,\theta\},  \label{SS:Pol4}  \\
\partial_{\varphi} u^{(j)}_{r} = & \, \mp u^{(j-1)}_r  \pm \delta_{n,j-1} u^{(n)}_r \quad && \text{ on }  \{0,\theta\}, \label{SS:Pol5}
\end{alignat}
\end{subequations}
where $ \delta_{n,j-1} = 1 $ if $ j = n+1 $ and $ 0 $ else.
Note that this system is the same as \eqref{SS:Mellin} with $ \lambda \in \mathbb{C}$ replaced by $ j \in \mathbb{N}$. Hence, for $ j \leq n $, as in Corollary \ref{cor:repr} and in Lemma \ref{lem:Fourier_Rep:STOKES}, we get the solution formula
\begin{align}\label{form:sol:2}
u^{(j)}_{\varphi}(\varphi) = & \, \sum_{k=1}^{\infty}\frac{\hat{\mathfrak{f}}_k^{(j)}}{\big(j^2-\left(\frac{k\pi}{\theta}\right)^2-1\big)^2}\esin_k(\ph)   + (j + 1 )(1-\delta_{n,j-1}) u_{r}^{(j-1)}(\theta)  \partial_{\varphi'}G(j, \varphi, \theta)   \\ &  \, -  (j + 1 )(1-\delta_{n,j-1}) u_{r}^{(j-1)}(0) \partial_{\varphi'}G(j, \varphi, 0),\nonumber
\end{align}
where $ G $ is defined as in Lemma \ref{Lemma:green:fun} (replacing $\lambda$ by $j$) and (cf. \eqref{ftoffrak})
\begin{equation*}
  \hat{\mathfrak{f}}^{(j)}= (j^2-1)\hat{f}_\ph^{(j-2)}(\ph)-(j+1)\dph\hat{f}_r^{(j-2)}(\ph).
\end{equation*}
For $ j = 0 $, we have $ \vf^{(-2)} =  0 $ and $ \vu^{(-1)} = 0 $ by assumption. By \eqref{SS:Pol3} and \eqref{SS:Pol4} we obtain $\vu^{(0)}=0$ and by \eqref{SS:Pol1} we also obtain $ p^{(-1)} = 0 $. For $ j = 1 $, we have
$ \vf^{(-1)} = 0 $ and $ \vu^{(0)} = 0 $. By  \eqref{form:sol:2}, we deduce $ \vu^{(1)} = 0 $. Rewriting \eqref{SS:Pol2} gives
\begin{equation*}
\partial_{\varphi} p^{(0)} =  (1 + \partial_{\varphi}^2)u^{(1)}_{\varphi} + 2 \partial_{\varphi} u^{(1)}_r -u^{(1)}_{\varphi} + f^{(-1)}_{\varphi} = 0.
\end{equation*}
We deduce that $ p^{(0)} $ is a constant. 

Notice that for $ j \geq 2  $, the integral of \eqref{SS:Pol1} over $ (0, \theta) $ rewrites in the form  
\begin{align}
\label{pressure:const}
\int_0^{\theta} p^{(j-1)}  \dd\varphi = \int_{0}^{\theta}  \frac{f_{r}^{(j-2)}}{j-1} \dd\varphi + \frac{1}{j-1}\left[\mp u^{(j-1)}_r  \pm \delta_{n,j-1} u^{(n)}_r  \right]_0^{\theta},
\end{align}
%
%
after integration by parts and using \eqref{SS:Pol3} and \eqref{SS:Pol5}.

For $ 2 \leq j \leq n $  we argue by induction. Suppose that we have already found $ \vu^{(j-1)} $ and $ p^{(j-2)} $, then formula \eqref{form:sol:2} defines $ u^{(j)}_{\varphi}$. By Lemmata \ref{lem:Bound:Green_Rep:STOKES} and \ref{lem:bound:uph:stokes:CL}, we have $ u^{(j)}_{\varphi} \in H^{M+2}(0,\theta) $ with the desired bound. With \eqref{SS:Pol3} we define $ u_{r}^{(j)} = \partial_{\varphi}u_{\varphi}^{(j)}/(j+1) \in H^{M+1}(0,\theta)$. From \eqref{SS:Pol2} we deduce the pressure $ p^{(j-1)} \in H^{M+1}(0,\theta) $ up to a constant. Equation \eqref{pressure:const} defines uniquely such a constant. We deduce from \eqref{SS:Pol1} that $ u_r^{(j)} \in H^{M+2}(0,\theta)$.

For $ j = n+1 $ the right-hand side of \eqref{SS:Pol} is identically zero since by assumption $ \vf^{(n-1)} = 0 $ and $  \mp u^{(n)}_r  \pm u^{(n)}_r  = 0 $. Therefore, the unique solution is $ \vu^{(n+1)} = 0$, $ p^{(n)} = 0 $.
Similarly, for $  j \geq n+2 $ , the right-hand side of \eqref{SS:Pol} is identically zero because $ \vf^{(j)} = 0 $ and $  u^{(j-1)}_r   = 0 $ by induction.

Finally, the estimates on $ \PPu^n $ and $ \PPp^n $ are a consequence of \eqref{form:sol:2} and Lemmata \ref{lem:Bound:Green_Rep:STOKES} and \ref{lem:bound:uph:stokes:CL}.
\end{proof}

\appendix

\section{Vector identities and polar coordinates}\label{app:polar_vec}

\subsection{Vector identities}\label{app:vec}
For sufficiently smooth $\vu,\vv:\RR^2\supset \Om\to\RR^2$ in Cartesian coordinates $(x_1,x_2)$ we recall the notation
\begin{align*}
    \grad \vu:\grad\vv=\sum_{i,j=1}^2\partial_{x_i}u_j\partial_{x_i}v_j,\quad
    \vu\otimes\vv\in\RR^{2\times 2}\text{ with }(\vu\otimes\vv)_{ij}:=u_iv_j \text{ for }1\leq i,j\leq2.
\end{align*}
Moreover, recall that in two dimensions the curl is defined as $\om_{\vu}:=\curl\vu=\partial_{x_1} u_2-\partial_{x_2} u_1$.
\begin{lem}\label{lem:app_curl} For a sufficiently smooth vector field $\vu:\RR^2\supset \Om\to\RR^2$ and a sufficiently smooth scalar field $\phi$ we have the following properties:
\begin{enumerate}[label=(\roman*)]
\begin{minipage}{0.4\linewidth}
\item $\curl\grad\phi=0$,
\item $\div\curl\vu = 0$,
\end{minipage}
\begin{minipage}{0.4\linewidth}
\item $\div\del\vu=\del\div\vu$,
\item $\curl\del\vu=\del\curl\vu$ if $\div\vu=0$.
\end{minipage}
\end{enumerate}
Moreover,  we have 
\begin{align*}
    \sum_{j=1}^2\div(v_j\grad u_j)=\vv \cdot\del\vu+\grad\vv:\grad\vu,\quad
    \grad(\phi \vu)=\phi\grad\vu+\vu\otimes\grad\phi.
\end{align*}
\end{lem}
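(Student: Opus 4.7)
The plan is to verify each of these identities by a direct component-wise computation in Cartesian coordinates, using only two tools: the Leibniz rule and the equality of mixed partial derivatives (Clairaut's theorem), both of which apply because $\vu$ and $\phi$ are assumed sufficiently smooth. No deeper machinery is needed, so there is no real obstacle; the only thing to be careful about is keeping the 2D sign conventions for $\curl$ (scalar output from a vector, vector output from a scalar) consistent with the definitions fixed at the start of Appendix \ref{app:polar_vec}.

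For (i), writing $\curl(\cdot) = \partial_{x_1}(\cdot)_2 - \partial_{x_2}(\cdot)_1$ gives $\curl\grad\phi = \partial_{x_1}\partial_{x_2}\phi - \partial_{x_2}\partial_{x_1}\phi = 0$ by Clairaut. Item (ii) is the dual statement in the convention that $\curl$ acting on a scalar returns the rotated gradient, and is likewise one line of symmetry of second derivatives. For (iii), both sides unfold to $\sum_{i,j}\partial_{x_i}^2 \partial_{x_j} u_j$, where swapping $\partial_{x_i}^2$ with $\partial_{x_j}$ is again Clairaut. For (iv), linearity of $\Delta$ and commutation of partials give $\curl\Delta\vu = \Delta(\partial_{x_1}u_2 - \partial_{x_2}u_1) = \Delta\curl\vu$ \emph{without} needing $\div\vu = 0$ in two dimensions; the divergence-free hypothesis is stated only because it is indispensable for the analogous identity in three dimensions, and it is the form in which the identity is invoked elsewhere in the paper.

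For the two product identities, both reduce to the Leibniz rule. Expanding $\div(v_j\grad u_j) = \grad v_j \cdot \grad u_j + v_j \Delta u_j$ and summing over $j = 1,2$ produces exactly $\grad\vv:\grad\vu + \vv\cdot\Delta\vu$. For the second, compute entry by entry,
\begin{equation*}
[\grad(\phi\vu)]_{ij} \;=\; \partial_{x_i}(\phi u_j) \;=\; \phi\,\partial_{x_i}u_j + u_j\,\partial_{x_i}\phi \;=\; [\phi\grad\vu]_{ij} + [\vu\otimes\grad\phi]_{ij},
\end{equation*}
and assemble the matrix identity. The proof is therefore essentially bookkeeping, and I would present it as such, leaving the routine expansions to the reader rather than writing every index.
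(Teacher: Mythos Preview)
Your approach is correct and is precisely how one would verify these elementary identities; the paper itself states the lemma without proof, treating the results as standard, so your direct component-wise verification via Clairaut and Leibniz is exactly what is expected. One minor point: in your computation of $[\grad(\phi\vu)]_{ij}$ you should double-check the index placement against the paper's convention $(\vu\otimes\vv)_{ij}=u_iv_j$, since depending on whether $(\grad\vu)_{ij}=\partial_{x_i}u_j$ or $\partial_{x_j}u_i$ the tensor-product term may need its factors swapped; this is purely bookkeeping and does not affect the substance of the argument.
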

For a vector $\vc{a}=(a_1,a_2)$ the rotated vector is $\vc{a}^{\perp}:=(-a_2,a_1)$. It holds that $\vc{a}^{\perp}\cdot\vc{a}=0$ and $\vc{a}^{\perp}\cdot\vc{b}=-\vc{a}\cdot\vc{b}^{\perp}$. For the rotated gradient $\grad^{\perp}$ we have the properties
\begin{equation*}
  \grad^{\perp}\cdot \vu=\curl \vu,\qquad \del\vu = \grad^{\perp}\curl\vu\text{ if }\div\vu=0.
\end{equation*}

\subsection{Polar coordinates}\label{app:polar} In polar coordinates $(r,\ph)$ the unit vectors at $(r,\ph)$ are given by $\ver=(\cos\ph,\sin\ph)$ and $\veph=(-\sin\ph,\cos\ph)$ and we write $\vu(r,\ph) = \ur(r,\ph)\ver + \uph(r,\ph)\veph$ as $\vu=(u_r,\uph)$.
The gradient and Laplace operator are in polar coordinates given by
\begin{equation*}\label{eqapp:laplace_polar}
    \nabla = (\dr)\ver+\left(r^{-1}\dph\right)\veph\quad \text{ and }\quad \del = \dr^2+r^{-1} \dr+r^{-2}\dph^2=r^{-2}\big((r\dr)^2+\dph^2\big).
\end{equation*}
Therefore, for a sufficiently smooth vector field $\vu:\RR^2\supset \Om\to\RR^2$ and a sufficiently smooth scalar field $\phi$ we have
\begin{align}
\operatorname{div} \vu =\grad\cdot\vu =&\;r^{-1}\big((r\dr+1)\ur+\dph\uph\big),\label{eqapp:div_polar}\\
\curl\vu=\grad^{\perp}\cdot\vu=&\;r^{-1}\big((r\dr+1)\uph-\dph\ur\big),\label{eqapp:curl_polar}\\
    \grad \vu =& \;r^{-1}\begin{pmatrix}
    r\dr\ur & \dph\ur-\uph\\
    r\dr\uph & \dph\uph+\ur
    \end{pmatrix},\label{eqapp:grad_polar}\\
    \del \vu =& \;r^{-2}\begin{pmatrix}\big((r\dr)^2+\dph^2\big)\ur-2\dph\uph-\ur\\
    \big((r\dr)^2+\dph^2\big)\uph+2\dph\ur-\uph
    \end{pmatrix}\label{eqapp:del_polar},\\
    \grad\otimes\grad\phi=&\begin{pmatrix}\dr^2\phi&r^{-1}\dph\dr\phi-r^{-2}\dph\phi\\
    r^{-1}\dph\dr\phi-r^{-2}\dph\phi&r^{-2}\dph^2\phi+r^{-1}\dr\phi\end{pmatrix}.\label{eqapp:del_hessian}
\end{align}
Finally, we also have the commutation relations
\begin{equation}\label{eqapp:com_rel}
    r^{\gamma} (r\dr) = (r\dr-\gamma)r^{\gamma}\quad \text{ and }\quad r\dr r^{\gamma} = r^{\gamma}(r\dr+\gamma) \quad \text{ for }\gamma\in\RR.
\end{equation}

\section{Some results on weighted Sobolev spaces}

\label{app:Hardy:traces}

\subsection{Proof of the claim in Remark \ref{rem:1}}

As an application of Hardy's inequality we prove the claim in Remark \ref{rem:1}.  Recall that $\prescript{k}{}{\mathcal{H}}^{k}_{\alpha} $ is the closure of $C_{\mathrm{c}}^{\infty}(\overline{\Om}\setminus\{0\})$ with respect to $\llbracket \cdot\rrbracket_{k,\alpha}$ as defined in Section \ref{subsec:mainresults}.

\begin{lem}
\label{lemma:norm}
Let $ k \in \mathbb{N} $  and $ \alpha \in \mathbb{R}  $ such that $ \alpha + k -1 \neq 0 $ and let
\begin{equation*}
  \mathfrak{H}^{k}_{\alpha} :=  \left\{ \vu: \llbracket \vu \rrbracket_{k-\ell,\alpha+\ell}  <  \infty   \text{ for all } 0 \leq \ell \leq k            \right\},
\end{equation*}
endowed with the norm  $ \| \vu \|_{\mathfrak{H}^k_{\alpha}}^2 =   \llbracket \vu \rrbracket_{0,\alpha+k}^2 + \dots + \llbracket \vu \rrbracket_{k,\alpha}^2   $. Then the inclusion
 $\prescript{k}{}{\mathcal{H}}^{k}_{\alpha} \longrightarrow \mathfrak{H}^{k}_{\alpha}$
is a linear and continuous bijection.
\end{lem}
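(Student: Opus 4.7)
The plan is to verify that the natural inclusion, which is the identity on smooth test functions, is well defined, continuous, and surjective; injectivity is automatic since both spaces consist of locally integrable functions and the map sends a function to itself. The two nontrivial steps are thus continuity and surjectivity.

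For continuity, I will show by iterating Hardy's inequality (Lemma \ref{lem:Hardy_on_wedge}) in the radial variable that
\[
 \llbracket \vu \rrbracket_{k-m, \alpha+m}^2 \leq \frac{C}{|\alpha+k-1|^{2m}} \llbracket \vu \rrbracket_{k, \alpha}^2
\]
for every $\vu \in C^{\infty}_{\mathrm{c}}(\overline{\Omega}\setminus\{0\})$ and every $m \in \{0, \ldots, k\}$. The key observation is that all the norms $\llbracket \cdot \rrbracket_{k-m, \alpha+m}$ carry the common weight $r^{-2\alpha-2k}\,r\,\dd r\,\dd\ph$; applying Hardy to each fixed term $(r\partial_r)^j \partial_\ph^\ell\vu$ with $j+\ell = k-m$, with exponent $\beta = -\alpha-k+1$ (which is nonzero precisely by the standing assumption $\alpha+k-1\neq 0$), produces a term with one additional $r\partial_r$ derivative, i.e.\ a summand appearing in $\llbracket \vu \rrbracket_{k-m+1,\alpha+m-1}^2$. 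Summing in $(j,\ell)$ and integrating in $\ph$ gives $\llbracket \vu \rrbracket_{k-m, \alpha+m}^2 \leq \frac{1}{(\alpha+k-1)^2}\llbracket \vu \rrbracket_{k-m+1,\alpha+m-1}^2$; iterating from $m=1$ up to $m=k$ yields the displayed bound. Extending by density makes the inclusion well-defined as a bounded linear map $\prescript{k}{}{\mathcal{H}}^{k}_\alpha \to \mathfrak{H}^{k}_\alpha$.

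For surjectivity, given $\vu \in \mathfrak{H}^{k}_\alpha$ I construct approximating test functions by cut-off and mollification. Set
\[
\chi_{\eps,R}(r) := \bigl(1 - \zeta(r/\eps)\bigr)\zeta(r/R),
\]
with $\zeta$ as in \eqref{eq:cutoff}, so that $\chi_{\eps,R}\vu$ is supported in the annulus $\{\eps \leq r \leq 2R\}$. When one expands $(r\partial_r)^j\partial_\ph^\ell(\chi_{\eps,R}\vu)$ by Leibniz, each factor $(r\partial_r)^{j_1}\chi_{\eps,R}$ with $j_1\geq 1$ is uniformly bounded in $\eps,R$ and supported in $\{\eps\leq r\leq 2\eps\}\cup\{R\leq r\leq 2R\}$, while the companion derivative of $\vu$ has total order at most $k-1$. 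These remainder terms are therefore dominated by lower-order weighted $L^2$-pieces of $\llbracket \vu \rrbracket_{k-m,\alpha+m}^2$ with $m\geq 1$, localised to annuli whose mass (with respect to the common weight) tends to zero by absolute continuity of the Lebesgue integral. The leading term $\chi_{\eps,R}(r\partial_r)^j\partial_\ph^\ell\vu$ converges to $(r\partial_r)^j\partial_\ph^\ell\vu$ in the weighted $L^2$ by dominated convergence. Once the cut-off is performed, $\chi_{\eps,R}\vu$ is supported in a bounded subset of $\overline{\Omega}\setminus\{0\}$ with Lipschitz boundary, and is approximated in $\llbracket \cdot \rrbracket_{k,\alpha}$ by $C^{\infty}_{\mathrm{c}}(\overline{\Omega}\setminus\{0\})$-functions via standard mollification after extension across the lateral sides of the wedge.

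The main obstacle is precisely this surjectivity step: without the hypothesis that \emph{all} of the intermediate norms $\llbracket \vu \rrbracket_{k-m,\alpha+m}$ are finite, the Leibniz remainders generated by the cut-off would not be controllable, since they genuinely involve derivatives of $\vu$ of intermediate order with weights intermediate between $\llbracket\cdot\rrbracket_{0,\alpha+k}$ and $\llbracket\cdot\rrbracket_{k,\alpha}$. The continuity step of Claim~A shows that these seemingly extra norms are in fact already controlled by $\llbracket\cdot\rrbracket_{k,\alpha}$, so the two sides of the bijection end up carrying equivalent norms, as asserted.
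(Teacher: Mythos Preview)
Your proposal is correct and follows the paper's approach: Hardy's inequality for continuity, and cut-off plus mollification for surjectivity. The only minor variant is the cut-off: you use $\chi_{\eps,R}(r)=(1-\zeta(r/\eps))\zeta(r/R)$ and invoke absolute continuity on the shrinking annuli, whereas the paper uses the logarithmic cut-off $\eta_n(r)=\eta(\log(r)/n)$, for which $(r\partial_r)^{j_1}\eta_n=n^{-j_1}\eta^{(j_1)}(\log r/n)$ gives smallness of the Leibniz remainders directly.
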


\begin{proof}
The inclusion being linear and continuous is a direct consequence of Hardy's inequality. To show that it is surjective it suffices to prove that for any $ \vu \in \mathfrak{H}^{k}_{\alpha} $ there exists a sequence $ \vu_{n} \in C^{\infty}_{\mathrm{c}}(\overline{\Omega} \setminus \{0\}) $
such that
$
\llbracket \vu_n - \vu \rrbracket_{k,\alpha} \longrightarrow 0.
$
Define the cut-off function
\begin{equation*}
    \eta\in C_{\mathrm{c}}^{\infty}(\RR)\quad \text{ satisfying }\quad \eta\big|_{[-1,1]}=1\text{  and  }\eta\big|_{\RR\setminus(-2,2)}=0.
\end{equation*}
Furthermore, define for $n \in\NN$
\begin{equation}
\label{def:cut:off}
    \eta_n(r) :=\eta\Big(\tfrac{\log(r)}{n}\Big),\qquad r>0.
\end{equation}
Then  supp$(\eta_{n}) = [e^{-2n}, e^{2n}]$ and $\eta_n(r)\longrightarrow 1$ pointwise as $n\longrightarrow\infty$ for all $r>0$. Moreover,
\begin{equation*}
    (r\dr)^j\eta_n(r)=n^{-j} \eta^{(j)}\left(\tfrac{\log r}{n}\right).
\end{equation*}
For $ \vu \in \mathfrak{H}^{k}_{\alpha} $ it holds that
$
\eta_n \vu \longrightarrow \vu $ in $ \mathfrak{H}^{k}_{\alpha}
$
and for any $n\in\NN$ the vector field $ \eta_n \vu  $ is compactly supported on $\overline{\Omega} \setminus \{0\}$. Therefore, by mollification there exists  $ \vu_{n} \in C^{\infty}_{\mathrm{c}}(\overline{\Omega} \setminus \{0\}) $ such that $ \llbracket \vu_n - \eta_n \vu \rrbracket_{k,\alpha} \leq \| \eta_n \vu -\vu \|_{\mathfrak{H}^k_{\alpha}}$. This sequence $ \vu_n $ satisfies the desired properties.
\end{proof}

\subsection{Trace theorems} Recall from Section \ref{sec:reg_problem} that for $ s \in\RR $ and $ \alpha \in \mathbb{R} $ such that $ s+\alpha -\frac{1}{2} \neq 0 $ the space $ \tilde{H}^s_{\alpha} $ is the closure of $ C^{\infty}_{\mathrm{c}}(\partial \Omega') $ with respect to the norm 
\begin{equation*}
\| g \|_{\tilde{H}^s_{\alpha}}^2 = \sum_{\varphi \in \{0, \theta \}} \int_{\Re \lambda = s+\alpha -\frac{1}{2}}  |\lambda|^{2s} | \hat{g}(\lambda, \varphi )|^2 \, \dd \Im \lambda.
\end{equation*}

\begin{prop}[Trace operators and extensions]
\label{prop:trace}
For $ k \in\NN $ and $ \alpha \in \mathbb{R} \setminus \mathbb{Z} $, there exists a linear continuous trace operator
\begin{equation*}
T:\prescript{k}{}{\mathcal{H}}^{k}_{\alpha} \longrightarrow \prod_{\ell= 0}^{k-1} \tilde{H}^{k- \ell -\frac{1}{2}}_{\alpha}
\end{equation*}
such that $ Tu = (\partial_{\varphi}^{\ell} u |_{\partial \Omega'})_{\ell = 0}^{k-1} $ for any $ u \in C^{\infty}_{\mathrm{c}}(\overline{\Omega} \setminus \{0\}) $. Moreover, $ T $ admits a  linear and continuous right inverse, that we call extension 
\begin{equation*}
L :  \prod_{\ell= 0}^{k-1} \tilde{H}^{k- \ell -\frac{1}{2}}_{\alpha} \longrightarrow \prescript{k}{}{\mathcal{H}}^{k}_{\alpha}.
\end{equation*}

\end{prop}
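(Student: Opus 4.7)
The plan is to transfer the problem to Mellin variables via the isomorphism $\prescript{k}{}{\mathcal{H}}^k_\alpha \cong \hat H^k_\alpha$ from Lemma \ref{isom:space}, turning trace and extension into a parametric family of one-dimensional problems on $(0,\theta)$ indexed by $\lambda$, and then invoking classical trace theory with $\lambda$-dependent constants.

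For the trace operator $T$, I would first restrict to $u \in C_c^\infty(\overline{\Omega}\setminus\{0\})$, so that $\hat u(\lambda,\cdot)$ is entire in $\lambda$ with values in $C^\infty([0,\theta])$. A parameter-dependent trace inequality
\begin{equation*}
|v(\vartheta)|^2 \leq C\Bigl(\mu \int_0^\theta |v|^2\,\dd\varphi + \tfrac{1}{\mu}\int_0^\theta |\partial_\varphi v|^2\,\dd\varphi\Bigr),\qquad \mu \geq \theta^{-1},
\end{equation*}
proved by writing $|v(\vartheta)|^2 = \int_0^\theta \partial_\varphi(\chi |v|^2)\,\dd\varphi$ for a cut-off $\chi$ with $\chi(\vartheta) = 1$ and applying Young's inequality, is applied to $v = \partial_\varphi^\ell \hat u(\lambda,\cdot)$ with the choice $\mu = \max(|\lambda|,\theta^{-1})$. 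This yields the fibrewise bound
\begin{equation*}
|\lambda|^{2(k-\ell)-1}|\partial_\varphi^\ell \hat u(\lambda,\vartheta)|^2 \leq C\sum_{j+m=k,\,m\geq\ell} |\lambda|^{2j}\,\|\partial_\varphi^m \hat u(\lambda,\cdot)\|_{L^2(0,\theta)}^2.
\end{equation*}
Integrating in $\Im\lambda$ and summing over $\ell$ and $\vartheta \in \{0,\theta\}$ controls the boundary expressions by $\llbracket u \rrbracket_{k,\alpha}$. Landing in $\tilde H^{k-\ell-1/2}_\alpha$ requires evaluating the integral on the shifted line $\Re\lambda = \alpha+k-\ell-1$; this I would achieve by Cauchy's theorem, exploiting entireness of $\hat u$ and its rapid decay in $\Im\lambda$, and then extend by density of $C_c^\infty(\overline{\Omega}\setminus\{0\})$ in $\prescript{k}{}{\mathcal{H}}^k_\alpha$.

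For the right inverse $L$, I would construct $\hat v(\lambda,\cdot)$ pointwise in $\lambda$ by a Hestenes-type formula whose localization scale depends on $|\lambda|$. With a fixed cut-off $\eta \in C_c^\infty([0,\infty))$ satisfying $\eta = 1$ near $0$, set
\begin{equation*}
\hat v(\lambda,\varphi) := \sum_{\ell=0}^{k-1}\Bigl[\tfrac{\varphi^\ell}{\ell!}\,\hat g_\ell(\lambda,0)\,\eta(|\lambda|\varphi) + \tfrac{(\varphi-\theta)^\ell}{\ell!}\,\hat g_\ell(\lambda,\theta)\,\eta(|\lambda|(\theta-\varphi))\Bigr]
\end{equation*}
for $|\lambda|\geq 2/\theta$, and modify $\eta$ to have $|\lambda|$-independent support inside $(0,\theta/2)$ for $|\lambda| \leq 2/\theta$. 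A direct calculation verifies the boundary identities $\partial_\varphi^\ell \hat v(\lambda,\vartheta) = \hat g_\ell(\lambda,\vartheta)$, and a scaling argument gives
\begin{equation*}
\sum_{j+m=k}|\lambda|^{2j}\|\partial_\varphi^m \hat v(\lambda,\cdot)\|_{L^2(0,\theta)}^2 \leq C\sum_{\ell=0}^{k-1}\sum_{\vartheta\in\{0,\theta\}} |\lambda|^{2(k-\ell)-1}|\hat g_\ell(\lambda,\vartheta)|^2.
\end{equation*}
Inverting the Mellin transform along the line $\Re\lambda = \alpha+k-1$ then produces $Lg := v \in \prescript{k}{}{\mathcal{H}}^k_\alpha$, with linearity and continuity following from the construction.

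The main obstacle is reconciling the Mellin contours: the natural line for $\hat u$ and its $\varphi$-derivatives is $\Re\lambda = \alpha+k-1$, while $\tilde H^{k-\ell-1/2}_\alpha$ is defined on $\Re\lambda = \alpha+k-\ell-1$. Shifting between these lines requires analyticity of $\hat u$ in the intervening strip together with sufficient decay in $\Im\lambda$, both available on the smooth core; the exclusion $\alpha \in \RR\setminus \ZZ$ rules out resonances on the shifted contour. A secondary technical point is calibrating the $|\lambda|$-dependent cut-off in the Hestenes formula so that the polynomial pieces remain disjoint in the high-frequency regime yet the resulting $\varphi$-supports stay inside $(0,\theta)$ uniformly, which is what makes the $|\lambda|$-weighted $H^k$ estimate above scale correctly.
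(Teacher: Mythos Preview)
Your approach is essentially the paper's: a fibrewise fundamental-theorem-of-calculus trace estimate on the Mellin side, followed by a Hestenes-type extension with a $|\lambda|$-scaled cut-off. The paper's extension formula
\[
\hat U(\lambda,\varphi)=\sum_{\ell=0}^{k-1}\frac{\varphi^\ell}{\ell!}\,\hat u_\ell(\lambda,0)\,h\Bigl(\tfrac{\varphi(1+|\lambda|)}{\theta}\Bigr)+\sum_{\ell=0}^{k-1}(-1)^\ell\frac{(\theta-\varphi)^\ell}{\ell!}\,\hat u_\ell(\lambda,\theta)\,h\Bigl(\tfrac{(\theta-\varphi)(1+|\lambda|)}{\theta}\Bigr)
\]
with $h\in C_c^\infty([0,1))$ is slightly cleaner than yours: the factor $(1+|\lambda|)/\theta$ already handles both the low- and high-frequency regimes uniformly, so no case distinction is needed.

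On the contour issue you flag: the paper works entirely on the single line $\Re\lambda=\alpha+k-1$ and never shifts. In particular the extension formula only makes sense if all the boundary data $\hat u_\ell$ live on that same line; this suggests the intended target is really $\tilde H^{k-\ell-1/2}_{\alpha+\ell}$ (whose defining line is $\alpha+k-1$ for every $\ell$), and the paper's stated index $\alpha$ is an inconsistency. Your proposed fix via Cauchy's theorem would not work as written, since the integrands $|\lambda|^{2s}|\hat g|^2$ are not holomorphic and the integrals over different horizontal lines measure genuinely different weighted norms of $g$. The correct resolution is that no shift is needed once the target spaces are all placed on the common line; this is also what the paper's proof of the trace estimate implicitly uses.
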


\begin{proof}

The existence of $ T $ is a direct consequence of H\"older's inequality. In fact for $ \vartheta \in \{0, \theta \}$ and $ \ell \in \{0, 1, \dots, k-1  \} $ it holds
\begin{align*}
|\lambda||\partial_{\varphi}^{\ell} \hat{u}(\cdot, \vartheta)|^2 \leq & \,  \tfrac{|\lambda|}{\theta} \|\partial_{\varphi}^{\ell} \hat{u} \|_{L^2(0,\theta)}^2 + \| \lambda \partial_{\varphi}^{\ell} \hat{u} \|_{L^2(0,\theta)}^2+  \|\partial_{\varphi}^{\ell+1} \hat{u} \|_{L^2(0,\theta)}^2  \\ \leq & \, (\tfrac{1}{ \theta\alpha} + 1)\| \lambda \partial_{\varphi}^{\ell} \hat{u} \|_{L^2(0,\theta)}^2+  \|\partial_{\varphi}^{\ell+1} \hat{u} \|_{L^2(0,\theta)}^2.
\end{align*}
This implies that
\begin{equation*}
\|\partial_{\varphi}^{\ell}u\|_{\tilde{H}^{k-\ell-\frac{1}{2}}_{\alpha}} \leq C_{\alpha,\theta} \| u \|_{\prescript{k}{}{\mathcal{H}}^{k}_{\alpha}}.
\end{equation*}

The existence of the right inverse is a consequence of the following explicit formula.
For $ (u_1,\dots, u_{k-1})  \in  \prod_{\ell= 0}^{k-1} \tilde{H}^{k- \ell -\frac{1}{2}}_{\alpha} $, define the extension $ U $ in Mellin variables as
\begin{align}
\hat{U} (\lambda, \varphi) = & \,  \sum_{\ell = 0 }^{k - 1}  \frac{\varphi^{\ell}}{\ell!} \hat{u}_{\ell}(\lambda, 0) h\big(\tfrac{\varphi(1+ |\lambda | )}{\theta}\big) \nonumber \\
& +  \sum_{\ell = 0 }^{k - 1}  (-1)^{\ell} \frac{(\theta - \varphi)^{\ell}}{\ell!} \hat{u}_{\ell}(\lambda, \theta) h\big(\tfrac{(\theta-\varphi)(1+ |\lambda | )}{\theta}\big), \label{inverse:trace}
\end{align}
where $ h \in C^{\infty}_{\mathrm{c}}([0,1)) $ satisfies $ 0 \leq h \leq 1 $ and $ h = 1 $ in an open neighbourhood of $ 0 $.
Upon noting that
$$ \int_{0}^{\theta}  h\big(\tfrac{\varphi(1+ |\lambda | )}{\theta}\big)\dd \varphi   \lesssim \frac{\theta}{|\lambda|}, $$
it is straightforward to see that \eqref{inverse:trace} indeed defines a linear and continuous right inverse $L$ to the trace operator $ T $.
\end{proof}

\subsection{Density results}
Recall that
\begin{align*}
  \TT &:= \big\{\vc{v}\in C_{\mathrm{c}}^{\infty}(\overline{\Om}\setminus\{0\}):  \div\vv=0\text{ in }\Omega\text{ and } v_{\ph}=0\text{ on }\dOm'\big\}.
\end{align*}

\begin{prop}
\label{prop:ind:spaces}
Let $ \alpha \in \mathbb{R} \setminus \mathbb{Z} $, then the inclusions
\begin{align*}
I_1:  &\; \overline{\TT}^{\prescript{2}{}{\mathcal{H}}^{2}_{\alpha}}   \longrightarrow  \big\{ \vu \in \prescript{2}{}{\mathcal{H}}^{2}_{\alpha}: \div \vu =0 \text{ in }\Om \text{ and } u_{\varphi} = 0 \text{ on }\dOm'\big\}\quad \text{ and }\\
I_2:  &\; \overline{\{ \vv \in \TT :   \vv = 0 \text{ on } \partial \Omega' \}}^{\prescript{2}{}{\mathcal{H}}^{2}_{\alpha}}   \longrightarrow  \big\{ \vu \in \prescript{2}{}{\mathcal{H}}^{2}_{\alpha}: \div \vu =0 \text{ in }\Om \text{ and } \vu = 0 \text{ on }\dOm'\big\}
\end{align*}
are linear and bijective.
\end{prop}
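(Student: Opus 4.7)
The plan is to prove surjectivity of the two inclusions; injectivity is immediate since each map is the canonical embedding of a closure into $\prescript{2}{}{\mathcal{H}}^{2}_{\alpha}$, and linearity is trivial. So fix $\vu$ in the target space of $I_1$ (resp.\ $I_2$) and construct a sequence in $\TT$ (resp.\ in $\{\vv\in\TT:\vv=0\text{ on }\partial\Omega'\}$) converging to $\vu$ in $\prescript{2}{}{\mathcal{H}}^{2}_{\alpha}$.

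The key reduction is to a scalar stream function. Because $\Omega$ is simply connected and $\div\vu=0$, there exists $\psi$ (unique up to a constant) with $\vu=\nabla^\perp\psi$. The condition $u_\varphi=\partial_r\psi=0$ on each of the two boundary rays forces $\psi\equiv c_0$ on $\{\varphi=0\}$ and $\psi\equiv c_\theta$ on $\{\varphi=\theta\}$, with
\begin{equation*}
c_0-c_\theta \;=\; \int_0^\theta u_r(r,\varphi)\,r\,\dd\varphi \qquad \text{for all } r>0.
\end{equation*}
The Cauchy--Schwarz bound $(c_0-c_\theta)^2\le \theta\int_0^\theta u_r^2\,r^2\,\dd\varphi$, multiplied by $r^{-2\alpha-5}$ and integrated in $r$, yields
\begin{equation*}
(c_0-c_\theta)^2 \int_0^\infty r^{-2\alpha-5}\,\dd r \;\le\; \theta\,\llbracket\vu\rrbracket_{0,\alpha+2}^2,
\end{equation*}
where the right-hand side is finite by Remark \ref{rem:1} (using $\alpha\notin\mathbb{Z}$ so that the two Hardy inequalities embedding $\prescript{2}{}{\mathcal{H}}^{2}_{\alpha}$ into $\mathcal{H}^0_{\alpha+2}$ are valid). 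The radial integral on the left diverges for every real $\alpha$, so $c_0=c_\theta$; subtracting the common constant we assume $\psi=0$ on $\partial\Omega'$. For $I_2$, the extra condition $u_r=-r^{-1}\partial_\varphi\psi=0$ further gives $\partial_\varphi\psi=0$ on $\partial\Omega'$.

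Working at the level of $\psi$ automatically preserves $\div=0$ and the boundary conditions under any operation. Apply the radial cut-off $\eta_n$ from \eqref{def:cut:off} to obtain $\psi_n:=\eta_n\psi$, which retains the boundary trace of $\psi$ because $\eta_n=\eta_n(r)$; then $\vu_n:=\nabla^\perp\psi_n$ is compactly supported in $\overline{\Omega}\setminus\{0\}$, divergence-free, lies in the target space, and converges to $\vu$ in $\prescript{2}{}{\mathcal{H}}^{2}_{\alpha}$ by dominated convergence combined with the Hardy estimates on the terms containing derivatives of $\eta_n$ (cf.\ the proof of Lemma \ref{lemma:norm}). For the final smoothing, extend $\psi_n$ across each boundary ray by odd reflection in case $I_1$ (preserving the zero Dirichlet trace); in case $I_2$, first contract the angular support slightly by the substitution $\psi_n(r,\varphi)\mapsto\psi_n(r,(1+2\tau/\theta)\varphi-\tau)$ for small $\tau>0$, so that the support lies strictly inside $\Omega$ (this produces only an $\prescript{2}{}{\mathcal{H}}^{2}_{\alpha}$-small error because $\psi_n$ vanishes together with $\partial_\varphi\psi_n$ on $\partial\Omega'$), and then extend by zero. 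Convolving the extended function with a standard mollifier at scale $\delta$ smaller than both $\tau$ and $\operatorname{dist}(\operatorname{supp}\psi_n,0)$, and restricting back to $\Omega$, yields $\psi_n^{\tau,\delta}\in C^\infty_{\mathrm{c}}(\overline{\Omega}\setminus\{0\})$ retaining the required boundary trace, so that $\vv_{n,\tau,\delta}:=\nabla^\perp\psi_n^{\tau,\delta}$ belongs to the appropriate dense subclass; a diagonal subsequence in $(n,\tau,\delta)$ supplies the approximation.

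The main obstacle is the final smoothing step: one must mollify the scalar stream function in the wedge while exactly preserving the divergence-free constraint and the (one or two) scalar boundary conditions on $\vu$, in a weighted norm centred at the corner. This is resolved by the reflection (for $I_1$) or shrink-then-zero-extend (for $I_2$) construction above, which localises the issue to each ray and reduces it to classical half-plane smoothing; the power weight is harmless there because the radial cut-off has already placed the support uniformly away from the origin. The other delicate step, the flux-vanishing $c_0=c_\theta$, is precisely where the assumption $\alpha\notin\mathbb{Z}$ enters, through the embedding $\prescript{2}{}{\mathcal{H}}^{2}_{\alpha}\hookrightarrow\mathcal{H}^0_{\alpha+2}$ furnished by Remark \ref{rem:1}.
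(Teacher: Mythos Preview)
Your overall strategy --- pass to the stream function $\psi$, show $\psi|_{\partial\Omega'}=0$, cut off radially, then mollify while keeping the boundary condition --- matches the paper's, and your flux argument for $c_0=c_\theta$ is a valid alternative to the paper's use of the trace theorem (the paper simply observes that $\psi|_{\partial\Omega'}$ lies in a weighted trace space $\tilde H^{3/2}_\alpha$, which contains no nonzero constant).

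There is, however, a genuine gap in the final smoothing step. Since $\vu=\nabla^\perp\psi$ must be approximated in $\prescript{2}{}{\mathcal{H}}^{2}_{\alpha}$, the stream function must be approximated at the level of \emph{three} derivatives. But odd reflection of $\psi_n$ across a boundary ray, given only $\psi_n|_{\partial\Omega'}=0$, produces a function that is merely $H^2$ across the interface: the second normal derivative $\partial_\varphi^2\psi_n$ generally jumps by $2\,\partial_\varphi^2\psi_n|_{\partial\Omega'}$. Consequently the distributional third derivative of the reflected function carries a surface term $2\,\partial_\varphi^2\psi_n|_{\partial\Omega'}\,\delta_{\partial\Omega'}$, and mollifying at scale $\delta$ yields a contribution of order $\delta^{-1/2}$ in the $L^2$ norm of $\partial_\varphi^3\psi_n^\delta$; the one-sided $H^3$ convergence therefore fails unless $\partial_\varphi^2\psi_n|_{\partial\Omega'}=0$, which you have no reason to expect. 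The same obstruction applies to your $I_2$ construction: after the angular contraction, the zero extension of $\psi_n^\tau$ matches $\psi_n$ and $\partial_\varphi\psi_n$ at the junction but not $\partial_\varphi^2\psi_n$, so the extension is again only $H^2$ and the mollification does not converge in $H^3$.

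The paper circumvents this by decoupling the smoothing from the boundary condition: it first takes an \emph{arbitrary} approximation $\psi_k\to\psi$ in $\prescript{3}{}{\mathcal{H}}^{3}_{\alpha}$ by smooth compactly supported functions (available by definition of the space), with no control on the trace, and then \emph{corrects} the trace by subtracting the lift $L(\psi_k|_{\partial\Omega'},0,0)$ furnished by Proposition~\ref{prop:trace}. Since $\psi_k|_{\partial\Omega'}\to\psi|_{\partial\Omega'}=0$ in the trace norm and $L$ is bounded, the correction tends to zero, so $\psi_k-L(\psi_k|_{\partial\Omega'},0,0)$ has vanishing trace and still converges in $\prescript{3}{}{\mathcal{H}}^{3}_{\alpha}$. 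For $I_2$ one corrects the first two traces simultaneously. This trace-correction device is the missing ingredient in your argument.
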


\begin{proof}
We prove the statement for $I_1$ as the proof for $I_2$ is similar. Let $ \vv \in \prescript{2}{}{\mathcal{H}}^{2}_{\alpha} $ satisfy $  \div \vv =0 $ in $ \Om $ and $ v_{\varphi} = 0 $ on $ \dOm'  $. Then it suffices to show that there exists a sequence $ (\vv_k)_{k\geq 1} \in \TT$ such that
$ \vv_k \longrightarrow \vv $ in $ \prescript{2}{}{\mathcal{H}}^{2}_{\alpha}$ as $k\longrightarrow\infty$.
The stream function is given by
$$ \psi(r,\varphi) = \int_{0}^{\varphi} r v_{r}(r, \tilde{\ph}) \dd\tilde{\ph} $$
and satisfies $r\dr \psi = r\uph$ and $\dph \psi = -r\ur$. Moreover, we have $\psi=0$ on $\dOm'$. Indeed, by definition it is clear that $\psi(r,0)=0$. Using that $ v_{\ph}(r,\theta) = 0 $ gives  $ r \partial_r \psi ( r, \theta ) = 0 $, i.e., $ \psi(r, \theta) $ is constant. By applying the trace operator $ T $ to $ \psi $ we obtain $ \psi |_{\partial \Omega'}  \in \tilde{H}^{\frac{3}{2}}_{\alpha} $, which implies that the constant is zero.
In addition, we have $ \psi \in \prescript{3}{}{\mathcal{H}}^{3}_{\alpha} $ since by Lemma  \ref{lemma:norm}
\begin{align*}
  \|\psi\|^2_{\prescript{3}{}{\mathcal{H}}^{3}_{\alpha}}\sim \|\psi\|^2_{\mathfrak{H}^3_{\alpha} }& = \sum_{0 \leq j +  \ell \leq 3 }\int_{0}^{\theta}\int_0^{\infty} r^{-2\alpha - 6}|(r \partial_r)^j\partial_{\varphi}^{\ell} \psi|^2 r \, \dd r \dd\varphi\\& \lesssim \sum_{0 \leq j +  \ell \leq 2}\int_{0}^{\theta}\int_0^{\infty} r^{-2\alpha - 4}|(r \partial_r)^j\partial_{\varphi}^{\ell} \vv|^2 r  \dd r \dd\varphi \lesssim \llbracket \vv \rrbracket_{2, \alpha }^2.
\end{align*}

By definition of $ \prescript{3}{}{\mathcal{H}}^{3}_{\alpha} $, there exists a sequence $ (\psi_{k})_{k \geq 1} $ of smooth compactly supported functions such that $ \psi_{k} \longrightarrow \psi $ in $  \prescript{3}{}{\mathcal{H}}^{3}_{\alpha}  $ as $k\longrightarrow \infty$. With the extension operator $L$ from Proposition \ref{prop:trace}, we define $ \tilde{\vv}_k \in C^{\infty}(\overline{\Om}) $ such that  $ \div \tilde{\vv}_k = 0$, $(\tilde{\vv}_k)_\ph=0$ on $\dOm'$ and 
\begin{equation}
\label{def:app:dd:ff}
 \tilde{\vv}_k: = \nabla^{\perp} (\psi_k - L(\psi_k, 0,\dots,0 ))  \longrightarrow \vv \qquad \text{ in }  \prescript{2}{}{\mathcal{H}}^{2}_{\alpha} \text{ as }k\longrightarrow \infty.
\end{equation}
 However, $ \tilde{\vv}_k $ is not compactly supported in $ \overline{\Omega} \setminus \{0\}$. To solve this, consider the cut-off functions $ \eta_k $ as defined in \eqref{def:cut:off} and let $ g(k) $ be an increasing sequence such that
\begin{equation*}
\|\nabla^{\perp} \big( \eta_{g(k)} (\psi_k - L(\psi_k, 0,\dots,0 )) \big) -  \tilde{\vv}_k \|_{\prescript{2}{}{\mathcal{H}}^{2}_{\alpha}} \leq \| \tilde{\vv}_k - \vv  \|_{\prescript{2}{}{\mathcal{H}}^{2}_{\alpha}}\qquad \text{for all }k\geq 1.
\end{equation*}
Then, the sequence $(\vv_k)_{k\geq 1}$ defined by
\begin{equation*}
\vv_k := \nabla^{\perp} \big( \eta_{g(k)} (\psi_k - L(\psi_k, 0,\dots,0 )) \big)
\end{equation*}
has all the desired properties.
\end{proof}
\begin{prop}\label{prop:densH^2alpha}
For $ \alpha \in \mathbb{R} \setminus \mathbb{Z} $ and $\theta\in (0,\frac{\pi}{2})$ the inclusions
\begin{align*}
I_1 :  &\; \overline{\TT}^{\mathscr{H}^2_{\alpha}}   \longrightarrow  \big\{ \vu: \div \vu =0 \text{ in }\Om, u_{\varphi} = 0 \text{ on }\dOm'\text{ and }\| \vu \|_{\mathscr{H}^{2}_{\alpha} } < \infty \big\}\quad \text{ and }\\
I_2:&\; \overline{\TT}^{\mathfrak{X}^2_{\alpha,\theta}}   \longrightarrow  \big\{ \vu: \div \vu =0 \text{ in }\Om, u_{\varphi} = 0 \text{ on }\dOm'\text{ and }\| \vu \|_{\mathfrak{X}^{2}_{\alpha,\theta} } < \infty \big\}
\end{align*}
are linear and bijective.
\end{prop}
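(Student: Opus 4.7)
Both maps are linear and injective by definition of the closures, so the content is surjectivity, i.e., density of $\TT$ in the target set under the relevant norm. I follow the strategy of Proposition \ref{prop:ind:spaces}: I first lift $\vv$ to a stream function $\psi$ with zero Dirichlet trace on $\partial\Omega'$, then approximate $\psi$ by smooth compactly supported functions vanishing on $\partial\Omega'$, and recover an approximating sequence $\vv_k\in\TT$ by applying $\nabla^\perp$.

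\textbf{Stream function and its boundary behaviour.} Given $\vv$ in the target set, define
$$\psi(r,\varphi):=\int_0^\varphi rv_r(r,\tilde\varphi)\dd\tilde\varphi,$$
so that $\nabla^\perp\psi=\vv$ and $\psi(r,0)=0$. Since $v_\varphi(r,\theta)=\partial_r\psi(r,\theta)=0$, the function $\psi(\cdot,\theta)$ is $r$-independent; the weighted trace theorem (Proposition \ref{prop:trace}) forces this constant to be zero, because $\psi$ belongs to a weighted Sobolev space of positive order on $\partial\Omega'$. Hence $\psi|_{\partial\Omega'}=0$. A direct computation translates $\|\nabla^\perp\psi\|_{\mathscr{H}^2_\alpha}$ (respectively $\|\nabla^\perp\psi\|_{\mathfrak{X}^2_{\alpha,\theta}}$) into an equivalent weighted third-order Sobolev norm $\|\psi\|_{\mathcal W}$ on $\psi$ with zero Dirichlet trace on $\partial\Omega'$.

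\textbf{Approximation of $\psi$.} Set $\psi^{(n)}:=\eta_n\psi$ with $\eta_n$ as in \eqref{def:cut:off}. Because $\eta_n$ depends only on $r$, we still have $\psi^{(n)}|_{\partial\Omega'}=0$, and $\psi^{(n)}$ is compactly supported in the bounded annular sector $A_n:=(e^{-2n},e^{2n})\times(0,\theta)$. Combining the bound $|(r\partial_r)^j\eta_n|\lesssim n^{-j}$ with Hardy's inequality yields $\psi^{(n)}\to\psi$ in $\mathcal W$ as $n\to\infty$. On $A_{n+1}$ the weight $r^{-2\alpha}$ is bounded above and below by positive constants depending on $n$, so on $A_{n+1}$ the norm $\|\cdot\|_{\mathcal W}$ is equivalent to the unweighted $H^3(A_{n+1})$-norm, and $\psi^{(n)}$ extends to an $H^3(A_{n+1})$-function with zero trace on the two radial edges of $A_{n+1}$. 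The classical density of $C_c^\infty(\mathrm{int}\,A_{n+1})$ in the corresponding $H^3$-with-zero-radial-trace space (obtained by translation and mollification on a bounded Lipschitz sector) produces $\psi_k^{(n)}\in C_c^\infty(\Omega)$ with $\psi_k^{(n)}\to\psi^{(n)}$ in $H^3(A_{n+1})$, and hence in $\mathcal W$. A standard diagonal argument extracts a sequence $\psi_k\to\psi$ in $\mathcal W$.

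\textbf{Recovery of $\vv$.} Set $\vv_k:=\nabla^\perp\psi_k\in C_c^\infty(\Omega)\subset C_c^\infty(\overline\Omega\setminus\{0\})$. Then $\div\vv_k=0$, and $(v_k)_\varphi=\partial_r\psi_k=0$ on $\partial\Omega'$ (automatically, since $\psi_k$ is supported away from $\partial\Omega'$), so $\vv_k\in\TT$. Convergence of the interior terms $\llbracket\vv_k-\vv\rrbracket_{\ell,\alpha}$ and $\llbracket\vv_k-\vv\rrbracket_{2,\alpha-1}$ follows immediately from $\psi_k\to\psi$ in $\mathcal W$; convergence of the boundary terms $|(v_k)_r-v_r|_\alpha$ and (for $I_2$) $|r\partial_r((v_k)_r-v_r)|_\alpha$ follows from the continuity of the trace operator $T$ in Proposition \ref{prop:trace} applied to the corresponding angular derivative of $\psi_k-\psi$.

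\textbf{Main obstacle.} The delicate point is the approximation step, where a weighted-Sobolev function with prescribed zero Dirichlet trace on the two radial edges must be approximated by smooth compactly supported interior functions \emph{while simultaneously controlling the weighted boundary terms} $|u_r|_\alpha$ (and $|r\partial_r u_r|_\alpha$ for $I_2$). The preliminary radial cut-off $\eta_n$ is the key device: it reduces the problem on the unbounded wedge to an $H^3$-with-zero-radial-trace approximation on a bounded annular sector, where the weight is inert and classical density results apply; the boundary-term convergence is then automatic via the weighted trace theorem.
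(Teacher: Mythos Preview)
Your approximation step contains a genuine gap. You assert that $C_c^\infty(\mathrm{int}\,A_{n+1})$ is dense in the space of $H^3(A_{n+1})$-functions with zero Dirichlet trace on the two radial edges, and thereby produce $\psi_k\in C_c^\infty(\Omega)$ approximating $\psi^{(n)}$ in $H^3$. This density statement is false: the $H^3$-closure of smooth functions compactly supported in the open sector consists of those $f$ for which the trace \emph{and} the normal derivatives $\partial_\varphi f$, $\partial_\varphi^2 f$ all vanish on the radial edges. Your stream function has $\psi|_{\partial\Omega'}=0$ but $\partial_\varphi\psi|_{\partial\Omega'}=-r v_r$, which is generically nonzero, so $\psi^{(n)}$ does \emph{not} lie in that closure. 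Concretely, your $\vv_k=\nabla^\perp\psi_k$ with $\psi_k\in C_c^\infty(\Omega)$ satisfies $(\vv_k)_r=-r^{-1}\partial_\varphi\psi_k=0$ on $\partial\Omega'$, so $|(\vv_k)_r-v_r|_\alpha=|v_r|_\alpha$ cannot tend to zero; the trace-continuity argument you invoke at the end in fact reveals precisely this obstruction rather than resolving it.

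The paper's proof does \emph{not} force the approximants of $\psi$ to vanish on the radial edges. After the radial cut-off $\eta_n$ it takes arbitrary $\psi_k\in C_c^\infty(\overline\Omega\setminus\{0\})$ with $\psi_k\to\eta_n\psi$ in $\prescript{3}{}{\mathcal{H}}^{3}_{\alpha}$ (no boundary condition imposed on $\psi_k$), and then \emph{corrects} only the zeroth-order trace via the extension operator $L$ of Proposition~\ref{prop:trace}, setting $\tilde\vv_k:=\nabla^\perp\bigl(\psi_k-L(\psi_k|_{\partial\Omega'},0,\dots,0)\bigr)$. Since $\psi|_{\partial\Omega'}=0$, continuity of $T$ and $L$ gives $L(\psi_k|_{\partial\Omega'},0,\dots,0)\to 0$, so the correction is asymptotically negligible; by construction $(\tilde\vv_k)_\varphi=0$ on $\partial\Omega'$, while $(\tilde\vv_k)_r|_{\partial\Omega'}$ now genuinely converges to $v_r|_{\partial\Omega'}$. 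The additional boundary terms $|v_r|_\alpha$ (and $|r\partial_r v_r|_\alpha$ for $I_2$) are handled by observing that on the bounded annular support the weights are equivalent, so convergence in $\prescript{3}{}{\mathcal{H}}^{3}_{\alpha}$ controls the trace $\partial_\varphi\psi|_{\partial\Omega'}$ in $\tilde H^0_{\alpha-1}$.
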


\begin{proof} We prove the statement for $I_1$ and the proof for $I_2$ is similar.
We follow the proof of Proposition \ref{prop:ind:spaces}. In particular, we have that the stream function satisfies $\psi\in \prescript{2}{}{\mathcal{H}}^{3}_{\alpha}$ and $ \partial_{\varphi} \psi|_{\partial \Omega'} \in \tilde{H}^0_{\alpha-1} $. This leads to the extra difficulty of finding a smooth, compactly supported  approximation of $ \psi $ in the norm $\prescript{2}{}{\mathcal{H}}^{3}_{\alpha}$ and  $ \partial_{\varphi} \psi|_{\partial \Omega'}$ in $ \tilde{H}^0_{\alpha-1} $. Let $ \eta_n $ again be the cut-off function as defined in \eqref {def:cut:off}. Note that $ \eta_{n} \psi $ converges to $ \psi $ in $\prescript{2}{}{\mathcal{H}}^{3}_{\alpha}$ and $ \partial_{\varphi}( \eta_{n} \psi) $ to $ \partial_{\varphi} \psi $  in $ \tilde{H}^0_{\alpha-1} $ as $n\longrightarrow \infty$.
Then  $ \eta_{n} \psi  $ is compactly supported away from 0, so we can find an approximating sequence $ \psi_n^k\in C^{\infty}_{\mathrm{c}}(\Omega_{2n}) $ such that $ \psi_n^k \longrightarrow \eta_n \psi $ in $ \prescript{3}{}{\mathcal{H}}^{3}_{\alpha} $ as $ k \longrightarrow \infty, $ where
$$ \Omega_{2n} = \{ (r, \varphi) \in \Omega:  r \in(e^{-4n}, e^{4n})  \}.$$
By the trace theorem (Proposition \ref{prop:trace}) and the fact that the weights $r^{\beta}$ for any $\beta\in\RR$ are equivalent on compact subsets of $ \Omega \setminus \{0\} $, it follows that $ \|f\|_{\prescript{2}{}{\mathcal{H}}^{2}_{\alpha}} +  \| \partial_{\varphi} f \|_{\tilde{H}^0_{\alpha-1}} \leq C_{2n }\|f\|_{\prescript{3}{}{\mathcal{H}}^{3}_{\alpha}} $ for any $ f \in C^{\infty}_{\mathrm{c}}(\Omega_n) $. Therefore, $ \psi_n^k $ and $ \partial_{\varphi} \psi_n^k $ converge, respectively, to $ \eta_n \psi $ and $ \partial_{\varphi}(\eta_n \psi ) $ in $ \prescript{2}{}{\mathcal{H}}^{2}_{\alpha} $ and $ \tilde{H}^0_{\alpha-1}$ as $ k \longrightarrow \infty$.  We can now conclude by defining the approximate velocity field as in \eqref{def:app:dd:ff} and proceed from there.
\end{proof}

\section{Auxiliary estimates}\label{app:Aux_est}
In this appendix we prove some auxiliary estimates which are required in Section \ref{sec:higher_reg}. Recall that we defined the interval
$$ I_{\eps} = \left[ -(1-\eps)\tfrac{\pi}{\theta} + 1,(1-\eps) \tfrac{\pi}{\theta}- 1 \right]. $$

\begin{lem}
Let $ \lambda \in\mathbb{C} $. If $\Re \lambda \in I_{\eps} $, $ |\Im \lambda| \theta < \tfrac{1}{4} $, $ |\lambda -1 | \geq \tfrac{1}{4}$, $ |\lambda +1 | \geq \tfrac{1}{4}$ and $ \varphi \in (0, \theta ) $, then we have the estimates
\begin{equation}
\label{key:estimate:luce:pulsata:-1}
\left| \frac{\sin((\lambda+1)\varphi)}{4 \lambda \sin((\lambda+1)\theta)} \pm \frac{\sin((\lambda-1)\varphi)}{4 \lambda \sin((\lambda-1)\theta)}\right| \leq C_{\eps}\frac{1}{|\lambda|},
\end{equation}
\begin{equation}
\label{key:estimate:luce:pulsata}
\left| \frac{\sin((\lambda+1)\varphi)}{4\lambda \sin((\lambda+1)\theta)} - \frac{\sin((\lambda-1)\varphi)}{4 \lambda \sin((\lambda-1)\theta)}\right| \leq C_{\eps} \frac{|\lambda| \theta}{|\lambda|^2},
\end{equation}
\begin{equation}
\label{key:estimate:luce:pulsata:cos}
\left| \frac{(\lambda+1)\cos((\lambda+1)\varphi)}{4\lambda \sin((\lambda+1)\theta)} - \frac{(\lambda-1)\cos((\lambda-1)\varphi)}{4 \lambda \sin((\lambda-1)\theta)}\right| \leq C_{\eps} \theta.
\end{equation}

\end{lem}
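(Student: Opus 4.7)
The approach is to reduce each of the three inequalities to a mean-value-type bound for the increment of an analytic function along a short segment. The hypotheses place both $(\lambda+1)\theta$ and $(\lambda-1)\theta$ in the compact rectangle
\begin{equation*}
\Omega_\eps := \{w \in \CC : |\Re w| \leq (1-\eps)\pi,\ |\Im w| \leq \tfrac{1}{4}\},
\end{equation*}
on which the elementary bounds $c_\eps|w| \leq |\sin w| \leq C_\eps|w|$, $|\sin w|, |\cos w| \leq C_\eps$, and $|w - k\pi| \geq \eps\pi$ for $k = \pm 1$ all hold with constants depending only on $\eps$. These bounds follow from the compactness of $\Omega_\eps$ together with the fact that $\sin w/w$ is analytic and non-vanishing on $\Omega_\eps$.

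For \eqref{key:estimate:luce:pulsata:-1} I would estimate each summand separately: using $|\sin((\lambda\pm 1)\varphi)| \leq C_\eps |\lambda \pm 1|\varphi$ together with $|\sin((\lambda\pm 1)\theta)| \geq c_\eps |\lambda\pm 1|\theta$ gives each term $\leq C_\eps \varphi/(|\lambda|\theta) \leq C_\eps/|\lambda|$, and the triangle inequality finishes this case. For \eqref{key:estimate:luce:pulsata} I would set $\rho := \varphi/\theta \in (0,1)$, $z_\pm := (\lambda\pm 1)\theta$, and
\begin{equation*}
U(w) := \frac{\sin(\rho w)}{\sin w},
\end{equation*}
so that the left-hand side equals $(U(z_+) - U(z_-))/(4\lambda)$. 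The singularity of $U$ at $w=0$ is removable (value $\rho$), $U$ is analytic on $\Omega_\eps$, and a direct compactness argument shows that $|U'(w)| \leq C_\eps$ uniformly for $w \in \Omega_\eps$ and $\rho \in (0,1)$ (near the origin one uses the Taylor expansion $U'(w) = \tfrac{\rho(1-\rho^2)}{3}w + \O(w^3)$). Integrating along the straight segment from $z_-$ to $z_+$, which has length $2\theta$ and lies in $\Omega_\eps$ by convexity, yields $|U(z_+) - U(z_-)| \leq 2C_\eps\theta$, giving \eqref{key:estimate:luce:pulsata}.

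Estimate \eqref{key:estimate:luce:pulsata:cos} is the delicate one. Setting $V(w) := w\cos(\rho w)/\sin w$, the left-hand side equals $(V(z_+) - V(z_-))/(4\lambda\theta)$, and the naive estimate obtained by integrating $V'$ along $[z_-, z_+]$ yields only $C_\eps/|\lambda|$, which is insufficient once $|\lambda|\theta$ is small. The key observation is that $V$ is \emph{even} in $w$ (as $V(-w) = (-w)\cos(\rho w)/(-\sin w) = V(w)$), so $V(w) = W(w^2)$ for an analytic function $W$ on $\CC \setminus \{k^2\pi^2 : k \in \ZZ\setminus\{0\}\}$; consequently $V(z_+) - V(z_-) = W(z_+^2) - W(z_-^2)$. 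The chord $[z_-^2, z_+^2]$ in the $s$-plane has length $|z_+^2 - z_-^2| = |z_+ - z_-|\cdot|z_+ + z_-| = 2\theta \cdot 2|\lambda|\theta = 4|\lambda|\theta^2$, and since $\Re z_\pm^2 \leq ((1-\eps)\pi)^2 < \pi^2$ it remains at distance $\geq \eps\pi^2$ from the nearest singularity of $W$. Hence $|W'| \leq C_\eps$ on this chord, yielding $|V(z_+) - V(z_-)| \leq 4C_\eps |\lambda|\theta^2$, which divided by $4|\lambda|\theta$ gives \eqref{key:estimate:luce:pulsata:cos}. The principal obstacle is precisely this need for the evenness of $V$ in (3); a secondary obstacle is checking uniformity of all constants in $\rho$, $\lambda$, and $\theta$, which reduces to elementary but careful estimates on $\sin$ and $\cos$ of complex arguments with imaginary part at most $1/4$.
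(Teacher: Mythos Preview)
Your proof is correct and takes a genuinely different, more conceptual route than the paper's. For \eqref{key:estimate:luce:pulsata:-1} both arguments coincide. For \eqref{key:estimate:luce:pulsata} the paper writes the difference over a common denominator and then expands the numerator via the integral remainder $\sin x = x - \int_0^x (x-s)\sin s\,ds$, producing a page of cross-terms that are estimated one by one; your substitution $U(w)=\sin(\rho w)/\sin w$ with $\rho=\varphi/\theta$ collapses this to a single application of the mean-value inequality along the segment $[z_-,z_+]$ of length $2\theta$. For \eqref{key:estimate:luce:pulsata:cos} the paper only says ``a similar strategy proves'' it, whereas your evenness observation $V(-w)=V(w)$, hence $V(w)=W(w^2)$, is exactly what is needed to upgrade the segment length from $|z_+-z_-|=2\theta$ to $|z_+^2-z_-^2|=4|\lambda|\theta^2$ and recover the required $\theta$-bound; this makes transparent why the naive estimate loses a factor $|\lambda|\theta$. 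Two small points to tighten: make explicit that the chord $[z_-^2,z_+^2]$ stays in the half-plane $\{\Re s\le(1-\eps)^2\pi^2\}$ by convexity (you only state it for the endpoints), and note that the uniform bounds on $U'$ and $W'$ in $\rho\in[0,1]$ follow from joint continuity on the compact set $\Omega_\eps\times[0,1]$ (resp.\ $K_\eps\times[0,1]$) after removing the removable singularity at the origin.
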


\begin{proof}
 Note that in any compact subset of $ \mathbb{C} $ we have $|\sin z| \leq C |z| $, so \eqref{key:estimate:luce:pulsata:-1} follows easily.

For \eqref{key:estimate:luce:pulsata} note that
\begin{align}
\frac{\sin((\lambda+1)\varphi)}{4\lambda \sin((\lambda+1)\theta)} & - \frac{\sin((\lambda-1)\varphi)}{4 \lambda \sin((\lambda-1)\theta)} \label{riscrittura:102}\\
= & \, \frac{\sin((\lambda + 1)\varphi)  \sin((\lambda-1)\theta)-\sin((\lambda - 1)\varphi) \sin((\lambda+1)\theta) }{4\lambda \sin((\lambda+1)\theta)  \sin((\lambda-1)\theta)}. \nonumber
\end{align}
To estimate the numerator, we rewrite using the Taylor expansion theorem with Lagrange remainders:
\begin{equation*}
\sin(x) = x - \int_0^x(x-s) \sin(s) \dd s.
\end{equation*}
Using this formula, we deduce 
\begin{align}
\sin &((\lambda + 1)\varphi)  \sin((\lambda-1)\theta)-\sin((\lambda - 1)\varphi) \sin((\lambda+1)\theta) \nonumber \\
= & \, -(\lambda +1)\varphi \int_0^{(\lambda-1)\theta} ((\lambda-1)\theta - s)\sin(s)  \dd s+(\lambda -1)\varphi \int_0^{(\lambda+1)\theta} ((\lambda+1)\theta - s)\sin(s)  \dd s  \label{monkey:1}\\
& \, -(\lambda -1)\theta \int_0^{(\lambda+1)\varphi} ((\lambda+1)\varphi - s)\sin(s)  \dd s+ (\lambda +1)\theta \int_0^{(\lambda-1)\varphi} ((\lambda-1)\varphi - s)\sin(s)  \dd s \label{monkey:2} \\
& \, + \int_0^{(\lambda+1)\varphi}((\lambda+1)\varphi-s)\sin(s) \dd s \int_0^{(\lambda-1)\theta}((\lambda-1)\theta-s)\sin(s) \dd s  \label{monkey:3} \\
& \,  - \int_0^{(\lambda-1)\varphi}((\lambda-1)\varphi-s)\sin(s) \dd s \int_0^{(\lambda+1)\theta}((\lambda+1)\theta-s)\sin(s) \dd s. \label{monkey:4}
\end{align}
We bound \eqref{monkey:1}, \eqref{monkey:2} and \eqref{monkey:3}+\eqref{monkey:4} separately.
We start by rewriting the first term
\begin{align*}
\eqref{monkey:1} = & \, (\lambda^2-1)\varphi \theta \int_{(\lambda-1)\theta}^{(\lambda +1)\theta} \sin(s)  \dd s - \lambda \varphi \int_{(\lambda+1)\theta}^{(\lambda-1)\theta} s \sin(s)  \dd s+ \varphi \int_{0}^{(\lambda-1)\theta} s \sin(s)  \dd s  \\ & \, + \varphi \int_{0}^{(\lambda+1)\theta} s \sin(s)  \dd s.
\end{align*}
We then deduce
\begin{equation}
|\eqref{monkey:1}| \leq  \, C (|\lambda^2-1| + |\lambda|+ |\lambda-1|^3+|\lambda+1|^3)\theta^4 
\leq  \, C \theta, \label{monkey:q1}
\end{equation}
where in the last inequality we have used that $ |\lambda| \theta \leq |\Re \lambda|\theta+|\Im \lambda|\theta \leq \pi +1 $ and $ \theta \leq 1 $.
The same estimates holds for \eqref{monkey:2}. We are left with
\begin{align*}
\eqref{monkey:3} & +\eqref{monkey:4} = \\
&   \,   (\lambda^2-1)\varphi \theta \Big( \int_0^{(\lambda+1)\varphi} \sin s  \dd s \int_0^{(\lambda-1)\theta} \sin s \dd s - \int_0^{(\lambda-1)\varphi}  \sin s  \dd s \int_0^{(\lambda+1)\theta}  \sin s  \dd s\Big) \\
&  - \, \lambda \theta \Big( \int_0^{(\lambda+1)\varphi} s \sin s  \dd s \int_0^{(\lambda-1)\theta} \sin s  \dd s - \int_0^{(\lambda-1)\varphi} s \sin s  \dd s \int_0^{(\lambda+1)\theta}  \sin s  \dd s\Big) \\
&  -  \,   \lambda\varphi \Big( \int_0^{(\lambda+1)\varphi} \sin s  \dd s \int_0^{(\lambda-1)\theta} s \sin s  \dd s - \int_0^{(\lambda-1)\varphi}  \sin s  \dd s \int_0^{(\lambda+1)\theta} s \sin s  \dd s\Big) \\
&   \, +  \theta \Big( \int_0^{(\lambda+1)\varphi} s \sin s  \dd s \int_0^{(\lambda-1)\theta} \sin s  \dd s + \int_0^{(\lambda-1)\varphi} s \sin s  \dd s \int_0^{(\lambda+1)\theta}  \sin s  \dd s\Big) \\
&   \, + \varphi \Big( \int_0^{(\lambda+1)\varphi} \sin s  \dd s \int_0^{(\lambda-1)\theta} s \sin s  \dd s + \int_0^{(\lambda-1)\varphi}  \sin s  \dd s \int_0^{(\lambda+1)\theta} s \sin s  \dd s\Big) \\
& \, + \Big( \int_0^{(\lambda+1)\varphi} s \sin s  \dd s \int_0^{(\lambda-1)\theta} s \sin s  \dd s - \int_0^{(\lambda-1)\varphi} s \sin s  \dd s \int_0^{(\lambda+1)\theta} s \sin s  \dd s\Big)
\end{align*}
Note
\begin{align*}
\Big|  \int_0^{(\lambda+1)\varphi}  \sin s  \dd s \int_0^{(\lambda-1)\theta}  \sin s  \dd s - \int_0^{(\lambda-1)\varphi}  \sin s  \dd s \int_0^{(\lambda+1)\theta}  \sin s  \dd s  \Big| \leq C|\lambda - 1 |^2\theta^4
\end{align*}
and
\begin{align*}
\Big|  \int_0^{(\lambda+1)\varphi} \sin s  \dd s \int_0^{(\lambda-1)\theta} s \sin s  \dd s + \int_0^{(\lambda-1)\varphi} \sin s  \dd s \int_0^{(\lambda+1)\theta} s \sin s  \dd s  \Big| &  \\ \leq C(|\lambda + 1 |+|\lambda-1|)&|\lambda^2-1|^2\theta^5,
\end{align*}
which implies
\begin{align}
|\eqref{monkey:3}+\eqref{monkey:4}| \leq & \, C (|\lambda^2-1||\lambda-1|^2+|\lambda||\lambda-1|^3+ (|\lambda + 1 |+|\lambda-1|)|\lambda^2-1|^2+|\lambda-1|^3 )\theta^6  \nonumber \\
\leq & \, C \theta, \label{monkey:q3}
\end{align}
where in the last inequality we have used that $ |\lambda| \theta \leq |\alpha|\theta+|\Im \lambda|\theta \leq \pi +1 $ and $ \theta \leq 1 $.
Inequality \eqref{monkey:q1} implies the result \eqref{monkey:q3}.

By using \eqref{monkey:q1} and \eqref{monkey:q3} to estimate the numerator of \eqref{riscrittura:102} and the fact that $$ |\lambda \sin((\lambda-1)\theta) \sin((\lambda+1)\theta)| \geq c_{\eps}^2 |\lambda||\lambda^2-1|\theta^2, $$ we deduce \eqref{key:estimate:luce:pulsata}.
A similar strategy proves \eqref{key:estimate:luce:pulsata:cos}.
\end{proof}

\bibliographystyle{plain}
\bibliography{Well-posednessStokesonwedgewithNavierslip}

\end{document}